\newcommand{\Spec}{{\rm Spec}}
\newcommand{\Hom}{ \,{\rm Hom} \,}
\newcommand{\Sym}{ \,{\rm Sym} \,}
\newcommand{\im}{ \,{\rm Im} \,}
\newtheorem{theorem}{Theorem}[section]
\newtheorem*{theorem*}{Theorem}
\newtheorem{proposition}[theorem]{Proposition}
\newtheorem{corollary}[theorem]{Corollary}
\newtheorem{lemma}[theorem]{Lemma}
\newtheorem{definition}[theorem]{Definition}
\newtheorem{remark}[theorem]{Remark}
\newtheorem{conjecture}[theorem]{Conjecture}
\newtheorem{example}[theorem]{Example}
\newcommand{\RR}{{\mathbb R }}
\newcommand{\CC}{{\mathbb C }}
\newcommand{\ZZ}{{\mathbb Z }}
\newcommand{\PP}{ {\mathbb P }}
\newcommand{\QQ}{{\mathbb Q }}
\newcommand{\ff}{{\mathbf f }}
\newcommand{\calc}{\mathcal{C}}
\newcommand{\cald}{\mathcal{D}}
\newcommand{\calx}{\mathcal{X}}
\newcommand{\calo}{\mathcal{O}}
\newcommand{\cale}{\mathcal{E}}
\newcommand{\frakm}{\mathfrak{m}}
\newcommand{\tg}{\tilde{g}}
\newcommand{\reg}{\mathrm{reg}}
\newcommand{\codim}{\mathrm{codim}}
\newcommand{\Euler}{\mathrm{Euler}}
\newcommand{\mapd}[2]{J_k(#1,#2)}
\newcommand{\gnk}{\mathrm{Diff}_m \times \mathrm{Diff}_n}
\newcommand{\Tp}{\mathrm{Tp}}
\newcommand{\bbb}{\mathbf{b}}
\newcommand{\mult}{\mathrm{mult}}
\newcommand{\epd}[1]{\mathrm{eP}[#1]}
\newcommand{\mdeg}[1]{\mathrm{mdeg}[#1]}
\newcommand{\emu}{\mathrm{emult}}
\newcommand{\dist}{{\mathrm{dst}}}
\newcommand{\Span}{\mathrm{Span}}
\newcommand{\supp}{\mathrm{supp}}
\newcommand{\Curv}{\mathrm{Curv}}
\newcommand{\Conf}{\mathrm{Conf}}
\newcommand{\res}{\operatornamewithlimits{Res}}
\newcommand{\ires}{\res_{z_1=\infty}\res_{z_{2}=\infty}\dots\res_{z_k=\infty}}
\newcommand{\sires}{\res_{\mathbf{z}=\infty}}
\newcommand{\dbz}{\,d\mathbf{z}}
\newcommand{\HC}{\mathrm{HC}}
\newcommand{\coeff}{\mathrm{coeff}}
\newcommand{\symdot}{\mathrm{Sym}^{\le k}\CC^m}
\newcommand{\grass}{\mathrm{Grass}}
\newcommand{\flag}{\mathrm{Flag}}
\newcommand{\diff}{\mathrm{Diff}}
\newcommand{\TT}{\mathrm{T}}
\newcommand{\Lt}{\tilde{\Lambda}}
\newcommand{\bz}{\mathbf{z}}
\newcommand{\bx}{\mathbf{x}}
\newcommand{\tx}{\tilde{x}}
\newcommand{\ta}{\tilde{a}}
\newcommand{\bi}{\mathbf{i}}
\newcommand{\bff}{\mathbf{f}}
\newcommand{\bU}{\mathbf{U}}
\newcommand{\bW}{\mathbf{W}}
\newcommand{\kt}{{K}}
\newcommand{\OO}{\mathcal{O}}
\newcommand{\jetk}[2]{J_{k}({#1},{#2})}
\newcommand{\jetreg}[2]{J_{k}^{\mathrm{reg}}({#1},{#2})}
\newcommand{\jetnondeg}[2]{J_{k}^{\mathrm{nondeg}}({#1},{#2})}
\newcommand{\nondeg}{\mathrm{nondeg}}
\newcommand{\tc}{\hat T}
\newcommand{\GL}{\mathrm{GL}}
\newcommand{\sym}{\mathrm{Sym}}
\newcommand{\Hilb}{\mathrm{Hilb}}
\newcommand{\CHilb}{\mathrm{CHilb}}
\newcommand{\GHilb}{\mathrm{GHilb}}
\newcommand{\RHilb}{\mathrm{RHilb}}
\newcommand{\BHilb}{\mathrm{BHilb}}
\newcommand{\hch}{\mathrm{HC}}
\newcommand{\hcpt}{H^*_{\mathrm{cpt}}}
\newcommand{\C}{\mathbb{C}}
\def\a{\alpha}
\def\b{\beta}
\def\g{\gamma}
\def\d{\delta}
\def\e{\epsilon}
\def\l{\lambda}
\def\s{\sigma}
\def\vp{\varphi}
\def\L{\Lambda}
\title{Multiple-point residue formulas for holomorphic maps} 
\author{Gergely B\'erczi and Andr\'as Szenes}
\address{Aarhus University}
\email{gergely.berczi@math.au.dk}
\address{University of Geneva}
\email{andras.szenes@unige.ch}
\date{}
\begin{document}

\begin{abstract}
We develop a new approach to the study of the multipoint loci of holomorphic 
maps between complex manifolds. 
We relate the $k$-fold locus to the curvilinear component of the Hilbert scheme 
of $k$ points on the source space of the map, and using equivariant 
localisation, we derive a closed iterated multipoint residue formula. Our work 
is motivated by ideas and conjectures of M. Kazarian and R. Rim\'anyi on 
residual polynomials.
\end{abstract}

\maketitle
{\scriptsize
\tableofcontents}

\section{Introduction} 
Let $f: M \to N$ be a holomorphic map between two complex manifolds of 
dimensions $m \le n$. A point $p$ of $M$ is called a \textsl{$k$-fold point} of 
$f$ if the fiber $f^{-1}f(p)$ contains at least $k$ points. For sufficiently 
generic $f$ the closure of the $k$-fold point locus represents a cohomology 
class, and a \textit{multiple-point formula} or \textit{k-fold point formula} 
is a universal polynomial in the Chern classes of $f$ representing this cycle. 
Understanding the geometric origins of these universal polynomials, and finding 
explicit formulas for their coefficients is one of the central problems of 
enumerative geometry and global singularity theory. 

Another somewhat simpler problem is the identification of the class of
the locus $\Sigma_\a$ in $M$ where $f$ has a given singularity type $\a$. A 
(contact) singularity type is determined by a nilpotent algebra $A$, and the 
corresponding locus $\Sigma_A$ in $M$ is where $f$ has local algebra $A$, see 
\cite{arnold}. Assuming $M$
is compact and $f$ is sufficiently generic, the closure of $\Sigma_A$ is an 
analytic subvariety. Transplanting Thom's pioneering work \cite{thom} in the 
50's  into the complex/holomorphic world, one can say that the Poincar\'e dual 
class $[\Sigma_A]\in H^*(M,\mathbb{Z})$ is a polynomial 
$\Tp_A^{m \to n}(c_1,c_2,\ldots)$ in the Chern classes $c_i(f)$ of the difference 
bundle $TM-f^*TN$. These expressions are called \textit{the Thom polynomials of 
the 
singularity}; their full description has been a major open problem ever since 
then, see \cite{rimanyi,kazarian2,bsz,rf}. The theory of Thom polynomials provides a 
unified approach to many enumerative problems in algebraic geometry, which 
often can be reformulated as problems of counting singularities of certain maps 
of algebraic varieties.

In most of the enumerative problems one needs, however, an extension of Thom 
polynomial theory to the case of multisingularities. This natural, but 
significantly harder version studies the locus of points $p$ in $M$ where $f^{-1}f(p)$ has 
$k$ points with given singularity type at each point. 
In the present paper, we study the simplest multisingularity problem, the 
multipoint case, when $ p $ is a generic point of $ f $, i.e., when $ f $ is a 
local immersion of $ M $ into $ N $ at $ p $.

A key difference between the theories is that Thom polynomials are expressed in 
terms of the Chern classes $ c_i(f)=c_i(TM-f^*TN) $, which we will informally 
call \textit{local}, while the formulas expressing the classes of the 
multipoint loci involve more general classes of the form $ f^*f_*c_i(f) $, 
which we will call informally \textit{global}. The very first example is the 
double point formula:
\[     f^*f_*[1] - c_{n-m}(f).
 \]

Multisingularity theory is a classical subject in enumerative geometry, and its 
roots go back to the works of Pl\"ucker, and Salmon in the XIX. century (cf. for 
an account in Kleiman \cite{kleiman3}).
A major impetus to the subject was given by the work of Kleiman 
\cite{kleiman1,kleiman2}, who developed several methods to approach the 
subject: the 
\textit{iteration principle} and the method of \textit{Hilbert schemes}. His 
work however applied to the rather restricted set of corank-1 functions only. 
This assumption implies, in particular, the vanishing of a large 
set of characteristic classes of the map $f$, and thus the resulting 
formulas coincide with the conjectured general answer modulo an ideal only.

A breakthrough in the theory occurred following the work of Kazarian 
\cite{kaz97} and Rim\'anyi \cite{rimanyi2}.
In, \cite{kaz97,kazarian} Kazarian proposed a sieve-type multipoint formula (cf. 
\eqref{eq:sieve}), where the kernel of the sieve (which he named the 
\textit{residual polynomial}) is a polynomial depending only on the local 
classes. Experimenting with the restriction equations, Rim\'anyi in \cite{rimanyimarangell} then 
conjectured that the residual polynomials are simply the Thom polynomials for 
the $ A_k $ singularities with appropriate shifts in the depth and codimension. 
Assuming the sieve formula, 
Rim\'anyi and Marangell \cite{rimanyimarangell} also derived formulas for quadruple 
points of $ f $.

In \cite{kazarian_on_kleiman}, it was suggested that the right approach to the 
general 
case lies in a refined theory of intersection on singular spaces.
In the present paper we describe such an approach and, in fact give an exact 
meaning to Kazarian's sieve formula in this framework, and then prove Rim\'anyi's 
conjecture.

Let us describe our main result in some detail.
Let $M_k\subset M$ and $N_k=f(M_k) \subset N$ denote the $k$-fold locus, and $\bar{m}_k \in H^*(M), \bar{n}_k=k \cdot f_*m_k \in H^*(N)$ the multipoint classes. Note that the factor $k$ comes from the degree of the map $f$ on the $k$-fold locus. The $k$-fold locus in $M$ and $N$ have natural multiplicities, and we introduce 
\[m_k=(k-1)!\cdot \bar{m}_k\ \ \ \text{ and } n_k=k! \cdot \bar{n}_k,\]
where $n_k=f_*m_k$ holds. 
\begin{theorem}\label{mainthm1} Let $f:M \to N$ be a sufficiently generic holomorphic map between the complex manifolds $M$ and $N$ and $k>0$ and integer. The $k$-fold locus class in $N$ can be written as 
\begin{equation}\label{eq:sieve}
n_k=\sum_{i_1+\ldots +i_s=k} {k \choose i_1\ i_2\ \ldots \ i_s} S_{i_1} S_{i_2} \ldots S_{i_s}
\end{equation}
where $S_q=f_*R_q$ holds for the so-called residual polynomial $R_q$, which has the following closed expression for $1\le q\le k$:
\[R_q=\sires \frac{
(-1)^{q-1}Q_{q-1}(z_1,\ldots, z_{q-1})\,\prod_{1\le i<j \le q-1}(z_i-z_j)}{
\prod_{i+j \le l \le q-1}
(z_i+z_j-z_l)  (z_1\ldots z_{q-1})^{m-n+1}} \prod_{j=1}^{q-1} c_f\left(\frac{1}{z_j}\right)dz_i\]
where $c_f\left(\frac{1}{z_j}\right)=1+\frac{c_1(TM-f^*TN)}{z_j}+\frac{c_2(TM-f^*TN)}{z_j^2}+\ldots +$ is the total Chern class of $f$ at $1/z_j$. 
\end{theorem}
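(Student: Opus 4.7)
The strategy is to realize the pushforward $n_k=f_*m_k$ as a direct integral from a smooth model of the curvilinear component $\CHilb^k(M)$ of the Hilbert scheme of $k$ points on $M$, and to evaluate this integral by equivariant localization with respect to a torus acting on the fibres.

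First I would establish the sieve formula \eqref{eq:sieve}. Over a point $y\in N_k$, the fibre $f^{-1}(y)$ breaks up naturally into clusters indexed by a partition $(i_1,\ldots,i_s)$ of $k$: two preimages belong to the same cluster when they lie on a length-$2$ curvilinear subscheme of $M$ contracted by $f$, and more generally a cluster of size $q$ corresponds to a length-$q$ curvilinear subscheme of $M$ mapped to $y$. Let $R_q\in H^*(M)$ denote the class of the length-$q$ curvilinear (single-cluster) locus, normalised by the automorphism factor $(q-1)!$, and put $S_q=f_*R_q$. Stratifying $N_k$ by the cluster type of the fibre and counting the $\binom{k}{i_1\,i_2\,\ldots\,i_s}$ labellings of the $k$ ordered preimages into clusters yields \eqref{eq:sieve}. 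This reduces the theorem to computing $R_q$ for each $q\ge 1$.

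Next, for fixed $q$, I would model the length-$q$ curvilinear locus at $p\in M$ via the test-curve construction of \cite{bsz}: a length-$q$ curvilinear subscheme supported at $p$ is equivalent to a nondegenerate $(q-1)$-jet of a map $\gamma\colon (\CC,0)\to(M,p)$ modulo the reparametrisation group $\diff(q-1)$ of $(q-1)$-jets of $(\CC,0)\to(\CC,0)$. The class $R_q$ is then the pushforward to $M$ of an explicit characteristic class on the associated bundle of nondegenerate jets, and the maximal torus of $\diff(q-1)$ supplies the formal variables $z_1,\ldots,z_{q-1}$. In the resulting equivariant integral one recognises the ingredients of the stated formula: the Chern series $c_f(1/z_j)$ comes from the Chern class of $TM-f^*TN$ pulled back along the $j$-th coordinate of the universal jet, the Vandermonde $\prod_{i<j}(z_i-z_j)$ is the Weyl-type denominator of $\diff(q-1)$-localisation, the linear factors $(z_i+z_j-z_l)$ for $i+j\le l\le q-1$ encode the weights defining nondegenerate jets inside all jets, and the power $(z_1\cdots z_{q-1})^{m-n+1}$ is the equivariant Euler class of the normal bundle of the curvilinear incidence inside $M^q$.

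The main obstacle, and the step I expect to occupy the bulk of the argument, is re-packaging the Atiyah--Bott/Berline--Vergne fixed-point expansion on this model into the single iterated residue at $z_1=\cdots=z_{q-1}=\infty$ appearing in the statement. The torus-fixed points of the curvilinear model are indexed by combinatorial flag data, and one must show that the sum of their contributions equals the iterated residue applied to one universal rational form. The argument should rest on a residue-vanishing lemma in the spirit of our earlier work on Thom polynomials \cite{bsz}, which guarantees that, once the nested residues at infinity are taken in the prescribed order, only the genuine fixed-point terms survive and the spurious poles cancel; the polynomial numerator $Q_{q-1}$ is then uniquely determined by the combinatorial data of the reparametrisation quotient and by the chosen normalisation of $R_q$. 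Granting this residue identity, the formula for $R_q$ follows and substitution into the sieve formula of the previous step finishes the proof.
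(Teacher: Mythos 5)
Your first step treats the sieve identity \eqref{eq:sieve} as a set-theoretic bookkeeping statement: stratify $N_k$ by cluster type of the fibre, let $R_q$ be ``the class of the single-cluster locus'', and count labellings. This begs the question. The identity \eqref{eq:sieve} is precisely Kazarian's sieve conjecture in the multipoint case, and it is a nontrivial cohomological statement about closures with multiplicities: the class of the closure of the $k$-fold locus does not decompose into products of pushforwards of strata classes just because the open strata partition the set, since the strata closures meet in excess and the multiplicities along the punctual/diagonal loci are exactly what has to be controlled. In the paper this is done by (i) representing $[\GHilb^k(f)]$ as a properly supported Euler class $\Euler(f^*TN^{[k]}/f^{[k]*}T_h)$ of a concretely constructed bundle with a transversal section (which itself requires the local analysis at Porteous points and Thom--Boardman genericity, Propositions \ref{prop:kporteous}--\ref{prop:hilbfloc}, to know that the support meets the punctual locus only in the curvilinear component and is locally irreducible there), (ii) running a cohomological inclusion--exclusion over the fully nested Hilbert scheme $N^k(M)$ indexed by partitions, and (iii) proving the First Residue Vanishing Theorem so that only the pull-back term survives in the deepest sieve term. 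None of these appears in your proposal; in particular your definition of $R_q$ as the curvilinear single-cluster class is not the definition of the residual polynomial (which is the constant term of $m_\alpha$ in the Landweber--Novikov classes), and identifying the two is essentially the content of Theorem \ref{mainthm2}, not something one may assume.

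A second concrete gap is the exponent $m-n+1$. You attribute the factor $(z_1\cdots z_{q-1})^{m-n+1}$ to ``the equivariant Euler class of the normal bundle of the curvilinear incidence inside $M^q$''. In the paper the extra $+1$ (the codimension shift that distinguishes $R_q=\Tp_{A_{q-1}}^{m\to n-1}$ from the ordinary Thom polynomial $\Tp_{A_{q-1}}^{m\to n}$) comes from a different mechanism: the local model of $\GHilb^k(f)$ near its punctual part as the total space of the tautological bundle $B=\calo_M^{[k]}/\calo_M$ over the curvilinear component, whose equivariant Euler class at the distinguished fixed point contributes $z_1\cdots z_{q-1}$ via the Thom isomorphism; combined with the diagonal push--pull that cancels $c_{(k-1)n}(T_v)$ against the horizontal subbundle $T_h$. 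Without establishing this local model (and the Second Residue Vanishing Theorem reducing the curvilinear integral to the single fixed point that produces $Q_{q-1}$), your argument would at best reproduce the unshifted Thom polynomial answer and could not account for the exotic punctual components of the Hilbert scheme when $q$ is large, which is exactly where the ``properly supported'' hypothesis and the genericity analysis are needed.
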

Explanations: 
\begin{itemize}
\item By sufficiently generic map we mean a stable map which Thom-Boardman transversal. 
\item The iterated residue $ \sires $ is equal to $(-1)^{q-1}$ times the 
coefficient of $(z_1\ldots z_{q-1})^{-1}$ in the Laurent expansion of the rational 
expression in the domain $z_1\ll \ldots \ll z_{q-1}$ and finally $Q_{q-1}$ is a 
polynomial invariant of Morin singularities.
\item $n_k$ is a polynomial in $c_i(f)$ and the Landweber-Novikov classes $s_J(f)$ on the target manifold $N$, where for $J=(j_1,j_2,\ldots)$  
 \[s_J(f)=f_*(c_1(f)^{j_1}c_2(f)^{j_2}\ldots )\]
\end{itemize}

In \cite{bsz} we proved that for a generic map $f: M\to N$ the Thom polynomial of the $A_{k-1}$ Morin singularity is
\[\Tp_{k-1}^{m \to n}=\sires \frac{
(-1)^{k-1}Q_{k-1}(z_1,\ldots, z_{k-1})\,\prod_{1\le i<j \le k-1}(z_i-z_j)}{
\prod_{i+j \le l \le k-1}
(z_i+z_j-z_l)  (z_1\ldots z_{k-1})^{m-n}} \prod_{j=1}^{k-1} c_f\left(\frac{1}{z_j}\right)dz_i\]

Note the only difference between the two expressions is the power of $z_1\ldots 
z_{k-1}$ in the denominator, which is equivalent to increasing the codimension of 
the map $f$ by $1$.This immediately implies Rim\'anyi's conjecture.

\begin{theorem}\label{mainthm2} The residual polynomial $R_q^{m \to n}$ for a generic map $f:M \to N$ is given by the Thom polynomial of $A_{q-1}$ singularity in codimension $n-m-1$, that is
\[R_q^{m\to n}=\Tp_{A_{q-1}}^{m \to n-1}.\]
\end{theorem}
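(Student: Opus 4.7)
The approach is to prove Theorem \ref{mainthm2} by a direct side-by-side comparison of the closed iterated-residue formula for $R_q^{m\to n}$ established in Theorem \ref{mainthm1} with the formula for the Morin Thom polynomial $\Tp_{A_{q-1}}^{m\to n}$ obtained in \cite{bsz}. Both expressions are iterated residues at $\mathbf{z}=\infty$ in the same variables $z_1,\ldots,z_{q-1}$, with the same numerator $(-1)^{q-1}Q_{q-1}(z_1,\ldots,z_{q-1})\prod_{i<j}(z_i-z_j)$, the same Morin denominator factor $\prod_{i+j\le l\le q-1}(z_i+z_j-z_l)$, and the same Chern factor $\prod_{j=1}^{q-1}c_f(1/z_j)$. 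The claim therefore reduces to a purely formal observation about a single exponent.

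First I would record the exponent comparison explicitly: in the formula for $R_q^{m\to n}$ the denominator contains $(z_1\cdots z_{q-1})^{m-n+1}$, while in the formula for $\Tp_{A_{q-1}}^{m\to n}$ it contains $(z_1\cdots z_{q-1})^{m-n}$. Since the codimension of the map enters only through this exponent, the substitution $n\mapsto n-1$ in the Thom-polynomial formula transforms $(z_1\cdots z_{q-1})^{m-n}$ into $(z_1\cdots z_{q-1})^{m-(n-1)}=(z_1\cdots z_{q-1})^{m-n+1}$, so the two rational differential forms placed under the iterated residue become literally identical.

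To conclude rigorously, I would observe that both $R_q^{m\to n}$ and $\Tp_{A_{q-1}}^{m\to n-1}$ are, by construction, universal polynomials in the formal symbols $c_1,c_2,\ldots$ appearing through the Laurent series $c_f(1/z_j)=1+c_1/z_j+c_2/z_j^2+\cdots$, and the iterated residue $\sires$ depends only on the rational form and the prescribed Laurent domain $z_1\ll\cdots\ll z_{q-1}$. Equality of integrands thus forces equality of the universal polynomials, and evaluating both on the Chern classes $c_i(TM-f^*TN)$ of the actual map $f:M\to N$ yields the stated identity of characteristic classes.

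I do not foresee a serious mathematical obstacle, as the content of Theorem \ref{mainthm2} is genuinely contained in the formula of Theorem \ref{mainthm1}. The only delicate point is bookkeeping: one has to confirm that the Morin-invariant polynomial $Q_{q-1}$ in the two formulas is literally the same object with the same normalisation convention, so that the numerators cancel on the nose rather than up to a combinatorial prefactor that would then have to be accounted for in the codimension-shift argument.
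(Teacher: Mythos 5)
Your proposal is correct and is essentially the paper's own argument: after establishing the closed residue formula of Theorem \ref{mainthm1}, the paper identifies $R_q^{m\to n}$ with $\Tp_{A_{q-1}}^{m\to n-1}$ precisely by comparing it with the iterated-residue formula for Morin Thom polynomials from \cite{bsz} and observing that the only discrepancy is the exponent $(z_1\cdots z_{q-1})^{m-n+1}$ versus $(z_1\cdots z_{q-1})^{m-n}$, i.e.\ a shift of the codimension by one. Your bookkeeping worry is harmless, since $Q_{q-1}$ depends only on $q$ (it is the equivariant dual of the Borel orbit closure described in Remark \ref{remarkq}) and is the same normalised object in both formulas.
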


Our approach relies on integration over Hilbert schemes of points, which goes 
back to our work on the Thom polynomials of $ A_k $-singularities. The 
fundamental steps are as follows:

\begin{itemize}
\item 	The description of the class of the $ k $-point loci as a generalized 
tautological integral on the Hilbert scheme of $k$ points on $ M $.
\item Introducing a K-theoretic sieve which, emulating Kazarian's sieve 
\eqref{eq:sieve}, 
localizes this integral in the neighborhood of the punctual locus of the 
Hilbert scheme.
\item A vanishing result, which eliminates the integral of all but the 
last element (the kernel) of the sieve.
\item The reduction of this integral to an integral over the curvilinear locus 
of the Hilbert scheme.
\item  An evaluation of this curvilinear integral using the techniques of 
\cite{bsz}.
\end{itemize}

The structure of the paper is as follows: we begin with a detailed overview of 
our strategy in \S\ref{sec:strategy}; after giving the necessary background on 
singularities in \S\ref{sec:background}, we explain the basic structural results on 
Thom polynomials and multipoint formulas in \S\ref{sec:thommulti}. Next, we 
turn to our main geometric object, the Hilbert scheme of $ k $ points on $ M $, 
for which 
we give the necessary background in \S\ref{sec:hilbertschemes}. In \S 
\ref{sec:tauintegral} 
we state the integral formula over the Hilbert scheme of points, and explain 
the proof for $k=2$ and $k=3$.
This is followed by the most technical part of the paper: the introduction of a local model of the Hilbert scheme in \S\ref{sec:model}
and the fully nested Hilbert scheme in \S \ref{sec:nested}. The last section, 
\S\ref{sec:equiv} contains the necessary technical results on equivariant 
localisation. 
The last three sections contain the proof of the main theorems: the integral 
formula on Hilbert schemes, first in a basic form and then in an enhanced 
push-forward version, and finally the multipoint formula.

\textbf{Acknowledgments}. The authors gratefully acknowledge useful discussions 
with Rahul Pandharipande and Rich\'ard Rim\'anyi.
This research was supported by SNF grant 175799, the NCCR SwissMAP and Aarhus University Research Foundation grant AUFF 29289.

\section{Overview of the strategy}\label{sec:strategy} 
In this section we give a brief outline of our strategy in order to explain the main ideas. The used concepts, terminology and their technical background is only briefly explained here, and the detailed definitions will be given in subsequent sections. This brief overview is intended to be a short guide to help the reader in following our argument.  
\subsection{Reformulation of the problem with Hilbert schemes} \label{subsec:reformulation}
Let $f: M\to N$ be a generic map between smooth compact complex manifolds of dimension $\dim(M)=m, \dim(N)=n$. For $m>2$ and $k\ge 8$ the Hilbert scheme $\Hilb^k(M)$ is singular and not irreducible. In fact, we will work with the ordered Hilbert scheme $\Hilb^{[k]}(M)$ which is a branched cover of the ordinary Hilbert scheme (see \S \ref{sec:nested}) with an associated ordered Hilbert-Chow morphism $HC: \Hilb^{[k]}(M) \to M^{\times k}$. The geometric component (also called the main component) of $\Hilb^{[k]}(M)$ 
\[\GHilb^k(M)=\overline{\{x_1 \sqcup \ldots \sqcup x_k: x_i \neq x_j\}} \subset \Hilb^{[k]}(M)\]
is the closure of the locus of reduced subschemes (a reduced point is a $k$-tuple of different points on $M$). We define
\[\GHilb^k(f)=\overline{\{x_1 \sqcup \ldots \sqcup x_k \in \Hilb^{[k]}(M): x_i \neq x_j, f(x_1)=\ldots =f(x_k) \in N\}}\subset \GHilb^k(M)\]
which is the closure in $\GHilb^k(M)$ of the $k$-fold set of $f$. This has a natural projection $\pi_1 \circ HC: \GHilb^k(f) \to M$ to the first factor, and the multipoint classes in $M,N$ are defined as 
\begin{equation}\label{strategyone}
m_k=[(\pi_1 \circ HC)(\GHilb^k(f)]\in H^*(M) \text{ and } n_k=[f(\GHilb^k(f))]\in H^*(N).
\end{equation}
The first key observation is that for sufficiently generic maps---which we call Thom-Boardman maps---$\GHilb^k(f)$ is a local topological complete intersection. We can summarize this in the following diagram: 
\begin{equation}\label{whatwewant} \xymatrix{& V \ar[d] & & \\
\GHilb^k(f) \ar@{^{(}->}[r]  & \GHilb^k(M) \ar[r]^-{f^{[k]}} & N^k & \nabla_N \ar@{^{(}->}[l] \ar[d]^{\pi_{\Delta N}}\\
& & & \Delta_N \ar@{^{(}->}[lu]^i}
\end{equation}
where 
\begin{itemize}
\item[(i)] $f^{[k]}=f^{\times k} \circ HC$ where $f^{\times k}: M^k \to N^k$ is the $k$th Cartesian product of $f$. 
\item[(ii)] $V=(f^*TN)^{[k]}/W$ is a to-be-defined vector bundle such that $\GHilb^k(f)$ is the zero set of a section. It is the quotient of the $k$th tautological bundle $(f^*TN)^{[k]}$ of the pull-back $f^*TN$.
\item[(iii)] The definiton of $W$ (and hence $V$) happens in two steps. Let $\nabla_N \subset N^k$ be (a tubular neighborhood of) the normal bundle of the diagonal $\Delta_N$. Then 
\[\GHilb^k(f) \subset (f^{[k]})^{-1}(\nabla_N)\]
and $\GHilb^k(f)$ is compact in $(f^{[k]})^{-1}(\nabla_N)$. We will first define $W$ over $(f^{[k]})^{-1}(\nabla_N)$, and take a topological extension over $\GHilb^k(M)$. Our calculations will take place in $\nabla_{N}$ and $(f^{[k]})^{-1}(\nabla_{N})$, and in the compactly supported cohomology of the open set $(f^{[k]})^{-1}(\nabla_{ N})$. 
\end{itemize}
We can reformulate the multipoint locus in \eqref{strategyone} as
\begin{equation}\label{strategytwo}
n_k=[f(\GHilb^{k}(f))]=\pi_*f^{[k]}_*\left[\Euler(V)\right]_{(f^{[k]})^{-1}(\nabla_{N}) \subset\GHilb^k(M)}
\end{equation}
This reduces the calculation to push forward of the tautological-type class 
\[\Euler((f^*TN)^{[k]}/W)\]
along $f$. Our push-forward formula will be an immediate extension of an integration formula over the Hilbert scheme. Hence, as a by-product, we develop a new approach to compute certain tautological integrals over $\GHilb^k(M)$. Tautological integration has attracted considerable attention in recent years. Over surfaces and some special threefolds there are old and new techniques and methods, including virtual integration over spaces endowed with so-called perfect obstruction theory which gives a $2$-term deformation complex \cite{virtual}. However, virtual integration has technical limitations and it only works in low dimensional geometries. 

In this paper we focus on specific tautological classes satisfying certain geometric conditions which we call properly supported classes (see Definition \ref{geocond}).

\subsection{Push forward along $f^{[k]}$} The core technical result of the present paper is a formula which reduces integration over $\GHilb^k(M)$ (and push-forward along $f^{[k]}$) to equivariant integration over the curvilinear component of the punctual Hilbert scheme over $\CC^m$ supported at the origin. 
\[\xymatrix{\int_{\GHilb^k(M)} \Phi(V^{[k]}) \ar@^{~>}[r] & \int_{\CHilb^k_0(\CC^m)}} \frac{\Phi(V^{[k]})}{\Euler(B)} \]
for some tautological bundle $B$. Here $\Phi$ is a Chern polynomial in the Chern roots of the tautological bundle $V^{[k]}$ for some bundle $V$ over $M$, satisfying a geometric condition, called properly supported, formulated in Definition \ref{geocond}. For integration over $\CHilb^k(M)$ we adapt results in \cite{bsz,kazarian,b0}. The key steps of our approach can be summarised as follows.  

\noindent \textbf{Step 1: The Sieve } We introduce a master blow-up $\pi_\Lambda: N^k(M) \to \GHilb^k(M)$ which we call the fully-nested Hilbert scheme. It admits a blow-up morphism $\pi_\a: N^k(M) \to \GHilb^\a(M)$ to approximating Hilbert spaces $\GHilb^\a(M)$ defined for partitions $\a=\a_1 \sqcup \ldots \sqcup \a_r =\{1,\ldots, k\}$ of the $k$ points into smaller groups. For a bundle $V$ over $M$ this defines approximating tautological bundles $V^\a$ over $\GHilb^\a(M)$, and the pull-backs $\pi_\a^*V^\a$ of tautological bundles from the different approximating sets provide enough approximating bundles over $N^k(M)$ to define a sieve formula, following ideas of Li and Rennemo. This sieve gives $K$-theoretic decomposition of $V^{[k]}$ as a sum 
\[V^{[k]}=\sum_{\a \in \Pi(k)}W^\a\]
of bundles indexed by partitions of $\{1, \ldots k\}$. For any partition $\{1,\ldots, k\}=\a_1 \sqcup \ldots \sqcup \a_s$ the bundle $W^\a$ is supported on the approximating punctual subset  
\[\supp(W^\a)=N^{\a}_0(X)=\HC^{-1}(\Delta_{\a})\]
and $W^\a$ is a linear combination of the classes $V^{[\b]}$ for those partitions $\b \in \Pi(k)$ which are refinements of $\a$. In particular, for $\a=\Lambda=\{1,\ldots, k\}$ the trivial partition
\begin{equation}\label{deepestterm}
W^\Lambda=\pi_\Lambda^* V^{[k]}+\sum_{\a \in \Pi(k)\setminus \Lambda} \gamma_\a \pi_\a^*V^\a   
\end{equation}
is a linear combination of tautological classes pulled-back from the tautological approximating classes. Let $\Phi$ be a Chern polynomial in the Chern roots of $V^{[k]}$. Then the sieve reduces the push-forward operation $f^{[k]}_*$ (and integration) over $\GHilb^k(M)$ to $N^k_\nabla(M)$:
\begin{equation}\label{step1} 
\xymatrixcolsep{5pc} \xymatrix{f^{[k]}_*\left[\Phi(V^{[k]}) \right]_{\GHilb^k(M)} \ar@^{~>}[r]^-{\mathrm{Sieve}} & f^{[k]}_*\pi_{\Lambda *} \left[\Phi(W^\Lambda) \right]_{N^k_\nabla(M)}+\sum_{\a \in \Pi_k \setminus \Lambda} \left[\Phi(W^\a) \right]_{N^\a_\nabla(M)}}
\end{equation}
where the terms in the sum are determined inductively, and $N^\a_\nabla(M)$ stands for a small, tubular neighborhood of the $\a$-punctual part $N^\a_0(M)$. This leaves us with calculating the leading term $f^{[k]}_*\pi_{\Lambda *} \left[\Phi(W^\Lambda) \right]_{N^k_\nabla(M)}$. 

\noindent \textbf{Step 2: From $M$ to $M=\CC^m$.} Since $\Phi(W^\Lambda)$ is compactly supported in a neighborhood $N^k_\nabla(M)$ of the punctual part $N^k_0(M)$, we can work locally, and pull-back our local neighborhood $N^k_\nabla(M)$ from a universal bundle. The Chern-Weil map then reduces push-forward (integration) to equivariant push-forward (integration) over $M=\CC^m$. The punctual nested Hilbert scheme $N^k_0(\CC^m)$ is not irreducible, but $\Phi(W^\Lambda)$ is supported in a tubular neighborhood which is the union of tubular neighborhoods of its components. 
\begin{equation}\label{step2} 
\xymatrixcolsep{5pc} \xymatrix{f^{[k]}_*\pi_{\Lambda *} \left[\Phi(W^\Lambda)\right]_{N^k_\nabla(M)} \ar@^{~>}[r]^-{\mathrm{Chern-Weil}} & f^{[k],T}_*\pi_{\Lambda *}^T \left[\Phi(W^\Lambda)\right]_{N^k_\nabla(\CC^m)}}
\end{equation}
where $T$ stands for torus-equivariant push-forward with $T\subset \GL(m,\CC)$ the maximal torus.

\noindent \textbf{Step 3: First Residue Vanishing Theorem.} The next crucial step in the strategy is the construction of a blow-up $\hat{N}^k_\nabla(\CC^m) \to N^k_\nabla(\CC^m)$ and $\widehat{\GHilb}^k_\nabla(\CC^m) \to \GHilb^k_\nabla(\CC^m)$ such that $\hat{N}^k_\nabla(\CC^m) \to \widehat{\GHilb}^k_\nabla \to \flag_{k-1}(T\CC^m)\to \CC^m$ fibers over the flag $\flag_{k-1}(T\CC^m)$ of $(1,2,\ldots, k-1)$ dimensional subspaces of the tangent bundle, which itself fibers over $\CC^m$. The fiber over the origin is the balanced Hilbert scheme $\BHilb^{k}_0(\CC^m)$ consisiting of subschemes with baricenter at the origin. The key observation is that equivariant localisation over $\BHilb^k_0(\CC^m)$ can be transformed into an iterated residue. This iterated residue formula shows some unexpected (and miraculous) symmetries which arranges fixed points into cycles and incarnated in the form of residue vanishing properties. Our first residue vanishing theorem tells that the contribution of the terms $\Phi(V^\a)$ in \eqref{deepestterm} to the push-forward (integral) formula is zero for $\a \neq \Lambda$. But the leading term is a pull-back from the Hilbert scheme, hence 
\begin{equation}\label{step3} 
\xymatrixcolsep{5pc} \xymatrix{ f^{[k],T}_*\pi_{\Lambda *}^T \left[\Phi(W^\Lambda)\right]_{N^k_\nabla(\CC^m)} \ar@^{~>}[r]^-{\mathrm{\text{Residue Vanishing I}}} & f^{[k],T}_*\pi_{\Lambda *}^T \left[\pi_\Lambda^* \Phi(V^{[k]}) \right]_{N^k_\nabla(T\CC^m)}=f^{[k],T}_*\left[\Phi(V^{[k]}) \right]_{\GHilb^k_\nabla(\CC^m)}}
\end{equation}
In short, the sieve and the first residue vanishing theorem allows us to go back to the Hilbert scheme and use equivariant techniques over $M=\CC^m$.
In this paper we will work with equivariant classes $\Phi(V^{[k]})$ which we call properly supported (see Definition \ref{geocond}): their support intersects the punctual Hilbert scheme in the curvilinear component:
\[\mathrm{supp}(\Phi(V^{[k]}))\cap \GHilb^k_0(\CC^m) \subset \CHilb^k(\CC^m),\]
and the support is locally irreducible at these points. 
In particular, in the next step we explain a local model for stable maps which shows that this assumption holds in the multipoint calculation. This reduces the push-forward (integration) to a tubular neighborhood $\CHilb^k_\nabla(\CC^m)$ of $\CHilb^k(\CC^m)$.
\begin{equation}\label{step3b} 
f^{[k],T}_*\left[\Phi(V^{[k]}) \right]_{\GHilb^k_\nabla(\CC^m)}=f^{[k],T}_*\left[\Phi(V^{[k]}) \right]_{\CHilb^k_\nabla(\CC^m)}
\end{equation}

\noindent \textbf{Step 4: The local model of stable maps} We prove that for sufficiently generic maps $f$ (we call these stable Thom-Boardman maps, whose complement is countable union of Zariski closed subsets in the space of maps) $\GHilb^k(f)$ intersects the punctual part $\GHilb^k_0(M)$ in the curvilinear component $\CHilb^k(M)$:
\[\GHilb^k_0(f):=\GHilb^k(f)\cap \GHilb^k_0(\CC^m) \subset \CHilb^k(\CC^m)\]
Moreover, the small neighborhood 
\[\GHilb^k_\nabla(f)=\GHilb^k(f) \cap \GHilb^k_\nabla(\CC^m)\]
 of $\GHilb^k_0(f)$ in $\GHilb^k(f)$ can be modeled as a bundle over $\GHilb^k_0(f)$. More precisely, we prove the existence of a bundle $B$ over the curvilinear component such that there is a diagram 
\begin{equation}\label{localmodel}
\xymatrix{\GHilb^k_\nabla(f) \ar[d]  \ar[r]^\nu &  B  \ar[d]  \\
  \GHilb^k_0(f) \ar@{^{(}->}[r] &  \CHilb^k(M)}
 \end{equation}
where $\nu$ is a topological isomorphism from $\CHilb^k_\nabla(f)$ to the total space of a bundle $B$. We have seen that the support of $\Euler(f^*TN^{[k]}/W)$ is the homology cycle represented by $\GHilb^k(f)$, hence the Thom isomorphism combined with Step 3 gives 
\[\xymatrixcolsep{5pc} \xymatrix{f^{[k],T}_*\left[\Euler(f^*TN^{[k]}/W)^{+}\right]_{\CHilb^k_\nabla(\CC^m)} \ar@{~>}[r]^-{\text{Local model}} &  f^{[k],T}_*\left[\frac{\Euler(f^*TN^{[k]}/W)}{\Euler(B)}\right]_{\CHilb^k(\CC^m)}}\]
More generally, this local model holds for classes $\Phi(V^{[k]})$ which are properly supported in the sense of Definition \ref{geocond}. Then  
\[\xymatrixcolsep{5pc} \xymatrix{f^{[k],T}_*\left[\Phi(V^{[k]}\right]_{\CHilb^k_\nabla(\CC^m)} \ar@{~>}[r]^-{\text{Local model}} &  f^{[k],T}_*\left[\frac{\Phi(V^{[k]})}{\Euler(B)}\right]_{\CHilb^k(\CC^m)}}\]
\noindent \textbf{Step 5: Second Residue Vanishing Theorem} The problem is now reduced to push-forward (integration) over the curvilinear component $\CHilb^k(\CC^m)$. We adapt and generalise the formula developed in \cite{bsz,kazarian} to perform this integration. The key step is to prove a  second residue vanishing theorem, which roughly says that only one fixed point in the residue formula has nonzero contribution. This allows us to ignore the complexity of the singular spaces involved , and reduces it to understand the geometry of a distinguished fixed point.
\[\xymatrixcolsep{5pc} \xymatrix{f^{[k],T}_*\left[\frac{\Phi(V^{[k]})}{\Euler(B)}\right]_{\CHilb^k(\CC^m)} \ar@{~>}[r]^-{\substack{\text{Localisation}\\ \text{Residue Vanishing II}}} & \text{Residue multipoint formula}}\]
The final residue multipoint formula gives the deepest term in the sieve, and can be identified by the residual polynomial in Theorem \ref{mainthm1}. It is a shifted version of the Thom polynomial formula derived in \cite{bsz}, which means a shift in the codimension of the map, hence we conclude Theorem \ref{mainthm2}. 

\subsection{Additional remarks}The core idea in Step 2 is to turn the localisation formula into an iterated reside by constructing partial blow-ups of the spaces involved. Here we give some details for the Hilbert spaces involved. The fibration for the master blow-up space will just combine this with the blow-up maps. 
The original set-up can be summarised as  
\begin{equation}
\xymatrix{&  &   (f^*TN)^{[k]}/W,\ B \ar[d]  \\
 \GHilb^k_0(f) \ar@{^{(}->}[r] &\CHilb^k(\CC^n) \ar@{^{(}->}[r] &  \GHilb^k(\CC^n)  }
\end{equation}
where the bundles have sections $s:\GHilb^k(\CC^m) \to (f^*TN)^{[k]}/W$, $t: \GHilb^k(\CC^m) \to B$ such that their zero cycles are:
\[s^{-1}(0)=\GHilb^k(f),\ \ t^{-1}(0)=\GHilb^k_0(\CC^m)\]
and for Thom-Boardman $f$, the intersection of these cycles sit in the curvilinear component:
\[\GHilb^k_0(f)=\GHilb^k(f)\cap \GHilb^k_0(\CC^m) \subset \CHilb^k(\CC^m)\]
We will construct partial blow-ups such that our diagram fibers over the flag bundle 
\[\flag_{k-1}(T\CC^m).\]
The idea roughly is the following: the first $j$ points $p_1,\ldots, p_j$ of a generic point $\{p_1,\ldots, p_k\} \in \GHilb^k(\CC^m)$ in the ordered Hilbert scheme span a $j-1$-dimensional subspace in the tangent space $T_p\CC^m$ where $p=\frac{1}{j}(p_1+\ldots. +p_j)$ is the center of gravity. 

These partial blow-up spaces are still singular, but they all sit in the smooth ambient space $\grass_{k-1}(S^\bullet T\CC^m)$, which is a partial blow-up of the Grassmannian bundle whose fiber over a flag $(V_1 \subset \ldots \subset V_{k-1}) \in \flag_{k-1}(T_p\CC^m)$ is the Grassmannian of $k$-codimensional ideals in the symmetric algebra $S^\bullet V_{k-1}$. A crucial fact for the residue vanishing properties is that the dimension of this fiber depends only on $k$, not on $m$, and hence for $k\ll m$ the dimension of the fiber is much smaller than that of the total space. Moreover, both $B$ and $V$ are restrictions of bundles over $\grass_{k-1}(S^\bullet T\CC^m)$. 
 
\begin{equation}
\xymatrix{&   &   & V,\ B \ar[d] \\
\widehat{\GHilb}^k_0(f) \ar@{^{(}->}[r]  \ar[rd]&  \widehat{\CHilb}^k(\CC^m) \ar@{^{(}->}[r] \ar[d]&  \widehat{\GHilb}^k(\CC^m) \ar[ld]  \ar@{^{(}->}[r] & \grass_{k-1}(S^\bullet T\CC^m) \ar[lld] \\
  & \flag_{k-1}(T\CC^m) \ar[d] & & \\
  & \CC^m &&}
\end{equation}

\section{Structure theorems for singularities of maps}\label{sec:background}

\subsection{Singularities and stability notions}
We introduce the notation $J(m)=J(m,1)$ for the space of germs of holomorphic functions $(\CC^m,0) \to (\CC,0)$; in local coordinates $(x_1,\ldots,x_m)$ at the origin,  
\[J(m,1)=\{h\in\CC[[x_1\ldots x_m]];\; h(0)=0\}\] 
is the algebra of power series without a constant term. Let $J_k(m,1)$
be the space of $k$-jets of holomorphic functions on $\CC^n$ near the
origin, i.e. the quotient of $J(m,1)$ by the ideal of those
power series whose lowest order term is of degree at least $k+1$.


Our basic object is $J_k(m,n)$, the space of $k$-jets of holomorphic
maps $(\CC^m,0)\to(\CC^n,0)$. This is a finite-dimensional complex
vector space, which one can identify as $J_k(m,1) \otimes \CC^n$; hence
$\dim J_k(m,n) =n\binom{m+k}{k}-n$.  We will call the elements of
$J_k(m,n)$ as map-jets of order $k$, or simply map-jets. 


One can compose map-jets via substitution and elimination of terms
of degree greater than $k$; this leads to the composition maps
\begin{equation}
  \label{comp}
J_k(m,n) \times J_k(n,p) \to J_k(m,p),\;\;  (\Psi_2,\Psi_1)\mapsto
\Psi_2\circ\Psi_1.
\end{equation}
When $k=1$, $J_1(m,n)$ may be identified with $n$-by-$m$
matrices, and \eqref{comp} reduces to multiplication of matrices.  By
taking the linear parts of jets, we obtain a map
\[\mathrm{Lin}: J_k(m,n)\to \Hom(\CC^m,\CC^n),\] 
which is compatible with the compositions \eqref{comp} and matrix
multiplication. The  set
\[
\diff_k(m) = \{\Delta\in \mapd mm;\; \mathrm{Lin}(\Delta) \text{ invertible}\}.
\]
is an algebraic group under the composition map \eqref{comp}, and this gives the 
so-called {\em left-right} action (also called $\mathcal{A}$-action) of the group $\diff_k(m) \times \diff_k(n)$ on $\mapd mn$:
\[ [(\Delta_L,\Delta_R),\Psi]
 \mapsto \Delta_L\circ\Psi\circ \Delta_R^{-1}
 \] 
 Singularity theory, in the sense that we are considering here, studies the left-right-invariant algebraic
 subsets of $J_k(m,n)$. A natural class of such  invariant subsets are given by jets with the same local algebra; the local algebra (or nilpotent algebra) of the jet $f=(f_1,\dots, f_n) \in J_k(m,n)$ is defined as 
 \[A_f=J_k(m,1)/(f_1,\dots,f_n)\]
 the algebra of function germs modulo the
 ideal generated by the components of $f$. This is a finitely generated nilpotent algebra, whose dimension over $\CC$ is called the multiplicity of $f$. We call the singularity isolated, if $\mu(f)<\infty)$. For isolated singularities the multiplicity counts the maximum number of points in a fiber of $f$ at a neighborhood of the singularity, that is 
 \[\dim(A_f)+1=\max_{0<|y| < \epsilon}(\# f^{-1}(y))\]

 Now let $A$ be a finitely generated nilpotent algebra, and consider the set
\begin{equation}
  \label{defconea}
\Theta_A^{m\to n} = \{(f_1,\dots,f_n) \in J_k(m,n);\; A_f \cong A\}
  \end{equation}
 Note that $\Theta_A$ is $\diff_k(m) \times \diff_k(n)$-invariant, and it serves as a prototype of singularity classes.  A key observation though is that two map-jets with
  the same nilpotent algebra may be in different $\diff_k(m) \times \diff_k(n)$-orbits. However, there is an algebraic group action on $J_k(m,n)$ whose orbits are
  exactly the sets $\Theta_A^{m\to n}$ for various nilpotent algebras
  $A$. The so-called contact equivalence or $\mathcal{K}$-equivalence was introduced by John Mather. Two germs $f,f' \in J_k(m,n)$  are contact equivalent if there is a diffeomorphism germ $\a \in \diff_k(m)$ and a map germ $M \in J_k(\CC^m,\GL(n))$ such that
\[f'(x)= M(x) \cdot f(\a(x))\] 
This can be also thought as a group action: the group $\mathcal{K}$ is
\[\mathcal{K} = \diff_k(m) \times J_k(\CC^m, \GL(n))\]
acts (from the left) on $J_k(m,n)$ by
\[((\a,M) \cdot f = M \cdot (f \circ \a^{-1})\]



\begin{theorem}[Mather, \cite{mather}] Two map germs are $\mathcal{K}$-equivalent if and only if their ideals are
taken into each other by a diffeomorphism in $\diff_k(m)$ for some $k$. Hence two finitely determined map germs are K-equivalent if and only if their
local algebras (or equivalently, their quotient algebras) are isomorphic.
\end{theorem}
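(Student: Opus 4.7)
The plan is to split the statement into its two parts and handle each in turn, where the first (equivalence of $\calk$-action with ideal-preserving diffeomorphisms) is where the real work lies, while the second follows from it by a lifting argument.

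For the first part, the forward direction is essentially tautological: if $f'(x) = M(x)\cdot f(\a(x))$ with $M(x)\in\GL(n)$ and $\a\in\diff_k(m)$, then the ideal generated by the components $f'_i$ equals that generated by $M\cdot(f\circ\a)$, and since $M$ is pointwise invertible this equals the ideal $(f_1\circ\a,\ldots,f_n\circ\a) = \a^*(f_1,\ldots,f_n)$, the image of the ideal of $f$ under the diffeomorphism $\a$. For the converse, suppose $\a\in\diff_k(m)$ carries the ideal $I_f=(f_1,\ldots,f_n)$ onto $I_{f'}=(f'_1,\ldots,f'_n)$. Replacing $f$ by $f\circ\a^{-1}$ we may assume $I_f=I_{f'}$, so there exist matrix-valued jets $M,N\in J_k(\CC^m,M_n(\CC))$ with $f'=Mf$ and $f=Nf'$, giving $(NM-\mathrm{Id})f=0$. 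The technical point is to promote $M$ to an element of $J_k(\CC^m,\GL(n))$.

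This invertibility is the main obstacle, and I would handle it with a Nakayama-style argument at the origin. The quotient $I_f/\frakm I_f$ is a finite-dimensional $\CC$-vector space whose dimension is the minimal number of generators of $I_f$; the classes of $f_1,\ldots,f_n$ and $f'_1,\ldots,f'_n$ span the same space, and the transition matrix between two such spanning $n$-tuples is forced to be invertible at $0$ (after a standard adjustment killing any relations among the generators, which for finitely determined germs can be done inside $J_k$). Concretely, one first reduces to the case where the generators form a minimal set, then uses the relation $(NM-\mathrm{Id})f=0$ modulo $\frakm$ to see $M(0)\in\GL(n)$, and finally extends by continuity/jets to get $M\in J_k(\CC^m,\GL(n))$.

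For the second part, one direction is immediate: a $\calk$-equivalence $f'=M\cdot(f\circ\a^{-1})$ induces a $\CC$-algebra isomorphism $A_{f'}\xrightarrow{\sim} A_f$ via $\a^*$, since the pullback diffeomorphism carries $I_{f'}$ to $I_f$. For the converse, given an isomorphism $\phi\colon A_f\xrightarrow{\sim}A_{f'}$, lift the images $\phi(\bar x_1),\ldots,\phi(\bar x_m)$ to elements $g_1,\ldots,g_m\in J_k(m,1)$ and consider the map-jet $\beta=(g_1,\ldots,g_m)\in\mapd{m}{m}$. Because $\phi$ is an algebra isomorphism and both $\frakm/\frakm^2$ identifications are compatible with $\phi$, the linear part $\mathrm{Lin}(\beta)$ is invertible, so $\beta\in\diff_k(m)$. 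By construction $\beta^*$ sends $I_{f'}$ onto $I_f$ modulo the finite-determinacy order, and for finitely determined germs this suffices to give equality of ideals at the jet level, so the first part applies and yields the desired $\calk$-equivalence.

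The anticipated hard step is the invertibility argument in the first part: passing from a matrix in $J_k(\CC^m,M_n)$ relating two generating sets of the same ideal to one lying in $J_k(\CC^m,\GL(n))$ is not automatic and requires controlling redundant generators. In the second part, the lifting of $\phi$ to an actual diffeomorphism $\beta$ is conceptually clean but must be done with care at the level of $k$-jets, using the finite determinacy hypothesis to guarantee that matching ideals modulo a high enough power implies matching ideals exactly in $J_k(m,n)$.
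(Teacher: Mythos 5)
The paper itself offers no proof of this statement: it is quoted as Mather's theorem with a citation to \cite{mather}, so there is no in-paper argument to compare yours against. Judged on its own, your outline follows the classical route (reduce $\mathcal{K}$-equivalence to equality of ideals, then to the existence of an invertible matrix factor; lift an algebra isomorphism to a diffeomorphism carrying one ideal to the other), and that route does work; but two steps are stated inaccurately enough that, as written, they do not prove what you need.

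First, in the converse of part one, the inference ``use $(NM-\mathrm{Id})f=0$ modulo $\frakm$ to see $M(0)\in\GL(n)$'' is wrong as written: since $f(0)=0$, every such relation is vacuous modulo $\frakm$. The real information lives in $I_f/\frakm I_f$: the relation $f=NMf$ only forces $N(0)M(0)$ to act as the identity on the span of the classes $[f_i]$, and this yields invertibility of the transition matrix only after both $n$-tuples have been replaced, by invertible row operations justified by Nakayama, by a minimal generating system padded with zeros. Alternatively you can avoid minimality altogether with the standard trick: from $f'=Af$ and $f=Bf'$ one gets $(\mathrm{Id}-BA)f=0$, so $C=A+D(\mathrm{Id}-BA)$ satisfies $Cf=f'$ for every matrix $D$, and since $\ker A(0)\cap\ker(\mathrm{Id}-B(0)A(0))=0$ one can choose $D$ so that $C(0)$ is invertible; this is cleaner than the adjustment you sketch. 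Second, in part two the invertibility of $\mathrm{Lin}(\beta)$ is not automatic: the isomorphism $\phi$ only controls the induced map $\frakm/(\frakm^2+I_f)\to\frakm/(\frakm^2+I_{f'})$, and when $I_f\not\subset\frakm^2$ this is a proper quotient of $\frakm/\frakm^2$, so a careless lift of $\phi(\bar x_1),\ldots,\phi(\bar x_m)$ can have singular linear part. You must choose the lift so that it carries $(I_f+\frakm^2)/\frakm^2$ isomorphically onto $(I_{f'}+\frakm^2)/\frakm^2$ (these have equal dimension), or first split off the submersive part of the germs. With these two repairs, and the finite-determinacy bookkeeping you already indicate for passing between germs and $k$-jets, your argument becomes the standard proof that the paper is implicitly invoking.
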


Using the fact that  $\mathcal{K}_d$ is connected, 
it is not difficult to derive the following properties of $\Theta_A$.
  \begin{proposition}
    \label{propgena} Let $A$ be a nilpotent algebra such that
    $A^{d+1}=0$ and $m\geq\dim(A/A^2)$.  Then for $n$ sufficiently
    large, $\Theta_A^{m\to n}$ is a nonempty, $\diff(m) \times \diff(n)$-invariant, irreducible quasi-projective algebraic variety of
    codimension $(n-m+1)\dim(A)$ in $J_k(m,n)$.
\end{proposition}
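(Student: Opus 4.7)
My plan is to use Mather's theorem (just recalled) to identify $\Theta_A^{m\to n}$ with a single $\mathcal{K}$-orbit in $J_k(m,n)$ once $k$ is large enough, and then to read off the four claimed properties from standard facts about orbits of connected algebraic groups, with only the codimension formula requiring a dedicated tangent-space calculation.

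For non-emptiness, I would exploit the two hypotheses as follows. Since $\dim(A/A^2)=s\leq m$, lift a set of $s$ algebra generators of $A$ to the first $s$ coordinates of $\mathcal{O}_m=\CC\{x_1,\ldots,x_m\}$ and extend this to a surjection $\mathcal{O}_m \twoheadrightarrow \tilde A := A\oplus\CC$ sending $x_{s+1},\ldots,x_m$ to zero. The kernel $I\subset\mathcal{O}_m$ is finitely generated---its generators can be chosen in bounded degrees thanks to $A^{d+1}=0$---say by $N$ power series $h_1,\ldots,h_N\in\frakm_m$. For any $n\geq N$ the map $f=(h_1,\ldots,h_N,0,\ldots,0)\in J_k(m,n)$ has $I_f=I$ and hence $A_f\cong A$, so $\Theta_A^{m\to n}$ is non-empty for all sufficiently large $n$.

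Invariance under $\diff(m)\times\diff(n)$ is formal: a right composition by a source diffeomorphism pulls $I_f$ back along a ring automorphism of $\mathcal{O}_m$, while a left composition by an origin-preserving target diffeomorphism replaces the generators of $I_f$ by a power-series-invertible change, leaving $A_f$ unchanged up to isomorphism in each case. Mather's theorem then asserts that any two $f, f'\in\Theta_A$ are $\mathcal{K}$-equivalent once $k$ exceeds the determinacy bound (itself controlled by $A^{d+1}=0$), so $\Theta_A$ is a single $\mathcal{K}$-orbit. Since $\mathcal{K}=\diff_k(m)\times J_k(\CC^m,\GL(n))$ is a connected algebraic group---the linear-part map fibers it onto the connected group $\GL(m;\CC)\times\GL(n;\CC)$ with affine fiber---the orbit is smooth, irreducible and locally closed in the affine space $J_k(m,n)$, hence quasi-projective.

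For the codimension I would compute the tangent space to the orbit at a conveniently chosen representative $f$ via the infinitesimal action
\[
T_f(\mathcal{K}\cdot f)\;=\;\frakm_m\cdot Df \;+\; I_f\cdot\mathcal{O}_m^n \;\subset\; J(m,\CC^n),
\]
where $Df$ denotes the $\mathcal{O}_m$-submodule generated by the partial derivatives $\partial_{x_1}f,\ldots,\partial_{x_m}f$. Quotienting first by the second summand identifies $J(m,\CC^n)/I_f\mathcal{O}_m^n$ with $A\otimes\CC^n$, of dimension $n\dim A$, so the task is to show that the image of $\frakm_m\cdot Df$ inside $A\otimes\CC^n$ has dimension exactly $(m-1)\dim A$. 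This is the main obstacle: by choosing the representative $f$ in its $\mathcal{K}$-orbit so that the vectors $\partial_{x_i}f(0)\in\CC^n$ (for $i=1,\ldots,m$) are in a sufficiently generic position with respect to the filtration $A\supset A^2\supset\cdots$---which is possible precisely when $n$ is large enough, thereby justifying the "for $n$ sufficiently large" clause---and by tracking how multiplication by $\frakm_m$ acts on these vectors modulo $I_f$, one obtains the required $(m-1)\dim A$ count, yielding the codimension $n\dim A-(m-1)\dim A=(n-m+1)\dim A$.
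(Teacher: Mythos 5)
Your overall route---realizing $\Theta_A^{m\to n}$ as a single $\mathcal{K}$-orbit via Mather's theorem, getting irreducibility, smoothness and local closedness (hence quasi-projectivity) from connectedness of the jet-level contact group, and producing a representative from the genotype of $A$ using $m\ge\dim(A/A^2)$ and $A^{d+1}=0$---is exactly the argument the paper has in mind: it offers no written proof beyond the remark that the properties follow from connectedness of $\mathcal{K}$ and Mather's theorem. Your non-emptiness, invariance, irreducibility and quasi-projectivity parts are fine, and your reduction of the codimension to $n\dim A-\dim_\CC\bigl(\text{image of }\frakm_m\!\cdot\! Df\ \text{in}\ A\otimes\CC^n\bigr)$ is the correct tangent-space set-up.

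The final count, however, is a genuine gap. The claim that one can choose the representative $f$ (or take $n$ large) so that the image of $\frakm_m\cdot Df$ has dimension exactly $(m-1)\dim A$ is not justified and cannot be made to work by genericity: the dimension of $T_f(\mathcal{K}\cdot f)$ is constant along the orbit, so no choice of representative improves the position of the vectors $\partial_{x_i}f(0)$---indeed their span is dictated by $A$, since the rank of $df(0)$ equals $m-\dim(A/A^2)$. Moreover the count genuinely depends on the ring structure of $A$, not only on $\dim A$. For the Porteous algebra $A=\langle x,y\rangle/\langle x,y\rangle^2$ (so $\dim A=2$, $A^2=0$), the condition $A_f\cong A$ forces $df(0)$ to have corank $2$ and the quadratic parts of the components to span $\Sym^2$ of the kernel, giving $\codim\,\Theta_A=2(n-m+2)$ rather than $2(n-m+1)$; equivalently, for a stable representative the image of $\frakm_m\cdot Df$ in $A\otimes\CC^n$ has dimension $2m-4$, not $2(m-1)$. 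So the $(m-1)\dim A$ count does hold for the Morin algebras $A_k$ (the case the paper actually uses, where it can be verified explicitly on the miniversal unfolding), but your genericity mechanism cannot deliver it, and for arbitrary nilpotent $A$ the codimension formula you are asked to prove fails at this very point; the step can only be repaired by restricting the class of algebras or by computing the algebra-dependent defect of the module $A\cdot\langle\overline{\partial_1 f},\dots,\overline{\partial_m f}\rangle$.
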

Note that the codimension of $\Theta_A$ depends only on the codimension of the map
$n-m$ and does not depend on $k$.
In the present paper, we will study certain  rough topological
invariants of  contact singularities; these invariants depend only
on the closure of the singularity locus in $J_k(m,n)$. As it turns
out, in an asymptotic sense, the closures of contact orbits are also
closures of left-right orbits, hence, from our point of view, these
two types of singularity classes are closely related.

While we will not need this statement, we describe it in some
details for reference.  Roughly, we claim that for fixed $A$ and
$r$, and sufficiently large $m$, there is a dense left-right orbit
in $\Theta_A^{m\to m+r}$.

Let $r$ be a nonnegative integer. An {\em unfolding} of a map-jet
$\Psi \in J_k(m,n)$ is a map-jet $\widehat{\Psi}\in \mapd{m+r}{n+r}$ of
the form
\[(x_1,\ldots ,x_m,y_1,\ldots, y_r)\mapsto
(F(x_1,\ldots, x_m,y_1,\ldots, y_r),y_1,\ldots, y_r)
\]
where $F\in \mapd {m+r}n$ satisfies
\[F(x_1\ldots, x_m,0,\ldots,
0)=\Psi(x_1,\ldots, x_m).\] The {\em trivial unfolding} is the
map-jet
\[(x_1,\ldots, x_m, y_1,\ldots, y_r)\to (\Psi(x_1,\ldots
,x_m),y_1, \ldots ,y_r).\]

\begin{definition}
  A map-jet $\Psi \in J_k(m,n)$ is {\em stable} if all unfoldings of
  $\Psi$ are left-right equivalent to the trivial unfolding.
\end{definition}

Informally, a germ of a holomorphic map $f:M \rightarrow N$ of
complex manifolds at a point $p\in M$ is stable if for any
small deformation $\tilde{f}$ of $f$, there is a point in the
vicinity of $p$ at which the germ of $\tilde{f}$ is left-right
equivalent to the germ of $f$ at $p$.
Now we can formulate the relationship between contact and left-right
orbits precisely.

\begin{theorem}[Mather, \cite{mather}]
\label{propstable}
  \begin{enumerate}
  \item If $\,\widehat\Psi$ is an unfolding of $\Psi$, then
    $A_{\widehat\Psi}\cong A_\Psi$.
  \item Every map germ has a stable unfolding, these are also called stable representatives of the local algebra $A_\Psi$.
  \item If a map germ is stable, then its left-right orbit is dense in
    its contact orbit.
    \end{enumerate}
\end{theorem}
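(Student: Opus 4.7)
The plan is to address the three claims in order, since they rely on increasingly deep input from Mather's theory.

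For (1), the argument is essentially a direct computation. Writing $\widehat\Psi(x,y) = (F(x,y),\, y_1, \ldots, y_r)$ with $F(x,0) = \Psi(x)$, the local algebra of $\widehat\Psi$ is
\[A_{\widehat\Psi} \;=\; J_k(m+r,1)/(F_1, \ldots, F_n, y_1, \ldots, y_r).\]
Dividing first by the ideal $(y_1,\ldots,y_r)$ induces a surjection onto $J_k(m,1)$ given by $y\mapsto 0$, under which $F_i(x,y)$ maps to $F_i(x,0) = \Psi_i(x)$. Hence the quotient coincides with $J_k(m,1)/(\Psi_1,\ldots,\Psi_n) = A_\Psi$.

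For (2), I would use Mather's infinitesimal stability criterion, which states that $\Phi \in J_k(m,n)$ is stable iff
\[T_\Phi J_k(m,n) \;=\; J_k(m,n)\cdot \Phi^*\frakm_n \;+\; \mathcal{O}_m\cdot \partial\Phi/\partial x,\]
the two summands being the infinitesimal contact and right-action tangent spaces respectively. Starting from $\Psi$, one picks representatives $e_1(x),\ldots, e_r(x)$ whose classes form a basis of the cokernel of the right-hand side at $\Psi$, and sets
\[\widehat\Psi(x,y) \;=\; \Bigl(\Psi(x) + \sum_{i=1}^r y_i e_i(x),\; y_1, \ldots, y_r\Bigr).\]
The vectors $\partial\widehat\Psi/\partial y_i$ evaluated at $y=0$ are precisely the $e_i$, so they fill in the missing cokernel and the infinitesimal stability equation holds at the origin. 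Passing from infinitesimal stability at a point to genuine (analytic) stability in a neighborhood, via Mather's preparation theorem, is the technical heart of the construction, and is the main obstacle I expect in the proof.

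For (3), the strategy combines two ingredients. First, Mather's classification of stable germs: two stable map-jets in the same $J_k(m,n)$ with isomorphic local algebras lie in a common left-right orbit. This reduces the problem to showing that the stable locus inside the contact orbit $\Theta_A^{m\to n}$ is dense. Second, stability is a Zariski-open condition (again via the infinitesimal criterion and upper semicontinuity of the cokernel dimension of the action map), so the stable locus inside $\Theta_A^{m\to n}$ is open. Since $\Psi$ is stable by hypothesis, this open subset is nonempty, and by the irreducibility of $\Theta_A^{m\to n}$ established in Proposition~\ref{propgena}, it is dense. The left-right orbit of $\Psi$ contains this dense open subset, and so is itself dense in its contact orbit, which concludes the proof.
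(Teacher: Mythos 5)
This statement is quoted from Mather (\cite{mather}) and the paper gives no proof of it at all, so there is nothing in the text to compare your argument against; what follows is an assessment of your sketch on its own terms. Part (1) is correct and complete: killing the unfolding parameters $y_1,\ldots,y_r$ in $\CC[[x,y]]/(F_1,\ldots,F_n,y_1,\ldots,y_r)$ reduces the quotient to $\CC[[x]]/(\Psi_1,\ldots,\Psi_n)=A_\Psi$. Part (2) is the standard construction, and in fact it coincides with what the paper itself sketches immediately after the theorem (the genotype and its miniversal unfolding built from a basis of $Q=J(s,p)/\langle \partial f/\partial x_i, r_i^{(j)}\rangle$). Two caveats: the displayed equation you call the infinitesimal stability criterion is not quite that criterion --- the correct one is $\theta(\Psi)=t\Psi(\theta_m)+\omega\Psi(\theta_n)$, where the second summand is the pullback of \emph{target} vector fields, whereas what you wrote ($\Phi^*\frakm_n$ times everything plus the right-tangent part) is the extended contact tangent space $T\mathcal{K}_e\Psi$; it happens to be the right normal space to unfold against (a basis of $\theta(\Psi)/T\mathcal{K}_e\Psi$, together with the constants, gives a versal hence stable unfolding), so your construction survives, but the labelling should be fixed. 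More importantly, the step you yourself flag --- that infinitesimal stability/versality implies genuine stability, via the Malgrange--Mather preparation theorem --- is the entire content of this part of Mather's theorem, so as written (2) is a reduction, not a proof.

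Part (3) has the same character: the argument (stable germs in a fixed contact class with the same source/target dimensions are $\mathcal{A}$-equivalent; stability is an open condition; a nonempty open subset of the irreducible variety $\Theta_A^{m\to n}$ of Proposition~\ref{propgena} is dense) is the standard and correct route, but its first ingredient, Mather's classification of stable germs by their local algebras, is a theorem at least as deep as the density statement you are proving, so again the sketch defers the essential difficulty rather than resolving it. There is also a minor technical point you should acknowledge: the classification is a statement about germs, while $\Theta_A^{m\to n}$ lives in the finite jet space $J_k(m,n)$, so one needs finite determinacy of stable germs (and openness of the stable locus at the jet level) to transfer $\mathcal{A}$-equivalence of germs into the $\diff_k(m)\times\diff_k(n)$-orbit picture. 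Since the paper treats the whole theorem as a citation, using Mather's machinery as a black box is defensible, but your write-up should state explicitly which of his results are being invoked at each step, and correct the tangent-space formula in (2).
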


Given a finitely generated nilpotent algebra $A$, we can ask the following natural questions:
\begin{enumerate}[(i)]
\item Can we find a germ  $f:(\CC^m,0) \to 
(\CC^n,0)$ with $A_f=A$, and for what parameters $m,n$?
\item How can we construct a stable unfolding of $f$? 
\end{enumerate}

Let $\frakm$ be the maximal ideal of $A$, and write a minimal presentation 
\[A=\CC[x_1,\ldots,x_s]/(r_1,\ldots, r_p)\] 
with a smallest possible number $s=\dim(\frakm /\frakm^2)$ of generators and relations. Then the germ $f \in J(s,p)$ defined as
\[g_A:(x_1,\ldots, x_s) \mapsto (f_1,\ldots r_p)\] 
has local algebra $A_f=A$. Moreover, $s$ is the minimal possible source dimension and $l=p-s$ is the
minimal codimension for any map with local algebra $A$. Note that for this minimal presentation $r_i\in \frakm^2$, so $df=0$. This germ $g_A$ is
 called the \textsl{genotype} of $A$. 
 
 The genotype is not stable, but we can stabilize it by a nontrivial unfolding as follows. The quotient algebra 
\[Q=J(s,p)/\langle \frac{\partial f}{\partial x_i}, r_i^{(j)} \rangle\] 
is a finite-dimensional vector space, where $r_i^{(j)} \in J(s,p)$ stands for the germ we get by putting $r_i$ as the $j$th coordinate function and putting $0$ elsewhere. So one can write 
\[Q=\mathrm{Span}_\CC(e_1,\ldots, e_p, \phi_1,\ldots, \phi_r)\]
for some $\phi_i \in J(s,p)$. Then the unfolding
 \begin{eqnarray}
g:(\CC^s \times \CC^r,0)\rightarrow (\CC^p \times \CC^r,0)\\
(x,y)\rightarrow (\phi(x)+\sum_1^r y_i\phi_i(x), y)
\end{eqnarray}
is stable, also called the {\it miniversal unfolding} of $g$ and a \textsl{prototype} of $A$. To obtain a
stable germ with local algebra $A$ and codimension $l>p-s$ we take the
miniversal unfolding of $(x_1,\ldots,x_s)\rightarrow (r_1,\ldots, r_p,0,\ldots
,0)$ with $q$ $0$s where $q+p-s=l$.

\subsection{Stratifications of maps} Consider a holomorphic map $f:M \to N$. We give a short survey on stability, transversality and stratifications of maps, essential to state the structural theorems in the next section. We refer \cite{arnold} for details.  

Two maps $f,g: M\to N$ are said to be {\it $\mathcal{A}$-equivalent} if there exist diffeomorphisms $\phi \in \diff(M), \psi \in \diff(N)$ such that $f=\phi \circ g \circ \psi^{-1}$ holds. 
The map $f: M \to N$ is said to be {\it stable} if any map that is close enough (together with its derivatives) to $f$ is $\mathcal{A}$-equivalent to $f$.

The set $J_k(M,N)$ of all $k$-jets of smooth maps of a fixed manifold $M$ into $N$ is called the space of $k$-jets of maps. The $k$-jet extension $j^kf$ is a map $M \to J_k(M,N)$ which sends each point of $p\in M$ into the $k$-jet of $f$ at $p$ (and hence $j^kf(p)\in J_k(m,n)$).  

A {\it stratified submanifold} of a smooth manifold is a finite union of pairwise-disjoint smooth path-connected manifolds with the property that the closure of any stratum consists of that stratum and a finite union of strata of lower dimensions. A map is {\it transverse to a stratified submanifold} if it is transverse to each of its strata. Thom's Transversality Theorem (\cite{arnold}) implies that the set of transverse maps is the intersection of countable number of open, dense subsets. 

Let $S,U$ be strata of a stratification such that $S \subset \bar{U}$. {\it Whitney's condition A} requires that if a) $s\in S$ and a sequence $u_n \in U$ converging to $s$, and b) the tangent space $T_{u_n}U$ converges in some Grassmannian to a limit $V$ then $T_sS \subset V$. 
\begin{proposition}
Suppose that $Z \subset J_k(M,N)$ is a closed subset endowed with a stratification satisfying Whitney condition A. Then the set of maps $f:M \to N$ whose $k$-jet is transversal to all strata of the stratification is open, dense in $J(M,N)$.
\end{proposition}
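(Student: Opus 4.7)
The plan is to split the claim into density and openness in $J(M,N)$, and attack them separately. Since by definition a stratification in this paper consists of finitely many pairwise-disjoint smooth strata, the bookkeeping is mild in both parts.

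For density I would appeal directly to Thom's Transversality Theorem, already quoted above: for each individual stratum $S_\a$, the set $T_\a = \{f \in J(M,N) : j^k f \pitchfork S_\a\}$ is a countable intersection of open dense sets, hence dense. As the stratification is finite, $\bigcap_\a T_\a$ is still a countable intersection of open dense sets, and in particular dense. This step uses nothing about the stratification beyond the fact that each $S_\a$ is a smooth submanifold of $J_k(M,N)$; Whitney's condition A plays no role here.

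The crucial and harder half is openness, and this is where Whitney A enters decisively. I would argue by contradiction: assume $j^k f$ is transverse to every stratum, and suppose there exist $f_n \to f$ (in a sufficiently fine Whitney $C^\infty$ topology) and points $p_n \in M$ such that $j^k f_n$ fails to be transverse to some stratum at $p_n$. Using finiteness of the stratification, pass to a subsequence on which the offending stratum is a fixed $S$, and after localising to a compact subset extract $p_n \to p$ and $s_n := j^k f_n(p_n) \in S$ converging to $s := j^k f(p) \in \overline{S}$. If $s \in S$, the failure at $p_n$ contradicts the classical fact that transversality to a fixed submanifold is an open condition at points where it already holds. Otherwise $s \in S' \subset \overline{S}\setminus S$, and this is precisely the case in which Whitney A must be invoked.

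In this remaining case, pass to a further subsequence so that $T_{s_n} S$ converges in the appropriate Grassmannian to a subspace $V$; Whitney's condition A provides exactly the inclusion $T_s S' \subset V$. By hypothesis $f$ is transverse to $S'$ at $p$, so $\mathrm{Im}\, d(j^k f)(p) + T_s S' = T_s J_k(M,N)$, and a fortiori $\mathrm{Im}\, d(j^k f)(p) + V = T_s J_k(M,N)$. Combining the convergence $T_{s_n}S \to V$ with the continuity $d(j^k f_n)(p_n) \to d(j^k f)(p)$, the composition $T_{p_n} M \to T_{s_n} J_k(M,N) \to T_{s_n}J_k(M,N)/T_{s_n} S$ is surjective for $n$ large, contradicting the non-transversality at $p_n$ and finishing the argument. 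The main obstacle I expect is this Grassmannian-limit step: one must check that surjectivity onto the varying normal spaces is genuinely an open condition along the perturbation, which is standard but uses the Whitney A inclusion in exactly this direction, and it is precisely this asymmetry that prevents density from upgrading to openness for a general stratification.
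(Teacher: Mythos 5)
The paper does not actually prove this proposition: it is quoted as a standard background fact (with \cite{arnold} as the reference for stratifications and Thom transversality), so there is no in-paper argument to compare yours against. On its merits, your proposal is the standard textbook proof and is sound in substance. The density half is exactly right: Thom's transversality theorem applies stratum by stratum, finiteness of the stratification keeps the intersection residual, and Whitney A is irrelevant there. The openness half also identifies the correct mechanism: non-transversality points $p_n$ for $f_n\to f$, limit jet $s=j^kf(p)$ lying either in the offending stratum $S$ (contradicting openness of transversality to a fixed locally closed submanifold) or in a frontier stratum $S'\subset\overline{S}\setminus S$, where condition A gives $T_sS'\subset V=\lim T_{s_n}S$ and transversality of $j^kf$ to $S'$ at $p$ then forces transversality of $j^kf_n$ to $S$ at $p_n$ for large $n$, since surjectivity of a continuously varying linear map onto the quotient by subspaces converging in the Grassmannian is an open condition.

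The one point you should tighten is the reduction of openness to a sequential argument. For compact $M$ (the situation the paper cares about) this is harmless, since $J(M,N)$ with the $C^\infty$ topology is metrizable; for non-compact $M$ in the strong Whitney topology the space is not first countable, so "no bad sequence converges to $f$" does not by itself give openness, and your localisation to a compact set needs the fact that a strong-topology convergent sequence eventually agrees with its limit outside a compact set. The cleaner standard route, which packages your Whitney A step once and for all, is to observe that condition A makes the set of $(k{+}1)$-jets whose underlying $k$-jet lies in $Z$ and is not transverse to its stratum a closed subset $W\subset J_{k+1}(M,N)$, and then to use the elementary fact that $\{f: j^{k+1}f(M)\cap W=\emptyset\}$ is open in the Whitney topology. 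Either way, the mathematical content you supply is the right one.
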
 

\subsection{Thom-Boardman classes} 

Classification of contact singularity classes and the hierarchy of these classes is out of reach, but there exist courser but full classifications of singular points which also satisfy nice geometric properties.  We will use the Thom-Boardman classification in this paper, introduced by Thom \cite{thom} and Boardman \cite{boardman}  (see also \cite{arnold}). 

Let $I=(i_1\ge i_2 \ge \ldots \ge i_k)$ be a finite nonincreasing sequence of nonnegative integers. 
\begin{definition}[Thom, \cite{thom}]\label{thomboardmandef1} For a smooth map $f: M \to N$, we define $\Sigma^I(f) \subset M$ inductively as follows. Let 
\[\Sigma^i(f)=\{x \in M : \dim(\ker(d_xf))=i\}\]
Suppose that $\Sigma^i(f) \subset M$ is a smooth submanifold and $0\le j\le i$; then we define $\Sigma^{ij}(f)$ to be $\Sigma^j(f|_\Sigma^i(f))$. Inductively, assume that $\Sigma^{i_1,\ldots, i_r}(f) \subset M$ is smooth and define
\[\Sigma^{i_1,\ldots, i_{r+1}}(f) = \Sigma^{i_{r+1}}((f|_{\Sigma^{i_1,\ldots, i_r}(f)})\]
\end{definition}

Boardman in \cite{boardman} proposed a different definition to avoid the condition on the smoothness of the sets $\Sigma^I(f)$. For $I=(i_1,\ldots, i_k)$ he defined a smooth, not necessarily closed submanifold $\Sigma^I \subset J_k(M,N)$ of the $k$-jet space, independent of the choice $f$. 

\begin{definition} Let $B$ be an ideal in $\mathcal{E}_m=J(m)\oplus \CC$, the ring at the origin of germs of holomorphic functions on $\CC^m$. The $k$th Jacobian extension $\Delta_k(B)$ is the ideal generated by $B$ together with the $k \times k$ determinants $\det(\partial \varphi_i/\partial x_j)$ formed by partial derivatives of functions in $B$ in some fixed local coordinates at the origin on $\CC^m$. It will be convenient to introduce the relabeling $\Delta^k(B):=\Delta_{m-k+1}(B)$. Note that $\Delta^k(B)$ actually does not depend on the choice of these local coordinates.
\end{definition}
We get a chain of ideals
\[B = \Delta^0(B) \subseteq  \Delta^1(B) \subseteq \ldots  \subseteq \Delta^{m+1}(B) = \mathcal{E}_m\]
The largest $\Delta^k$ such that $\Delta^k(B) \neq \mathcal{E}_m$ is called the critical Jacobian extension. Note that the critical extension of an ideal $B$ is $\Delta^{m-r}=\Delta_{r+1}$, where $r=\dim_\CC(\mathfrak{m}^2+B/\mathfrak{m}^2)$.
\begin{definition}[Boardman, \cite{boardman}] Let $I=(i_1,\ldots, i_k)$ be a nonincreasing set of nonnegative integers. The map jet $f=(f_1,\ldots, f_n) \in J_k(\CC^m,\CC^n)$ belongs to $\Sigma^I$ (and said to have Boardman symbol $I$) if the ideal $B=(f_1,\ldots, f_n) \subset \mathcal{E}_m$ has successive critical extensions 
\[\Delta^{i_1}B,\ \  \Delta^{i_2}\Delta^{i_1}B,\ \  \Delta^{i_3}\Delta^{i_2}\Delta^{i_1}B ,\ldots \]
The Boardman class $\Sigma^I$ is a smooth submanifold of $J_k(m,n)$.
\end{definition}
\begin{proposition}[\cite{arnold}] Let $I=(i_1,\ldots, i_k)$ be a Boardman symbol. 
\begin{enumerate}
\item The codimension of the submanifold $\Sigma^I$ in $J_k(m,n)$ is given by the formula
\[\mathrm{codim}(\Sigma^I)=(n-m+i_1)\mu(i_1,\ldots, i_k)-(i_1-i_2)\mu(i_2,\ldots, i_k)-\ldots -(i_{k-1}-i_k)\mu(i_k)\]
where $\mu(i_1,\ldots, i_s)$ is the number of sequences $(j_1,\ldots, j_s)$ satisfying $j_1>0$, $j_1 \ge \ldots j_s$ and $i_s \ge j_s$.
\item The multiplicity of $f\in \Sigma^I(m,n)$ is
\[\dim(A_f)=i_1+\ldots +i_k+1\]
\end{enumerate}
\end{proposition}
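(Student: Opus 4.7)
The plan is to prove both parts by induction on the length $k$ of the Boardman symbol $I = (i_1, \ldots, i_k)$, using the Jacobian-extension definition of $\Sigma^I$ (which works for every map-jet and thereby sidesteps the smoothness issues inherent in Thom's inductive definition via $\Sigma^I(f)$). For the base case $k = 1$, the stratum $\Sigma^{(i_1)} \subset J_k(m,n)$ pulls back from the corank-$i_1$ locus in $J_1(m,n) = \Hom(\CC^m, \CC^n)$, whose codimension is the classical determinantal number $(n-m+i_1)\, i_1$; this matches the claimed formula since $\mu(i_1) = i_1$. In part (2), a germ with Boardman symbol $(i_1)$ has local algebra $A_f = \mathcal{E}_m / (f_1, \ldots, f_n)$ of dimension $i_1 + 1$: the image of $df$ kills $m - i_1$ generators of $\frakm/\frakm^2$, leaving an $i_1$-dimensional cokernel, to which one adds the constants.

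For the inductive step in (1), I would analyze what happens when we adjoin the next critical Jacobian extension $\Delta^{i_r}$ to the chain $\Delta^{i_{r-1}} \cdots \Delta^{i_1} B$. The new extension imposes additional rank conditions on a Jacobian matrix whose generating minors correspond bijectively to weakly decreasing sequences counted by $\mu(i_1, \ldots, i_r)$; working in coordinates adapted to the already-constructed stratum, one verifies that the number of independent new equations and their codimension contribution is governed precisely by these combinatorial counts. Using the recursion that $\mu(i_1, \ldots, i_r)$ differs from $\mu(i_1, \ldots, i_{r-1})$ by the number of length-$r$ sequences newly permitted at step $r$, the additional codimension acquired at each stage contributes a signed summand of the form $-(i_{r-1} - i_r)\,\mu(i_r, \ldots, i_k)$ in the telescoped total; this coefficient reflects how much the corank condition has relaxed from step $r-1$ to step $r$. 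Summing over $r$ produces the full codimension formula.

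For (2), the ascending chain $B \subsetneq \Delta^{i_1} B \subsetneq \cdots \subsetneq \Delta^{i_k} \cdots \Delta^{i_1} B \subsetneq \mathcal{E}_m$ induces a filtration of $A_f = \mathcal{E}_m / B$ whose successive quotients have dimensions $i_1, i_2, \ldots, i_k$ respectively (by the very meaning of the corank at each Jacobian step), with the final codimension-one gap corresponding to the constants; adding these dimensions yields $\dim A_f = 1 + i_1 + \cdots + i_k$. The main obstacle throughout is the combinatorial bookkeeping in the inductive step for (1): one must work in local coordinates finely adapted to each successive stratum in order to identify the generators of each Jacobian extension with the sequences counted by $\mu$, and then verify that the signed telescoping in the asserted formula exactly matches the cumulative codimension contributions. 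Part (2) is comparatively direct once the geometric filtration from the extensions is in hand, since the corank data at each step translates immediately into the dimension of the associated graded piece.
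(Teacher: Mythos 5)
The paper offers no proof of this proposition at all: it is quoted as a classical result (Boardman's theorem) with the citation to Arnold et al., so your argument has to stand on its own, and it does not yet. For part (1), the inductive step is a plan rather than a proof: the claims that the generating minors of the next critical Jacobian extension correspond bijectively to the sequences counted by $\mu(i_1,\ldots,i_r)$, that the resulting equations are independent along the previously constructed stratum, and that the contributions telescope into the signed formula are precisely the content of the theorem. Establishing the independence/transversality of those minor equations (Boardman does it through the intrinsic derivative machinery, Morin through normal forms) is the hard core, and "working in coordinates adapted to the stratum, one verifies\ldots" leaves all of it undone; your own closing remark that the combinatorial bookkeeping is the main obstacle concedes this. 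The base case, the determinantal codimension $(n-m+i_1)i_1$ together with $\mu(i_1)=i_1$, is fine.

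Part (2) contains an actual error, not merely a gap. The index $i_r$ is the corank of $B_{r-1}=\Delta^{i_{r-1}}\cdots\Delta^{i_1}B$ computed modulo $\frakm^2$, i.e. $i_r=\dim_\CC \frakm/(B_{r-1}+\frakm^2)$; it is a first-order datum and in general does not equal $\dim_\CC(B_r/B_{r-1})$, which receives contributions in all orders. Concretely, for $B=\frakm^3\subset\mathcal{E}_2$ (the germ $(x^3,x^2y,xy^2,y^3)$) the Boardman symbol is $(2,2)$, yet $\Delta^{2}B/B=\frakm^2/\frakm^3$ has dimension $3\neq i_1$ and $\dim_\CC\mathcal{E}_2/B=6\neq i_1+i_2+1=5$. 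Likewise $(x^2,xy,y^2)$ and $(x^2,xy,y^3)$ both lie in $\Sigma^{(2,0)}$ but have local algebras of different dimensions, so no formula depending only on $I$ can hold for every germ in the class: statement (2) is about \emph{generic} germs of the class, and a correct proof must invoke this genericity (for instance via the generic normal forms or the regular points of $\Sigma^I$), which your filtration argument never does — "by the very meaning of the corank" is exactly the step that fails.
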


\begin{definition}[Boardman, \cite{boardman}] For a map $f: M \to N$ between manifolds and $x \in M$ let $f=(f_1, \ldots ,f_n)$ be the coordinate functions of $f$ in some local coordinate system. Then $x \in \Sigma^I(f)$ if $B=(f_1,\ldots, f_n) \in \Sigma^I$.
\end{definition} 

\begin{remark} Thom-Boardman singularities of order $k$ are $k$-determined, that is, it is enough to
look at the first $k$ differentials of a map to decide whether it belongs to the given Thom-
Boardman class (this is clear from Boardman's definition). They are also stable in
the sense that if $f:\CC^m \to \CC^n$ belongs to $\Sigma^I$,then so does $f \oplus id_\CC: \CC^{m+1} \to \CC^{n+1}$.
\end{remark}

Let $J_k(M,N)=J_k(M) \otimes \CC^n \to M$ denote the $k$-jet bundle whose fiber over $x \in M$ is the set $J_k(m,n)$ of $k$-jets of germs at $x$. For a map $f: M \to N$ its $k$-jet extension defines a section $s_f$ of $J_k(M,N)$. 

\begin{theorem}[Boardman, \cite{boardman}] \begin{enumerate}
\item Assume that the $k$-jet extension $j^kf$ of the map $f: M  \to N$ is transverse to the manifolds $\Sigma^I$ (we call these maps Thom-Boardman maps). Then 
\[\Sigma^I(f)=(j^kf)^{-1}(\Sigma^I)\]
for any $I$ of length $k$. 
\item Any smooth map $f:M \to N$ can be arbitrarily well approximated , together with any number of its derivatives, by a Thom-Boardman map.
\end{enumerate}
\end{theorem}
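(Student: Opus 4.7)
The two statements admit distinct mechanisms: part (1) is a local computation translating between Thom's inductive kernel description and Boardman's ideal-theoretic one, while part (2) is a density argument from transversality.

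For part (1), the plan is to proceed by induction on the length $k$ of $I=(i_1,\ldots,i_k)$. The base case $k=1$ is essentially linear algebra: if $B_x=(f_1,\ldots,f_n)_x\subset \mathcal{E}_m$, then the first critical Jacobian extension of $B_x$ occurs at $\Delta^{i_1}$ precisely when the Jacobian matrix of $f$ at $x$ has corank $i_1$; this is the condition $\dim \ker d_xf=i_1$, so $(j^1f)^{-1}(\Sigma^{i_1})=\Sigma^{i_1}(f)$. For the inductive step, assume the claim for sequences of length $k-1$. Transversality of $j^kf$ to $\Sigma^{i_1,\ldots,i_{k-1}}$, combined with the inductive hypothesis, ensures that $\Sigma^{i_1,\ldots,i_{k-1}}(f)\subset M$ is a smooth submanifold of the expected codimension, so that Thom's definition can be iterated. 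I would then pick $x\in \Sigma^{i_1,\ldots,i_{k-1}}(f)$, choose local coordinates on $M$ adapted to the kernel filtration of successive differentials so that $\Sigma^{i_1,\ldots,i_{k-1}}(f)$ becomes a coordinate subspace, and verify by direct calculation that the corank-$i_k$ condition on $d(f|_{\Sigma^{i_1,\ldots,i_{k-1}}(f)})$ is exactly the condition that the next critical Jacobian extension $\Delta^{i_k}\Delta^{i_{k-1}}\cdots\Delta^{i_1}B_x$ jumps at the prescribed spot.

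For part (2), the plan is a direct application of Thom's Transversality Theorem combined with the proposition stated in the excerpt before the Thom--Boardman subsection. Each $\Sigma^I\subset J_k(M,N)$ is a smooth submanifold (by the codimension proposition), and its closure is stratified by lower Boardman strata $\Sigma^{I'}$; a standard computation shows this stratification satisfies Whitney's condition A. Hence the set $\mathcal{T}^I$ of maps $f\in \Hom(M,N)$ with $j^kf\pitchfork \Sigma^I$ is open and dense in the Whitney topology. Taking the intersection over the countable family of all Boardman symbols $I$ up to any fixed length, and then a further countable intersection over lengths, produces a residual, hence dense, set of maps transverse to every $\Sigma^I$. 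Density in the Whitney $C^\infty$-topology yields approximation together with arbitrarily many derivatives.

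The main obstacle is the inductive step of part (1): one must prove that the Jacobian extension $\Delta^{i_k}$ of the restricted ideal $B'=B_x+\mathcal{I}(\Sigma^{i_1,\ldots,i_{k-1}}(f))$ matches the ideal cutting out the further corank-$i_k$ locus of $f|_{\Sigma^{i_1,\ldots,i_{k-1}}(f)}$. This compatibility between the operations \emph{restrict to a critical submanifold} and \emph{take a Jacobian extension} is exactly Boardman's key algebraic lemma, and verifying it requires the coordinate choice adapted to the successive kernel filtration alluded to above; once this lemma is in hand, the inductive machinery closes without further input.
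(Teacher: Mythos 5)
The paper does not prove this statement at all: it is quoted from Boardman \cite{boardman} as background, so there is no in-house argument to compare yours against, and the proposal has to stand on its own. For part (1) it does not, as written. The inductive step hinges exactly on the compatibility you isolate at the end, namely that the next critical Jacobian extension $\Delta^{i_k}\Delta^{i_{k-1}}\cdots\Delta^{i_1}B_x$ of the ideal $B_x=(f_1,\dots,f_n)\subset\mathcal{E}_m$ is computed by the corank of $d\bigl(f|_{\Sigma^{i_1,\dots,i_{k-1}}(f)}\bigr)$. You call this ``Boardman's key algebraic lemma'' and assert the induction closes once it is available, but that lemma \emph{is} the substantive content of the theorem: Boardman's symbol is defined from the ideal alone, with no reference to the submanifolds $\Sigma^{i_1,\dots,i_{k-1}}(f)$, and identifying the successive critical extensions with the ideals of Thom's successively restricted kernel loci under the transversality hypothesis is precisely the long technical development of \cite{boardman}. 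Deferring it leaves a genuine gap rather than a routine coordinate verification; a complete argument would have to either reprove that compatibility (including where transversality is actually used to guarantee the extensions are the critical ones) or explicitly cite it, at which point little of the theorem remains to be proved.

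For part (2) your route is right in spirit and matches the logic the paper itself invokes for the subsequent proposition: each $\Sigma^I\subset J_k(M,N)$ is a smooth submanifold, Thom transversality makes $\{f: j^kf\pitchfork\Sigma^I\}$ residual, a countable intersection over all symbols is residual, and residual sets are dense in the Whitney $C^\infty$ topology, which is the approximation statement. However, your intermediate claim that $\overline{\Sigma^I}$ is stratified by lower Boardman strata satisfying Whitney condition A is false, and the paper explicitly warns against it: Thom--Boardman classes do not stratify the jet space (the closure of a class need not be a union of classes; see the remark citing Lander and the Porteous example quoted before Proposition \ref{prop:boardman}), and by Wilson \cite{wilson} the transverse maps form an \emph{open} set only under dimension restrictions. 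So delete the ``open and dense'' claim and the appeal to Whitney A; residuality and density are all that is needed, and with that correction part (2) is sound.
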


In fact, we have the following stronger result, which follows from Thom's transversality theorem.
\begin{proposition}[(see also Wilson \cite{wilson})] The set of Thom-Boardman maps is residual, that is, the intersection of countable number of open dense subsets in $J_k(M,N)$. In this sense, a generic map is Thom-Boardman. 
\end{proposition}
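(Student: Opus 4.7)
The plan is to combine the two ingredients that the paper has already put on the table: Boardman's theorem that each $\Sigma^I \subset J_k(M,N)$ is a smooth submanifold, and the transversality proposition cited just before the Thom--Boardman discussion, which turns Whitney-stratified closed subsets of $J_k(M,N)$ into open dense transversality conditions on $C^\infty(M,N)$. The Baire category theorem then glues everything together.

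First I would dispose of the cardinality issue: the Boardman symbols $I=(i_1\ge i_2 \ge \cdots \ge i_k\ge 0)$ of length at most $k$ with entries bounded by $\min(m,n)$ form a \emph{countable} (in fact finite) set, so the stratification of $J_k(M,N)$ by the manifolds $\Sigma^I$ has only countably many strata. Next, for each Boardman symbol $I$ I would take the Zariski closure $\overline{\Sigma^I} \subset J_k(M,N)$. The key structural fact is that this closure is stratified by Boardman strata of strictly greater codimension,
\[
\overline{\Sigma^I} \;=\; \Sigma^I \;\sqcup\; \bigsqcup_{J \succ I} \Sigma^J,
\]
and this stratification satisfies Whitney's condition A. This is the classical Thom--Boardman--Mather fact, proved by controlling the behavior of the successive Jacobian extensions $\Delta^{i_r}\cdots \Delta^{i_1}B$ under deformations of the ideal $B$.

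Once Whitney A is in place, I would invoke the transversality proposition already quoted in the text to conclude that for every fixed $I$ the set
\[
\mathcal{T}_I \;=\; \bigl\{\, f \in C^\infty(M,N) \;:\; j^kf \pitchfork \Sigma^J \text{ for every stratum } \Sigma^J \text{ of } \overline{\Sigma^I}\,\bigr\}
\]
is open and dense in $C^\infty(M,N)$ with the Whitney topology. A map $f$ is Thom--Boardman precisely when $j^kf$ is transverse to every $\Sigma^I$, which is the same as $f \in \bigcap_{I} \mathcal{T}_I$. Being a countable intersection of open dense sets, this is residual, proving the statement.

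The main obstacle is verifying Whitney's condition A for the stratification of $\overline{\Sigma^I}$ by lower Boardman strata; this is the one nontrivial input and is where most of the technical content sits. A cleaner alternative, which I would actually prefer in practice, is to bypass Whitney stratifications altogether and appeal directly to Thom's jet transversality theorem in its general form: for any (locally closed) smooth submanifold $\Sigma \subset J_k(M,N)$, the set of $f$ with $j^kf \pitchfork \Sigma$ is already residual. Applying this to each $\Sigma^I$ individually and intersecting over the countably many $I$ yields residuality directly, without needing the condition-A property of the closures.
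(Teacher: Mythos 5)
Your fallback argument is the right one, and it is in fact the paper's own: the paper derives this proposition directly from Thom's jet transversality theorem, applied to each of the finitely many Boardman submanifolds $\Sigma^I\subset J_k(M,N)$ (each is a locally closed smooth submanifold by Boardman's theorem), giving a residual set of maps for each $I$; intersecting over the finitely (in particular countably) many symbols of length $k$ gives residuality. Note that no openness is needed for this conclusion, since a countable intersection of residual sets is residual.

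Your primary route, however, contains a genuine gap. The ``key structural fact'' $\overline{\Sigma^I}=\Sigma^I\sqcup\bigsqcup_{J\succ I}\Sigma^J$ with Whitney condition A is false in general: the closure of a Boardman stratum need not be a union of Boardman strata. The paper itself records the counterexample (due to Lander): $\Sigma^2\cap\overline{\Sigma^{1,\ldots,1}}\neq\emptyset$ for any number of $1$'s, while for sufficiently many $1$'s one has $\dim\Sigma^{1,\ldots,1}<\dim\Sigma^2$, so $\overline{\Sigma^{1,\ldots,1}}$ meets $\Sigma^2$ without containing it and hence is not a union of strata $\Sigma^J$. Consequently the quoted Whitney-A proposition cannot be applied to $\overline{\Sigma^I}$ stratified ``by lower Boardman strata,'' and the claim that each $\mathcal{T}_I$ is \emph{open} and dense is exactly the delicate point: Wilson's theorem, cited in the paper, says that the Thom--Boardman maps form an open set only under specific numerical conditions on $(m,n)$ (essentially the absence of corank-$2$ phenomena). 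So you should drop the stratification step entirely and keep only the direct appeal to Thom transversality for each $\Sigma^I$, which suffices for residuality and agrees with the paper.
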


\begin{remark}
Thom-Boardman singularities do not give a stratification of the jet space $J_k(M,N)$: they provide a partition of $M$ of a generic mapping into locally closed submanifolds $\Sigma^I(f)$, but the closure of a submanifold is not necessarily a union of similar submanifolds. Indeed, it can be shown (see Lander \cite{lander}) that $\Sigma^2 \cap \overline{\Sigma^{1,\ldots, 1}} \neq \emptyset$ for any number of $1$'s. But if this number of $1$'s is sufficiently large, then $\dim(\Sigma^{1,\ldots, 1})<\dim(\Sigma^2)$.
\end{remark}

\begin{remark} Existence of good approximation of smooth maps does not imply that Thom-Boardman maps form a dense open subset of the space $J_k(M,N)$ of $k$-jets of maps from $M$ to $N$. Thom's Transversality Theorem (see e.g. \cite{arnold})  implies that this set is residual. But Wilson \cite{wilson}] showed that the Thom-Boardman maps form an open subset of $J(M,N)$, if and only if either $m\le n$ and $3m-4<2n$ or $m>n$ and $2n<m+4$. These numerical conditions hold precisely if $\codim(\Sigma^2)>m$, that is, there are no corank-$2$ singularities of $f$ (such maps are also called corank $1$ maps or Morin maps). Moreover, If these numerical conditions hold, then a map is Thom-Boardman if and only if its germ is stable at each point. 

\end{remark}

We now  prove some simple structural statements about the Thom-Boardman classification. Recall that the Thom-Boardman classes $\Sigma^I$ do not give a stratification of the jet space $J_k(M,N)$; Porteous pointed out the existence of generic smooth maps $f:\RR^5 \to \RR^5$ with singularities of type $\Sigma^2$ and $\Sigma^{1,1,1,1}$, both of dimension $1$, such that $\Sigma^2(f)$ contains some isolated points in the closure of $\Sigma^{1,1,1,1}(f)$. In general, $\Sigma^i \nsubset \overline{\Sigma^{1,\ldots, 1}}$ holds, but if the number of $1$'s is not more then $i$, then one contains the other according to the next Proposition.

\begin{proposition}\label{prop:boardman}
For a Boardman symbol $I=(i_1,\ldots, i_k)$ and a Thom-Boardman map $f: M \to N$ we have 
\[\Sigma^{i_1+\ldots +i_k}(f) \subset \overline{\Sigma^{i_1,\ldots i_k}(f)}\]
\end{proposition}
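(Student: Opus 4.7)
The proof naturally splits into two parts: a universal inclusion at the jet-space level, followed by a transversality transfer to $M$.

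\textbf{Step 1 (Universal inclusion in $J_k(m,n)$).} The core claim is
\[
\Sigma^{i_1+\cdots+i_k}\ \subset\ \overline{\Sigma^{i_1,\ldots,i_k}}\quad\text{inside } J_k(m,n).
\]
Given $\xi\in\Sigma^{i_1+\cdots+i_k}$, set $c:=i_1+\cdots+i_k$ and place $\xi$ in a Thom-Boardman normal form: after changes of source and target coordinates, and, if necessary, an auxiliary stabilisation by a trivial unfolding (harmless by Theorem~\ref{propstable}), the linear part of $\xi$ is the projection onto the first $m-c$ target coordinates, and the higher-order terms involve only the kernel variables. Fix an ordered splitting $\ker d\xi=V_1\oplus\cdots\oplus V_k$ with $\dim V_j=i_j$, and choose fresh target directions $W_2,\dots,W_k$. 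Consider the one-parameter family
\[
\xi_t\ =\ \xi\ +\ t\sum_{j=2}^{k}L_j,\qquad L_j\colon V_j\xrightarrow{\cong}W_j,
\]
supplemented, if needed, by small generic perturbations of the quadratic and higher-order terms of $\xi$ so that the successive iterated restrictions acquire the prescribed coranks. For $t\neq 0$ the linear part of $\xi_t$ has kernel exactly $V_1$, hence $\xi_t\in\Sigma^{i_1}$; the restriction to the singular locus then has corank $i_2$, placing $\xi_t$ in $\Sigma^{i_1,i_2}$; inductively, $\xi_t\in\Sigma^{i_1,\ldots,i_k}$ for small $t\neq 0$, while $\xi_t\to\xi$.

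\textbf{Step 2 (Transversality transfer to $M$).} The Thom-Boardman hypothesis means $j^k f$ is transverse to every Boardman stratum $\Sigma^J\subset J_k(M,N)$ and $\Sigma^J(f)=(j^kf)^{-1}(\Sigma^J)$. Take any $x\in\Sigma^{i_1+\cdots+i_k}(f)$. By Step~1, $j^kf(x)\in\overline{\Sigma^{i_1,\ldots,i_k}}$. Whitney condition A between the pair $(\Sigma^{i_1+\cdots+i_k},\Sigma^{i_1,\ldots,i_k})$---which follows from the inductive Jacobian-extension description of the Boardman strata, since the tangent spaces to $\Sigma^{i_1,\ldots,i_k}$ at nearby points limit to superspaces of $T\Sigma^{i_1+\cdots+i_k}$---combined with transversality of $j^kf$, lifts an approach sequence from the jet space to a sequence in $M$ through $x$ lying in $\Sigma^{i_1,\ldots,i_k}(f)$. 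This gives $x\in\overline{\Sigma^{i_1,\ldots,i_k}(f)}$, completing the proof.

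\textbf{Main obstacle.} The delicate point is Step~1: verifying that the constructed deformation $\xi_t$ realises the \emph{precise} symbol $(i_1,\ldots,i_k)$ rather than landing in some coarser Boardman class (the genuine difficulty underlying Porteous's counterexample when the total corank does not match). Because $\Sigma^{i_1,\ldots,i_k}$ is defined via iterated critical Jacobian extensions, the check proceeds by induction on $k$: at each stage one must reduce to the elementary case $\Sigma^{i+j}\subset\overline{\Sigma^i}$ applied to the restriction of $\xi_t$ to the intermediate singular manifold, and the supplementary perturbations of the quadratic and higher-order parts must be chosen generically so as to avoid accidental corank jumps. Making this precise requires enough freedom in the Thom-Boardman normal form, which is exactly what the hypothesis $i_1+\cdots+i_k=c$ provides.
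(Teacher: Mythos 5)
Your Step 1 is where the proof has to happen, and as written it does not go through. The deformation $\xi_t=\xi+t\sum_{j\ge 2}L_j$ only controls the \emph{linear} part: for $t\neq 0$ you indeed get $\ker d\xi_t=V_1$, but the assertion that the restriction to the singular locus then has corank exactly $i_2$, and inductively that $\xi_t\in\Sigma^{i_1,\ldots,i_k}$, depends entirely on the quadratic and higher-order data, which your construction leaves untouched. Your proposed repair --- ``small \emph{generic} perturbations of the quadratic and higher-order terms'' --- points in the wrong direction: inside $\Sigma^{i_1}$ the locus with second corank $\ge i_2$ (and so on) has positive codimension, so a generic perturbation drives the subsequent coranks \emph{down}, landing you in a coarser symbol such as $\Sigma^{i_1,0,\ldots}$ rather than in the prescribed stratum. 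What is needed is a carefully constructed, non-generic approximating family converging to $\xi$ and lying in $\Sigma^{i_1,\ldots,i_k}$, and producing it is exactly the content of the proposition; your ``Main obstacle'' paragraph concedes this is unresolved, so the argument is circular at its core. A secondary gap: in Step 2 you invoke Whitney condition A for the pair of Boardman strata with only a one-line justification, while the paper explicitly warns that the Thom--Boardman classes do not even form a stratification whose closures are unions of strata, so this regularity claim would itself require proof.

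For contrast, the paper resolves the hard case by an explicit degeneration rather than genericity. It first treats the curvilinear symbol $(1,\ldots,1)$ via the test-curve model: points of $\Sigma^{1,\ldots,1}$ are of the form $\phi(\gamma',\ldots,\gamma^{(k)})=\Span\bigl(\gamma',\,\gamma''+(\gamma')^2,\ldots\bigr)$ in $\grass_k(J_k(m,1)^*)$, and rescaling the derivatives so that $|\gamma^{(i)}|\ll|\gamma^{(i+1)}|$ makes these converge to $\Span(\gamma',\ldots,\gamma^{(k)})$, which represents an arbitrary corank-$k$ point; conversely every germ in $\Sigma^k$ is represented by such a span, so $\Sigma^k$ lies in $\overline{\Sigma^{1,\ldots,1}}$. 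The general symbol is then obtained by induction, applying this special case to the restrictions $f|_{\Sigma^{i_1,\ldots,i_{k-1}}(f)}$ (using the defining property $\Sigma^{i_1,\ldots,i_{r+1}}(f)=\Sigma^{i_{r+1}}(f|_{\Sigma^{i_1,\ldots,i_r}(f)})$ for Thom--Boardman maps), which also sidesteps the stratified-transversality transfer you rely on. If you want to salvage your normal-form approach, you would have to replace ``generic perturbation'' by an explicit one-parameter family realizing the prescribed iterated coranks --- in effect reconstructing the paper's rescaling argument.
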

\begin{proof}
First we prove the statement for $(i_1,\ldots, i_k)=(1,\ldots, 1)$. The Thom-Boardman class $\Sigma^{1,1,\ldots 1}$ is equal to the Morin contact singularity class:
\[\Sigma^{1,1,\ldots 1}=\{f\in J_k(m,n): A_f \simeq \CC[t]/t^k\} \subset J_k(m,n).\]  
The test curve model introduced in \cite{bsz,b3} says that 
\[f \in \Sigma^{1,1,\ldots 1} \Leftrightarrow \exists \g \in J_k^\reg(1,m) \text{ such that } f \circ \g =0\]
that is, $\Sigma^{1,\ldots, 1}$ is the set of those $k$-jets of germs on $\CC^m$ at the origin which vanish on some regular curve $\g$; we call such a $\g$ a test
curve. Test curves of germs are generally not unique: if $\g$ is a test
curve for $f \in \Sigma^{1,1,\ldots 1}$, and $\vp \in J_k^\reg(1,1)$ is a 
polynomial reparametrisation of $\CC$ at the origin, then $\g \circ \vp$ is an other test curve of $\Psi$:
\begin{displaymath}
\label{basicidea}
\xymatrix{
  \CC \ar[r]^\vp & \CC \ar[r]^\g & \CC^m \ar[r]^{f} & \CC^n}
\end{displaymath}
\[f \circ \g=0\ \ \Rightarrow \ \ \ f \circ (\g \circ \vp)=0\]
In fact, we get all test curves of $f$ in this way if the
following property, open and dense in $\Sigma^{1,1,\ldots 1}$, holds: the linear part of $f$ has 
$1$-dimensional kernel. For a given $\g \in \jetreg 1m$ let $\mathcal{S}^n_{\g}$ denote the set of 
solutions of the equation $f \circ \g=0$, that is,
\begin{equation}\label{solutionspace}
\mathcal{S}^n_\g=\left\{f \in \jetk mn, f \circ \g=0 \text{ up to order } i \right\}.
\end{equation}
The equation $f \circ \g=0$ reads as follows for $k=3$:
\begin{eqnarray} \label{eqn3}
& f'(\g')=0 \\ \nonumber & f'(\g'')+f''(\g',\g')=0 \\
\nonumber
& f'(\g''')+2f''(\g',\g'')+f'''(\g',\g',\g')=0 \\
\end{eqnarray}
These are linear in $f$, hence $\mathcal{S}^n_\g \subset \jetk mn$
is a linear subspace of codimension $kn$, i.e a point of $\grass_{\mathrm{codim}=kn}(J_k(m,n))$, whose orthogonal, $(\mathcal{S}^n_{\g})^\perp$, is an $kn$-dimensional subspace of $J_k(m,n)^*$. These subspaces are invariant under the reparametrization of $\g$. In fact, $f \circ \gamma$ has $n$ vanishing coordinates and therefore
\[(\mathcal{S}^n_{\g})^\perp=(\mathcal{S}^1_{\g})^\perp \otimes \CC^n\]
holds. The map
\[\phi: \jetreg 1m \rightarrow \grass_k(J_k(m,1)^*)\]
defined as  $\gamma  \mapsto (\mathcal{S}^1_\g)^\perp$
is $J_k^\reg(1,1)$-invariant and induces an injective map on the $J_k^\reg(1,1)$-orbits into the Grassmannian 
\[\phi^\grass: \jetreg 1m /J_k^\reg(1,1) \hookrightarrow \grass_k(J_k(m,1)^*).\]
This is given by 
\[\phi(\g',\ldots, \g^{(k)})=\Span(\g',\g''+(\g')^2,\ldots, \g^{(k)}+\sum_{i_1+\ldots +i_k=k}(\g')^{i_1}\cdots (\g^{(k)})^{i_k})\]
and the closure of the image is $\overline{\Sigma^{1,\ldots,1}} \subset \grass_k(J_k(m,1)^*)$.
Hence a generic point in $\Sigma^{1,\ldots, 1}$ has the form $\phi(\g',\ldots, \g^{(k)})$ with $\det(\g',\ldots, \g^{(k)})\neq 0$. A sequence satisfying $|\g^{(i)}|<\e_i |\g^{(i+1)}|$ and $\e_i \to 0$ for $1\le i \le k-1$ converges to $\Span(\g',\ldots, \g^{(k)})$, which is a point in $\Sigma^k$. Conversely, any germ $f \in \Sigma^k \subset J_k(m,n)$ satisfies $\dim(\ker(f))=k$, and is reprtesented by a point $\Span(\g',\ldots, \g^{(k)}) \in \grass_k(J_k(m,1)^*)$, and hence can be approximated by points of the form $\phi(\g',\ldots, \g^{(k)})$.

Now let $f \in \Sigma^{i_1,\ldots, i_k}$. We apply the just proved special case for the restriction $f|_{\Sigma^{i_1,\ldots, i_{k-1}}(f)}$ to get
\[\Sigma^{i_1,\ldots, i_k}(f) \subset \Sigma^{i_1,\ldots, i_{k-1},\overbrace{1\ldots, 1}^{i_k}}(f),\]
then do the same inductively.
\end{proof}

\begin{remark}
We will use the test curve model introduced in this proof for the description of the curvilinear Hilbert scheme, see Theorem \ref{bszmodel}.
\end{remark}


We finish this overview on Thom-Boardman classes with a general structure theorem on singularities of maps. Let $f: M \to N$ be a Thom-Boardman map and let $A$ be a local algebra of dimension $k$,  and assume that the local algebra of $f$ at some $p\in M$ is $A_f\simeq A$. By Prop. \ref{prop:boardman} $p\in \overline{\Sigma^{1,1,\ldots, 1}(f)}$ and hence the point $\mathrm{Spec}(A)\in \Hilb^k(\CC^m)$ sits in the closure of the points with Morin algebra, that is, the curvilinear locus. We obtain 

\begin{corollary}
Assume that $f: M \to N$ is a Thom-Boardman map and $A$ is a local algebra of dimension $k$ such that $\Tp_A^{M \to N} \neq 0$, which implies that there are points on $M$ where the local algebra of $f$ is isomorphic to $A$. Then $\mathrm{Spec}(A)$ must sit in the the curvilinear component $\CHilb^k(\CC^m)$. 
\end{corollary}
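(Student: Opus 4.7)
My plan is to read the corollary off from Proposition \ref{prop:boardman} via the natural correspondence between jet singularities and points of the Hilbert scheme of length-$k$ subschemes of $\CC^m$. First I would extract the geometric content of the hypothesis: the nonvanishing $\Tp_A^{M\to N}\neq 0$ forces the existence of a point $p\in M$ with $A_{f,p}\cong A$, and since $\dim_\CC A=k$, the multiplicity formula for Thom--Boardman classes places $p$ in some stratum $\Sigma^I(f)$ with Boardman symbol $I=(i_1,\ldots,i_r)$ satisfying $i_1+\cdots+i_r=k-1$.

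Next I would apply Proposition \ref{prop:boardman} iteratively to approximate $p$ by Morin singularities. Peeling off the indices of $I$ from right to left, each $i_j$ is replaced by a string of $i_j$ ones via the inclusion supplied by the proposition, yielding the chain
\[\Sigma^{i_1,\ldots,i_r}(f)\subset\overline{\Sigma^{i_1,\ldots,i_{r-1},1,\ldots,1}(f)}\subset\cdots\subset\overline{\Sigma^{\overbrace{1,1,\ldots,1}^{k-1}}(f)}.\]
Thus $p$ lies in the closure of the Morin locus $\Sigma^{1,\ldots,1}(f)$, whose points carry local algebra $\CC[t]/t^k$.

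Finally I would transport this statement across the local-algebra map $q\mapsto\mathrm{Spec}(A_{f,q})$: it sends the Morin stratum to the locus of curvilinear length-$k$ subschemes of $\CC^m$ (all isomorphic to $\mathrm{Spec}(\CC[t]/t^k)$), and the curvilinear component $\CHilb^k(\CC^m)$ is by definition the closure of this locus inside $\Hilb^k(\CC^m)$. Hence $\mathrm{Spec}(A)$, being the limit in $\Hilb^k(\CC^m)$ of the approximating curvilinear subschemes produced in the previous step, must lie in $\CHilb^k(\CC^m)$.

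The main subtlety I anticipate is in this last transfer: the Boardman closure is taken inside the jet space, whereas the target closure is taken inside the Hilbert scheme, and the map from jets to ideals is not a priori continuous across all strata. What makes the argument go through is the upper semicontinuity of the multiplicity on Thom--Boardman strata together with properness of $\Hilb^k(\CC^m)$, which guarantee that a limit of length-$k$ curvilinear subschemes in the Hilbert scheme is still a length-$k$ scheme sitting in $\CHilb^k(\CC^m)$.
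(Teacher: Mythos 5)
Your argument is essentially the paper's own: the paper likewise invokes Proposition \ref{prop:boardman} (whose proof contains exactly your peel-the-symbol-into-ones induction) to place the point $p$ in $\overline{\Sigma^{1,1,\ldots,1}(f)}$, and then transfers this to the Hilbert scheme, where the Morin locus corresponds to curvilinear length-$k$ subschemes and its closure is $\CHilb^k(\CC^m)$. The only difference is one of detail: you spell out the jet-space-to-Hilbert-scheme limit step (flatness/properness of the limit of the approximating curvilinear subschemes), which the paper simply asserts, and your justification there is of the right shape.
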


\section{Thom polynomials and multisingularity classes}\label{sec:thommulti}
 A natural generalisation of the multipoint problem addresses cohomological counts of multisingularity loci for generic maps. In this paper we only deal with the multipoint situation, but we give here a short survey on multisingularities to provide a wider context of the problem. In a forthcoming paper we address the multisingularity question. Thom polynomials are multisingularity classes for monosingularity types, and the conjecture of Rim\'anyi formulates the nontrivial fact that Thom polynomials are the building blocks of multisingularity classes. 

\subsection{Thom polynomials}

Let $f: M \to N$ be a complex analytic map between two complex manifolds, $M$ and $N$, of dimensions $m\le n$. We say that $p\in M$ is a {\em singular} point of $f$ if the rank of the differential $df_p:T_pM\to T_{f(p)}N$ is less than $m$. Recall that a singularity class $O \subset \mapd mn$ is a subset invariant under left-right equivalence. 
For a singularity $O$ and generic 
holomorphic map $f:M\to N$, we can define the set
\[\Sigma_O(f) = \{p\in M;\; f_p\in O\}, \] which is independent of 
coordinate choices on $M,N$. Assuming $M$
is compact and $f$ is sufficiently generic, $\Sigma_O(f)$ is an analytic subvariety of $M$ giving a Poincar\'e dual class $[\Sigma_O(f)]\in H^*(M,\mathbb{Z})$. This problem was first formulated by Ren\'e Thom (cf. \cite{thom, haef}) in the category of smooth varieties and smooth maps; in this case cohomology with $\ZZ/2\ZZ$-coefficients is used. Thom discovered
that to every singularity $O$ one can associate a bivariant characteristic class $\tau_O$, which, when evaluated on the pair $(TM,f^*TN)$ produces the Poincar\'e dual class $[\Sigma_O(f)]$.  One of the consequences of this result is that the class $[\Sigma_O(f)]$ depends only on the homotopy class of $f$.

The structure of the $\gnk$-action on $\mapd mn$ is rather complicated; even the parametrization of the orbits is difficult. However, as we saw in the oprevious section, there is a simple invariant on the space of orbits: to each map-germ $f=(f_1,\ldots, f_n):(\C^m,0)\to(\C^n,0)$, we can associate its local algebra $A_f=(x_1,\ldots, x_m)/(f_1,\ldots, f_n)$ defined as the quotient of the algebra of power series with no constant term by the ideal generated by the pull-back subalgebra $f^*(y_1\ldots,y_n)$. This algebra $A_{f}$ is trivial if the map-germ $f$ is nonsingular, and it $\mathcal{A,K}$-invariant. Hence, for a fixed nilpotent algebra $A$ with $n$ generators one can associate the invariant subset 
$\Sigma_A=\{f \in J(m,n):A_f \simeq A\}$,
the set of jets with local algebra isomorphic to $A$. The {\em Thom principle} in this holomorphic setting (cf \cite{kazariantalk,bsz}) for $\calo=\Sigma_A$  tells that there is a jet order $k$ depending only on $A$ such that  
\[ [\Sigma_A(f)]=\epd{\Sigma_A,J_k(m,n)}(f^*TN,TM),\]
is obtained by substituting the Chern roots of $TM,f^*TN$ into the $\GL(m) \times \GL(n)$-equivariant dual of $\Sigma_A$ in $J_k(m,n)$. This equivariant dual sits in the ring $\CC[x_1,\ldots, x_m,y_1,\ldots, y_n]^{S_m\times S_n}$ of bi-symmetric polynomials on $m+n$ variables. Damon and Ronga \cite{damon,sigma11} proved that $[\Sigma_A(f)]$ depends on $TM,f^*TN$ only through the Chern classes of the difference bundle $TM-f^*TN$, and hence $[\Sigma_A(f)]=\Tp_A(c_1,c_2,\ldots)$ is a polynomial in these classes, which we call the {\em Thom polynomial of $A$}. Calculation of these polynomials remains a major open problem ever since then, see e.g \cite{arnold, rimanyi, kazariantalk, rimanyifeher}.

\textbf{Toy example: the singularity locus.} The very first example should be the cohomological locus of singulariites, i.e the locus of points where $f$ is not of maximal rank.
\[\mathrm{Sing}(f)=\{p\in M: \mathrm{rank}(d_pf)<m\}\]
This locus corresponds to the algebra $A_1=\CC[t]/t^2$ and $f$ has the same singularity locus as its first jet, so we can take $k=1$. Then  
\begin{multline*}
\Sigma_{A_1}=\{A \in J_1(m,n):A_f \simeq \CC[t]/t^2\}=\{A\in \Hom(\CC^m,\CC^n) \dim \ker A=1\}=\\
=\{A\in \Hom(\CC^m,\CC^n): \exists! v\in\CC^n,\,v\neq 0:\,Av=0\}
\end{multline*}

Then $\Hom(\CC^m,\CC^n)=(\CC^n)^* \otimes \CC^m$ is acted on by $\GL(n)\times \GL(m)$. Let $\l_1,\ldots, \l_m$ and $\theta_1,\ldots, \theta_n$ denote the torus weights for the maximal tori $T^m \subset \GL(m)$ and $T^n \subset \GL(n)$, respectively. 
Then the $T^m \times T^n$ torus weight under the left-right action on the $(i,j)$ entry is $\theta_j-\lambda_i$.  
We have a natural $T^m$-equivariant fibration $\pi:\overline{\Sigma_{A_1}}
\rightarrow \PP^{m-1}$ and the fiber over a point $[v]\in \PP^{m-1}$ is the
linear subspace $W_v=\{A;\;Av=0\}\subset\Sigma_{A_1}$.

The torus fixed-points on $\PP^{m-1}$ correspond to the coordinate axes, let $p_1\ldots p_m$ denote these. The tangent weights at $p_i$ are 
$\{\lambda_s-\lambda_i;\;s\neq i\}$. The fiber at $p_i$ is the set of matrices $A$ with all entries in the
$i$th column vanishing. This is a linear subspace in $\Hom(m,n)$, whose equivariant dual is the product of the normal weights:
\[\epd{W_i,\Hom(m,n)}=\prod_{j=1}^n(\theta_j-\lambda_i)\]
Hence the $T^m \times T^n$-equivariant dual of $\Sigma_{A_1}$ is given by the Atiyah-Bott formula:
\[\Tp(\Sigma_{A_1})=\sum_{i=1}^m\frac{\prod_{j=1}^n(\theta_j-\lambda_i)}{\prod_{s\neq
i}(\lambda_s-\lambda_i)}=\res_{q=0}\frac{\prod_{j=1}^{n}(1+q\theta_j)}
{\prod_{i=1}^{m}(1+q\lambda_i)}\,\frac{dq}{q^{n-m+2}}=c_{n-m+1}(TM-f^*TN)
\]
Here we used the simple but crucial observation that the residues of the rational differential form
\[
-\frac{\prod_{j=1}^{m}(\theta_j-z)}
{\prod_{i=1}^{n}(\lambda_i-z)}\,dz.\] 
at finite poles: $\{z=\lambda_i;\;i=1\ldots m\}$ exactly recover the
terms of the Atiyah-Bott sum. Apply the residue theorem, and change variables $z=-1/q$.

\subsection{Multisingularity classes}

To every multisingularity class $\a=(\a_1,\ldots, \a_s)$ one can assign the corresponding multisingularity cycle $M(\a)$ of a given map $f:M \to N$ of relative dimension $l=\dim(N)-\dim(M)$. This cycle is defined as a subvariety in the Cartesian product $M^s=M\times \ldots \times M$, given as the closure of the set of tuples $(x_1,\ldots, x_s) \in  M^s$ of pairwise distinct points, $x_i \neq x_j$ for $i\neq j$, such that $f(x_1)=\ldots =f(x_s)$ and $f$ has a singularity of the given type $\a_i$ at $x_i$ for $i=1,\ldots, s$. Under some genericity condition $M(\a)$ has the expected dimension equal to $\dim(M)-\codim \a_i-(r-l)l$, where $\codim \a_i$ is the expected codimension of the cycle of the local singularity $\a_i$ in the source manifold $M$ of the map and is equal to the true codimension of the variety of this singularity in the space of jets of the maps.
   
Let us consider the natural projections $p_M :M(\a)\to M$ and $p_N: M(\a)\to N$ defined by the projection on the first factor of the Cartesian product $M^s$ and by the map $f$, respectively. If $f$ is proper, then the images of these projections are closed singular subvarieties in $M$ and $N$, respectively. The theory of characteristic classes of multisingularities describes the cohomology classes $\overline{m}_\a \in H^*(M)$ and $
\overline{n}_\a \in H^*(N)$, dual to the images $p_M(M(\a)) \subset M$ and $p_N(M (\a))$, respectively (regarded as singular reduced subvarieties).

We often consider the classes $m_\a$ and $n_\a$ with their natural multiplicities, namely, we set
\[m_\a = \#\mathrm{Aut}(\a_2,\ldots ,\a_s)\bar{m}_\a, n_\a = \# \mathrm{Aut}(\a_1, \ldots, \a_s)\bar{n}_\a\]
where $\#\mathrm{Aut}(\a_1,\ldots, \a_s) = \# \mathrm{Aut}(\a)$ is the number of permutations $\s \in S_s$ such that $\a\s(i) = \a(i)$ for $i = 1,\ldots ,s$. If $\a$ contains $k_j$ singularities of the same type $\b_j, j = 1,2,\ldots $ with pairwise distinct $\b_j$ , then $\# \mathrm{Aut}(\a)=k_1! \cdot k_2! \cdots$. These multiplicities are also equal to the degrees of the map of the multisingularity cycle $M(\a)$ onto its image under the action of the projections $p_M$ and $p_N$, respectively.
We note that the map $f$ induces a map $p_M(M(\a))\to p_N(M(\a))$. The degree of this map is equal to the number of singularities $\a_1$ in the tuple $\a = (\a)_1, \ldots , \a_r)$, hence $n_\a = f_* m_\a \in H^*(N)$,
where $f_*$ is the Gysin homomorphism (which is well defined if the map f is proper). 

Kazarian \cite{kazarian} formulates a conjecture which expresses the classes $m_\a$ and $n_\a$ in terms of some characteristic classes related to the map $f$. These classes include, in particular, the Chern classes $c_i(TM) \in H^*(M)$ and $c_i(TN) \in H^*(N)$ themselves of the manifolds. One can obtain more classes from these by applying the pull-back and push-forward homomorphisms $f^* :H^*(N) \to H^*(M)$ and $f_* : H^*(M) \to H^*(N)$. In fact, the final expressions include only certain combinations of these classes, defied as follows.
\begin{itemize} 
\item The relative Chern classes $c_i(f) = c_i(f^*TN -TM)$ of the map $f$ are defined on the source manifold $M$. 
\item The Landweber-Novikov classes on the target manifold $N$:
\[s_I(f)=f_*(c_1(f)^{i_1}c_2(f)^{i_2} \ldots) \in H^{2\deg s_I}(N) \text{ with } \deg s_I =l+\deg c_I =l+i_1+2i_2+\ldots \]
\end{itemize}


\begin{definition} For a multisingularity $\a = (\a_1,\ldots ,\a_s)$ the residual polynomial $R_\a$ is the polynomial in the classes $c_i$ equal to the constant term of the polynomial $m_\a$ in the variables $s_I$. In other words, $R_\a$ is obtained from $m_\a$ if one equates all the generators $s_I$ to zero in the latter polynomial.
\end{definition}
The degree of $R_\a$ is equal to the codimension of the corresponding cycle of multisingularities in $M$ , that is, 
\[\deg R_\a = \sum \codim \a_i+(s-1)l\]
We denote by $S_\a = s(R_\a)$ the linear combination of Landweber-Novikov classes that corresponds to the polynomial $R_\a$. If $f:M\to N$ is a holomorphic proper map, then, as explained above, the polynomials $R_\a$ and $S_\a$ determine the corresponding characteristic cohomology classes on $M$ and $N$, respectively.
\begin{conjecture}[Kazarian] The polynomials $m_\a$ and $n_\a$ are uniquely determined by the residual polynomials $R_{\a_J} = R_{\a_{j_1},\ldots ,\a_{j_t}}$ for all possible subsets $J = \{j_1, \ldots , j_t\} \subset \{1, \ldots , s\}$. More explicitly, the following relations hold:
\[m_\a = \sum_{J_1 \sqcup \ldots \sqcup J_m=\{1,\ldots ,s \}} R_{\a_{J_1}} S_{\a_{J_2}} \ldots S_{\a_{J_m}}.\]
\[n_\a=s(m_\a)=\sum_{J_1 \sqcup \ldots \sqcup J_m=\{1,\ldots ,s \}} S_{\a_{J_1}} \ldots S_{\a_{J_m}}\]
The subset containing the element $1 \in \{1,\ldots, s\}$ is denoted by $J_1$. 
\end{conjecture}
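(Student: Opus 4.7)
The plan is to adapt and extend the Hilbert-scheme strategy summarized in Section~\ref{sec:strategy} to the multisingularity setting, recognizing that the multipoint case (Theorem~\ref{mainthm1}) is the degenerate instance in which every $\alpha_i$ is the trivial (smooth) class, so the right-hand side of Kazarian's formula reduces to the sieve of Theorem~\ref{mainthm1} with $R_q$ playing the role of $R_{\alpha_J}$ for $|J|=q$. The geometric model I would build is the closure in an ordered Hilbert-type scheme of configurations of $s$ pairwise disjoint subschemes $\xi_1,\ldots,\xi_s\subset M$, where $\xi_i$ represents the local algebra $A_{\alpha_i}$ at a point $x_i$, subject to $f(\xi_1)=\ldots=f(\xi_s)$ in $N$. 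By the Thom--Boardman structure of Section~\ref{sec:background} and Mather's Theorem~\ref{propstable}, for a Thom--Boardman generic $f$ this cycle is a local topological complete intersection cut out by a section of a tautological quotient of $(f^*TN)^{[\bullet]}$, directly generalizing diagram~\eqref{whatwewant} and giving $m_\alpha,n_\alpha$ as push-forwards of tautological Euler classes in the spirit of~\eqref{strategytwo}.

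Next I would run an extension of the $K$-theoretic sieve of Step~1 of the strategy, now indexed by the partition lattice $\Pi(\{1,\ldots,s\})$. For a block $J\subset\{1,\ldots,s\}$, ``merging'' the corresponding clusters produces a punctual stratum supported on a single point carrying the product local algebra $\bigotimes_{i\in J}A_{\alpha_i}$, and by the first and second residue-vanishing theorems (Steps~3 and~5) the leading contribution of this stratum should reduce to an integral over the curvilinear locus of the punctual Hilbert scheme whose value is exactly the residual polynomial $R_{\alpha_J}$. In the sieve one distinguished block $J_1$ (the one containing the index $1$) remains on $M$, while every other block $J_r$ is transported to $N$ by the analogue of $f^{[\bullet]}_*$; this push-forward converts $R_{\alpha_{J_r}}$ into the Landweber--Novikov class $S_{\alpha_{J_r}}=f_*(R_{\alpha_{J_r}})$, producing exactly
\[
m_\alpha \;=\; \sum_{J_1\sqcup\ldots\sqcup J_m=\{1,\ldots,s\}} R_{\alpha_{J_1}}\,S_{\alpha_{J_2}}\cdots S_{\alpha_{J_m}},
\]
and the identity for $n_\alpha$ follows by applying $f_*$ and using $n_\alpha=f_*m_\alpha$.

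The main obstacle, and presumably the reason this result is deferred to a forthcoming paper, is the correct local model and residue-vanishing analysis when the algebras $A_{\alpha_i}$ are not of Morin type $\CC[t]/t^k$. Outside the Morin family the punctual Hilbert scheme has several components and need not be dominated by the curvilinear one; by the Corollary at the end of Section~\ref{sec:background}, only those $\alpha_i$ whose point $\Spec(A_{\alpha_i})$ lies in the curvilinear component have nonzero Thom polynomial, but the combinatorics of merging distinct singular blocks into a product algebra $\bigotimes_{i\in J}A_{\alpha_i}$ may push the merged spectrum out of the curvilinear component. One therefore has to prove a strengthened second residue-vanishing theorem on an enlarged blow-up fibered over $\flag_{\bullet}(TM)$ showing that, even so, a single distinguished torus-fixed point of the ambient $\grass_{\bullet}(S^\bullet T\CC^m)$ carries the equivariant integral. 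Controlling simultaneously this geometric uniqueness and the automorphism multiplicities $\#\mathrm{Aut}(\alpha)$ that arise when identical singularity types are permuted between blocks is where the real work lies; the multipoint theorem of this paper is exactly the case where both difficulties collapse, since $A_{\alpha_i}=\CC$ forces every merged algebra to be $\CC[t]/t^{|J|}$, which is curvilinear, and $\mathrm{Aut}(\alpha)=S_s$ is handled by the explicit multinomial coefficient in~\eqref{eq:sieve}.
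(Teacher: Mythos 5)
The statement you are trying to prove is Kazarian's conjecture, which this paper does \emph{not} prove: it is stated in \S\ref{sec:thommulti} as a conjecture, and the paper only establishes its multipoint specialization (Theorems \ref{mainthm1} and \ref{mainthm2}, where every $\a_i$ is the trivial singularity $A_0$), explicitly deferring the general multisingularity case to a forthcoming paper. So there is no proof in the paper to compare against, and your proposal should be judged as a stand-alone argument. As such it is a programme rather than a proof: the two steps on which everything hinges --- the local model of the $\a$-multisingularity cycle near its punctual locus, and a residue-vanishing statement strong enough to reduce the deepest sieve term to a single distinguished locus/fixed point --- are precisely the steps you leave open. The paper's machinery genuinely requires the ``properly supported'' hypothesis of Definition \ref{geocond}, i.e.\ that the support meets the punctual Hilbert scheme only inside the curvilinear component and is locally irreducible there; for non-Morin blocks this fails in general (the relevant $\Spec(A)$ may lie in an exotic smoothable component, and the local irreducibility argument of Propositions \ref{prop:kporteous}--\ref{prop:hilbfloc}, which is special to Porteous/Morin geometry, has no analogue), so the First and Second Residue Vanishing Theorems cannot simply be invoked.

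Two further concrete problems. First, when a block $J$ of clusters carrying algebras $A_{\a_i}$ collides, the limiting local algebra is \emph{not} $\bigotimes_{i\in J}A_{\a_i}$: the limits depend on the directions and rates of collision and sweep out a positive-dimensional family of algebras of the appropriate colength, so your identification of the merged punctual stratum with a single product algebra is incorrect, and with it the claim that its contribution ``is exactly $R_{\a_J}$''. Second, that claim is in any case circular as stated: in Kazarian's conjecture $R_{\a_J}$ is \emph{defined} as the constant term of $m_{\a_J}$ in the Landweber--Novikov classes, so the nontrivial content is that one and the same locally-defined polynomial reappears multiplicatively (after $f_*$, as $S_{\a_J}$) in every block of every partition; proving this requires an independent computation of the deepest sieve term --- which in the multipoint case the paper achieves via the curvilinear integral and the shifted $A_{q-1}$ Thom polynomial --- and your sketch supplies no substitute for that computation when the $\a_i$ are singular. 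Until the local model, the strengthened vanishing theorem, and this identification are actually established, the argument does not prove the conjecture.
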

In particular, this defines $R_\a$ recursively as the unique polynomial satisfying the relation
\[R_{\a}=\Sigma_{J_1 \sqcup \ldots \sqcup J_s=\{1,\ldots ,k\}} (-1)^{s-1}(s-1)!m_{\a_{J_1}} n_{\a_{J_2}} \ldots n_{\a_{J_s}}.\]

\section{Hilbert schemes on smooth varieties}\label{sec:hilbertschemes}

\subsection{Components and punctual components}
Let $X$ be a smooth projective variety of dimension $m$ and let  
\[\Hilb^k(X)=\{\xi \subset X:\dim(\xi)=0,\mathrm{length}(\xi)=\dim H^0(\xi,\calo_\xi)=k\}\]
denote the Hilbert scheme of $k$ points on $X$ parametrizing all length-$k$ subschemes of $X$. For $p\in X$ let 
\[\Hilb^{k}_p(X)=\{ \xi \in \Hilb^k(X): \mathrm{supp}(\xi)=p\}\]
denote the \textit{punctual Hilbert scheme} consisting of subschemes supported at $p$. Let 
\[\rho: \Hilb^k(X) \to \Sym^kX,\  \xi \mapsto \Sigma_{p\in X}\mathrm{length}(\calo_{\xi,p})p\]
denote the Hilbert-Chow morphism. Then $\Hilb^k(X)_p=\rho^{-1}(kp)$.

The Hilbert scheme of $k$ points on smooth surfaces is a $2k$ dimensional nonsingular variety \cite{fogarty}. It occupies a central position in many important branches of mathematics and mathematical physics. In particular, it plays a pivotal role in enumerative geometry,  see e.g \cite{nakajima}. The cohomological intersection theory of the Hilbert scheme of points on surfaces can be approached from several different directions: 1) via the inductive recursions set up in \cite{egl, esa,esa2}; 2) using Nakajima calculus \cite{nakajima,groj,lehn} or more recently 3) virtual localisation on Quot schemes \cite{virtual, mop2}. \textit{Lehn's conjecture} \cite{lehn} on top Segre numbers of tautological line bundles incapsulates the complexity of this theory.

While Hilbert schemes on surfaces are well-understood and broadly studied, the Hilbert scheme of points on manifolds of dimension three or higher are extremely wild objects with several unknown irreducible components and bad singularities \cite{vakil}, and our undertanding of them is very limited. 

In enumerative geometry applications we are mostly interested in the \textit{geometric component} (also called the main component, or smoothable component) of the Hilbert scheme $\Hilb^k(X)$. This component, which we denote by $\GHilb^k(X)$ is the closure of the open locus formed by the reduced points in $\Hilb^k(X)$, that is, those supported at $k$ distinct points on $X$. This is an irreducible, but highly singular component of dimension $km$. Ekedahl and Skjelnes \cite{ekedahl} constructs the geometric component as a certain blow-up along an ideal of $\Sym^k X$--this is the generalisation of the classical result of Haiman \cite{haiman} on surfaces. 

In this paper we develop iterated residue formulas for certain \textit{tautological integrals} on the geometric component. The idea is to reduce integration to the \textit{punctual curvilinear component}, which, in turn, is a projective compactification of a non-reductive quotient: the moduli of $k$-jets in $X$ up to polynomial reparametrisatons. 
A subscheme $\xi \in \Hilb^k_p(X)$ is called curvilinear if $\xi$ is contained in some smooth jet of a curve $\calc_p \subset X$ at $p$.  Equivalently, $\xi$ is curvilinear if $\calo_\xi$ is isomorphic  to the $\CC$-algebra $\CC[z]/z^{k}$.
The \textit{punctual curvilinear locus} at $p\in X$ is the set of curvilinear subschemes supported at $p$: 
\begin{multline}\nonumber 
\mathrm{Curv}^k_p(X)=\{\xi \in \Hilb^k_p(X): \xi \subset \mathcal{C}_p \text{ for some smooth curve } \mathcal{C} \subset X\}=\\
=\{\xi \in \Hilb^k_p(X):\calo_\xi \simeq \CC[z]/z^{k}\}
\end{multline}

We call the closure $\CHilb^k_p(X)=\overline{\mathrm{Curv}^{k}_p(X)}$ the \textit{punctual curvilinear component} supported at $p$. $\CHilb_p^k(X)$ is an irreducible component of the punctual Hilbert scheme $\Hilb^k_p(X)$ of dimension $(k-1)(m-1)$. On a smooth surface $S$, $\CHilb^k_p(S)=\Hilb^k_p(S)$ according to Brianchon. Moreover, 
\[\GHilb^k_p(S)=\Hilb_p^{[k]}(S) \cap \GHilb^k(S)\]
is irreducible and equal to the full punctual Hilbert scheme.  However, for $\dim(X)\ge 3$ $\GHilb^k_p(S)$ is typically not irreducible; its components are called the \textit{smoothable components} of $\Hilb_p^{k}(X)$. The punctual curvilinear component $\CHilb^k_p(X)$ is a smoothable component. On the other hand, the  Iarrobino-type punctual components \cite{iarrobino} are not necessarily smoothable: their dimension can be higher than $km$. Description of the smoothable components is a hard problem and is unknown in general.  It was an open question until recently whether there exist divisorial components of $\GHilb^k_p(S)$, that is, components $C_p \subset R_p^{[k]}$ whose sweep $\cup_{p\in X} C_p$ over $X$ is a divisor of the good component. The dimension of this $C_p$ is necessarily $m(k-1)-1$.  Erman and Velasco in \cite{erman} give affirmative answer to this question when $d\ge 4, k\ge 11$. A beautiful illustration of the components and structure of $\Hilb^k(X)$ can be found in the PhD thesis of Jelisiejew \cite{jel}, called the \textit{'Bellis Hilbertis'}.

To sum up, when $\dim(X)\ge 3$, and the number of points $k$ is large enough, the punctual Hilbert scheme $\Hilb^k_p(X)$ is not necessarily irreducible or reduced, and it has smoothable and non-smoothable components. Among the smoothable components, there is a distinguished one, the curvilinear component, we will call all other smoothable components  \textit{exotic components}.

\subsection{Tautological bundles and integrals} Let $X$ be a smooth projective variety of dimension $m$, and let $V$ be a rank $r$ algebraic vector bundle on $X$. Let $\Hilb^k(X)$ denote  the Hilbert scheme of length $k$ subschemes of $X$ and let $V^{[k]}$ be the corresponding tautological rank $rk$ bundle on $\Hilb^k(X)$ whose fibre at $\xi \in \Hilb^k(X)$ is $H^0(\xi,V|_\xi)$. 

 Let $\Phi(c_1,\ldots, c_{rk})$ be a Chern polynomial in $c_i=c_i(V^{[k]})$ of weighted degree equal to $\dim \GHilb^k(X)=km$. The Chern numbers 
\[\int_{\GHilb^k(X)} \Phi(V^{[k]})\]
are called tautological integrals of $V^{[k]}$ on the geometric component. Rennemo \cite{rennemo} shows that for any $\Phi$ there is a universal polynomial $P_{\Phi}$ (independent of $X$) such that the integral $\int_{\GHilb^k(X)} \Phi$ is obtained by substituting the Chern roots of $X$ and $V$ into $P_{\Phi}$. Over the curvilinear component $\CHilb^k(X)$ closed formula for $P_\Phi$ is developed in \cite{b0}.

\section{Residue formula for tautological integrals on Hilbert schemes}\label{sec:tauintegral}

A central result of this paper is an iterated residue formula for certain tautological integrals on the main component $\GHilb^k(M)$. We state and explain the formula in this section, and prove it in   \S \ref{sec:prooftauintegrals}. This argument is then extended to push-forwards along holomorphic maps in \S \ref{push-forwards}, which leads us to the proof of our main theorems. 
Before presenting the proof of Theorem \ref{mainthm1} and Theorem \ref{mainthm2} in full generality in \S \ref{sec:proofmain}, we explain the formula first when the number of points is $2$ and $3$. For these low number of points (and more generally, when $k\le 13$ and hence the punctual Hilbert scheme $\GHilb^k(\CC^m)=\CHilb^k(\CC^m)$ is irreducible) we do not have to deal with all technical difficulties, but they show they already exhibit most of the key steps of our strategy. 

\subsection{Hilbert scheme of $2$ points}

For $k=2$ the geometric component $\GHilb^2(M)=\Hilb^2(M)$ is equal to the full Hilbert scheme, and the punctual part is irreducible, consisting only of the curvilinear component $\CHilb^2_0(\CC^n) \simeq \PP^{n-1}$. Hence Condition \ref{geocond} automatically holds, and Theorem \ref{tauintegral} reads as follows. 
\begin{proposition}[\textbf{Integration over $\GHilb^2(M)$}]
	Let $V$ be a rank-$r$ bundle over $M$, and $\Phi$ be a symmetric 
	polynomial of degree $2m$ in $2r$ variables. Then we have
	\[\int_{\GHilb^2(M)}\Phi\left(V^{[2]}\right)=\int_M \res_{z=\infty} \Phi\left(V\oplus V(z)\right)\,
	s_M\left(\frac{1}{z}\right)\,\frac{dz}{z^{m+1}}+\int_{M \times M} \Phi\left(V_1 \oplus V_2\right)\]
where 
\begin{itemize}
	\item $V_i$ is the bundle $V$ pulled back from the $i$th copy of $M$ for $i=1,2$,
	\item $V(z)$ stands for the bundle $V$ tensored by the line $\mathbb{C}_z$, which is the representation of a torus $T$ with weight $z$. Then 
	\[ \Phi(V\oplus V(z)) =  \Phi(\theta_1,z+\theta_1,\theta_2,z+\theta_2,\dots,\theta_r,z+\theta_r)
	\]
	\item $s_M\left(\frac{1}{z}\right)$ is the Segre series of $M$.
\end{itemize}
\end{proposition}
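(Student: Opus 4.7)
For $k=2$ everything collapses to the simplest possible instance of the strategy in \S\ref{subsec:reformulation}: the ordered geometric Hilbert scheme is just the blow-up $\GHilb^2(M) = \mathrm{Bl}_\Delta(M\times M)$, its punctual part is the exceptional divisor $E \cong \PP(TM)$, and $\CHilb^2_0(\CC^m) = \PP^{m-1}$ is already irreducible, so the proper-support condition of Definition~\ref{geocond} is automatic and the sieve of Step~1 has only the two terms indexed by the partitions of $\{1,2\}$.

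The plan is to split $\Phi(V^{[2]}) = \Phi_U + \Phi_E$ into a class supported on the open reduced locus $U \cong (M\times M)\setminus\Delta$ and one compactly supported in a tubular neighborhood of $E$. On $U$ the Hilbert--Chow morphism is an isomorphism, $V^{[2]}$ pulls back to the external sum $V_1 \oplus V_2$, and since $\Delta$ has positive real codimension the open contribution gives exactly the second summand $\int_{M\times M}\Phi(V_1\oplus V_2)$ of the formula.

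For the contribution near $E$ I would apply the Chern--Weil reduction of Step~2 to pass to the universal $T^m$-equivariant model on $\CC^m$, and then use Atiyah--Bott localization on $\PP^{m-1}$. At each of the $m$ torus-fixed points $[e_i] \in \PP^{m-1}$ the corresponding subscheme is $\Spec\CC[t]/t^2$ along the $i$th axis, so equivariantly $V^{[2]}|_{[e_i]} = V(0) \oplus V(\theta_i)$, and the tangent weights to $\PP^{m-1}$ at $[e_i]$ are $\{\theta_j - \theta_i : j\ne i\}$. Combining this with the Segre-series contribution of the normal bundle to $E$ inside the blow-up produces, up to signs, the fixed-point sum
\[
\sum_{i=1}^m \frac{\Phi\bigl(V \oplus V(\theta_i)\bigr)\, s_M(1/\theta_i)}{\theta_i^{m+1}\,\prod_{j\ne i}(\theta_j - \theta_i)},
\]
understood as a cohomology class on $M$ via Chern--Weil and then integrated over $M$.

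Finally, exactly as in the toy singularity-locus calculation at the end of \S\ref{sec:background}, these $m$ summands are the residues at the finite poles $z=\theta_i$ of the single rational $1$-form $\Phi(V\oplus V(z))\, s_M(1/z)\, dz/z^{m+1}$, so the residue theorem identifies their sum with $\pm\res_{z=\infty}$ of the same form, giving the first summand of the announced formula. The most delicate point I anticipate is the Segre-class bookkeeping for $N_{E/\mathrm{Bl}_\Delta(M^2)}$: this is precisely what produces the factor $s_M(1/z)/z^{m+1}$ inside the residue, and pinning down the exact signs and normalizations there (together with the absence of a $\tfrac{1}{2}$ on the second summand, which reflects the ordered convention for $\GHilb^2$) is the main place where one has to be careful.
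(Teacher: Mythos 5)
Your plan is workable, and for the punctual contribution it takes a genuinely different route from the paper: the paper first writes $\int\Phi(V^{[2]})=\int[\Phi(V^{[2]})-\Phi(V^{[1,1]})]+\int\Phi(V^{[1,1]})$ with $V^{[1,1]}=\hch^*(V_1\oplus V_2)$, reduces the first term to $\CHilb^2_0\cong\PP(TM)$ by the Thom isomorphism, and then applies a family version of Kalkman's formula for the $\CC^*$-quotient $\PP(TM)=(\nabla_M\setminus 0)/\CC^*$, which yields the residue at infinity over $\Delta M$ in one step, with no fixed-point enumeration; your route (Chern--Weil reduction, fiberwise Atiyah--Bott on $\PP^{m-1}$, then the residue theorem as in the toy example of \S\ref{sec:thommulti}) gives the same answer and in fact parallels what the paper does for general $k$. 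But your first step has a real gap as stated: a closed form cannot in general be split into closed pieces $\Phi_U+\Phi_E$ supported on $U$ and near $E$, and ``$\Delta$ has positive codimension'' does not identify the open contribution with $\int_{M\times M}\Phi(V_1\oplus V_2)$. What makes this precise is exactly the comparison class $V^{[1,1]}$: since $\hch$ is an isomorphism away from $E$ and $V^{[2]}\cong\hch^*(V_1\oplus V_2)$ there, the difference $\Phi(V^{[2]})-\Phi(V^{[1,1]})$ can be represented by a form supported in a tubular neighborhood of $E$, while $\int_{\GHilb^2(M)}\Phi(V^{[1,1]})=\int_{M\times M}\Phi(V_1\oplus V_2)$ because $\hch$ is proper and birational; the subtracted term must then be carried through the local computation and only drops at the very end, because it is $z$-independent and contributes nothing to the residue at infinity.

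Second, your displayed fixed-point sum is not the Atiyah--Bott sum. The localization variables are the Chern roots $\lambda_1,\dots,\lambda_m$ of $TM$ (your $\theta_i$ clash with the Chern roots of $V$ in the statement), and at $[e_i]\in\PP^{m-1}$ the correct denominator is $\lambda_i\prod_{j\ne i}(\lambda_j-\lambda_i)$: the lone factor $\lambda_i$ is the weight of the normal line bundle $N_E\cong\calo(-1)$ of the exceptional divisor (the paper's $u_z=z$), and $\prod_{j\ne i}(\lambda_j-\lambda_i)$ are the tangent weights of the fiber. No Segre factor belongs at a fixed point: the factor $s_M(1/z)\,z^{-(m+1)}$ appears only after you convert the finite-pole sum, via the residue theorem, into $\res_{z=\infty}$ of $[\Phi(V\oplus V(z))-\Phi(V\oplus V)]\,dz/(z\prod_{j}(z+\lambda_j))$ (up to the sign conventions for the residue at infinity) and expand $\prod_j(z+\lambda_j)^{-1}=s_M(1/z)\,z^{-m}$; so it comes from the Euler class of $TM$, i.e.\ of the normal bundle of the diagonal $\Delta M\subset M\times M$ (the paper's $c_m(\nabla^z_{\Delta M})$), not from $N_{E}$ as you suggest. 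Note also that dropping the subtraction term at the fixed-point stage is not harmless: without it the rational form acquires a pole at $z=0$ with residue $\Phi(V\oplus V)/\prod_j\lambda_j$, so the sum over the poles $z=\lambda_i$ would no longer equal the residue at infinity. With these two repairs your argument closes and reproduces the stated formula.
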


\begin{proof}
Let $\hch:\GHilb^2(M)\to M\times M$ be the Hilbert-Chow morphism, and introduce the notation $V^{[1,1]} = \hch^*(V_1 \oplus V_2)$.	We begin by writing 
	\[\int_{\GHilb^2(M)}\Phi(V^{[2]})=
	\int_{\GHilb^2(M)}\left[ \Phi(V^{[2]})-\Phi(V^{[1,1]})\right] +
	\int_{\GHilb^2(M)}\Phi(V^{[1,1]}),
	\]
	and observing that we can rewrite the second term as follows:
	\[ \int_{\GHilb^2(M)}\Phi(V^{[1,1]}) = \int_{M \times M} \Phi\left(V_1 \oplus V_2\right). \]
	As to the first term, we note that the integrand may be represented as a form supported in a neighborhood of $\CHilb^2_0$ which is, in turn, isomorphic to the normal bundle $\nabla_0\to\CHilb^2_0$ (a line bundle in this case) of the punctual Hilbert scheme in the entire Hilbert scheme. According to the Thom isomorphism theorem, we have $\hcpt(\nabla_0)\simeq H^*(\CHilb^2_0)\cdot u$, where $u$ is the compactly supported Thom class. We thus have
	\begin{equation*}
		\int_{\GHilb^2(M)}\left[ \Phi(V^{[2]})-\Phi(V^{[1,1]})\right] =
		\int_{\nabla_0}\left[ \Phi(V^{[2]})-\Phi(V^{[1,1]}\right] =
		\int_{\CHilb^2_0} \frac{\Phi(V^{[2]})-\Phi(V^{[1,1]})}{u}
	\end{equation*} 
	
	Now we use the family version of Kalkman's formula: we represent $\CHilb^2_0$ as a $T=\C^*$ quotient of the normal bundle $\nabla_{ M}$ of $\Delta M\subset M\times M$. We lift the bundles $V^{[2]}$ and $V^{[1,1]}$ to equivariant bundles $V_z^{[2]}$ and $V_z^{[1,1]}$ on $\nabla_{ M}$, the Euler class $u$ to $u_z$, and then the formula reads
	\[ \int_{\CHilb^2_0} \frac{\Phi(V^{[2]})-\Phi(V^{[1,1]})}{u}
	=  \int_{\Delta M} \res_{z=\infty} \frac{\Phi(V_z^{[2]})-\Phi(V_z^{[1,1]})}{u_z\cdot c_m(\nabla^z_{\Delta M})},
	\]
	where all the objects under the integral sign are restricted to $\Delta M$, the zero-section of $\nabla_{ M}$; identifying $\Delta M$ with $M$, these restrictions are as follows:
\begin{itemize}
	\item $V_z^{[2]}=V\oplus V(z)$
	\item  $V_z^{[1,1]}=V^{[1,1]}$
	\item $u_z=z$
	\item $c_m(\nabla^z_{ M})=\prod_{i=1}^{m}(z+\lambda_i)$, where the $\lambda$s are the Chern roots of $\nabla_{ M}$.
\end{itemize}
	Thus we have
	\[  \int_{\CHilb^2_0} \frac{\Phi(V^{[2]})-\Phi(V^{[1,1]})}{u}
	= \int_{\Delta M} \res_{z=\infty} \frac{\Phi(V\oplus V(z))-\Phi(V^{[1,1]})}{z\cdot \prod_{i=1}^{m}(z+\lambda_i)} \]
	Finally, we observe that the second term of the numerator is independent of $z$, and thus does not contribute to the residue. We  thus arrive at the final formula
\[	\int_{\GHilb^2(M)}\left[ \Phi(V^{[2]})-\Phi(V^{[1,1]})\right] =	\int_M \res_{z=\infty} \Phi\left(V\oplus V(z)\right)\,
	s_M\left(\frac{1}{z}\right)\,\frac{dz}{z^{m+1}}, \]
and this completes the proof.
\end{proof}	
	
Now we extend this formula to the case when the push-forward to the 
point is replaced by a pushforward along a smooth map between varieties, 
$f:M\to N$. 
Notation:
\begin{itemize}
	\item We will use the standard notation $f_*[\Psi]$ for the push-forward 
	of a cohomology class $\Psi\in H^*(M)$. Occasionally, we will indicate the 
	source space in the index for clarity, as in  $f_*[\Psi]_M$. 
	\item We will write $ f^{\times2}: M \times M\to N \times N$, and $ f^{[2]} =  f^{\times2}\circ\hch$
	\item The embeddings of the diagonals will be denoted by $\iota_{\Delta M}:\Delta M\hookrightarrow M\times M$ and $i_\Delta: \Delta N\hookrightarrow N \times N$.
\end{itemize}

Then, with the same proof as in Step 1, we arrive at the following formula:
\begin{equation}\nonumber
  f^{[2]}_*[\Phi(V^{[2]})]_{\GHilb^2(M)} =
f^{\times2}_* [\Phi(V_1 \oplus V_2)]_{M \times M} + \iota_{\Delta N*} f_* \left[\res_{z=\infty} \Phi\left(V\oplus V(z)\right)\,
s_M\left(\frac{1}{z}\right)\,\frac{dz}{z^{m+1}}\right] 
\end{equation}


A key observation is that this generalized setup has an added flexibility: the coefficient ring of the push-forward become the cohomology of $N$. One way this could appear is the following situation: Suppose that $V^{[2]}$ has a subbundle of the form $f^{[2]*}W\hookrightarrow V^{[2]}$ for some bundle $W$ over $N\times N$ and this is consistent with an embedding $f^{\times2*}W\hookrightarrow V_1\oplus V_2$. For this case we can write
\begin{equation}\label{relative2pt}
f^{[2]}_*\left[\Phi\left(\frac{V^{[2]}}{f^{[2]*}W}\right)\right]_{\GHilb^2(M)} =
f^{\times2}_* \left[\Phi\left(\frac{V_1 \oplus V_2}{f^{\times2*}W}\right)\right]_{M \times M} + \iota_{\Delta N*} f_* \left[\res_{z=\infty} \Phi\left(\frac{V\oplus V(z)}{f^*\iota_{\Delta N}^*W}\right)\,
s_M\left(\frac{1}{z}\right)\,\frac{dz}{z^{m+1}}\right] 
\end{equation}

Step 3. Now we apply this to our setup. We define the 2-point locus in $N$ as 
$f_*[\Hilb^2(f)]$ and our goal is to evaluate this cohomology class in $N$.
As $f(\Hilb^2(f))$ is a subset of the diagonal $\Delta N\subset N\times N$, our calculations will take place in a neighborhood of $\Delta N$, which we will identify with the normal bundle $\nabla_{ N}$ of $\Delta N$ in $N\times N$, and in compactly supported cohomology in the open set $U=\left(f^{[2]}\right)^{-1}\nabla_{ N}$.

The fundamental geometric statement is as follows: 
\begin{proposition}\label{prop:sectionk=2}
	Let  $T_v\to\nabla_N$ be the tautological pull-back bundle on $\nabla_{N}$  and $ T_h=\pi^*T\Delta M\to\nabla_N $ be the pull-back from the base. Then there is a bundle embedding  $f^{[2]*}T_h\hookrightarrow f^*TN^{[2]}$  consistent with an embedding $f^{\times2*}T_h\hookrightarrow f^*TN_1\oplus f^*TN_2$. In addition, the quotient bundle
	$f^*TN^{[2]}/f^{[2]*}T_h$ has a transversal section presenting  $\Hilb^2(f)$ as a complete intersection in $U$.
\end{proposition}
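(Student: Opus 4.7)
The plan is to realize $\Hilb^2(f)$ as the transverse zero locus of a ``difference section'' on $U = (f^{[2]})^{-1}(\nabla_N)$, a section that on the reduced part equals the naive difference $f(p_2) - f(p_1) \in T_{f(p_1)}N$ and on the punctual curvilinear part equals the derivative $(p,[v]) \mapsto df_p(v) \in T_{f(p)}N$. These two pieces fit together into a single section of the rank-$n$ quotient $(f^*TN)^{[2]}/f^{[2]*}T_h$ because the scheme-theoretic tautological bundle $V^{[2]}$ naturally encodes first jets: at punctual $\xi \cong \Spec\CC[z]/z^2$, the fiber $V_p \otimes \CC[z]/z^2$ carries both values and first derivatives.

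The embedding step proceeds first on $M \times M$. With $T_h = \pi^*T\Delta_N$, the bundle $f^{\times 2*}T_h$ is defined on $(f^{\times 2})^{-1}(\nabla_N)$ and embeds diagonally into $f^*TN_1 \oplus f^*TN_2$ via the normal bundle identification: at $(p_1, p_2)$ near the diagonal, the value $v \in T_{f(p_1)}N \cong T_{f(p_2)}N$ maps to $(v,v)$. Pulling back via the ordered Hilbert-Chow map $\hch: \Hilb^{[2]}(M) = \mathrm{Bl}_{\Delta_M}(M\times M) \to M \times M$ together with the identification of $(f^*TN)^{[2]}$ on $\Hilb^{[2]}(M)$ with a bundle built from $\hch^*(f^*TN_1 \oplus f^*TN_2)$ yields the embedding $f^{[2]*}T_h \hookrightarrow (f^*TN)^{[2]}$ on $U$. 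Consistency between the two embeddings is by construction.

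To construct the section, start with the section $(p_1,p_2) \mapsto f(p_2) - f(p_1) \in f^{\times 2*}T_v$ on $(f^{\times 2})^{-1}(\nabla_N) \subset M\times M$, whose zero locus is $(f^{\times 2})^{-1}(\Delta_N) \supset \Delta_M$. The expansion $f(p_2) - f(p_1) = df_p(p_2-p_1) + O(|p_2-p_1|^2)$ shows that this section vanishes to order $1$ along $\Delta_M$. Upon passing to the blow-up $\Hilb^{[2]}(M)$, this order-$1$ vanishing on $\Delta_M$ is absorbed by the twist that distinguishes the scheme-theoretic bundle $(f^*TN)^{[2]}$ from the naive pullback $\hch^*(f^*TN_1 \oplus f^*TN_2)$ along the exceptional divisor $E = \PP(TM)$. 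The result is a well-defined section $\sigma \in \Gamma(U, (f^*TN)^{[2]}/f^{[2]*}T_h)$ whose restriction to the reduced part is $f(p_2)-f(p_1)$ and whose restriction to $E$ is $(p,[v]) \mapsto df_p(v)$. Hence $\sigma^{-1}(0)$ equals $\Hilb^2(f)$: the reduced double-point pairs together with $\PP(\ker df_p)$ on $E$.

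The quotient bundle has rank $n$, matching the expected codimension of $\Hilb^2(f)$. Transversality of $\sigma$ is checked in two regimes. On the reduced part, it amounts to $\mathrm{Im}(df_{p_1}) + \mathrm{Im}(df_{p_2}) = T_{f(p_1)}N$ at double-point pairs, which holds generically from Thom-Boardman. On $E$, transversality of $(p,[v]) \mapsto df_p(v)$ to the zero section reduces to transversality of the jet $j^1 f: M \to J_1(M,N)$ to the Thom-Boardman strata $\Sigma^1$, which is the Thom-Boardman hypothesis. Together these yield the complete intersection structure. The principal technical obstacle is identifying the scheme-theoretic bundle $(f^*TN)^{[2]}$ along the exceptional divisor $E$ precisely enough to verify that the naive difference section descends to the claimed $\sigma$ --- a local-coordinate computation on the blow-up that is the key non-formal step.
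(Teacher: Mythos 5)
Your proposal is correct in substance, but it takes a genuinely different route from the paper. The paper never proves the $k=2$ statement separately: it is subsumed by Proposition \ref{prop:sectionk}, whose proof is jet-theoretic and uniform in $k$ --- the $k$-jet of $f$ is viewed, via the test-curve/differential-operator model, as an element of $H^0(\cale\otimes f^*TN)$ over the curvilinear locus (equivalently, in the alternative argument, for a punctual ideal $I\subset\mathfrak m_p$ one sends $\omega\in T^*_{f(p)}N$ to $j^k\omega\circ f \bmod I$ using a local ``straightening'' of $N$), and this section is then extended topologically from the punctual locus to the tubular neighborhood $U$, where its zero set is identified with $\Hilb^k(f)$. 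You instead exploit the special feature of $k=2$ that $\Hilb^{[2]}(M)$ is the blow-up of $M\times M$ along the diagonal, and build the classical ``difference section'' $f(p_2)-f(p_1)$, showing it descends across the exceptional divisor (the order-one vanishing being absorbed by the twist relating $\hch^*(f^*TN_1\oplus f^*TN_2)/T_h$ to $(f^*TN)^{[2]}/f^{[2]*}T_h$) to a section restricting on $E$ to $(p,[v])\mapsto df_p(v)$. What your approach buys is a concrete, global description of the section on all of $U$ and a transparent identification of the zero locus with $\Hilb^2(f)$; what it loses is any prospect of generalization, since for $k\ge 3$ there is no blow-up model of $\GHilb^k(M)$ and the paper's construction from $j^kf$ on the punctual/curvilinear locus is exactly what replaces it.

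Two small caveats. First, the descent/twist identification along $E$ that you flag as ``the key non-formal step'' is indeed where the content lies; it is standard and does work (the induced map $(f^*TN)^{[2]}/T_h\to\hch^*(f^*TN_1\oplus f^*TN_2)/T_h$ vanishes along $E$ and the divided map is an isomorphism), but as written it is asserted rather than carried out --- note the paper sidesteps it entirely by defining the section on the punctual locus and extending. Second, the spanning condition $\im(df_{p_1})+\im(df_{p_2})=T_{f(p_1)}N$ at reduced double points does not follow from Thom--Boardman transversality alone, which constrains only mono-singularity strata; it is the self-transversality built into the stability hypothesis (the paper's ``sufficiently generic'' means stable and Thom--Boardman), so you should invoke stability of the multi-germ there rather than the jet-transversality hypothesis.
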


 Recall first the Thom isomorphism for the space $\nabla_N$. Since $c_n(T_v)$ is exactly the Thom class of the bundle $\nabla_N$, we have  
\[  \hcpt(\nabla_N)\simeq H^*(\Delta_N)\cdot c_n( T_v).
 \] 
We also, clearly have $T\nabla_N\simeq TN_1\oplus TN_2\simeq T_h\oplus T_v$. 
Now, we apply \eqref{relative2pt} as follows:
 we set $V=f^*TN$, $W=T_h$ and $\Phi=c_n$. Then
\begin{multline}\label{finalk=2}
	[f(\Hilb^2(f))]= 
	\pi_*f^{[2]}_*\left[c_n\left(\frac{f^*TN^{[2]}}{f^{[2]*}T_h}\right)\right]_{U\subset\GHilb^2(M)}=\\  
\frac1{c_n(T_v)}	f^{\times2}_* \left[c_n\left(\frac{f^*TN_1 \oplus f^*TN_2}{f^{\times2*}T_h}\right)\right]_{M \times M} +
 \frac1{c_n(T_v)}\iota_{\Delta N*} f_* \left[\res_{z=\infty} c_n\left(\frac{f^*TN\oplus f^*TN(z)}{f^*\iota_{\Delta N}^* T_h}\right)\,
	s_M\left(\frac{1}{z}\right)\,\frac{dz}{z^{m+1}}\right]=\\ 
	\iota_{\Delta N}^*f^{\times2}_*[1]+ f_*\left[\res_{z=\infty} c_n\left(f^*TN(z)\right)\,
	s_M\left(\frac{1}{z}\right)\,\frac{dz}{z^{m+1}}\right]=f_*[1]\cdot f_*[1] + f_*c_{n-m}(f).
\end{multline}

\subsection{The Hilbert scheme of three points}\label{subsec:k=3}

In this section we follow our strategy to prove the formula for $3$ points. Most of the key ingredients of the general argument already appears here. 

For $k=3$ the geometric component $\GHilb^3(M)=\Hilb^3(M)$ is, again, equal to the full Hilbert scheme, and the punctual part is irreducible, consisting only of the curvilinear component $\CHilb^3_0(\CC^n)$, there are no exotic components. Hence Definition \ref{geocond} automatically holds for the forms we deal with, and Theorem \ref{tauintegral} reads as follows. 

\begin{proposition}\label{prop:k=3}
	Let $V$ be a rank-$r$ bundle over the $m$-dimensional manifold $M$ with $r\ge m$, and $\Phi=c_{3m}(V^{[3]})$ be the top Chern class, which is a symmetric 
	polynomial of degree $3m$ in $3r$ Chern roots. Then we have
	\begin{eqnarray*} \int_{\GHilb^3(M)}\Phi\left(V^{[3]}\right)&=&\int_M \res_{z_1=\infty} \res_{z_2=\infty} \frac{\Phi\left(V\oplus V(z_1)\oplus V(z_2) \right)(z_1-z_2)dz_1dz_2}{(z_1z_2)^{m+1}(2z_1-z_2)}\,
		s_M\left(\frac{1}{z_1}\right) s_M\left(\frac{1}{z_2}\right)\,+ \\
		& +& \sum_{i=1}^3 \int_{M\times M} \res_{z=\infty} \frac{\Phi\left(V\oplus V(z) \oplus V_i \right)dz}{z^{m+1}}\,
		s_M\left(\frac{1}{z}\right)
		+\\
		&+& \int_{M \times M \times M} \Phi\left(V \oplus V \oplus V \right)
	\end{eqnarray*}
	
\end{proposition}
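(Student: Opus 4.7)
The plan is to follow the five-step strategy outlined in Section~\ref{sec:strategy}, specialized to $k=3$ where $\GHilb^3_0(\CC^m)=\CHilb^3_0(\CC^m)$ is irreducible and no exotic components appear. First, I would set up the sieve decomposition of Step 1: M\"obius inversion on the five-element partition lattice of $\{1,2,3\}$ decomposes
\[ \Phi(V^{[3]}) \;=\; W^{[3]} \;+\; W^{\{1,2\},\{3\}} \;+\; W^{\{1,3\},\{2\}} \;+\; W^{\{2,3\},\{1\}} \;+\; W^{[1,1,1]}, \]
where $W^{[1,1,1]}=\Phi(V_1\oplus V_2\oplus V_3)$ is the pullback along the ordered Hilbert--Chow morphism, each $W^{\{i,j\},\{k\}}=\Phi(V^{[\{i,j\}]}\oplus V_k)-\Phi(V^{[1,1,1]})$ is supported in a tubular neighborhood of the partial diagonal where the points labelled $i$ and $j$ collide, and $W^{[3]}$ is the remaining difference, compactly supported near the triple-collision locus $HC^{-1}(\Delta_3)$.

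The bottom term integrates trivially, $\int_{\GHilb^3(M)} W^{[1,1,1]}=\int_{M^3}\Phi(V\oplus V\oplus V)$, since $HC$ is birational onto the main component of the ordered Hilbert scheme. For each pair-collision term the tubular neighborhood of its support is modelled on $\GHilb^2(M)\times M$ (no exotic components occur at $k=2$), and the integrand restricts on this model to the bracketed difference $[\Phi(V^{[2]})-\Phi(V^{[1,1]})]$ from the preceding proposition, tensored with the pullback of the Chern data of the third factor. Applying the Thom isomorphism for the normal bundle of $\Delta M\subset M\times M$ together with Kalkman's residue formula on the $\CC^*$-quotient description of $\CHilb^2_0$, exactly as in the $k=2$ argument, produces the three single-residue terms of the claimed formula.

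The core of the argument is the triple-collision contribution $\int_{\GHilb^3(M)} W^{[3]}$. By Chern--Weil (Step 2) this reduces to a torus-equivariant integral over a neighborhood of $\CHilb^3_0(\CC^m)$ inside $\GHilb^3(\CC^m)$. The curvilinear component admits the fibration of Step 3 over the flag variety $\flag_2(\CC^m)$ of nested lines $V_1\subset V_2\subset\CC^m$, with fibre the one-dimensional balanced scheme $\BHilb^3_0$. Iterated Atiyah--Bott localization with respect to the two scaling tori $\CC^*_{z_1}$ and $\CC^*_{z_2}$ attached to the successive normal bundles of the diagonals $\Delta M\subset M^2$ and $\Delta(M^2)\subset M^3$ converts this integral into an iterated residue in $z_1,z_2$. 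In the resulting expression, the factor $(z_1-z_2)$ in the numerator is the Vandermonde from the flag-to-Grassmannian pushforward, the factor $(2z_1-z_2)$ in the denominator is the non-trivial tangent weight at the distinguished $\CC^*_{z_1}\times\CC^*_{z_2}$-fixed point of $\BHilb^3_0$ corresponding to the curvilinear jet $(t,t^2)$, and the powers $z_i^{m+1}$ combined with $s_M(1/z_i)$ come from the Euler and Segre classes of the successive normal bundles, pushed forward from $\flag_2(\CC^m)$ down to $M$.

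The main technical obstacle is the correct identification of the tangent weights and the distinguished fixed point on $\BHilb^3_0$, together with the verification that the approximating terms $W^{\{i,j\},\{k\}}$ and $W^{[1,1,1]}$ do not contribute to the double-residue part. This is the $k=3$ instance of the first residue vanishing theorem; it follows from the observation that the approximating tautological bundles $V^{[\{i,j\}]}\oplus V_k$ and $V_1\oplus V_2\oplus V_3$ are pulled back from smaller Hilbert schemes and hence become independent of $z_2$ (respectively of both $z_1$ and $z_2$) on the blow-up, so that the corresponding iterated residues at infinity vanish by a degree count at the outermost variable.
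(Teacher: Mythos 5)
Your proposal follows essentially the same route as the paper's proof: the partition sieve (which the paper implements on the fully nested Hilbert scheme $N^3(M)$, where the approximating bundles actually live), reduction of the pair-collision terms to the $k=2$ residue formula and of the fully reduced term to $M^3$, and treatment of the triple-collision kernel via Chern--Weil reduction, the fibration over $\flag_2(T\CC^m)$, conversion of localization into an iterated residue in $z_1,z_2$, and the first residue vanishing by a degree count in the missing variable. The differences are only cosmetic or matters of precision: the paper makes the master blow-up explicit, works with the $3$-dimensional balanced nested fiber (your fibre is not one-dimensional) and lists all five torus-fixed points, and then the vanishing theorem together with the local model $\Euler(B)=z_1z_2$ isolates the distinguished curvilinear fixed point carrying the $(2z_1-z_2)$ weight exactly as you describe.
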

\begin{proof}
	We let $\GHilb^{[1,2,3]}(M)$ denote the ordered Hilbert scheme of $3$ points, which is a branched 6-fold cover of $\GHilb^3(M)$. Let $\GHilb^{[1,2],[3]}(M)=\overline{\im(\phi^{[1,2],[3]})}$ denote the closure of the image of the rational morphism 
	\[\phi^{[1,2],[3]}: \GHilb^{[1,2,3]}(M) \dasharrow \GHilb^{[1,2]}(M) \times M\]
	which is defined on the locus where point number $3$ does not collide with points number $1$ and $2$. We define $\GHilb^{[2,3],[1]}(M), \GHilb^{[1,3],[2]}(M)$ and $\GHilb^{[1],[2],[3]}(M)$ similarly, and we call these birational models \textit{ approximating Hilbert schemes} for short. Note that $\GHilb^{[1],[2],[3]}(M)=M\times M \times M$. We form a rational map 
\[\phi=(id, \phi^{[1,2],[3]}, \phi^{[1,3],[2]},  \phi^{[2,3],[1]}, \phi^{[1],[2],[3]})\]
which sends a generic point of the ordered Hilbert scheme to the product of the approximating sets:
\[\xymatrix{ \GHilb^{[1,2,3]}(M) \ar@{-->}[r]^-{\phi} & \GHilb^{[1,2,3]} \times \GHilb^{[1,2],[3]} \times \GHilb^{[1,3],[2]} \times \GHilb^{[2,3],[1]} \times \GHilb^{[1],[2],[3]}}\]
We call $N^3(M)=\overline{\im(\phi)}$ the \textit{(master) nested Hilbert scheme}, which is endowed  
with projections $\pi^{[i,j],[k]}$ and $\pi^{[1],[2],[3]}$ to the corresponding approximating sets. Pull-back along these projections provide \textit{approximating bundles}
\[V^{[i,j],[k]}=\pi^{[i,j],[k]*}(V^{[i,j]} \oplus V) \text{ and } V^{[1],[2],[3]}=\pi^*(V \oplus V \oplus V)\]
where $V^{[ij]} = V^{[2]}$ stands for the tautological bundle over $\GHilb^{[i,j]}(M)$.

Let $\hch:\GHilb^{[1],[2],[3]}(M)\to M\times M \times M$ be the ordered Hilbert-Chow morphism, and for any partition $\l$ of $\{1,2,3\}$ let 
$N^\l_0(M)=(\pi^{[1],[2],[3]})^{-1} \Delta_\l$
denote the punctual part sitting over the corresponding diagonal of $M \times M \times M$. 
The sieve formula in Step 1 of the strategy has the following form:
\begin{align} \nonumber
	         \int_{\GHilb^3(M)} \Phi(V^{[3]})&=\int_{\GHilb^{[1],[2],[3]}(M)} \Phi(V^{[1,2,3]})=\int_{N^3(M)} \pi^{[1,2,3]*}\Phi(V^{[1,2,3]})=\\ \nonumber
		&=\int_{N^3(M)}\left[ \Phi(V^{[1,2,3]})-\Phi(V^{[1,2],[3]]})-\Phi(V^{[1,3],[2]]})-\Phi(V^{[2,3],[1]]})+2\Phi(V^{[1],[2],[3]]})\right] +\\ \nonumber
		&+\int_{N^3(M)}\left[ \Phi(V^{[1,2],[3]]})-\Phi(V^{[1],[2],[3]})\right]\\ \nonumber
		&+\int_{N^3(M)}\left[ \Phi(V^{[1,3],[2]]})-\Phi(V^{[1],[2],[3]})\right]\\ \nonumber
		&+\int_{N^3(M)}\left[ \Phi(V^{[2,3],[1]]})-\Phi(V^{[1],[2],[3]})\right]\\ \nonumber
		&+\int_{N^3(M)} \Phi(V^{[1],[2],[3]}) \nonumber
		\end{align}
The second term can be written as 
\[ \int_{N^3(M)}\Phi(V^{[1,2],[3]]})-\Phi(V^{[1],[2],[3]}) = \int_{\Hilb^{[1,2]}(M) \times M} (\Phi(V^{[1,2]})-\Phi(V^{[1],[2]})) \oplus V. \]
and we can apply the residue formula we presented for $k=2$ points in the previous section. This gives the second term in Proposition \ref{prop:k=3}. The last term can be written as 
\[\int_{N^3(M)} 2\Phi(V^{[1],[2],[3]})=\int_{M\times M\times M}V_1 \oplus V_2 \oplus V_3\]
It remains to prove the residue formula for the first term, where the integrand 
\[\Phi^+=\Phi(V^{[1,2,3]})-\Phi(V^{[1,2],[3]]})-\Phi(V^{[1,3],[2]]})-\Phi(V^{[2,3],[1]]})+2\Phi(V^{[1],[2],[3]]})\] 
is supported in a neighborhood $N^3_\nabla(M)$ of the punctual part $N^3_0(M)$. Hence, as explained in \S \ref{subsec:reduction}, we can reduce the calculation to equivariant  integration assuming $M=\CC^m$. In practice this means that we apply equivariant integration, and then substitute the Chern roots of $M$ into the torus weights (this is the Chern-Weil map).

The next crucial step is to define partial blow-ups $\widehat{\GHilb}^3(\CC^m) \to \GHilb^3(\CC^m)$ and $\hat{N}^3(\CC^m) \to N^3(\CC^m)$ with fibration 
\begin{equation}\label{fibration}
\xymatrix{\hat{N}^3(\CC^m) \ar[r]^-\pi & \widehat{\GHilb}^3(\CC^m) \ar[r]^-\rho & \flag_2(T\CC^m) \ar[r] & \CC^m}
\end{equation}  
We define $\widehat{\GHilb}^3(\CC^m)$ as the closure of the graph of the rational map $\rho: \GHilb^3(\CC^m) \dasharrow \flag_{2}(T\CC^m)$ defined on the open locus formed by $3$ different points as
\[(p_1,p_2,p_3) \mapsto (\frac{1}{3}(p_1+p_2+p_3), \mathrm{Span}(p_1,p_2),\mathrm{Span}(p_1,p_2,p_3)).\]
In particular, the fiber over a flag over the origin $\bW=(0,W_1 \subset W_2 \subset \CC^m)$ is the \textit{balanced Hilbert scheme}
\[\rho^{-1}(\bW)=\BHilb^3(W_2)=\{\xi \in \GHilb^3(W_2): \text{baricenter of }\xi \text{ is the origin}\}\] 
Similarly, we define $\hat{N}^3(\CC^m)$ as the closure of the graph of the rational composition 
\[\rho \circ \pi: N^3(\CC^m) \to \GHilb^3(\CC^m) \dasharrow \flag_{2}(T\CC^m) \to \CC^m\]

We apply torus equivariant localisation for the action of the maximal torus $T \subset \GL(m)$ on the fiber $\hat{N}^3_0(\CC^m)$ over the origin in $\CC^m$:
\[\hat{N}^3_0(\CC^m) \to \flag_{2}(T_0\CC^m)=\flag_2(\CC^m).\]
This action is induced from the standard diagonal action on $\CC^m$ with weights $\l_1,\ldots, \l_m$ in the basis $\{e_1,\ldots, e_m\}$. The fibration \eqref{fibration} is equivariant, and Proposition \ref{ABtoresidue} turns our integral over $\flag_k(\CC^m)$ into an iterated residue
\begin{equation}\label{integralfirst}
\int_{\hat{N}^3(\CC^m)} \Phi^+=\int_{\CC^m}\res_{z_1=\infty} \res_{z_2=\infty}\frac{(z_1-z_2)\Phi^+_{\bff}dz_2dz_1}{\prod_{i=1}^{m}(z_1-\lambda_i)\prod_{i=1}^{m}(z_2-\lambda_i) }
\end{equation}
where integration over $\CC^m$ is a shorthand notation for substituting the Chern roots of $M$ into $\l_1,\ldots, \l_m$, followed by integration over $M$ (see \S \ref{subsec:reduction}).
Here 
\[\Phi^+_\bff=\int_{\hat{N}^3_\bff}\Phi^+\]
is the integral of the form on the fiber $\hat{N}^3_\bff=(\rho \circ \pi)^{-1}(\bff) \simeq BN^3(\CC_{[2]})$ over a fixed reference flag $\bff=(\mathrm{Span}(e_1)\subset \mathrm{Span}(e_1,e_2) \subset \CC^m)\in \flag_2(\CC^m)$. Here $\CC_{[2]}=\Span(e_1,e_2)$, and $z_1,z_2$ stand for the weights of the residual $T_\bz^2=(\CC^*)^2$ action on $\hat{N}^3_\bff$, and $BN^3(\CC_{[2]})$ is the balanced nested Hilbert scheme formed by schemes with baricenter at the origin.

We will apply a second localisation with respect to this 2-dimensional torus acting on $\hat{N}^3_\bff$. The geometry of $\hat{N}^3_\bff$ is sketched in Figure 1. The dimension is $\dim(\hat{N}^3_\bff)=3$, whose generic point is a triple $(p_1,p_2,p_3) \in (\CC^2)^{\times 3}$ with $p_1+p_2+p_3=0$, such that the line $p_1p_2$ has fixed direction $e_1$.  The punctual part 
\[N^3_0(\CC_{[2]})=\pi^{-1}(\widehat{\GHilb}^3_0)\cap \hat{N}^3_\bff\]
consists of triples $(\xi,[v_{12}],[v_{13}],[v_{23}]) \in \GHilb^3_0(\CC^2) \times \PP^1 \times \PP^1$ where $\xi$ is a punctual scheme of length $3$, and $v_{ij}$ are directions corresponding to the length-2 subschemes formed by points $i,j$, but $[v_{12}]=[e_1]$ is fixed. Note that the punctual part $\GHilb^3_0(\CC^2)=\CHilb^3(\CC^2)$ has two types of points: curvilinear subschemes which form a 1-dimensional set, and the Porteous point $\xi_{por}$ corresponding to the ideal $\mathfrak{m}^2=(x^2,xy,y^2)$. 

The punctual part $N^3_0(\CC_{[2]})$ has two components: $\PP^1=CN^3(\CC_{[2]})$ is the curvilinear component which is mapped by $\pi$ dominantly to $\GHilb^3_0(\CC_{[2]})$, and a second component $\PP^1 \times \PP^1$ which sits over the Porteous point. A point in $\PP^1 \times \PP^1$ is given by a tuple $(\xi_{por},[v_{12}],[v_{13}],[v_{23}])$ with $[v_{12}]=[e_1]$. So in fact, this is determined by the two directions $[v_{13}],[v_{23}]$, indicated in Figure 1 by yellow resp. green arrows.

\begin{figure}[ht!]
\centering
\includegraphics[width=90mm]{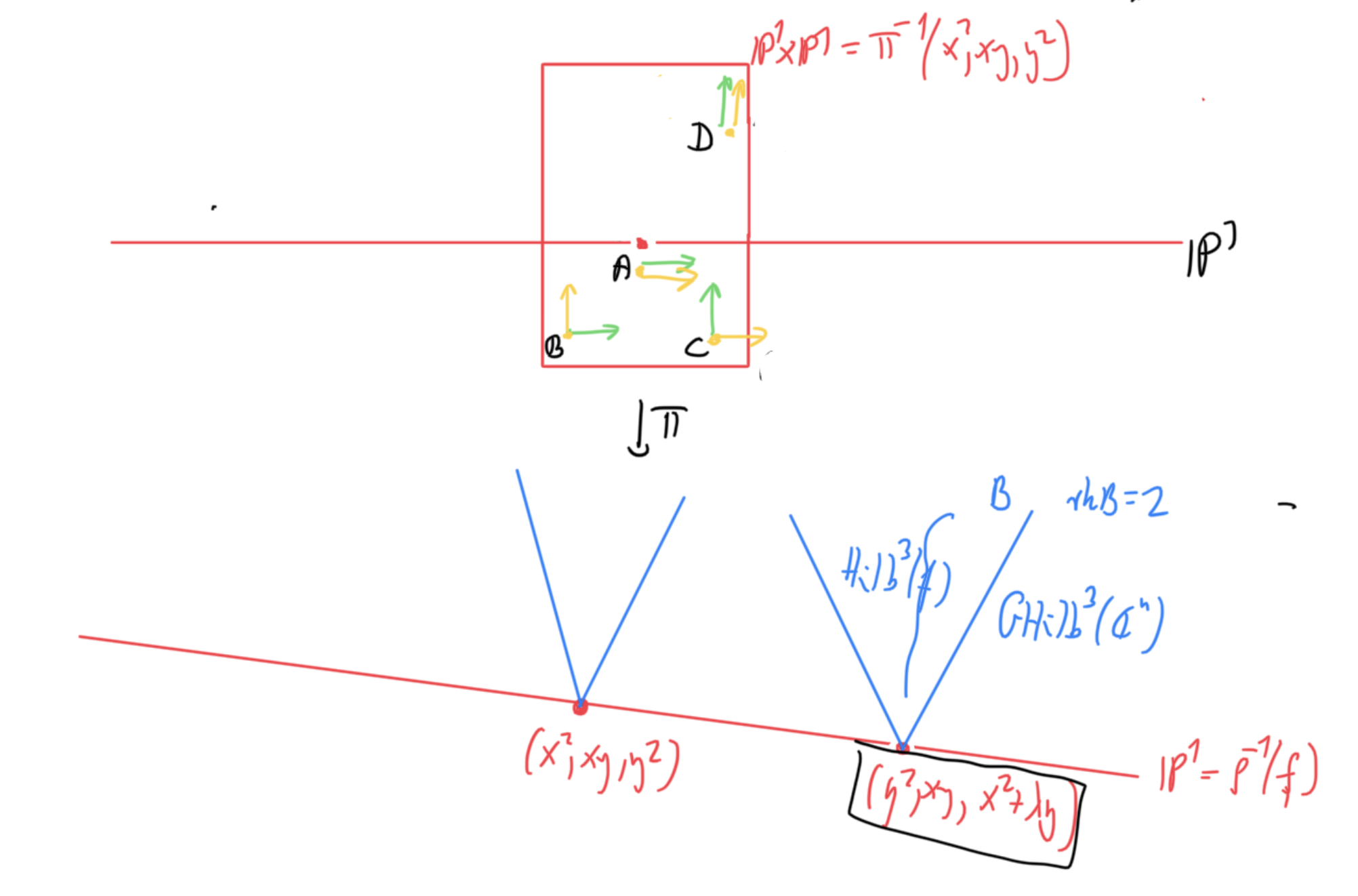}
\caption{$\pi: N^3_\bff(\CC_{[2]})\to \GHilb^3(\CC_{[2]})$}
\end{figure}

For equivariant localisation over $N^k_\bff$, in general, we will use a smooth ambient space 
\[\xymatrix{\hat{N}^k_0(\CC^m) \ar@{^{(}->}[r] \ar[d] & \Conf^k(\CC^m) \ar[ld] \\ \flag_{k-1}(\CC^m) & }\]
where $\Conf^k(\CC^m)$ stands for \textit{configuration space}, consisting of certain flag bundle of the tautological rank $k-1$ bundle over $\grass_{k-1}(T\CC^m)$, where $\GHilb^k(\CC^m)$ sits. For $k=3$ we have $\hat{N}^3(\CC^m)\simeq \Conf^3(\CC^m)$, and in particular the fibers over $\bff$ are isomorphic: $\hat{N}^3_\bff \simeq \Conf^3_\bff$.
Hence for $k=3$ Rossmann formula is not needed, but we can apply Atiyah-Bott localisation on $\hat{N}^3_\bff=N^3(\CC^2_{[2]})$ directly. We introduce the notations
\begin{equation}\label{equivbundles}
V_{x,y}^{[1,2,3]}=V\oplus V(x) \oplus V(y),\ \ V_{x}^{[i,j],[k]}=V \oplus V(x) \oplus V,\ \ V^{[1],[2],[3]}=V \oplus V \oplus V
\end{equation}
The following table contains the restrictions of the terms of $\Phi^+_\bff$ at different fixed points $F \in (\hat{N}^3_\bff)^{T_\bz^2}$.
\begin{center}
\begin{tabular}{|c|c|c|c|c|c|}
\hline
$F$ & $V_F^{[1,2,3]}$ & $V_F^{[1,2],[3]}$ & $V_F^{[1,3],[2]}$ & $V_F^{[2,3],[1]}$ & $V_F^{[1],[2],[3]}$ \\
\hline
$(\xi_{por},e_1,e_1,e_2)$ & $V_{z_1,z_2}^{[1,2,3]}$ & $V_{z_1}^{[1,2],[3]}$ & $V_{z_1}^{[1,3],[2]}$ & $V_{z_2}^{[2,3],[1]}$ & $V^{[1],[2],[3]}$ \\
\hline
$(\xi_{por},e_1,e_2,e_1)$ & $V_{z_1,z_2}^{[1,2,3]}$ & $V_{z_1}^{[1,2],[3]}$ & $V_{z_2}^{[1,3],[2]}$ & $V_{z_1}^{[2,3],[1]}$ & $V^{[1],[2],[3]}$ \\
\hline
$(\xi_{por},e_1,e_1,e_1)$ & $V_{z_1,z_2}^{[1,2,3]}$ & $V_{z_1}^{[1,2],[3]}$ & $V_{z_1}^{[1,3],[2]}$ & $V_{z_1}^{[2,3],[1]}$ & $V^{[1],[2],[3]}$ \\
\hline
$(\xi_{por},e_1,e_2,e_2)$ & $V_{z_1,z_2}^{[1,2,3]}$ & $V_{z_1}^{[1,2],[3]}$ & $V_{z_2}^{[1,3],[2]}$ & $V_{z_2}^{[2,3],[1]}$ & $V^{[1],[2],[3]}$ \\
\hline
$(\xi_{curv},e_1,e_1,e_1)$ & $V_{z_1,z_1}^{[1,2,3]}$ & $V_{z_1}^{[1,2],[3]}$ & $V_{z_1}^{[1,3],[2]}$ & $V_{z_1}^{[2,3],[1]}$ & $V^{[1],[2],[3]}$ \\
\hline
\end{tabular}
\end {center}
Note that here $\xi_{por}=(x^2,xy,y^2)$ stands for the Porteous point and $\xi_{curv}=(y,x^3)$ stands for the curvilinear fixed point in $\Hilb^3(\CC^2)$. Substitute these into the Atiyah-Bott formula:
\small{\[
\int\displaylimits_{\hat{N}^3(\CC^m)}\Phi^+=\res_{z_1,z_2=\infty}\sum\limits_{F \in (\hat{N}^3_\bff)^{T_\bz^2}} \frac{(z_1-z_2)\left(\Phi(V_F^{[1,2,3]})-\Phi(V_F^{[1,2],[3]})-\Phi(V_F^{[1,3],[2]})-\Phi(V_F^{[2,3],[1]})+2\Phi(V_F^{[1],[2],[3]})\right)}{\Euler^z(T_FN^3(\CC^2)) \prod_{i=1}^{m}(z_2-\lambda_i)\prod_{i=1}^{m}(z_1-\lambda_i) }\]}
The crucial observation is that only $V_F^{[1,2,3]}$ depends on both weights $z_1,z_2$. For all other terms, one of $z_1,z_2$ or both are missing, and hence the degree of the denominator in this missing variable (which is $\sim m$) is much bigger than the degree of the numerator, which only depends on $k$ in general. This is the essence of our first residue vanishing theorem (Theorem \ref{vanishing1}). Hence we are left with
\begin{equation}
\int_{\hat{N}^3(\CC^m)} \Phi^+= \res_{z_1=\infty} \res_{z_2=\infty}\sum\limits_{F \in (\hat{N}^3_\bff)^{T_\bz^2}} \frac{(z_1-z_2)\Phi(V\oplus V(z_1) \oplus V(z_2))dz_2dz_1}{\Euler^z(T_FN^3(\CC^2))\prod_{i=1}^{m}(z_2-\lambda_i)\prod_{i=1}^{m}(z_1-\lambda_i) },
 \end{equation}
However, the form $V_{z_1,z_2}^{[1,2,3]}=V\oplus V(z_1) \oplus V(z_2)=\pi^*V_{z_1,z_2}^{[1,2,3]}$ is a pull-back form from $\pi: N^3(\CC^m) \to \Hilb^3(\CC^m)$. Hence we can integrate over the Hilbert scheme:
\[\small{
\int\displaylimits_{\hat{N}^3(\CC^m)} \Phi^+= \res_{z_1,z_2=\infty}\sum\limits_{F \in \Hilb^3(\CC^2)^{T_\bz^2}}\frac{(z_1-z_2)\Phi(V\oplus V(z_1) \oplus V(z_2))dz_2dz_1}{\Euler^z(T_F(\Hilb^3(\CC^2)) \prod_{i=1}^{m}(z_2-\lambda_i)\prod_{i=1}^{m}(z_1-\lambda_i) }=\int\displaylimits_{\Hilb^3_\nabla(\CC^m)} \Phi(V_{z_1,z_2}^{[1,2,3]})
}\]
The next crucial observation is that the small neighborhood $\Hilb^3_\nabla(\CC^m)$ of $\Hilb^3_0(\CC^m)=\CHilb^3(\CC^m)$ can be modeled as a bundle over $\CHilb^3(\CC^m)$. More precisely, we will identify the normal bundle as a rank $2$ bundle $B$ over $\CHilb^3(\CC^m)$ such that the total space of $B$ is topologically isomorphic to $\Hilb^3_\nabla(\CC^m)$. We will see that $B$ is a tautological bundle over the punctual part, namely 
\[B|_{\CHilb^k(\CC^m)}=\calo_M^{[k]}/\calo_M\]
and hence the equivariant bundle over the fiber $\CHilb^3(\CC^2)$ over the reference flag $\bff$ has equivariant Euler class 
\[\Euler(B_{z_1.z_2})=z_1z_2\]
The Thom isomorphism and the iterated residue formula of \cite{b0,bsz} for integrals over the curvilinear component then gives 
\[\int_{\Hilb^3_\nabla(\CC^m)} \Phi=\int_{\CHilb^3(\CC^m)} \frac{\Phi}{\Euler(B)}=\res_{z_1=\infty} \res_{z_2=\infty}\frac{(z_1-z_2)\Phi(V\oplus V(z_1) \oplus V(z_2))dz_1dz_2}{(z_1z_2)(2z_1-z_2)\prod_{i=1}^{m}(z_2-\lambda_i)\prod_{i=1}^{m}(z_1-\lambda_i) }\]
After the substitution $z_i \to -z_i$ and the identity 
\[\frac{1}{\prod_{i=1}^{m}(z_1+\lambda_i)\prod_{i=1}^{m}(z_2+\lambda_i)}(\l_i \to \text{Chern roots of } M)=\frac{s_M(1/z_1)s_M(1/z_2)}{(z_1z_2)^m}\]
completes the proof of Proposition \ref{prop:k=3}.
\end{proof}

The next step is, again, to extend this formula to the case when the push-forward to the 
point is replaced by a push-forward along a smooth map between varieties, 
$f:M\to N$. \\
Notation:
\begin{itemize}
	\item We will use the standard notation $f_*[\Psi]$ for the push-forward 
	of a cohomology class $\Psi\in H^*(M)$. As before, we will indicate the 
	source space in the index for clarity, as in  $f_*[\Psi]_M$. 
	\item We will write $ f^{\times3}: M \times M \times M \to N \times N \times N$, and $ f^{[3]} =  f^{\times3}\circ\hch$. For the big diagonals we write the compositioin
	\[\xymatrix{f^{\times i,j}: \Delta_{i,j}M \times M \ar[r]^-{(f,f)} & \Delta_{i,j}N \times N} \] 
	\item The embeddings of the small diagonals will be denoted by $\iota_{\Delta M}:\Delta M\hookrightarrow M\times M \times M$ and $i_{\Delta N}: \Delta N\hookrightarrow N \times N \times N$. For the big diagonals we use $\iota_{\Delta_{i,j}N}: \Delta_{i,j}N \times N \to N \times N \times N$. 
\end{itemize}
First, the proof of Proposition \ref{prop:k=3} gives 
\begin{align*}
  f^{[3]}_*[\Phi(V^{[3]})]_{\GHilb^3(M)}=& \iota_{\Delta N*} f_* \left[\res_{z_1,z_2=\infty} \frac{\Phi\left(V\oplus V(z_1)\oplus V(z_2) \right)(z_1-z_2)dz_1dz_2}{(z_1z_2)^{m+1}(2z_1-z_2)}\, s_M\left(\frac{1}{z_1}\right) s_M\left(\frac{1}{z_2}\right)\right]_M+\\
 & +\sum_{i\neq j \in \{1,2,3\}} \iota_{\Delta_{i,j} N*} f_*^{\times i,j}\left[\res_{z=\infty} \frac{\Phi\left(V\oplus V(z) \oplus V \right)dz}{z^{m+1}}\,
		s_M\left(\frac{1}{z}\right)\right]_{\Delta_{i,j}M \times M}\\
&+f^{\times3}_* [\Phi(V_1 \oplus V_2 \oplus V_3)]_{M \times M \times M}  
\end{align*}


Suppose that $V^{[3]}$ has a subbundle of the form $f^{[3]*}W\hookrightarrow V^{[3]}$ for some bundle $W$ over $N\times N \times N$ and this is consistent with an embedding 
\[f^{\times3*}W\hookrightarrow V_1\oplus V_2 \oplus V_3 \text{\ \   and\ \   } HC_{i,j}^*f^{\times 3*}W\hookrightarrow V^{[i,j],[k]}.\]
where
\[\HC_{i,j}:\Hilb^{[i,j]}(M) \times M\to M \times M \times M\]
is the Hilbert-Chow for the points labeled by $i,j$. Using the identifications in \eqref{equivbundles}, we write
\begin{align}\label{relative3pt}
f^{[3]}_*\left[\Phi\left(\frac{V^{[3]}}{f^{[3]*}W}\right)\right]_{\GHilb^3(M)}  =&\iota_{\Delta N*} f_* \left[\res_{z_1,z_2=\infty} \Phi\left(\frac{V\oplus V(z_1)\oplus V(z_2)}{f^*\iota_{\Delta N}^*W} \right)\frac{(z_1-z_2)dz_1dz_2}{(z_1z_2)^{m+1}(2z_1-z_2)}\, s_M\left(\frac{1}{z_1}\right) s_M\left(\frac{1}{z_2}\right)\right]_M+\\ \nonumber
&+\sum_{i\neq j \in \{1,2,3\}} \iota_{\Delta_{i,j} N*} f_*^{\times i,j}\left[\res_{z=\infty} \Phi\left(\frac{V\oplus V(z) \oplus V }{f^{\times i,j*} \iota_{\Delta_{i,j}}^*W}\right)\frac{dz}{z^{m+1}}\,
		s_M\left(\frac{1}{z}\right)\right]_{\Delta_{i,j}M \times M}\\ \nonumber
& +f^{\times3}_* \left[\Phi\left(\frac{V_1 \oplus V_2 \oplus V_3}{f^{\times3*}W}\right)\right]_{M \times M \times M}. \nonumber
\end{align}

As $f(\Hilb^3(f))$ is a subset of the diagonal $\Delta N\subset N\times N \times N$, our calculations will take place in a neighborhood of $\Delta N$, which we will identify with the normal bundle $\nabla_{ N}$ of $\Delta N$ in $N\times N \times N$, and in compactly supported cohomology in the open set $U=\left(f^{[3]}\right)^{-1}\nabla_{ N}$. 

Proposition \ref{prop:sectionk=2} has the following extension: 
\begin{proposition}\label{prop:sectionk=3}
	Let  $T_v\to\nabla_N$ be the tautological pull-back bundle on $\nabla_{N}$  and $ T_h=\pi^*T\Delta N\to\nabla_N $ be the pull-back from the base. Then there is a bundle embedding  $f^{[3]*}T_h\hookrightarrow f^*TN^{[3]}$  consistent with an embedding 
\[f^{\times3*}T_h\hookrightarrow f^*TN_1\oplus f^*TN_2 \oplus f^*TN_3 \text{. and } (f^{\times 3)} \circ HC_{i,j})^*T_h \hookrightarrow (f^*TN)^{[i,j],[k]}.\] 
In addition, the quotient bundle
	$f^*TN^{[3]}/f^{[3]*}T_h$ has a transversal section presenting $\Hilb^3_0(f)$ as a complete intersection in $U$.
\end{proposition}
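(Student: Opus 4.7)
The plan is to mirror the construction used for $k=2$ in Proposition~\ref{prop:sectionk=2}, treating the third point as a further iteration of the two-point embedding. Throughout I work on the open set $U=(f^{[3]})^{-1}\nabla_N$, where the image under $f^{[3]}$ lies in a fixed tubular neighborhood of the small diagonal.

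\emph{Step 1: The embedding.} First I would choose a tubular-neighborhood identification $\nabla_N \simeq \nu_{\Delta N/N^3}$, so that $T_v$ and $T_h$ become the vertical and horizontal subbundles of $T\nabla_N$, and the fibers of $p:\nabla_N \to \Delta N \simeq N$ are trivialized by the exponential of some Hermitian metric on $N$. For $\xi\in U$ with $f^{[3]}(\xi)=(f(x_1),f(x_2),f(x_3))$, let $y(\xi)=p(f^{[3]}(\xi))$ be the common footpoint. The exponential based at $y(\xi)$ gives canonical isomorphisms $T_{f(x_i)}N \simeq T_{y(\xi)}N$, and hence $(f^*TN)^{[3]}|_\xi \simeq T_{y(\xi)}N^{\oplus 3}$, inside which the diagonal embedding $v\mapsto(v,v,v)$ defines $f^{[3]*}T_h \hookrightarrow f^*TN^{[3]}$.

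\emph{Step 2: Consistency on the approximating spaces.} The same diagonal recipe also produces $f^{\times 3*}T_h \hookrightarrow f^*TN_1 \oplus f^*TN_2 \oplus f^*TN_3$ over $M^{\times 3}$. On the intermediate space $\GHilb^{[i,j]}(M)\times M$, I would combine the two-point embedding already constructed in Proposition~\ref{prop:sectionk=2} on the colliding pair with the diagonal embedding on the third factor; compatibility with the Hilbert--Chow morphism and with the various big-diagonal inclusions then gives the commutative diagram that the proposition requires.

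\emph{Step 3: The section.} Define $s$ on $U$ by sending $\xi$ to the vertical component of $f^{[3]}(\xi)\in\nabla_N$; concretely $s(\xi)$ is the triple $\bigl(f(x_1),f(x_2),f(x_3)\bigr)-y(\xi)\cdot(1,1,1)$ viewed in $T_{y(\xi)}N^{\oplus 3}$ modulo the diagonal, which is precisely the fibre of $f^*TN^{[3]}/f^{[3]*}T_h$. For a reduced $\xi$, $s(\xi)=0$ iff $f(x_1)=f(x_2)=f(x_3)$, and the subscheme-theoretic version of $f|_\xi$ handles the non-reduced case, so $s^{-1}(0)\cap U=\GHilb^3(f)\cap U$ scheme-theoretically.

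\emph{Step 4: Transversality.} The main obstacle is proving that $s$ is transverse to the zero section: the linearization of $s$ at $\xi\in\GHilb^3(f)\cap U$ is, in local charts, the composition of the natural map $T_\xi\GHilb^3(M)\to(f^*TN)^{[3]}_\xi$ with the projection modulo the diagonal. Away from the punctual stratum the three points $x_1,x_2,x_3$ are distinct and transversality reduces to the three fibres $f^{-1}(f(x_i))$ meeting transversally, which is guaranteed by stability of $f$. On the punctual stratum $\GHilb^3_0(f)\subset\CHilb^3(M)$ (where the inclusion comes from Proposition~\ref{prop:boardman}), I expect transversality to follow from the Thom--Boardman hypothesis via the test-curve model of \cite{bsz}: genericity forces the image of the linearization to surject onto the $2n$-dimensional quotient. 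The delicate bookkeeping will be along the Porteous stratum of $\CHilb^3(\C_{[2]})$ visible in the figure of subsection~\ref{subsec:k=3}, handled by working stratum by stratum in the partial blow-up $\hat N^3(\CC^m)$ and using the explicit fibration over $\flag_2(T\CC^m)$.
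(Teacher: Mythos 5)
There is a genuine gap, and it sits exactly where the proposition has its real content: the definition of the section at non-reduced subschemes. Your Step 3 defines $s(\xi)$ as the vertical component of $f^{[3]}(\xi)=f^{\times 3}(\HC(\xi))$, which factors through the Hilbert--Chow morphism and therefore depends only on the support cycle of $\xi$, not on its scheme structure. In particular $s$ vanishes identically on the whole punctual locus $\GHilb^3_0(M)\subset U$ (every punctual $\xi$ maps to the small diagonal), and more generally at any partially punctual $\xi$ it only tests equality of the values $f(x_i)$, not whether the length-$2$ or length-$3$ piece lies along the fibre direction of $f$. So the zero scheme of your $s$ is strictly larger than $\Hilb^3_0(f)$: it contains a locus of dimension $3m-2$, while a transversal section of the rank-$2n$ bundle $f^*TN^{[3]}/f^{[3]*}T_h$ can only cut out something of dimension $3m-2n$. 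Hence no amount of transversality checking in Step 4 can rescue the section as you defined it; the phrase ``the subscheme-theoretic version of $f|_\xi$ handles the non-reduced case'' names precisely the construction that is missing rather than supplying it. The paper builds the section out of the full jet of $f$ along $\xi$: in the proof of the general-$k$ statement (Proposition \ref{prop:sectionk}) the curvilinear locus is embedded in $\grass_k(\cald_M^{k})$, the tautological bundle restricts to $\calo_M^{[k]}$, and the $k$-jet $j^kf$ is interpreted as a section of $\cale\otimes f^*TN\cong f^*TN^{[k]}/f^{[k]*}T_h$ there (equivalently, in the alternative construction, one fixes a $k$-straightening of $N$, sends $\omega\in T^*_{f(p)}N$ to the class of $j^k\omega\circ f$ modulo the ideal $I_\xi$), and then extends this analytically off the punctual locus; your ``difference of values'' formula is only what this section looks like on the reduced stratum.

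A related, smaller inaccuracy occurs in Step 1: the identification $(f^*TN)^{[3]}|_\xi\simeq T_{y(\xi)}N^{\oplus 3}$ indexed by the three points is only valid for reduced $\xi$. At non-reduced points the fibre is $H^0(\xi,f^*TN|_\xi)$, which after a local trivialization is $T_{y(\xi)}N\otimes H^0(\calo_\xi)$, and the embedding of $f^{[3]*}T_h$ must be described as the constants $v\mapsto v\otimes 1$ (this is the sub-bundle $V\hookrightarrow V^{[3]}$ of Theorem \ref{bszmodel}), not as a diagonal in a three-fold direct sum. This is the same underlying issue as in Step 3: both the embedding and the section have to be phrased in terms of $\calo_\xi$ (equivalently, jets of $f$ modulo the ideal of $\xi$) rather than in terms of the three support points, and once that is done the verification that the zero locus is $\Hilb^3_0(f)$, and the transversality statement for stable Thom--Boardman $f$, can proceed as in the paper's \S\ref{sec:proofmain}.
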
 

We are left with repeating the final calculation \eqref{finalk=2}.  The Thom isomorphism for the space $\nabla_N$ gives again  
\[  \hcpt(\nabla_N)\simeq H^*(\Delta_N)\cdot c_n( T_v).
 \] 
We also, clearly have $T\nabla_N\simeq TN_1\oplus TN_2 \oplus TN_3\simeq T_h\oplus T_v$, and we can apply \eqref{relative3pt} with $V=f^*TN$, $W=T_h$ and $\Phi=c_{2n}$. Let $\pi_{\Delta N}: \nabla N \to \Delta N$ denote the projection (see e.g. Diagram \ref{whatwewant})
\begin{align*}
	&[f(\Hilb^3(f))]=
	\pi_{\Delta N*}f^{[3]}_*\left[c_{2n}\left(\frac{f^*TN^{[3]}}{f^{[3]*}T_h}\right)\right]_{U\subset\GHilb^3(M)}= \\ 
	&= \frac1{c_{2n}(T_v)} \iota_{\Delta N*} f_* \left[\res_{z_1,z_2=\infty} c_{2n}\left(\frac{f^*TN\oplus f^*TN(z_1)\oplus f^*TN(z_2)}{f^*\iota_{\Delta N}^*T_h} \right)\frac{(z_1-z_2)dz_1dz_2}{(z_1z_2)^{m+1}(2z_1-z_2)}\, s_M\left(\frac{1}{z_1}\right) s_M\left(\frac{1}{z_2}\right)\right]_M+ \\ \nonumber
&+\sum_{i\neq j \in \{1,2,3\}} \frac1{c_{2n}(T_v)}\iota_{\Delta_{i,j} N*} f_*^{\times i,j}\left[\res_{z=\infty} c_{2n}\left(\frac{f^*TN_i\oplus f^*TN_j(z) \oplus f^*TN_k }{f^{\times i,j*} \iota_{\Delta_{i,j}}^*T_h}\right)\frac{dz}{z^{m+1}}\, s_M\left(\frac{1}{z}\right)\right]_{\Delta_{i,j}M \times M}+  \\ \nonumber
& +\frac1{c_{2n}(T_v)}	f^{\times3}_* \left[c_{2n}\left(\frac{f^*TN_1 \oplus f^*TN_2 \oplus f^*TN_3}{f^{\times3*}T_h}\right)\right]_{M \times M \times M}= \\
& 	=f_* \left[\res_{z_1,z_2=\infty} c_{2n}\left(f^*TN(z_1)\oplus f^*TN(z_2)\right)\frac{(z_1-z_2)dz_1dz_2}{(z_1z_2)^{m+1}(2z_1-z_2)}\, s_M\left(\frac{1}{z_1}\right) s_M\left(\frac{1}{z_2}\right)\right]_M+ \\
&+\sum_{i\neq j \in \{1,2,3\}} f_*\left[\res_{z=\infty} c_n\left(f^*TN(z)\right)\frac{dz}{z^{m+1}}\, s_M\left(\frac{1}{z}\right) \right]_{\Delta_{i,j}M \times M}+\iota_{\Delta N}^*f^{\times3}_*[1]= \\
&=f_*\Tp^{m \to n-1}_{A_2}(f)+f_*\Tp^{m \to n-1}_{A_1}f_*[1]+f_*[1]\cdot f_*[1] \cdot f_*[1]
\end{align*}
which completes the proof of Theorem \ref{mainthm1} and Theorem \ref{mainthm2} for $k=3$. We used the identity
\begin{multline}
\frac{c_{2n}\left(f^*TN(z_1)\oplus f^*TN(z_2)\right)(z_1-z_2)}{(z_1z_2)^{m+1}(2z_1-z_2)}\, s_M\left(\frac{1}{z_1}\right) s_M\left(\frac{1}{z_2}\right)=\\ \nonumber
\frac{z_1-z_2}{(z_1z_2)^{m+1}(2z_1-z_2)}\cdot \frac{\prod_{j=1}^n (\theta_j+z_1)(\theta_j+z_2)}{\prod_{i=1}^m (\l_i+z_1)(\l_i+z_2)}=
\frac{z_1-z_2}{(z_1z_2)^{m-n+1}(2z_1-z_2)} \cdot c_f(1/z_1)c_f(1/z_2)
\end{multline}
which is the residue formula of \cite{bsz} for Thom polynomials (see the Introduction).


\subsection{The integral formula in general}\label{subsec:integralformula}

We formulate the geometric conditions under which our theorem holds. 
\begin{definition}\label{geocond} Let $V$ be a rank $r$ vector bundle over the smooth projective variety $M$ of dimension $m$. We say that the Chern class $c_d(V^{[k]})$ of its $k$th tautological bundle over $\GHilb^k(M)$ is properly supported if $\supp(c_d(V^{[k]}))$ is irreducible, and intersects the punctual Hilbert scheme only in the curvilinear component:
\begin{equation}\label{supportcond}
\mathrm{supp}(c_d(V^{[k]})) \cap \GHilb^k_0(M) \subset \CHilb^k_0(M).
\end{equation}
\end{definition}
\begin{remark} \begin{enumerate}
\item In particular, if there is a section $\s$ of $V^{[k]}$ whose zero set satisfies $\s^{-1}(0)\cap \GHilb^k_0(M) \subset \CHilb^k_0(M)$, and it is locally irreducible, then we say that the Euler class $\Euler(V^{[k]})$ is properly supported.
\item Being properly supported is a strong condition for tautological classes, and typically happens when $V=f^*E$ is a pull-back along some sufficiently nice $f:M \to N$ whose properties impose geometric conditions on the support of the sections of $V^{[k]}$. We will see that our candidate $\Euler(V)$ is indeed properly supported.  
\end{enumerate}
\end{remark}


We denote by $\Pi(k)$ the set of partitions of $\{1,\ldots, k\}$ into nonempty subsets; an element $\a=\{\a_1,\ldots, \a_s\} \in \Pi(k)$ consists of subsets $\a_i \subset \{1,\ldots, k\}$ such that $\a_i\cap \a_j=\emptyset$ for $1\le i<j\le s$ and $\{1,\ldots, k\}=\cup_{i=1}^s \a_i$. We introduce a set of variables 
\[\bz^{\a_i}=\{z^{\a_i}_1,\ldots, z^{\a_i}_{|\a_i|-1}\}\]
for each element of the partition.

\begin{theorem}[\textbf{Integrals over $\GHilb^{k+1}(M)$}]\label{tauintegral}
Let $V$ be a rank $r$ vector bundle over the smooth projective variety $M$ of dimension $m$. Assume that $r,k,m$ satisfy the inequalities
\[k\le m \le r \le (m-1)\frac{k}{k-1},\]
and $c_{(k+1)m}(V^{[k+1]})$ is properly supported on $\GHilb^{k+1}(M)$. Then    
\[\int_{G\Hilb^{k+1}(M)}c_{(k+1)m}(V^{[k+1]}) =\sum_{(\a_1,\ldots, \a_s) \in \Pi(k+1)} \int_{M^s} \res_{\bz^{\a_1}=\infty}\ldots \res_{\bz^{\a_s}=\infty} \mathcal{R}^\a(\theta_i, \bz) d\bz^{\a_s}\ldots d\bz^{\a_1}\]
where $\mathcal{R}^\a(\theta_i, \bz)$ stands for the rational form 
\[c_{(k+1)m}(V(\bz^{\a_1}) \oplus \ldots \oplus V(\bz^{\a_s})) 
\prod_{l=1}^s \frac{(-1)^{|\a_l|-1} \prod_{1\le i<j \le |\a_l|-1}(z_i^{\a_l}-z_j^{\a_l})Q_{|\a_l|-1}d\bz^{\a_l}}
{\prod_{i+j\le l\le 
|\l_l|-1}(z_i^{\a_l}+z_j^{\a_l}-z_l^{\a_l})(z_1^{\a_l}\ldots z_{|\a_l|-1}^{\a_l})^{m+1}}\prod_{i=1}^{|\a_l|-1} 
s_M\left(\frac{1}{z_i^{\a_l}}\right).\]
Here we use the following notations:
\begin{itemize}
	\item $\res_{\bz^{\a_l}=\infty}=\res_{z^{\a_l}_1=\infty} \ldots \res_{z^{\a_l}_{|\a_l|-1}=\infty}$ is the iterated residue at infinity.
	\item $s_M\left(\frac{1}{z}\right)$ is the Segre series of $M$ and $Q_k$ are the universal Borel-multidegrees explained in \cite{bsz}.
	\item $V(z)$ stands for the bundle $V$ tensored by the line $\mathbb{C}_z$, which is the representation of a torus $T$ with weight $z$. Hence its Chern roots are $z+\theta_1,\ldots ,z+\theta_r$. Moreover 
	\[V(\bz^{\a_l})=V \oplus V(z^{\a_l}_1) \oplus \ldots \oplus V(z^{\a_l}_{|\a_l|-1})\]
	has rank $r|\a_l|$, and 
	\begin{equation}\label{residueeuler} \Phi(V(\bz^{\a_1}) \oplus \ldots \oplus V(\bz^{\a_s})) =  \Phi(\theta_i^1,z^{\a_1}_1+\theta_i,\ldots z^{\a_1}_{|\a_1|-1}+\theta_i, \ldots, \theta_i^s,z^{\a_r}_1+\theta_i,\ldots z^{\a_r}_{|\a_r|-1}+\theta_i)
	\end{equation}
	Note that we have $s$ copies of the roots $\theta_1,\ldots , \theta_r$ of $V$, and we think of the $i$th copy $\theta_1^i,\ldots , \theta_r^i$ as the Chern roots of $V=V^i$ sitting over the $i$th copy of $M$ in $M^s$.
	\item The iterated residue is a homogeneous symmetric polynomial of degree $ns$ in the Chern roots $\theta^i_j$, that is, the Chern roots of $V^1 \oplus \ldots \oplus V^s$ over $M^s$, and integration over $M^s$ evaluates the homogeneous $(n,n,\ldots,n)$ part with at fundamental class of $M^s$.   
\end{itemize}

\end{theorem}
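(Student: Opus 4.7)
My proof would follow the five-step program sketched in Section \ref{sec:strategy} and executed in detail in Section \ref{subsec:k=3} for $k+1=3$. The factorization of the formula as a sum over $\alpha=\{\alpha_1,\ldots,\alpha_s\}\in\Pi(k+1)$ of integrals over $M^s$ reflects the sieve decomposition: each partition $\alpha$ contributes a local term supported on the $\alpha$-punctual stratum, where within each block $\alpha_l$ the integrand concentrates on a neighborhood of the small diagonal, producing the iterated residue in the variables $\mathbf{z}^{\alpha_l}$.

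\textbf{The sieve and induction.} I would pull the integral back to the ordered Hilbert scheme and then to the master nested Hilbert scheme $N^{k+1}(M)$ constructed in Section \ref{sec:nested}, which carries projections $\pi_\alpha$ to every approximating space $\GHilb^\alpha(M)$. The decomposition $V^{[k+1]}=\sum_{\alpha\in\Pi(k+1)}W^\alpha$ from \eqref{deepestterm} expresses $c_{(k+1)m}(V^{[k+1]})$ as a sum of classes supported on the punctual strata $N^\alpha_0(M)$. For $\alpha\ne \Lambda$ the summand $W^\alpha$ is a pull-back from $\GHilb^\alpha(M)$ and its integral factors as a product over the blocks $\alpha_1,\ldots,\alpha_s$, each of which is an integral of the same shape on $\GHilb^{|\alpha_l|}(M_l)$ for $M_l$ a factor of $M^s$; this is where the inductive hypothesis on $k$ produces the partition terms with $s\ge 2$. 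The main task is therefore to evaluate the leading term indexed by $\alpha=\Lambda=\{\{1,\ldots,k+1\}\}$.

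\textbf{Leading term: Chern-Weil and first residue vanishing.} For the leading term I would exploit that $W^\Lambda$ is compactly supported in a tubular neighborhood $N^{k+1}_\nabla(M)$ of the small diagonal, so Chern-Weil reduces the push-forward to $T$-equivariant integration over $N^{k+1}_\nabla(\mathbb{C}^m)$ for $T\subset\GL(m,\mathbb{C})$ the maximal torus (the substitution $\lambda_i\rightsquigarrow$ Chern roots of $M$ then integrates over $M$). Following Step 3 of the strategy, I would pass to the flag-bundle blow-ups $\hat N^{k+1}(\mathbb{C}^m)\to\widehat{\GHilb}^{k+1}(\mathbb{C}^m)\to\flag_k(T\mathbb{C}^m)$ and apply equivariant localisation on the balanced fiber $\hat N^{k+1}_\mathbf{f}$; combined with the Rossmann-type formula of Section \ref{sec:equiv} this repackages the Atiyah-Bott sum as an iterated residue in the flag weights $z_1,\ldots,z_k$. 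The first residue vanishing theorem, whose hypothesis (the denominator degree in each omitted weight exceeds its numerator degree) is guaranteed by $r\le (m-1)k/(k-1)$, then annihilates every approximating summand in \eqref{deepestterm} except $\pi_\Lambda^*c_{(k+1)m}(V^{[k+1]})$. This brings the calculation back onto $\GHilb^{k+1}_\nabla(\mathbb{C}^m)$.

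\textbf{Local model and final residue.} The properly-supported hypothesis \eqref{supportcond} now enters: the support of the integrand meets the punctual Hilbert scheme inside $\CHilb^{k+1}_0(\mathbb{C}^m)$, and local irreducibility lets me invoke the local model of Section \ref{sec:model} to realise a neighborhood of this support as the total space of a rank-$k$ tautological bundle $B$ over the curvilinear component, with reference-fibre equivariant Euler class $z_1\cdots z_k$. The Thom isomorphism converts the neighborhood integral into $\int_{\CHilb^{k+1}(\mathbb{C}^m)} c_{(k+1)m}(V(\mathbf{z}^\Lambda))/\Euler(B)$, and the second residue vanishing theorem collapses the torus localisation on $\CHilb^{k+1}(\mathbb{C}^m)$ to the contribution of a single distinguished fixed point, producing the Borel multidegree factor $Q_k \prod_{i<j}(z_i-z_j)/\prod_{i+j\le l}(z_i+z_j-z_l)$ from \cite{bsz,b0}. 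Matching numerators and denominators with the Segre factor $\prod_i s_M(1/z_i)=\prod_{i,j}(1+\lambda_j/z_i)^{-1}\cdot z_i^{-m}$ pulled off after the $\lambda\rightsquigarrow c(M)$ substitution yields exactly $\mathcal{R}^\Lambda(\theta_i,\mathbf{z})$. The principal obstacles I expect are (i) verifying both residue vanishing theorems in sufficient generality—the first requires fine numerical control of the pole/zero structure, the second rests on a delicate orbit-cycle symmetry of the curvilinear fixed points—and (ii) making the local model bundle $B$ genuinely global across the entire support of a properly-supported class, rather than merely germ-wise at individual curvilinear fixed points.
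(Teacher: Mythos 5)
Your proposal is correct and follows essentially the same route as the paper's own proof: sieve decomposition on the fully nested Hilbert scheme, Chern--Weil reduction to equivariant integration over $\CC^m$, localisation over $\flag_k(\CC^m)$ recast as an iterated residue, the First Residue Vanishing Theorem eliminating all non-$\Lambda$ approximating summands, the local model with the rank-$k$ bundle $B=\calo^{[k+1]}/\calo$ and Thom isomorphism, and finally the Second Residue Vanishing Theorem isolating the distinguished fixed point that yields the $Q_k$ factor, with the non-trivial partitions handled blockwise by the same formula as in the $k=3$ case. The only cosmetic difference is that the paper invokes Atiyah--Bott on the smooth bold resolutions for the first localisation and reserves Rossmann's formula for the second localisation on $\CHilb^{k+1}_\ff$ inside $\flag_\ff$, whereas you fold Rossmann into the first step; this does not affect the argument.
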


\noindent \textbf{Remarks on the formula}  Let us add a few remarks and comments on the residue formula.
\begin{enumerate}
\item First, if we set the degree of every variable $z_i^{\a_l}$ and $\theta_i^j$ to be $1$, then the total degree of the rational expression $\mathcal{R}^\a(\theta_i, \bz)$ is $mk-(m+1)(k-s)=(m+1)s-k$.
The iterated residue has $k-s$ variables, and hence 
\[\res_{\bz^{\a_1}=\infty}\ldots \res_{\bz^{\a_s}=\infty} \mathcal{R}^\a(\theta_i, \bz) d\bz^{\a_s}\ldots d\bz^{\a_1}\]
is a symmetric polynomial in the Chern roots $\theta_1^i,\ldots, \theta_r^i$ of $V^i$ for $1\le i \le s$, of degree 
\[(m+1)s-k+(k-s)=ms=\dim(M^s)\]
Here $V^i$ is the pull-back of $V$ from the $i$th factor in $M^s$, and integration over $M^s$ gives the result.  
This shows that the dependence on Chern classes of $M$ in fact can be expressed via the Segre classes of $M$. 
For fixed $k$ the rational expression in $\mathcal{R}^\a$ in the formula is independent of the dimension $m$ of $M$, and the iterated residue depends on $m$ only through the total Segre class $s_M$ of $M$. The iterated residue is then some linear combination of the coefficients of (the expansion of) $\mathcal{R}^\a$'s multiplied by Segre classes of $M$. By increasing the dimension, the iterated residue involves new terms of the expansion of $\mathcal{R}^\a$'s, and we can think of the our residue formula as a generating function of tautological integrals for fixed $k$ but varying $m$. 
\item The formula covers the domain where $k\ll m<r\ll 2m$. These are pretty restrictive for the parameters involved, and in a forthcoming paper we will study how to relax on these conditions.   
\end{enumerate}

Let $M=\CC^m$ be the affine space with the standard $\GL(m)$ action which induces a $\GL(m)$ action on the Hilbert scheme $\GHilb^{k+1}(\CC^m)$. We formulate the equivariant version of Theorem \ref{tauintegral}, which is very similar, but a key difference is that we can integrate (push-forward) equivariant Chern classes $c_d(V^{[k+1]})$ with $d\ge \dim(\GHilb^{k+1}(\CC^m)$.   Let $\l_1,\ldots, \l_m$ denote the weights of the maximal torus $T \subset \GL(m)$ on $\CC^m$. 

\begin{theorem}[\textbf{Equivariant integrals over $\GHilb^{k+1}(\CC^m)$}]\label{equivariantintegral}
Let $\l_1,\ldots, \l_m$ denote the weights for the maximal torus $T \subset \GL(m)$ acting on $\CC^m$. Let $V$ be an equivariant bundle over $\CC^m$ with weights (i.e equivariant Chern root) $\theta_1, \ldots, \theta_r$. Assume that $r,k,m$ satisfy the inequalities $k \le m$ and
\[m \le r \le (m-1)\frac{k}{k-1},\]
and suppose that $c_{d}(V^{[k+1]})$ is properly supported on $\GHilb^{k+1}(M)$ for some 
\[\mathrm{rank}(V^{k+1})=(k+1)r \ge d\ge (k+1)m=\dim(\GHilb^{k+1}(\CC^m)).\] 
Then    
\[\int_{G\Hilb^{k+1}(\CC^m)}c_{d}(V^{[k+1]}) =\sum_{(\a_1,\ldots, \a_s) \in \Pi(k)} \int_{M^s} \res_{\bz^{\a_1}=\infty}\ldots \res_{\bz^{\a_s}=\infty} \mathcal{R}^\a(\theta_i, \bz) d\bz^{\a_s}\ldots d\bz^{\a_1}\]
where $\mathcal{R}^\a(\theta_i, \bz)$ is the rational form 
\[c_d(V(\bz^{\a_1}) \oplus \ldots \oplus V(\bz^{\a_s})) 
\prod_{l=1}^s \frac{(-1)^{|\a_l|-1} \prod_{1\le i<j \le |\a_l|-1}(z_i^{\a_l}-z_j^{\a_l})Q_{|\a_l|-1}d\bz^{\a_l}}
{\prod_{i+j\le l\le 
|\l_l|-1}(z_i^{\a_l}+z_j^{\a_l}-z_l^{\a_l})(z_1^{\a_l}\ldots z_{|\a_l|-1}^{\a_l})\prod_{i=1}^{|\a_l|-1} \prod_{j=1}^m
\left(\l_j-z_i^{\a_l}\right)}\]
with the same notations as in Theorem \ref{tauintegral}.
\end{theorem}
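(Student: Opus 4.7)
The plan is to follow exactly the strategy outlined in \S\ref{sec:strategy} and illustrated in full detail for $k=2,3$ in Propositions on $\GHilb^2$ and $\GHilb^3$; the present statement is the equivariant analog over $\CC^m$, which is in fact more natural since one can work on $\CC^m$ directly, bypassing the Chern--Weil reduction (Step 2) entirely. First, I would apply the sieve decomposition from Step 1: on the fully nested Hilbert scheme $N^{k+1}(\CC^m)$ with its blow-down morphism $\pi_\Lambda \colon N^{k+1}(\CC^m) \to \GHilb^{k+1}(\CC^m)$ and the approximating projections $\pi_\alpha \colon N^{k+1}(\CC^m) \to \GHilb^\alpha(\CC^m)$, decompose $\pi_\Lambda^* c_d(V^{[k+1]})$ as a sum indexed by partitions $\alpha \in \Pi(k+1)$, where the $\alpha$-summand is supported on $N^\alpha_0(\CC^m) = \HC^{-1}(\Delta_\alpha)$. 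Iterating on the number of parts (this is the induction analog of passing from $\GHilb^3$ to $\GHilb^{[1,2]} \times M$ in \S\ref{subsec:k=3}), the integral splits into contributions localized over each diagonal $\Delta_\alpha \subset (\CC^m)^{k+1}$.

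For each partition $\alpha = (\alpha_1,\ldots,\alpha_s)$, the corresponding term factors through a product of ``balanced'' contributions, one for each block $\alpha_l$. I would handle each block by the blow-up $\widehat{\GHilb}^{|\alpha_l|}(\CC^m)$ fibering over the flag bundle $\flag_{|\alpha_l|-1}(T\CC^m)$, whose fiber is the balanced Hilbert scheme $\BHilb^{|\alpha_l|}_0$. Applying $T$-equivariant localization (Atiyah--Bott/Rossmann) to each fiber and converting to iterated residues via Proposition \ref{ABtoresidue}, the external integration over the base $\CC^m$ of each fiber produces the denominator factor $\prod_j(\lambda_j - z^{\alpha_l}_i)$, which is the equivariant substitute for $s_M(1/z^{\alpha_l}_i)/(z^{\alpha_l}_i)^m$ appearing in the non-equivariant version. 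The variables $z^{\alpha_l}_i$ are the equivariant parameters of the residual torus acting on $\hat N^{|\alpha_l|}_\bff$ in the fiber direction, precisely as in equation \eqref{integralfirst}.

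Next I would invoke the First Residue Vanishing Theorem (Step 3): among the $2^{|\alpha_l|}$-many sieve terms contributing to each block, all but the ``deepest'' one vanish because their numerators are independent of at least one of the $z^{\alpha_l}_i$ while the denominators carry the full power $m$ in each variable, forcing degree cancellation under $\res_{z=\infty}$. This leaves only the pull-back $\pi_{\Lambda_l}^* V^{[|\alpha_l|]}$, reducing integration from the nested scheme back to $\GHilb^{|\alpha_l|}_\nabla(\CC^m)$. Now the properly-supported hypothesis on $c_d(V^{[k+1]})$ kicks in and restricts support to the curvilinear component $\CHilb^{|\alpha_l|}(\CC^m)$: by Step 4 (the local model), a tubular neighborhood of the curvilinear punctual locus is modeled by the bundle $B = \calo^{[|\alpha_l|]}/\calo$ whose equivariant Euler class on the balanced fiber is exactly $z^{\alpha_l}_1 \cdots z^{\alpha_l}_{|\alpha_l|-1}$.

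The final step is to apply the curvilinear residue formula of \cite{bsz,b0} to each block: the torus localization on $\CHilb^{|\alpha_l|}(\CC^m)$, combined with the Second Residue Vanishing Theorem, produces exactly the factor
\[
\frac{(-1)^{|\alpha_l|-1} \prod_{1\le i<j\le |\alpha_l|-1}(z^{\alpha_l}_i - z^{\alpha_l}_j)\, Q_{|\alpha_l|-1}}{\prod_{i+j\le l\le |\alpha_l|-1}(z^{\alpha_l}_i + z^{\alpha_l}_j - z^{\alpha_l}_l)\,(z^{\alpha_l}_1\cdots z^{\alpha_l}_{|\alpha_l|-1})}
\]
in the rational form $\mathcal R^\alpha$, where the Jacobi denominator encodes the tangent weights of the flag-bundle fiber and $Q_{|\alpha_l|-1}$ is the Borel multidegree. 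Multiplying the block contributions together and summing over $\alpha \in \Pi(k+1)$ yields the stated formula. The main obstacle will be the bookkeeping for the sieve inductively, namely verifying that the combinatorial multiplicities assembled from the nested-scheme decomposition precisely match the sum over partitions without any multinomial coefficient, and ensuring the local-model bundle $B$ restricts correctly on each $\CHilb^{|\alpha_l|}_0$ to make the factor $\prod z_i^{\alpha_l}$ appear with the right power $m$ after reassembly with the base integration $\prod_j(\lambda_j - z^{\alpha_l}_i)$; the numerical range $k \le m \le r \le (m-1)k/(k-1)$ is exactly the regime in which both residue vanishing theorems apply simultaneously, so it should not need to be revisited once each block is treated.
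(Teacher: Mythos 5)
Your proposal follows essentially the same route as the paper: the sieve on the fully nested Hilbert scheme, localisation over the flag bundle converted to iterated residues via Proposition \ref{ABtoresidue} (which is where the factors $\prod_j(\lambda_j-z_i^{\a_l})$ arise), the First Residue Vanishing Theorem to kill all but the deepest sieve term, the properly-supported/local-model step giving $\Euler(B)=z_1\cdots z_{|\a_l|-1}$, and finally Rossmann localisation plus the Second Residue Vanishing Theorem on the curvilinear fiber producing $Q_{|\a_l|-1}$ and the denominator $\prod_{i+j\le l}(z_i+z_j-z_l)$, with non-trivial partitions handled blockwise/inductively exactly as in the paper's $k=3$ discussion. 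The points you flag as remaining bookkeeping (set-partition indexing without multinomial coefficients, and the identification of $B$ with $\calo^{[k+1]}/\calo$ on the curvilinear locus) are precisely how the paper resolves them, so there is no substantive gap.
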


\section{Local models for Hilbert schemes}\label{sec:model}


In this section let $X$ be a smooth projective variety of dimension $m$, and let $J_kX \to X$ be the bundle of $k$-jets of germs of parametrized curves in $X$; that is, $J_kX$ is the of equivalence classes of germs of holomorphic
maps $f:(\CC,0) \to (X,p)$, with the equivalence relation $f\sim g$
if and only if the derivatives $f^{(j)}(0)=g^{(j)}(0)$ are equal for
$0\le j \le k$ when computed in some local coordinate system of $X$ near $p\in X$. The projection map $J_kX \to X$ is simply $f \mapsto f(0)$. If we choose local holomorphic coordinates on an open neighbourhood $\Omega \subset X$
around $p$, the elements of the fibre $J_kX_p$ can be represented by Taylor expansions 
\[f(t)=p+tf'(0)+\frac{t^2}{2!}f''(0)+\ldots +\frac{t^k}{k!}f^{(k)}(0)+O(t^{k+1}) \]
 up to order $k$ at $t=0$ of $\CC^m$-valued maps $f=(f_1,f_2,\ldots, f_m)$
on open neighbourhoods of 0 in $\CC$. Locally in these coordinates the fibre $J_kX_p$ can be identified with the set of $k$-tuples of vectors $(f'(0),\ldots, f^{(k)}(0)/k!)=(\CC^m)^k$ 
which further can be identified with $J_k(1,m)$.

Note that $J_kX$ is not
a vector bundle over $X$ since the transition functions are polynomial but
not linear. In fact, let $\diff_X$ denote the principal $\diff_k(m)$-bundle over $X$ formed by all local polynomial coordinate systems on $X$. Then 
\[J_kX=\diff_X \times_{\diff_k(m)} J_k(1,m).\] 
is the associated bundle whose structure group is $\diff_k(m)$.

Let $J_k^{\reg}X$ denote the bundle of $k$-jets of germs of parametrized regular curves in $X$, that is, where the first derivative $f'\neq 0$ is nonzero. After fixing local coordinates near $p\in X$ the fibre $J_k^{\reg}X_p$ can be identified with $\jetreg 1m$ and 
\[J_k^{\reg}X=\diff_X \times_{\diff_k(m)} J_k^\reg(1,m).\]

Let $\cald_X^{\le k}$ denote the dual bundle formed by at most $k$th order differential operators over $X$. Then $\cald_X^{\le 0}=\calo_X$, and we let $\cald_X^k=\cald_X^{\le k}/\cald_X^{\le 0}$. We have a filtration of bundles 
\begin{equation}\label{dfiltration}
\calo_X=\cald_X^{\le 0} \subset \cald_X^{\le 1} \subset \ldots \subset \cald_X^{\le k}
\end{equation}
where the graded component $\cald_X^{\le i}/\cald_X^{\le i-1}\simeq \Sym^iTX$ but this filtration is not split in general, so $\cald_X^{\le k} \nsimeq \Sym^{\le k}TX$. After choosing local coordinates on $X$ near $p$, the fibre $\cald^{k}_{X,p}$ can be identified with the space $J_k(m,1)^*\simeq \symdot$ of $k$th order differential operators on $\CC^n$.
Therefore $\cald_X^k$ is the associated bundle 
\[\cald_X^k=\diff_X \times_{\diff_k(m)} J_k(m,1)^*=\diff_X \times_{\diff_k(m)} \symdot.\]
Given a regular $k$-jet $f:(\CC,0) \to (X,p)$ we may push forward the differential operators of order k on $\CC$ to $X$ along $f$ and obtain a $k$-dimensional subspace of $\cald^{\le k}_{X,p}$. This gives the bundle map 
\begin{equation}\label{diffoppushforward}
J_k^\reg X \to \grass_k(\cald_X^{k})
\end{equation} 
Note that $\diff_k(1)=\jetreg 11$ acts fibrewise on the jet bundle $J_k^{\reg}X$ and the map \eqref{diffoppushforward} is $\diff_k(1)$-invariant resulting an embedding 
\begin{equation}\label{invdiffoppush}
\tilde{\phi}: J_k^{\reg}X/\diff_k(1) \hookrightarrow \grass_k(\cald_X^{k})
\end{equation}
This is the fibered version of the map 
\begin{equation}\label{embedgrass1}
\phi:J_k(1,m)/\diff_k(1) \hookrightarrow \grass(k,J_k(m,1)^*)
\end{equation}
\begin{theorem}[\textbf{The test curve model of Hilbert schemes}, \cite{bsz}, \cite{b}]\label{bszmodel}
\begin{enumerate}
\item For any $k,m$ we have 
\[\CHilb^{k+1}_0(\CC^m)=\overline{\mathrm{im}(\phi)} \subset \grass_k(J_k(m,1)^*)=\grass_k(\mathfrak{m}),\]
where $\mathfrak{m}$ is the maximal ideal at the origin. 
\item Let $\{e_1,\ldots, e_m\}$ be a basis of $\CC^m$. For $k\le m$ the $\GL(m)$-orbit of 
\[p_{k,m}=\phi(e_1,\ldots, e_k)=\mathrm{Span}_\CC (e_1,e_2+e_1^2,\ldots, \sum_{i_1+\ldots +i_r=k}e_{i_1}\ldots e_{i_r})\] 
forms a dense subset of the image $\jetreg 1m$ and
\[\CHilb^{k+1}_0(\CC^m)=\overline{\GL_m \cdot p_{k,m}}.\] 
\item There are two canonically defined bundles on $\CHilb^{k+1}_0(\CC^m)$: 
\begin{enumerate}
\item The tautological rank $k+1$ bundle $\calo_{\CC^m}^{[k+1]}$. 
\item The tautological rank $k$ bundle $\cale$, which is the restriction of the tautological bundle over $\grass_k(\wedge^k \symdot)$. 
\end{enumerate}
These fit into the short exact sequence 
\[\xymatrix{0 \ar[r] & \calo_\grass \ar[r] & \calo_{\CC^m}^{[k+1]} \ar[r] & \cale \ar[r] & 0}.\]
\item Similarly, we get an embedding of the curvilinear Hilbert scheme 
\[\CHilb^{k+1}(X)=\overline{\mathrm{im}(\tilde{\phi})} \subset \grass_k(\cald_X^{k}),\] 
and for a bundle $V$ over $X$ we obtain the short exact sequence of bundles over $\CHilb^{k+1}(X)$:
\[\xymatrix{0 \ar[r] & V \ar[r] & V^{[k+1]}=\calo_{X}^{[k+1]}\otimes V \ar[r] & \cale_X \ar[r] & 0}\]
where $\cale_X$ is the tautological bundle over $\grass_k(\cald_X^{k})$.
\end{enumerate}
\end{theorem}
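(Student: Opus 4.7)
The plan is to prove the four parts in order, with (1) carrying the main geometric content and (3)--(4) being largely formal consequences.

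For (1), start by observing that a length-$(k{+}1)$ curvilinear subscheme $\xi$ supported at the origin has ideal $I_\xi\subset\calo_{\CC^m,0}$ containing $\mathfrak{m}^{k+1}$, since $\mathfrak{m}$ acts nilpotently on $\calo_\xi\simeq\CC[t]/t^{k+1}$ with index $k{+}1$. Therefore $\bar I_\xi:=I_\xi/\mathfrak{m}^{k+1}$ is a codimension-$k$ subspace of $J_k(m,1)$, and its annihilator $(\bar I_\xi)^\perp$ is a $k$-dimensional subspace of $J_k(m,1)^*$, giving a morphism from $\CHilb^{k+1}_0(\CC^m)$ to $\grass_k(J_k(m,1)^*)$. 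Conversely, any regular $\gamma\in\jetreg{1}{m}$ determines a surjection $\gamma^{*}\colon\calo_{\CC^m,0}\twoheadrightarrow\CC[t]/t^{k+1}$ whose kernel is a curvilinear ideal, and two test curves define the same ideal if and only if they differ by a $\diff_k(1)$-reparametrisation. The explicit calculation already recorded in the proof of Proposition~\ref{prop:boardman} shows that the $k$-plane attached to $\gamma$ by this ideal construction equals $(\mathcal{S}^1_\gamma)^\perp=\phi(\gamma)$, so $\phi$ descends to a bijection from $\jetreg{1}{m}/\diff_k(1)$ onto the open curvilinear locus inside $\CHilb^{k+1}_0(\CC^m)$; taking closures in the Grassmannian then yields (1).

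For (2), directly evaluate $\phi$ on the test curve $\gamma(t)=e_1t+e_2t^2+\dots+e_kt^k$: the identification above gives $(\mathcal{S}^1_\gamma)^\perp=p_{k,m}$. Since the $\GL(m)$-action on $\CC^m$ commutes with the $\diff_k(1)$-reparametrisation action and is transitive on linearly independent $k$-tuples of vectors when $k\le m$, the $\GL(m)$-orbit of $p_{k,m}$ is the image under $\phi$ of those $\gamma$ whose first $k$ derivatives are linearly independent, a Zariski-open dense subset of $\jetreg{1}{m}/\diff_k(1)$. Taking closures completes (2).

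For (3), the fibre of $\calo_{\CC^m}^{[k+1]}$ at $\xi$ is $\calo_\xi$; the inclusion of constants embeds the trivial line bundle $\calo_{\grass}\hookrightarrow\calo_{\CC^m}^{[k+1]}$, and the quotient fibre is $\mathfrak{m}/(I_\xi\cap\mathfrak{m})=J_k(m,1)/\bar I_\xi$, which, via the standard duality between Grassmannians of subspaces and of quotients, is canonically the fibre of the tautological bundle $\cale$ on $\grass_k(J_k(m,1)^*)$. For (4), all the constructions above are manifestly $\diff_k(m)$-equivariant in local coordinate systems, so via the principal $\diff_k(m)$-bundle $\diff_X$ and the associated-bundle presentation recalled before the theorem ($\cald_X^k=\diff_X\times_{\diff_k(m)}J_k(m,1)^*$), the embedding and the short exact sequence descend globally to $X$; tensoring the whole diagram with an arbitrary bundle $V$ on $X$, and using $\calo_X^{[k+1]}\otimes V=V^{[k+1]}$, yields the $V$-twisted statement.

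The main obstacle I expect is the closedness/scheme-structure comparison in (1): one must verify that every limit inside $\grass_k(J_k(m,1)^*)$ of $k$-planes of the form $(\bar I_\xi)^\perp$ again satisfies the ideal condition (so that $W^\perp+\mathfrak{m}^{k+1}$ is automatically an ideal in $\calo_{\CC^m,0}$), and that the induced scheme structure on $\overline{\mathrm{im}(\phi)}$ matches the one inherited from the Hilbert scheme. Once this closedness and compatibility is established, parts (2)--(4) follow by the formal manipulations above.
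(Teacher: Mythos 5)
This theorem is imported from \cite{bsz} and \cite{b}; the paper gives no proof of it, so your proposal can only be compared with the construction recalled in the proof of Proposition~\ref{prop:boardman} and with the cited sources. Your outline does follow the same test-curve route: identify a regular jet $\gamma$ with the surjection $\gamma^*:\calo_{\CC^m,0}\twoheadrightarrow\CC[t]/t^{k+1}$, note that its kernel modulo $\frakm^{k+1}$ is exactly $\mathcal{S}^1_\gamma$, so that $\phi(\gamma)=(\mathcal{S}^1_\gamma)^\perp$ records the curvilinear ideal; quotient by $\diff_k(1)$; deduce (2) from transitivity of $\GL(m)$ on nondegenerate jets (for $k\le m$ one can even map the first $k$ derivatives to $e_1,\ldots,e_k$ by a single linear map, no reparametrisation needed); and globalise (3)--(4) through the associated-bundle formalism $\cald_X^k=\diff_X\times_{\diff_k(m)}J_k(m,1)^*$. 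That skeleton is the right one.

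The genuine gap is the step you yourself flag and then leave unresolved, and it is precisely the content of part (1): you only construct a bijection between $\jetreg 1m/\diff_k(1)$ and the open curvilinear locus and then say that ``taking closures yields (1)'', whereas the claim is that the closure of this locus \emph{inside the Hilbert scheme} is identified with $\overline{\mathrm{im}(\phi)}$ \emph{inside the Grassmannian}. The way to close this is not to analyse limits of $k$-planes, as you propose, but to extend the map to the whole punctual Hilbert scheme: for any colength-$(k+1)$ ideal $I$ supported at the origin the local algebra has nilpotent maximal ideal of index at most $k+1$, so $\frakm^{k+1}\subset I$ automatically; hence $\xi\mapsto(I_\xi/\frakm^{k+1})^\perp$ is a morphism $\Hilb^{k+1}_0(\CC^m)\to\grass_k(J_k(m,1)^*)$, and in fact the standard (Grothendieck) closed embedding of the punctual Hilbert scheme of the Artinian scheme $\mathrm{Spec}\,\calo/\frakm^{k+1}$ into the Grassmannian, the image being cut out by the closed condition that the subspace be closed under multiplication. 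Properness and injectivity then give $\overline{\mathrm{im}(\phi)}=\eta(\overline{\mathrm{Curv}^{k+1}_0})=\eta(\CHilb^{k+1}_0(\CC^m))$, and the closed-immersion property identifies the structures; without some version of this argument (1) is not proved, and neither is (3), whose exact sequence is asserted over all of $\CHilb^{k+1}_0$ including the boundary points. Two smaller issues: in (3) the quotient $\calo_\xi/\CC\cong\frakm/I_\xi$ is canonically the \emph{dual} of the tautological subspace $(I_\xi/\frakm^{k+1})^\perp$, so the duality convention defining $\cale$ has to be fixed consistently rather than waved at; and in (4) the identity $V^{[k+1]}=\calo_X^{[k+1]}\otimes V$ over the curvilinear locus is itself part of the statement (it is false over the full Hilbert scheme), so it needs an argument, e.g.\ via the canonical trivialisation of $V$ along the test curve, and cannot simply be quoted.
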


\subsection{Fibration over the flags in $TX$.}

Let $k\le m$ and let $P_{m,k} \subset \GL_m$ denote the parabolic subgroup which preserves the flag 
\[\mathbf{f}=(\mathrm{Span}(e_1)   \subset \mathrm{Span}(e_1,e_2) \subset \ldots \subset \mathrm{Span}(e_1,\ldots, e_k) \subset \CC^m).\] 
\begin{definition}\label{def:xktilde}
Define the partial desingularization 
\[\widetilde{\CHilb}^{k+1}_0(\CC^m)=\GL_m \times_{P_{m,k}} \overline{P_{m,k} \cdot p_k}\]
with the resolution map $\rho: \widetilde{\CHilb}^{k+1}_0(\CC^m) \to \CHilb^{k+1}_0(\CC^m)$ given by $\rho(g,x)=g\cdot x$.
\end{definition} 

Equivalently, let $\jetnondeg 1m \subset \jetreg 1m$ be the set of test curves with $\g',\ldots, \g^{(k)}$ linearly independent. These correspond to the nonsingular $m \times k$ matrices in $\Hom(\CC^k,\CC^m)$, and they fibre over the set of complete flags in $\CC^m$:
\begin{equation}\label{proj2}
\jetnondeg 1m/\diff_k(1) \to \Hom(\CC^k,\CC^m)/B_k=\flag_k(\CC^m)
\end{equation}
where $B_k \subset \GL(k)$ is the upper Borel. The image of the fibres under $\phi$ are isomorphic to $P_{m,k} \cdot p_k$, and therefore $\widetilde{\CHilb}^{k+1}_0(\CC^m)$ is the fibrewise compactification of $\jetnondeg 1m$ over $\flag_k(\CC^m)$. 

We construct a fibred version of $\widetilde{\CHilb}^{k+1}_0(\CC^m)$, a fibrewise partial desingularization 
\begin{equation}\label{desing}
\rho:\widetilde{\CHilb}^{k+1}(X) \to \CHilb^{k+1}(X)
\end{equation}
over $X$, where $\widetilde{\CHilb}^{k+1}(X)$ is a locally trivial bundle over $X$ with fibres isomorphic to $\widetilde{\CHilb}^{k+1}_0(\CC^m)$. 
The immediate problem is that $J_k^{\reg}X/\diff_k(1)$ is embedded into $\flag_k(\cald_X^k)$ but there is no projection $\cald_X^k \to TX$ to define a fibration of $J_k^{\reg}X/\diff_k(1)$ over $\flag_k(TX)$. To resolve this problem we will work with a linearised bundle $\CHilb^{k+1}(X)^{\GL}$ associated to a principal $\GL_m$-bundle over $X$, rather than $\CHilb^{k+1}(X)$ which is associated to the principal $\diff_k(1)$-bundle $\diff_X$. This way we reduce the structure group of $\CHilb^{k+1}(X)$ to $\GL_m$, but the new space has the same topological intersection numbers. We explain the details in the next subsection. 

\subsection{Linearisation of the Hilbert scheme}

Recall from Step 1 the notation $\diff_k(m)=J_k^\reg(m,m)$ for the group of $k$th order diffeomorphism germs of $\CC^m$ at the origin. In Step 1 we introduced the bundle of differential operators
\[\cald_X^k=\diff_X \times_{\diff_k(m)} \symdot.\]
where $\diff_X$ stands for the principal $\diff_k(m)$-bundle over $X$ formed by all local polynomial coordinate systems on $X$. This is not a vector bundle---the structure group is $\diff_k(m)$---but we can linearise it. 
The set $\GL(m)$ of linear coordinate changes forms a subgroup of $\diff_k(m)$. Let $\GL_X$ denote the principal $\GL(m)$-bundle over $X$ formed by all local linear coordinate systems on $X$. Then the vector bundle $\sym^{\le k} TX=\oplus_{i=1}^k \sym^i TX$ is associated to the same $\symdot$ considered as a $\GL(m)$-module:
\[\sym^{\le k} TX=\GL_X \times_{\GL(m)} \symdot.\]
Note that $\cald_X^{k}$ and  $\Sym^{\le k}TX$ are not isomorphic bundles, and in particular the filtration defined in \eqref{dfiltration} does not split. Hence there is no projection map $\cald_X^k \to TX$ but there is a natural projection $\sym^{\le k} TX \to TX$.

There is an induced $\diff_k(m)$ action on $\grass_k(\symdot)$ and the image $\CHilb^{k+1}_0(\CC^m)=\overline{\im(\phi^\grass)}$ is $\diff_k(m)$-invariant subvariety of $\grass_k(\symdot)$. Then $\CHilb^{k+1}(X)$ is the associated bundle
\[\CHilb^{k+1}(X)=\diff_X \times_{\diff_k(m)} \CHilb^{k+1}_0(\CC^m) \subset \diff_X \times_{\diff_k(m)} \grass_k(\symdot)=\grass_{k}(\cald^k_X).\]
We define the corresponding linearised bundle 
\[\CHilb^{k+1}(X)^{\GL}=\GL_X \times_{\GL_m} \CHilb^{k+1}_0(\CC^m) \subset \GL_X \times_{\GL_m}\grass_k(\symdot)=\grass_k(\sym^{\le k} TX)\]
which is the linearised version of $\CHilb^{k+1}(X)$ remembering the linear action on the fibres. 
For torus localisation purposes we can replace $\CHilb^{k+1}(X)$ with its linearised version $\CHilb^{k+1}(X)^{\GL}$.   

Let $J^\nondeg_k X= \GL_X \times_{\GL_m} J_k^\nondeg(1,m)$. We can form the bundle 
\[\calx^{\nondeg}_k=\GL_X \times_{\GL(m)} (\phi(\jetnondeg 1m)) \subset \GL_X \times_{\GL(m)} \grass_k(\symdot)=\grass_k(\sym^{\le k} TX)\]
which is a dense subbundle of $\CHilb^{k+1}(X)^\GL$ but not a subbundle of $\CHilb^{k+1}(X)$.
The projection $\sym^{\le k}TX \to TX$ then induces the following diagram whose restriction to the fibres over $X$ was given in \eqref{proj2}:
\begin{equation}\label{proj3}
\xymatrix{J_k^\nondeg X/\diff_k(1) \ar[r]^-{\phi^\grass} \ar[rd]^\pi & \grass_k(\sym^{\le k}TX) \ar@{-->}[d]\\
& \flag_k(TX) } 
\end{equation}
The image of $\phi^\grass$ sits in the domain of the vertical rational map and therefore we have a fibration
\[\mu:\calx^\nondeg_k \to \flag_k(TX)\]
of the bundles. 
\begin{definition}\label{def:widetilde} Let $\widehat{\CHilb}^{k+1}(X) \to \flag_k(TX)$ denote the fibrewise compactification of the bundle $\pi: \calx^{\nondeg}_k \to \flag_k(TX)$. In other words, if $P_{m,k} \subset \GL_m$ denotes the parabolic subgroup which preserves the flag 
\[\mathbf{f}=(\mathrm{Span}(e_1)   \subset \mathrm{Span}(e_1,e_2) \subset \ldots \subset \mathrm{Span}(e_1,\ldots, e_k) \subset \CC^m).\] 
 and $\mathfrak{p}_{m,k}=\tilde{\phi}(e_1,\ldots ,e_k)$ denotes the base point in $\grass_k(\symdot)$ then 
\[\widehat{\CHilb}^{k+1}(X)=\GL_X \times_{\GL(m)} \left(\GL(n) \times_{P_{m,k}} \overline{P_{m,k}\cdot \mathfrak{p}_{m,k}}\right)\] 
 and we have a partial resolution map
\[\rho: \widehat{\CHilb}^{k+1}(X)=\GL_X \times_{\GL(m)} \left(\GL(m) \times_{P_{m,k}} \overline{P_{m,k}\cdot \mathfrak{p}_{m,k}}\right) \to \GL_X \times_{\GL(m)} \left(\overline{\GL(m)\cdot \mathfrak{p}_{m,k}}\right)=\CHilb^{k+1}(X)^{\GL}.\]
\end{definition}

\subsection{Neighborhood of the $\CHilb^{k+1}(X)$ in $\GHilb^{k+1}(X)$} We start with $X=\CC^m$, and how to construct the model of a small neighborhood in $\GHilb^{k+1}(\CC^m)$ of the curvilinear locus $\CHilb^{k+1}(\CC^m)$ supported at some point on $\CC^m$. 
\begin{definition} Let $\xi=\xi_1 \sqcup \xi_2 \sqcup \ldots \sqcup \xi_s \in \GHilb^{k+1}(\CC^m)$ be a subscheme whose support consists of the $s$ points $p_1,\ldots, p_s \in \CC^m$ with $\mathrm{length}(\xi_i)=l_i$. The baricenter of $\xi$ is 
\[\bbb(\xi)=l_1p_1+\ldots +l_sp_s \in \CC^m\]
The balanced Hilbert scheme centered/balanced at $p\in \CC^m$ is 
\[\BHilb^{k+1}_p(\CC^m)=\bbb^{-1}(p) \subset \GHilb^{k+1}(\CC^m)\]
consists of all subschemes $\xi$ whose baricenter is $p$.
\end{definition}
Let $\{0\} \in U \subset \CC^m$ be a small open neighborhood of the origin and let 
\[\BHilb^{k+1}(U)=\{\xi \in \BHilb^{k+1}_0(\CC^m): \mathrm{supp}(\xi) \subset U\}\]
be a small neighborhood of $\CHilb^{k+1}_0(\CC^m)$ in $\BHilb^{k+1}_0(\CC^m)$. The fiberwise embedding gives 
\begin{equation}\label{diagramone}
\xymatrix{
\widetilde{\CHilb}^{k+1}_0(\CC^m)  \ar@{^{(}->}[r] \ar[d]^\rho & \BHilb^{k+1}_0(U)  \ar@{^{(}->}[r]^-{\iota} & \grass_{k+1}(S^k\CC^m)\\
\flag_k(\CC^m)  & & } 
\end{equation}
We construct a blow-up $\widetilde{\BHilb}^{k+1}_0(U)$ of $\BHilb^{k+1}_0(U)$ which fibers above the flag $\flag_k(\CC^m)$. We follow a similar path to the construction of $\widetilde{\CHilb}^{k+1}_0(\CC^m)=\GL_m \times_{P_{m,k}} \overline{P_{m,k} \cdot p_k}$. We use the following simple observation.
\begin{lemma}\label{keylemma}
Every point $\xi \in \BHilb^{k+1}_0(\CC^m)$ is contained in a $k$-dimensional linear subspace of $\CC^m$.
\end{lemma}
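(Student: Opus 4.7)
The plan is to prove this in two steps: first establish the statement for reduced (hence generic) subschemes directly, and then extend to all of $\BHilb^{k+1}_0(\CC^m)$ by a closedness argument.

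For the reduced case, let $\xi=\{p_1,\ldots,p_{k+1}\}\subset\CC^m$ be a configuration of $k+1$ distinct points with baricenter $\bbb(\xi)=p_1+\cdots+p_{k+1}=0$. This single nontrivial linear relation shows that the matrix whose columns are the $p_i$ has rank at most $k$, so the column span (equivalently, the affine span of the $p_i$, which contains the baricenter $0$ and is therefore a linear subspace) is of dimension at most $k$. Completing to dimension $k$ if necessary proves the lemma for reduced $\xi$.

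For the general case, I would introduce the incidence variety
\[
I\ =\ \{(\xi,V)\in \GHilb^{k+1}(\CC^m)\times\grass_k(\CC^m)\ :\ \xi\subset V\ \text{scheme-theoretically}\},
\]
and argue it is Zariski-closed: using the universal subscheme $\Xi\to\GHilb^{k+1}(\CC^m)$ and the rank-$(k+1)$ tautological bundle $\calo^{[k+1]}$, the condition $\xi\subset V$ becomes the vanishing on $\xi$ of the $m-k$ linear forms cutting out $V$, i.e.\ the vanishing of a natural bundle map from the rank-$(m-k)$ tautological quotient on $\grass_k(\CC^m)$ to $\calo^{[k+1]}$, which is obviously a closed condition. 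Since $\grass_k(\CC^m)$ is projective, the projection $\pi:I\to\GHilb^{k+1}(\CC^m)$ is proper, so $\pi(I)$ is Zariski-closed in $\GHilb^{k+1}(\CC^m)$.

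Finally, I would combine the two steps. By Step 1, $\pi(I)$ contains every reduced balanced subscheme. The translation action of $\CC^m$ on $\GHilb^{k+1}(\CC^m)$ together with the baricenter morphism yields an isomorphism $\GHilb^{k+1}(\CC^m)\simeq \BHilb^{k+1}_0(\CC^m)\times\CC^m$, so irreducibility of the main component $\GHilb^{k+1}(\CC^m)$ passes to $\BHilb^{k+1}_0(\CC^m)$ and density of the reduced locus in $\GHilb^{k+1}(\CC^m)$ descends to density of the reduced locus in $\BHilb^{k+1}_0(\CC^m)$. The closed set $\pi(I)\cap\BHilb^{k+1}_0(\CC^m)$ then contains a dense subset of the irreducible $\BHilb^{k+1}_0(\CC^m)$, and hence equals it, which is the claim. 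The most delicate point of the proof is the scheme-theoretic formulation of $I$ and the verification of its closedness in families; once that is set up cleanly, the limit argument is automatic.
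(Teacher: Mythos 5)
Your proof is correct and follows essentially the same route as the paper: the baricenter relation forces a reduced balanced configuration to span at most a $k$-dimensional linear subspace, and the general case follows because such subschemes are dense in $\BHilb^{k+1}_0(\CC^m)$ and containment in a $k$-plane passes to limits. Your incidence-variety argument with the proper projection from $\GHilb^{k+1}(\CC^m)\times\grass_k(\CC^m)$ is simply a careful formalization of the limit step that the paper carries out by taking $\xi\in\lim_{l\to\infty}V^l$ directly in the Grassmannian.
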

\begin{proof} Any subscheme $\xi \in \BHilb^{k+1}_0(\CC^m)$ is the limit $\xi=\lim_{l \to \infty} \xi^l$ of reduced subschemes 
\[\xi^l=p_1^l \sqcup \ldots \sqcup p^l_{k+1}\]
with $p_1+\ldots +p_{k+1}=0$. Hence $V^l=\mathrm{Span}(p_1,\ldots, p_{k+1})$ is a $k$-dimensional subspace in $\CC^m$, and $\xi \in  \lim_{l\to \infty} V^l$.


\end{proof}

Now let 
\[\mathbf{f}=(\mathrm{Span}(e_1)   \subset \mathrm{Span}(e_1,e_2) \subset \ldots \subset \mathrm{Span}(e_1,\ldots, e_k)=\CC_{[k]} \subset \CC^m).\] 
be our base flag as before and consider 
\[\CHilb^{k+1}_\ff(\CC^m)=\rho^{-1}(\ff)=\overline{P_{k,n}\cdot \mathfrak{p}_{k,n}}  \subset \grass_k(\sym^{\le k}\CC_{[k]})\]
Let $\BHilb^{k+1}_0(U \cap \CC_{[k]})$
be the space of those subschemes in the neighborhood which sit in the $k$-dimensional base space $\CC_{[k]}$. We choose $U$ to be $\GL(m)$-invariant then $U \cap \CC_{[k]}$, and hence $\BHilb^{k+1}(U \cap \CC_{[k]})$ is $P_{m,k}$-invariant and we can define 
\[\widetilde{\BHilb}^{k+1}_0(\CC^m)=\GL(m) \times_{P_{m,k}} \BHilb^{k+1}(U \cap \CC_{[k]})\]
which fibers over $\flag_k(\CC^m)=\GL(m) / {P_{m,k}}$. So we arrive at the diagram
\begin{equation}\label{diagramtwo}
\xymatrix{
\widetilde{\CHilb}^{k+1}_0(\CC^m) \ar@{^{(}->}[r] \ar[d]^\rho &  \widetilde{\BHilb}^{k+1}_0(U) \ar[ld] \ar@{^{(}->}[r]^-{\iota} & \grass_{k+1}(S^k\CC^m)\\
\flag_k(\CC^m)  & } 
\end{equation}
and by Lemma \ref{keylemma} $\rho: \widetilde{\BHilb}^{k+1}_0(U) \to \BHilb^{k+1}_0(U)$ is surjective rational map (generically 1-1). 

Next, we apply this construction fiberwise over $\CC^m$: the origin can be replaced by any point $p\in \CC^m$ and $U$ by a $\GL(m)$-invariant open neighborhood $U_p \subset \CC^m$ of $p$ such that 
\[\cup_{p\in \CC^m} \BHilb^{k+1}_p(U_p)=\GHilb^{k+1}(U)\]
where $U=\cup_{p\in \CC^m}U_p$ is a tubular neighborhood of the zero section of $T\CC^m$. We get 
\begin{equation}\label{diagramthree}
\xymatrix{
\widehat{\CHilb}^{k+1}(\CC^m) \ar@{^{(}->}[r] \ar[d]^\rho &  \widehat{\GHilb}^{k+1}(U) \ar[ld] \ar@{^{(}->}[r]^-{\iota} & \grass_{k+1}(S^k\CC^m)\\
\flag_k(T\CC^m) \ar[d]^\mu & & \\
\CC^m &&} 
\end{equation}

Finally, the same argument works over a smooth manifold $X$: we let $\bU \subset TX$ be a small $\GL_X$-invariant tubular neighborhood of the zero section, with the exponential map $\exp: \bU \to X$, which identifies $U_x$ with $\exp(\bU_x) \subset X$. We may assume that $\bU=\GL_X \times_{\GL(m)} U$ for some $U \subset \CC^m$. We define 
\[\widehat{\GHilb}^{k+1}(\bU)=\GL_X \times_{\GL(m)} \widetilde{\BHilb}^{k+1}_0(\CC^m)\]
We get the diagram
\begin{equation}\label{diagramfour}
\xymatrix{
\widehat{\CHilb}^{k+1}(X) \ar@{^{(}->}[r] \ar[d]^\rho &  \widehat{\GHilb}^{k+1}(\bU) \ar[ld] \ar@{^{(}->}[r]^-{\iota} & \grass_{k+1}(S^kTX)\\
\flag_k(TX) \ar[d]^\mu & &\\
X &&} 
\end{equation}
and $\widehat{\GHilb}^{k+1}(\bU) \to \GHilb^{k+1}(\bU)$ is a birational morphism. We obtain the following relation among the tautological bundles
 \[\calo_X^{[k+1]}=\iota^* \cale^{k+1}\]
 and for a bundle $V$ on $X$ the restriction of the tautological bundle to the curvilinear locus is 
 \begin{equation}\label{taurestrictedtocurvi}
 V^{[k+1]}|_{\widehat{\CHilb}^{k+1}(X)}=V \otimes \calo_X^{[k+1]}
 \end{equation}
 

\section{Fully nested Hilbert schemes}\label{sec:nested}

The central object in our proof of the main theorems is a generalisation of the standard nested Hilbert schemes of points. Recall the space  
\[\Hilb^{[1,2,\ldots, k]}(X)=\{(\xi_1 \subset \xi_2 \subset \ldots \subset \xi_k)| \xi \subset X, \dim H^0(\xi_i)=i\} \subset X \times \Hilb^2(X) \times \ldots \times \Hilb^k(X)\] 
which is formed by flags of subschemes is called the nested Hilbert scheme on $X$. Our fully nested Hilbert scheme defined in this section is associated to the ordered Hilbert scheme of $k$ labeled points, and it keeps track degenerations of any subset of the $k$ labeled points.
\subsection{Approximating bundles}
The ordered Hilbert scheme of points $\Hilb^{[k]}(X)$ is a branched cover of the ordinary Hilbert scheme, defined by the commutative diagram
\begin{equation*}
\xymatrix{\Hilb^{[k]}(X) \ar[r] \ar[d]^{\HC} &  \Hilb^k(X) \ar[d]^{\HC} \\
 X^k \ar[r]  & \Sym^k(X) } 
\end{equation*}
where the vertical arrows are the Hilbert-Chow morphisms taking a subscheme/ordered subscheme $Z$ to its support cycle. This is a ramified cover of $\Hilb^k(X)$ with a stratified ramification locus sitting over the diagonals of $\Sym^k(X)$. Define the bundle $F^{[k]}$ on $\Hilb^{[k]}(X)$ as the pullback of $F^{[k]}$ along $\Hilb^{[k]}(X) \to \Hilb^k(X)$. Note that we loosely use the same notation for $F^{[k]}$ on both $\Hilb^k(X)$ and $\Hilb^{[k]}(X)$.

Following Li \cite{junli} and Rennemo \cite{rennemo}, we define for every partition $\alpha \in \Pi(k)$ of $\{1,\ldots, k\}$ the scheme $\Hilb^{[\alpha]}(X)$ which is a certain approximation of  $\Hilb^{[k]}(X)$. We fix some notation and conventions first. For simplicity, let $\RHilb^{[k]}(X) \subset \GHilb^{[k]}(X)$ denote the open set of reduced subschemes of the form $\xi=x_1 \sqcup \ldots \sqcup x_k$, formed by $k$ different points on $X$.

\begin{definition}\label{def:approximatingsets} Let $\a=(\a_1,\ldots, \a_s) \in \Pi(k)$ be a partition of $\{1,\ldots, k\}$. 
\begin{enumerate}
\item We let $\sim_\alpha$ be the equivalence relation on $\{1,\ldots, k\}$ given by letting the
elements of $\alpha$ form the equivalence classes.
We introduce the partial order by setting $\alpha \le \beta$ if $\sim_\a$ is a refinement of $\sim_\beta$.
Let 
\[\Delta_\alpha=\{(x_1,\ldots, x_k) \in X^k | x_i=x_j \text{ if } i\sim_\a j\}.\]
denote the closed diagonal, then $\Delta_\b \subseteq \Delta_\a$ whenever $\a \le \b$.
\item Let 
\[\Hilb^{[\a]}(X)=\prod_{i=1}^s \Hilb^{[\a_i]}(X),\]
where for a subset $A \subset \{1,\ldots, k\}$, $\Hilb^{[A]}(X)$ denotes the ordered Hilbert scheme of $|A|$ points labeled by $A$. The punctual part sits over the corresponding diagonal:
\[\Hilb^{[\a]}_0(X)=\prod_{i=1}^s \Hilb^{[\a_i]}_0(X)=\HC^{-1}(\Delta_\a).\]
\item We define
\[\GHilb^{[\a]}(X) = \prod_{i=1}^s \GHilb^{[\a_i]}(X)\]
which is the the closure of $\RHilb^{[k]}(X)$ in $\Hilb^{[\a]}(X)$. Like above, the punctual part is $\GHilb^{[\a]}_0(X)=\HC^{-1}(\Delta_\a)$. 
\end{enumerate}
\end{definition}
The fully nested Hilbert scheme $N^k(X)$ parametrises ordered collections of $k$ points in $X$, with the additional data that when $l$ points with labels in the same set in the partition $\a$ come together at $x$, one must specify a length $l$ subscheme supported at $x$. The master blow-up space encodes information on how different subsets of points collide. 

\begin{definition}  Let $X$ be a complex nonsingular manifold and $k\ge 1$. \begin{enumerate} 
\item The fully nested Hilbert scheme is $N^k(X)=\overline{im(h)}$, the closure of the image of the natural map
\[h: \RHilb^{[k]}(X) \to \prod_{\a \in \Pi(k)} \GHilb^{[\a]}(X).\]
The Hilbert-Chow morphism extends to $N^k(X)$ and gives a morphism $\HC: N^k(X) \to X^k$, hence obtain the $\a$-punctual locus $N^\a_0(X)=\HC^{-1}(\Delta_\a)$. The projection $\pi_\a:N^k(X) \to \GHilb^{[\a]}(X)$ fits into the diagram
\begin{equation}\label{commdiagram}
\xymatrix{N^k(X) \ar[r]^-{\pi_\a} \ar[d]^{\HC} & \GHilb^{[\a]}(X) \ar[dl]^{\HC_\a} \\
X^k 
}
\end{equation}
\item Let $\a=(\a_1,\ldots, \a_s) \in \Pi(k)$ be a partition. Pull-back along the projection map $\pi_\a: N^k(X)  \to \GHilb^{[\a]}(X)$ defines an approximating bundle $\pi_\a^*(F^{[\a]})$ over $N^k(X)$, which satisfies 
 \[\pi_\a^*F^{[\a]}=\pi_{\a_1}^*F^{[[\a_1]]} \oplus \ldots \oplus \pi_{\a_s}^*F^{[[\a_s]]}.\]
We will loosely use the shorthand notation $F^{[\a]}$ for the bundle $\pi_\a^*(F^{[\a]})$. 
\item The punctual part, which sits over the $\a$-diagonal, fits into the diagram 
\begin{equation}\label{commdiagram2}
\xymatrix{N^\a_0(X) \ar[r]^{\pi_\a} \ar[d]^{\HC} & \GHilb^{[\a]}_0(X) \ar[dl]^{\HC_\a} \\
\Delta_\a 
}
\end{equation}
and  $F^{[\a]}_0=\pi_\a^*(F^{[\a]}|_{\GHilb^{[\a]}_0(X)})$ its restriction to the punctual part $N^\a_0(X)$. 
\end{enumerate}
\end{definition}

The terminology is self-explanatory: $N^k(X)$ can be considered as a nested Hilbert scheme, but nested with respect to the full partially ordered net of subsets of $\{1,\ldots, k\}$. Note that $N^k(X)$ is irreducible, being the closure of the image under $h$ of an irreducible variety. It is a blow-up of $\GHilb^k(X)$ via the natural projection map 
\[\pi_\L: N^k(X) \to \GHilb^k(X)\] 
where $\L=\{1,\ldots, k\}$ is the trivial partition. However, the geometry of the restriction $\pi_\L: N^k_0(X) \to \GHilb^k_0(X)$
to the punctual part is more subtle. In particular, the preimage of the curvilinear component $\CHilb^k(X)$ is not necessarily irreducible which is a delicate part of our integration argument, addressed in the next section.  

\begin{definition}\label{def:curvgeometricsubsetQ} The curvilinear part of $N^k(X)$ is $CN^k(X)=\pi_\L^{-1}(\CHilb^{[k]}(X))$ where $\L=\{1,\ldots, k\}$ is the trivial partition. The punctual curvilinear component supported at $p\in X$ is 
\[CN^k_p(X)=\pi_\L^{-1}(\CHilb^{[k]}_p(X))=CN^k(X) \cap \HC^{-1}(\{p,\ldots, p\}).\]
\end{definition}

\begin{example} We have seen in \S \ref{subsec:k=3} that $CN^3_0(\CC^2)$ is not irreducible, and  it has $2$ components: the curvilinear component $CN^{main}_0(\CC^2)$ of dimension $2$, and an other $\PP^1$ sitting over the boundary point $I=\mathfrak{m}^2 \in \CHilb^3_0(\CC^2)$. 
\end{example}

Despite these extra components, the projection $\pi^\L$ is isomorphism over the curvilinear locus in $\CHilb^{k}(X)$. Recall this is defined as
\[\mathrm{Curv}^k(X)=\{\xi \in \Hilb^k_0(X): \calo_\xi \simeq \CC[t]/t^k\},\]
and it is a dense open subset of $\CHilb^k(X)$. 


\subsection{Fibration of the fully nested Hilbert scheme over the flag manifold}

Recall the curvilinear part of $N^{k+1}(X)$, defined as $CN^{k+1}(X)=\pi_\L^{-1}(\CHilb^{k+1}(X))$ where $\L=\{0,1,\ldots, k\}$ is the trivial partition. The punctual curvilinear component supported at $p\in X$ is 
\[CN^{k+1}_p(X)=\pi_\L^{-1}(\CHilb^{k+1}_p(X))=CN^{k+1}(X) \cap \HC^{-1}(\{p,\ldots, p\}).\]

We follow the same argument which resulted in diagram \eqref{diagramfour}. Let $\bU \subset TX$ be a small $\GL_X$-invariant tubular neighborhood of the zero section, with the exponential map $\exp: \bU \to X$, which identifies $U_x$ with $\exp(\bU_x) \subset X$. We may assume that $\bU=\GL_X \times_{\GL(m)} U$ for some $U \subset \CC^m$. We define 
\[\widehat{N}^{k+1}(\bU)=\pi_\Lambda^{-1}(\widehat{\GHilb}^{k+1}(\bU)).\]
We get the following extension of diagram \eqref{diagramfour}:
\begin{equation}\label{diagramfive}
\xymatrix{
\widehat{CN}^{k+1}(X) \ar@{^{(}->}[r]  \ar[d]^{\pi_\Lambda} & \widehat{N}^{k+1}(\bU) \ar[d]^{\pi_\Lambda}\\ 
\widehat{\CHilb}^{k+1}(X) \ar@{^{(}->}[r] \ar[d]^\rho &  \widehat{\GHilb}^{k+1}(\bU) \ar[ld] \\
\flag_k(TX) \ar[d]^\mu & \\
X &} 
\end{equation}
and $\widehat{N}^{k+1}(\bU) \to \widehat{\GHilb}^{k+1}(\bU)$ is a birational morphism.

When $X=\CC^m$, all vertical maps in diagram \eqref{diagramfive} are $\GL(n)$-equivariant, and we can take $\bU=T\CC^m$, and an equivariant nonsingular resolution of $\widehat{CN}^{k+1}(X)$ and $\widehat{N}^{k+1}(T\CC^m)$ fitting into our complete diagram
\begin{equation}\label{diagramsix}
\xymatrix{\mathbf{CN}^{k+1}(\CC^m) \ar@{^{(}->}[r]  \ar[d] & \mathbf{N}^{k+1}(T\CC^m) \ar[d] \\ 
\widehat{CN}^{k+1}(\CC^m) \ar@{^{(}->}[r]  \ar[d]^{\pi_\Lambda} & \widehat{N}^{k+1}(T\CC^m) \ar[d]^{\pi_\Lambda}  \\ 
\widehat{\CHilb}^{k+1}(\CC^m) \ar@{^{(}->}[r] \ar[d]^\rho &  \widehat{\GHilb}^{k+1}(T\CC^m) \ar[ld]   \\
\flag_k(T\CC^m) \ar[d]^\mu &  \\
\CC^m &  } 
\end{equation}
where $\mathbf{N}^{k+1}(T\CC^m)$ is a nonsingular $\GL(m)$-equvivariant blow-up corresponding to $\bU=T\CC^m$. We can pull-back all approximating bundles $F^{[\a]}$ to $\widehat{N}^{k+1}(\bU)$ and $\mathbf{N}^{k+1}(\bU)$, and we loosely use the same notation for these pull-back bundles.

\subsection{Embedding into configuration spaces}\label{subsec:configuration}  $CN^{k+1}(X)$ and a small neighborhood of it has a natural embedding into a Bott-Samelson type configuration space as follows. 
\begin{definition} \begin{enumerate} 
\item Let $V$ be a complex vector space of dimension $k$. Its configuration space is 
\[\mathbf{V}=\{(V_A|A \subset \{1,\ldots ,k\}): V_A \subset V \text{ linear }, dim(V_A)=|A|, A \subset B \Rightarrow V_A \subset V_B\}\]
which is smooth an irreducible variety. 
\item Let $\pmb{\mathscr{E}}$ be the configuration bundle of the tautological bundle $\cale$ over $\grass_k(W)$. 
\item The shifted configuration space of $V$ is 
\[\mathbf{V}^{-1}=\{(V_A|A \subset \{1,\ldots ,k\}): V_A \subset V \text{ linear},dim(V_A)=|A|-1, A \subset B \Rightarrow V_A \subset V_B\}\]
which is, again, a smooth and irreducible variety. 

\end{enumerate}
\end{definition}

Then $CN^{k+1}(X)$ sits naturally in the shifted configuration bundle $\pmb{\mathscr{E}}^{-1}$ over $\grass_k(\sym^{\le k}TX) \subset \grass_{k+1}(\calo_X\oplus \ldots \oplus \sym^k TX)$, compatible with the fibration:
\[\xymatrix{CN^{k+1}(X) \ar@{^{(}->}[r]^{\bi} \ar[d] & \pmb{\mathscr{E}}^{-1} \ar[d] \\
\CHilb^{[k+1]}(X) \ar@{^{(}->}[r]^{\phi} & \grass_k(\sym^{\le k}TX)
}\]

Here $\bi$ is defined as follows. A point $\xi \in CN^{k+1}(X)$ is determined by the vector $(\xi_A:A \subset \{0,1,...,k\})$ where $\xi_A \subset \xi$ is the subscheme defined by points in $A$. Then 
\[\bi(\xi_A:A\subset \{0,\ldots, k\})=(\phi^\grass(\xi_A):A\subset \{0,\ldots, k\})\]
We need the shifted configuration bundle here because $\dim(\phi^\grass(\xi_A))=|A|-1$.

The embedding into the configuration bundle happens fiberwise over the curvilinear Hilbert scheme, and hence the construction of a neighborhood of $CN^{k+1}(X)$ in $N^{k+1}(X)$ can be extended. First, \eqref{diagramone} has the following extension for a tubular neighborhood $U$ of the zero section in $TX$
\begin{equation}\label{diagram8}
\xymatrix{& N^{k+1}(\bU) \ar@{^{(}->}[dr]  &\\
CN^{k+1}(X) \ar@{^{(}->}[r]^{\bi} \ar@{^{(}->}[ur] \ar[d]^{\pi_\Lambda} & \pmb{\mathscr{E}}^{-1} \ar[d] \ar@{^{(}->}[r]& \pmb{\mathscr{E}} \ar[d]\\
\CHilb^{k+1}(X)   \ar[r]^-{\phi^\grass} &   \grass_k(\sym^{\le k}TX) \ar@{^{(}->}[r]^-{\iota} & \grass_{k+1}(\calo_X \oplus \ldots \oplus \sym^kTX)} 
\end{equation}
Finally, the embedding into the configuration space can be constructed fiberwise over the flag $\flag_k(TX)$.



\section{Equivariant localisation and multidegrees}\label{sec:equiv}

This section is a brief of equivariant cohomology and localisation. For
more details, we refer the reader to Berline--Getzler--Vergne \cite{bgv} and B\'erczi--Szenes \cite{bsz}. 

Let $\kt\cong U(1)^m$ be the maximal compact subgroup of
$T\cong(\CC^*)^m$, and denote by $\mathfrak{t}$ the Lie algebra of $\kt$.  
Identifying $T$ with the group $\CC^m$, we obtain a canonical basis of the weights of $T$:
$\lambda_1,\ldots ,\lambda_m\in\mathfrak{t}^*$. 

For a manifold $M$ endowed with the action of $\kt$, one can define a
differential $d_\kt$ on the space $S^\bullet \mathfrak{t}^*\otimes
\Omega^\bullet(M)^\kt$ of polynomial functions on $\mathfrak{t}$ with values
in $\kt$-invariant differential forms by the formula:
\[   
[d_\kt\alpha](X) = d(\alpha(X))-\iota(X_M)[\alpha(X)],
\]
where $X\in\mathfrak{t}$, and $\iota(X_M)$ is contraction by the corresponding
vector field on $M$. A homogeneous polynomial of degree $d$ with
values in $r$-forms is placed in degree $2d+r$, and then $d_\kt$ is an
operator of degree 1.  The cohomology of this complex--the so-called equivariant de Rham complex, denoted by $H^\bullet_T(M)$, is called the $T$-equivariant cohomology of $M$. Elements of $H_T^\bullet (M)$ are therefore polynomial functions $\mathfrak{t} \to \Omega^\bullet(M)^K$ and there is an integration (or push-forward map) $\int: H_T^\bullet(M) \to H_T^\bullet(\mathrm{point})=S^\bullet \mathfrak{t}^*$ defined as  
\[(\int_M \alpha)(X)=\int_M \alpha^{[\mathrm{dim}(M)]}(X) \text{ for all } X\in \mathfrak{t}\]
where $\alpha^{[\mathrm{dim}(M)]}$ is the differential-form-top-degree part of $\alpha$. The following proposition is the Atiyah-Bott-Berline-Vergne localisation theorem in the form of \cite{bgv}, Theorem 7.11. 
\begin{theorem}[(Atiyah-Bott \cite{atiyahbott}, Berline-Vergne \cite{berlinevergne})]\label{abbv} Suppose that $M$ is a compact complex manifold and $T$ is a complex torus acting smoothly on $M$, and the fixed point set $M^T$ of the $T$-action on M is finite. Then for any cohomology class $\a \in H_T^\bullet(M)$
\[\int_M \alpha=\sum_{f\in M^T}\frac{\a^{[0]}(f)}{\mathrm{Euler}^T(T_fM)}.\]
Here $\mathrm{Euler}^T(T_fM)$ is the $T$-equivariant Euler class of the tangent space $T_fM$, and $\alpha^{[0]}$ is the differential-form-degree-0 part of $\alpha$. 
\end{theorem}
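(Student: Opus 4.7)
The plan is to give the standard Berline--Vergne style proof, which constructs an explicit equivariantly-closed primitive for $\alpha$ away from the fixed-point set $M^T$ and then shrinks the complement to a union of small balls around the fixed points, with the leftover boundary contributions producing exactly the terms in the stated formula.

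First I would pick a generic element $X\in\mathfrak{t}$ so that the induced vector field $X_M$ vanishes precisely on $M^T$ (possible because $M^T$ is finite and $T$ acts holomorphically). Fix a $K$-invariant Riemannian metric on $M$, and let $\theta$ denote the $1$-form dual to $X_M$. Then a direct computation shows
\[
 d_K\theta = d\theta - \iota(X_M)\theta = d\theta - \lVert X_M\rVert^2,
\]
and the scalar part $\lVert X_M\rVert^2$ is strictly positive on $M\setminus M^T$. This lets one invert $d_K\theta$ formally on $M\setminus M^T$ by a finite geometric series, since the nilpotent contribution $d\theta$ raises form-degree. Setting
\[
 \beta = \frac{\theta}{d_K\theta} = \sum_{j\ge 0}\frac{(-1)^j\,\theta\,(d\theta)^j}{\lVert X_M\rVert^{2(j+1)}},
\]
one checks $d_K\beta = 1$ off $M^T$. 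Hence on the complement of small equivariant balls $B_\epsilon(f)$ around each fixed point $f$, we have $\alpha = d_K(\alpha\beta)$, and Stokes applied to $M\setminus\bigsqcup_f B_\epsilon(f)$ reduces $\int_M\alpha$ to a sum of boundary integrals $-\sum_f \int_{\partial B_\epsilon(f)}\alpha\beta$.

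Next I would compute each local boundary integral. Near a fixed point $f$ the $T$-action can be equivariantly linearized; the tangent space $T_fM$ splits as a sum of complex weight spaces with weights $\lambda_1^{(f)},\dots,\lambda_m^{(f)}$, and in suitable coordinates $X_M$ becomes the linear vector field $\sum \lambda_i^{(f)}(X)\,(z_i\partial_{z_i}-\bar z_i\partial_{\bar z_i})$. Substituting the linear model into $\beta$ and taking the limit $\epsilon\to 0$ reduces the boundary integral to a Gaussian-type computation on the sphere; the outcome is the standard one,
\[
 -\lim_{\epsilon\to 0}\int_{\partial B_\epsilon(f)}\alpha\beta \;=\; \frac{\alpha^{[0]}(f)}{\prod_{i=1}^{m}\lambda_i^{(f)}} \;=\; \frac{\alpha^{[0]}(f)}{\mathrm{Euler}^T(T_fM)},
\]
since only the form-degree-zero part of $\alpha$ survives the localisation at a point. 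Summing over $f\in M^T$ yields the claimed formula.

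The technical heart of the argument is the Stokes step: one must justify the exchange of the limit $\epsilon\to 0$ with the integral and check that the series defining $\beta$ converges uniformly on compact subsets of $M\setminus M^T$ (which it does because it is finite at each degree). I expect the main obstacle in a careful write-up to be keeping track of the equivariant form-degree bookkeeping when expanding $1/d_K\theta$ and verifying that the residual contribution comes solely from the form-degree-zero part of $\alpha$ at each fixed point; everything else either vanishes by degree count on the sphere or cancels in the $\epsilon\to 0$ limit. Once this is in place, the formula follows directly.
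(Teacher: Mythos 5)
Your argument is the standard Berline--Vergne localisation proof, which is precisely the route taken in the sources the paper cites for this statement (the paper itself quotes Theorem \ref{abbv} from Atiyah--Bott, Berline--Vergne and \cite{bgv} without reproducing a proof). The outline is correct: choose a generic $X\in\mathfrak{t}$ with zero set of $X_M$ equal to $M^T$, invert $d_K\theta=d\theta-\lVert X_M\rVert^2$ off the fixed points to get $\beta$ with $d_K\beta=1$, apply Stokes on the complement of small balls, and evaluate the sphere integrals in the linearised model to obtain $\alpha^{[0]}(f)/\mathrm{Euler}^T(T_fM)$. Only minor bookkeeping would need attention in a full write-up: the overall sign in the geometric series for $1/d_K\theta$ (you get a global minus sign rather than $(-1)^j$, depending on the convention $d_K=d-\iota$ used in the paper), the $(2\pi)$-normalisation that is absorbed into the convention for integrating equivariant forms so that no such factors appear in the stated formula, the fact that the boundary evaluation is a direct sphere integral rather than a Gaussian one (the Gaussian appears in the alternative Mathai--Quillen style proof), and the closing remark that both sides are rational functions of $X$, so the identity proved for generic $X$ holds identically.
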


The right hand side in the localisation formula considered in the fraction field of the polynomial ring of $H_T^\bullet (\mathrm{point})=H^\bullet(BT)=S^\bullet \mathfrak{t}^*$ (see more on details in Atiyah--Bott \cite{atiyahbott} and \cite{bgv}). Part of the statement is that the denominators cancel when the sum is simplified.

\subsection{Equivariant Poincar\'e duals and multidegrees}
\label{subsec:epdmult} 

Restricting the equivariant de Rham complex to compactly supported (or quickly
decreasing at infinity) differential forms, one obtains the compactly
supported equivariant cohomology groups $ H^\bullet_{\kt,\mathrm{cpt}}(M)
$. Clearly $H^\bullet_{\kt,\mathrm{cpt}}(M) $ is a module over
$H^\bullet_\kt(M)$. For the case when $M=W$ is an $N$-dimensional
complex vector space, and the action is linear, one has
$H^\bullet_\kt(W)= S^\bullet\mathfrak{t}^*$ and $ H^\bullet_{\kt,\mathrm{cpt}}(W) $ is
a free module over $H^\bullet_\kt(W)$ generated by a single element of
degree $2N$:
\begin{equation}
  \label{thomg}
   H^\bullet_{\kt,\mathrm{cpt}}(W) = H^\bullet_{\kt}(W)\cdot\mathrm{Thom}_{\kt}(W)
\end{equation}

Fixing coordinates $y_1,\dots,y_N$ on $W$, in which the $T$-action is
diagonal with weights $\eta_1,\ldots,  \eta_N$, one can write an explicit
representative of  $\mathrm{Thom}_{\kt}(W)$ as follows:
\[   \mathrm{Thom}_{\kt}(W) = 
e^{-\sum_{i=1}^N|y_i|^2}\sum_{\sigma\subset\{1,\ldots , N\}}
\prod_{i\in\sigma}\eta_i/2\cdot\prod_{i\notin \sigma}dy_i\,d\bar y_i
\]

We will say that an algebraic variety has dimension $d$ if its
maximal-dimensional irreducible components are of dimension $d$.  A
$T$-invariant algebraic subvariety $\Sigma$ of dimension $d$ in $W$
represents $\kt$-equivariant $2d$-cycle in the sense that
\begin{itemize}
\item a compactly-supported equivariant form $\mu$ of degree $2d$ is
  absolutely integrable over the components of maximal dimension of
  $\Sigma$, and $\int_\Sigma\mu\in S^\bullet \mathfrak{t}$;
\item if $d_\kt\mu=0$, then $\int_\Sigma\mu$ depends only on the class
  of $\mu$ in $ H^\bullet_{\kt,\mathrm{cpt}}(W) $,
\item and $\int_\Sigma\mu=0$  if $\mu=d_\kt\nu$ for a
  compactly-supported equivariant form $\nu$.
\end{itemize}

\begin{definition} \label{defepd} Let $\Sigma$ be an $T$-invariant algebraic
  subvariety of dimension $d$ in the vector space $W$. Then the
  equivariant Poincar\'e dual of $\Sigma$ is the polynomial on $\mathfrak{t}$
  defined by the integral
\begin{equation}
 \label{vergneepd}
 \epd\Sigma = \frac1{(2\pi)^d}\int_\Sigma\mathrm{Thom}_{\kt}(W).
\end{equation}  
\end{definition}
\begin{remark}
  \begin{enumerate}
  \item An immediate consequence of the definition is that for an equivariantly
closed differential form $\mu$ with compact support, we have
\[  \int_\Sigma\mu = \int_W \epd\Sigma\cdot\mu.
\]
This formula serves as the motivation for the term {\em equivariant
  Poincar\'e dual.}
\item This definition naturally extends to the case of an analytic
  subvariety of $\CC^n$  defined in the neighborhood of the origin, or
  more generally, to any $T$-invariant cycle in $\CC^n$.
  \end{enumerate}
\end{remark}

Another terminology for the equivariant Poincar\'e dual is {\em multidegree}, which is close in spirit to the original
construction of Joseph \cite{joseph}. Let  $\Sigma \subset W$ be a $T$-invariant
subvariety. Then we have
\[       \epd{\Sigma,W}_T=\mdeg{I(\Sigma),\CC[y_1,\ldots , y_N]}.
\] 

Some basic properties of the equivariant Poincar\'e dual are listed in \cite{bsz}, these are: Positivity, Additivity, Deformation invariance, Symmetry and a formula for complete intersections. Using these properties one can easily describe an algorithm for
computing $\mdeg{I,S}$ as follows (see Miller--Sturmfels \cite[\S8.5]{milsturm}, Vergne \cite{voj} and \cite{bsz} for details). %

An ideal $M\subset S$ generated by a set of monomials in
$y_1,\ldots, y_N$ is called a \emph{monomial ideal}. Since
$\mathrm{in}_<(I)$ is such an ideal, by the deformation invariance
it is enough to compute $\mdeg{M}$ for monomial ideals $M$. If the
codimension of $\Sigma(M)$ in $W$ is $s$, then the maximal
dimensional components of $\Sigma(M)$ are codimension-$s$ coordinate
subspaces of $W$. Such subspaces are indexed by subsets
$\mathbf{i}\in\{1\ldots  N\}$ of cardinality $s$; the corresponding
associated primes are $\mathfrak{p}[\mathbf{i}]=\langle y_i:i\in
\mathbf{i} \rangle$. Then 
\begin{equation*}
\label{primemon} \mult(\mathfrak{p}[\mathbf{i}],M)=
\left|\left\{\mathbf{a}\in\ZZ_+^{[{\mathbf{i}}]};\;
\mathbf{y}^{\mathbf{a}+\mathbf{b}}\notin M\text{ for all }
\mathbf{b}\in\ZZ_+^{[\hat{\mathbf{i}}]}\right\}\right|,
\end{equation*}
where $\ZZ_+^{[\mathbf{i}]}=\{\mathbf{a}\in \ZZ_+^N;a_i=0 \text{ for }
i\notin \mathbf{i}\}$, $\hat{\mathbf{i}}=\{1\ldots 
N\}\setminus\mathbf{i}$, and $|\cdot|$, as usual, stands for the
number of elements of a finite set. By the normalization and additivity axiom we have
\begin{equation}\label{mdegformula}
\mdeg{M,S} =
\sum_{|\mathbf{i}|=s}\mult(\mathfrak{p}[\mathbf{i}],M)
\prod_{i\in\mathbf{i}}\eta_i.
\end{equation}
By definition, the weights $\eta_1,\ldots \eta_N$ on $W$ are linear forms of $\l_1,\ldots \l_r$, the basis of $(\CC^*)^r$, and we denote the coefficient of $\l_j$ in $\eta_i$ by $\coeff(\eta_i,j)$, $1\le i\le N, 1\le j \le r$, and introduce 
\[\deg(\eta_1,\ldots, \eta_N;m)=\#\{i;\;\coeff(\eta_i,m)\neq 0\}\}.\]
It is clear from the formula \eqref{mdegformula} that 
\begin{equation*}
\deg_{\l_m}\mdeg{I,S} \le \deg(\eta_1,\ldots, \eta_N;m)
\end{equation*}
holds for any $1\le m \le r$.

\subsection{The Rossman formula} \label{subsec:rossman} 

The Rossmann equivariant localisation formula is an improved version of the Atiyah-Bott/Berline-Vergne localisation for singular varieties sitting in a smooth ambient space. 
Let $Z$ be a complex manifold with a holomorphic $T$-action, and let
$M\subset Z$ be a $T$-invariant analytic subvariety with an isolated
fixed point $p\in M^T$. Then one can find local analytic coordinates
near $p$, in which the action is linear and diagonal. Using these
coordinates, one can identify a neighborhood of the origin in $\TT_pZ$
with a neighborhood of $p$ in $Z$. We denote by $\tc_pM$ the part of
$\TT_pZ$ which corresponds to $M$ under this identification;
informally, we will call $\tc_pM$ the $T$-invariant {\em tangent cone}
of $M$ at $p$. This tangent cone is not quite canonical: it depends on
the choice of coordinates; the equivariant dual of
$\Sigma=\tc_pM$ in $W=\TT_pZ$, however, does not. Rossmann named this
 the {\em equivariant multiplicity of $M$ in $Z$ at $p$}:
\begin{equation}\label{emult}
   \emu_p[M,Z] \overset{\mathrm{def}}= \epd{\tc_pM,\TT_pZ}.
\end{equation}

\begin{remark}
In the algebraic framework one might need to pass to the {\em
tangent scheme} of $M$ at $p$ (cf. Fulton \cite{fulton}). This is canonically
defined, but we will not use this notion.
\end{remark}
The analog of the Atiyah-Bott formula for singular subvarieties of smooth ambient manifolds is the following statement.
\begin{proposition}[Rossmann's localisation formula \cite{rossmann}]\label{rossman} Let $\mu \in H_T^*(Z)$ be an equivariant class represented by a holomorphic equivariant map $\mathfrak{t} \to\Omega^\bullet(Z)$. Then 
\begin{equation}
  \label{rossform}
  \int_M\mu=\sum_{p\in M^T}\frac{\emu_p[M,Z]}{\mathrm{Euler}^T(\TT_pZ)}\cdot\mu^{[0]}(p),
\end{equation}
where $\mu^{[0]}(p)$ is the differential-form-degree-zero component
of $\mu$ evaluated at $p$.  
\end{proposition}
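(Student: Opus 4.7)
The plan is to combine two ingredients: a global localisation reducing $\int_M \mu$ to a sum over $M^T$ of integrals on arbitrarily small invariant neighborhoods $U_p$, and a local model calculation at each $p \in M^T$ identifying the contribution there with $\emu_p[M,Z]/\Euler^T(\TT_pZ) \cdot \mu^{[0]}(p)$.

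For the first step, pick a generic $X \in \liet$ and let $\xi = X_Z$ be the induced holomorphic vector field on $Z$; its zero locus on $M$ is exactly $M^T$. The standard Cartan-model argument, identical in form to the proof of the Atiyah--Bott--Berline--Vergne theorem, constructs an equivariantly exact form $d_T\alpha$ on $Z$---built from a one-form $\theta$ with $\iota(\xi)\theta = 1$ off the $U_p$, cut off by a partition of unity---such that $\mu - d_T\alpha$ vanishes outside $\bigcup_p U_p$. Stokes on the (possibly singular) $M$, valid as a current identity since $\mathrm{Sing}(M)$ has real codimension at least $2$ and $\alpha$ is smooth on the ambient $Z$, gives
\[\int_M \mu \,=\, \sum_{p \in M^T} \int_{M \cap U_p} \mu.\]

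For the second step, the hypothesis on $Z$ provides local analytic coordinates near $p$ in which the $T$-action is diagonal; these identify $U_p$ with an invariant neighborhood of $0$ in $W := \TT_pZ$, and by definition send $M \cap U_p$ to a neighborhood of $0$ in $\tc_pM$. Extending the pulled-back form by a compactly supported equivariant cutoff---with Stokes guaranteeing that the resulting exact correction integrates to zero over $\tc_pM$---we may replace $\mu|_{M \cap U_p}$ by a compactly supported $d_T$-closed form $\tilde\mu$ on $W$ satisfying $\tilde\mu^{[0]}(0) = \mu^{[0]}(p)$. By the defining property of equivariant Poincaré duals recalled after Definition~\ref{defepd}, together with Atiyah--Bott applied on the smooth affine space $W$ at its unique fixed point $\{0\}$,
\[\int_{M \cap U_p}\mu \,=\, \int_{\tc_pM}\tilde\mu \,=\, \int_W \epd{\tc_pM,W}\cdot\tilde\mu \,=\, \frac{\epd{\tc_pM,W}\cdot\mu^{[0]}(p)}{\Euler^T(W)},\]
where in the last step we use that $\epd{\tc_pM,W}$ is a polynomial in $\liet^*$ and contributes only to the form-degree-zero part. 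Since $\epd{\tc_pM,W} = \emu_p[M,Z]$ by definition and $W = \TT_pZ$, summing over $p \in M^T$ yields \eqref{rossform}.

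The main obstacle in turning this plan into a complete proof is the careful justification of the Stokes steps on the singular cycles $M$ and $\tc_pM$, and the verification that the compactly supported cutoff near $p$ introduces only a correction that is equivariantly exact with compact support, hence integrates trivially; both are handled by the standard currents argument for analytic cycles with singular locus of real codimension at least $2$. A secondary subtlety, intrinsic to the formula itself, is that the tangent cone $\tc_pM$ depends on the chosen linearising coordinates, but by the deformation-invariance axiom of multidegrees recalled in \S\ref{subsec:epdmult} its equivariant Poincaré dual does not, so the right-hand side of \eqref{rossform} is well-defined.
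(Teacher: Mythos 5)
The paper offers no proof of this proposition: it is quoted as Rossmann's theorem with a citation to \cite{rossmann}, so there is no internal argument to measure yours against. Your two-step reconstruction is the standard proof and is essentially sound: a Cartan-model cutoff argument reduces $\int_M\mu$ to contributions from invariant neighborhoods $U_p$ of the fixed points, and in the linearising coordinates each contribution is evaluated on $W=\TT_pZ$ via the defining property $\int_\Sigma\nu=\int_W\epd{\Sigma,W}\cdot\nu$ of the equivariant Poincar\'e dual, with $\Sigma=\tc_pM$ being, by the paper's own definition, exactly the image of $M\cap U_p$ under that identification. A few points deserve tightening rather than correction. First, the global step needs $M$ compact (or $\mu$ compactly supported), and the vanishing of $\int_M d_T\alpha$ uses two separate facts: the exterior-derivative part integrates to zero because the integration current of an analytic cycle is closed (Lelong), while the contraction part $\iota(X_Z)\alpha$ restricts to zero on the smooth locus of $M$ for form-degree reasons since $X_Z$ is tangent to $M$; your "currents argument" should be split this way. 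Second, you assert $\tilde\mu^{[0]}(0)=\mu^{[0]}(p)$ without justification; it follows in one line because the form-degree-zero component of $d_T\alpha$ is $-\iota(X_Z)\alpha^{[1]}$, which vanishes at $p$ since $X_Z(p)=0$. Third, rather than invoking "Atiyah--Bott on the smooth affine space $W$" (which is non-compact), it is cleaner to use the statement \eqref{thomg} quoted in the same section: writing the compactly supported closed class as $P\cdot\mathrm{Thom}_\kt(W)+d_T\beta$ gives $\int_W=P$ and, evaluating form-degree zero at the origin, $P=\tilde\nu^{[0]}(0)/\Euler^T(W)$, which is exactly your final identity. With these small repairs your argument is a complete and correct proof of \eqref{rossform}, and your closing remark on the coordinate-independence of $\epd{\tc_pM,\TT_pZ}$ matches the caveat the paper itself makes after \eqref{emult}.
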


\section{Proof of residue formula for tautological integrals}\label{sec:prooftauintegrals}

In this section we prove Theorem \ref{tauintegral} and it equivariant counterpart, Theorem \ref{equivariantintegral}.

\subsection{Sieve on fully nested Hilbert schemes}\label{subsec:sieve}
The sieve argument presented in this subsection works in a more general setting. Let $V$ be a rank $r$ vector bundle over $X$ and $\Phi(c_1,\ldots ,c_{(k+1)r})$ be a Chern polynomial in the Chern roots of the tautological bundle $V^{[k+1]}$ over $\Hilb^{k+1}(X)$. Since
\begin{equation}\label{n!}
n! \int_{\GHilb^{k+1}(X)}\Phi = \int_{\GHilb^{[k+1]}(X)} \Phi
\end{equation}
one can work over ordered Hilbert schemes, and we keep the notation $\Phi$ for the pulled-back form. 

The first step in our strategy is to pull-pack integration to the fully nested Hilbert scheme $NX^{k+1}(X)$, which admits plenty of approximating bundles (coming from the different factors) to build linear combinations with punctual supports. A key observation of \cite{junli,rennemo} is that these tautological bundles can be combined into a sieve formula,  which decomposes $\Phi$ as a sum 
\[\Phi=\sum_{\a \in \Pi(k+1)}\Phi^\a\]
of forms indexed by partitions of $\{1, \ldots k+1\}$. For any partition $\{1,\ldots, k+1\}=\a_1 \sqcup \ldots \sqcup \a_s$ the form $\Phi^\a$ is supported on the approximating punctual subset  
\[\supp(\Phi^\a)=N^{\a}_0(X)=\HC^{-1}(\Delta_{\a})\]
and $\Phi^\a$ is a linear combination of the classes $\Phi(V^{[\b]})$ for those partitions $\b \in \Pi(k+1)$ which are refinements of $\a$, that is, $\b \le \a$ holds.  

\begin{definition} Let $\a \in \Pi(k+1)$ be a partition and $\Phi$ a homogeneous symmetric polynomial in the Chern roots of $V^{[k+1]}$. Define the class $\Phi^\a \in H^*(N^{k+1}(X))$ inductively by putting $\Phi^{[0],\ldots [k]}=\Phi(V^{[0],\ldots [k]})=\Phi(V\oplus \ldots \oplus V)$ and for $\a>([0],\ldots [k])$  
\[\Phi^\a=\Phi(V^{[\a]})-\sum_{\b < \a}\Phi^\b.\]
\end{definition}

\begin{remark}
Let $\Lambda=[0,\ldots, k]$ be the trivial partition. The formula above gives us 
\[\Phi^{\Lambda}(V^{[k+1]})=\sum_{\b \in \Pi(k+1)}(-1)^{|\b|-1}(|\b|-1)!\Phi(V^\b).\]
\end{remark}

\begin{proposition}\label{thm:support} The restriction of $\Phi^\a$ to $N^{k+1}(X) \setminus N^\a(X)$ vanishes, that is, the support of $\Phi^\a$ is $N^\a(X)=\HC^{-1}(\Delta_{\a})\subset N^{k+1}(X)$.
In particular, the support of $\Phi^\Lambda$ is $N^{k+1}_0(X)=\pi_\Lambda^{-1}(\GHilb^k_0(X))$, the punctual part of $N^{k+1}(X)$ where all components are punctual subschemes supported at some point of $X$.  
\end{proposition}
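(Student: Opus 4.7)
The plan is to proceed by induction on $\alpha$ in the partial order $\leq$ on $\Pi(k+1)$, starting from the finest partition $([0],\ldots,[k])$, where the claim is vacuous since $HC^{-1}(\Delta_{([0],\ldots,[k])}) = N^{k+1}(X)$. Fix $\alpha$ and assume $\Phi^\beta$ is supported on $HC^{-1}(\Delta_\beta)$ for every $\beta < \alpha$. I want to show $\Phi^\alpha$ vanishes on the open complement $U_\alpha = N^{k+1}(X) \setminus HC^{-1}(\Delta_\alpha)$, which I will do by producing a neighborhood around each point of $U_\alpha$ on which $\Phi^\alpha$ is identically zero.

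Pick $p \in U_\alpha$ and let $\gamma \in \Pi(k+1)$ be the coincidence partition of $HC(p)$, defined by $j \sim_\gamma j'$ iff $x_j(p) = x_{j'}(p)$. Then $HC(p) \in \Delta_\beta$ is equivalent to $\beta \leq \gamma$, so the condition $p \in U_\alpha$ becomes $\alpha \not\leq \gamma$, which forces the strict inequality $\alpha \wedge \gamma < \alpha$. Choose an open neighborhood $W$ of $p$ contained in $U_\alpha$ small enough that coincidences can only break apart on $W$, so that $\gamma(q) \leq \gamma$ for all $q \in W$. The key geometric input is a bundle isomorphism $V^{[\alpha]}|_W \cong V^{[\alpha \wedge \gamma]}|_W$: for each block $\alpha_i$ of $\alpha$, the subscheme $\xi_i = \pi_{\alpha_i}(q)$ splits on $W$ into components indexed by the blocks of $\gamma$ meeting $\alpha_i$, since points whose labels lie in a common $\gamma$-block remain clustered on $W$ while points in different $\gamma$-blocks stay separated. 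The universal property of the Hilbert scheme promotes this pointwise decomposition to a direct sum decomposition of the tautological bundle $V^{[\alpha_i]}|_W$, and summing over $i$ yields the claim.

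By the inductive hypothesis, for every $\beta < \alpha$ with $\beta \not\leq \gamma$ the class $\Phi^\beta$ vanishes on $W$ (since then $W \subset U_\beta$). Hence
\[
\sum_{\beta < \alpha} \Phi^\beta\big|_W \;=\; \sum_{\beta \leq \alpha \wedge \gamma} \Phi^\beta\big|_W \;=\; \Phi(V^{[\alpha \wedge \gamma]})\big|_W,
\]
where the last equality is obtained by iterating the defining recursion at $\alpha \wedge \gamma$ (every partition appearing is strictly below $\alpha$). Combining with $\Phi(V^{[\alpha]})|_W = \Phi(V^{[\alpha \wedge \gamma]})|_W$ from the bundle isomorphism yields $\Phi^\alpha|_W = 0$. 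Covering $U_\alpha$ by such neighborhoods establishes the general support statement, and the final assertion about $\Phi^\Lambda$ follows because $HC^{-1}(\Delta_\Lambda) = N^{k+1}_0(X)$ is precisely the punctual locus. The main obstacle in carrying out this plan rigorously is the bundle splitting step: one must verify in families that the fiberwise decomposition of $\xi_i$ into clusters extends to an honest direct sum decomposition of $V^{[\alpha_i]}|_W$ as a bundle, which hinges on the openness of the separation condition between clusters and on the compatibility of the tautological bundle under disjoint unions of subschemes.
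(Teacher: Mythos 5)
Your overall strategy---induction over the partition lattice, the telescoping identity $\sum_{\beta\le\alpha\wedge\gamma}\Phi^\beta=\Phi(V^{[\alpha\wedge\gamma]})$, and the splitting of the approximating tautological bundles where clusters separate---is exactly the ``easy inclusion-exclusion and induction'' that the paper attributes to Rennemo (the paper gives no further details, so your skeleton is the intended one). The step you flag as the main obstacle is in fact the unproblematic one: on the canonical open set $V_\gamma=\{q:\ x_i(q)\ne x_j(q)\text{ whenever }i\not\sim_\gamma j\}$ the clusters have disjoint supports, so sections over $\pi_{\alpha_i}(q)$ split as a direct sum over sub-clusters, and the identification of the sub-cluster of $\pi_{\alpha_i}(q)$ labelled by $\alpha_i\cap\gamma_j$ with $\pi_{\alpha_i\cap\gamma_j}(q)$ holds on all of $V_\gamma$ because it holds on the dense reduced locus and is a closed condition ($N^{k+1}(X)$ is by construction the closure of the reduced locus, and the Hilbert scheme is separated). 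So you may as well run the argument on the finitely many opens $V_\gamma$ with $\alpha\not\le\gamma$ rather than on germs of neighborhoods; your bookkeeping with $\gamma(q)\le\gamma$, the vanishing of $\Phi^\beta$ for $\beta<\alpha$, $\beta\not\le\gamma$, and the reduction of the remaining sum to $\Phi(V^{[\alpha\wedge\gamma]})$ is correct.

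The genuine gap is your last sentence: ``covering $U_\alpha$ by such neighborhoods establishes the general support statement,'' where $U_\alpha=N^{k+1}(X)\setminus \HC^{-1}(\Delta_\alpha)$. For a cohomology class, vanishing of the restriction to every member of an open cover of $U_\alpha$ does not imply vanishing of the restriction to $U_\alpha$: the generator of $H^1(S^1)$ restricts to zero on each of two arcs covering the circle. What the paper actually needs is the global statement---$\Phi^\alpha|_{U_\alpha}=0$ in $H^*(U_\alpha)$, equivalently a lift of $\Phi^\alpha$ to $H^*(N^{k+1}(X),U_\alpha)$---since this is what lets $\Phi^\Lambda$ be represented by a form compactly supported near the punctual locus in the reduction step of \S\ref{subsec:reduction}. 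After proving $\Phi^\alpha|_{V_\gamma}=0$ for every $\gamma$ with $\alpha\not\le\gamma$, you still have to patch these vanishings into vanishing on the union $U_\alpha=\bigcup_{\alpha\not\le\gamma}V_\gamma$, e.g.\ by an induction over coincidence strata that controls the Mayer--Vietoris correction terms, or by constructing a canonical relative (supported) lift rather than mere local vanishing. That patching is precisely the content carried by the citation to Rennemo, and as written your proposal does not supply it.
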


\begin{proof} This follows from easy inclusion-exclusion and induction, for the proof see \cite{rennemo}.
\end{proof}

\begin{lemma}\label{lemma:support} Let  $\Phi$ be a properly supported Chern polynomial in the sense of Definition \ref{geocond}. Then 
\begin{enumerate}
\item  $\pi_\Lambda^*\Phi$ is also properly supported, that is, it is represented by a form whose support intersects the punctual part $N^{k+1}_0(X)$ only in the curvilinear part $CN^{k+1}(X)=\pi_\L^{-1}(\CHilb^{k+1}(X))$. 
\item More generally, for $\a=(\a_1,\ldots, \a_s) \in \Pi(k+1)$ the form $\Phi^\a$ is represented by a form whose support intersects the punctual part $N^{k+1}_0(X)$ only in the curvilinear part
\[CN^\a(X)=\pi_\a^{-1}(\CHilb^{[\a_1]}(X)\times \ldots \times \CHilb^{[\a_s]}(X))\]
\end{enumerate}
\end{lemma}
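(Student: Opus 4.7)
The plan has two parts of very different difficulty.

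\emph{Part (1)} is essentially immediate. Since $\pi_\Lambda:N^{k+1}(X)\to\GHilb^{k+1}(X)$ is proper and birational, I would represent $\pi_\Lambda^*\Phi$ by a form whose support lies in $\pi_\Lambda^{-1}(\supp\Phi)$. By construction $\pi_\Lambda^{-1}(\GHilb^{k+1}_0(X))=N^{k+1}_0(X)$, and by Definition \ref{def:curvgeometricsubsetQ} $\pi_\Lambda^{-1}(\CHilb^{k+1}(X))=CN^{k+1}(X)$. Combined with $\supp(\Phi)\cap\GHilb^{k+1}_0(X)\subset\CHilb^{k+1}_0(X)$, this yields
\[
\supp(\pi_\Lambda^*\Phi)\cap N^{k+1}_0(X)\subset \pi_\Lambda^{-1}\!\bigl(\supp\Phi\cap\GHilb^{k+1}_0(X)\bigr)\subset CN^{k+1}(X).
\]

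\emph{Part (2)} is the substantive statement. I would prove it by induction on $\a\in\Pi(k+1)$ from finest to coarsest, using the recursive definition $\Phi^\a=\Phi(V^{[\a]})-\sum_{\b<\a}\Phi^\b$ together with the product structure $\GHilb^{[\a]}(X)=\prod_i\GHilb^{[\a_i]}(X)$ and the splitting $V^{[\a]}=\bigoplus_i V^{[\a_i]}$. The base case is the partition into singletons, which is vacuous because each factor $\CHilb^{[1]}(X)=X$ forces $CN^\a(X)=N^{k+1}(X)$. For the inductive step, Proposition \ref{thm:support} already places $\supp(\Phi^\a)\subset N^\a(X)$, so only the behaviour on the deepest stratum $N^{k+1}_0(X)\subset N^\a(X)$ needs attention.

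At a point $p\in N^{k+1}_0(X)$, corresponding to a punctual subscheme $\xi\in\GHilb^{k+1}_0(X)$, the condition $p\in CN^\a(X)$ unpacks to: the $\a_i$-group $\xi|_{\a_i}$ is curvilinear for every $i$. I would establish the contrapositive by interpreting $\Phi$ through a defining section, as in the remark after Definition \ref{geocond}; under $V^{[\a]}=\bigoplus_i V^{[\a_i]}$ this section decomposes into independent sections on each factor, and since curvilinearity of a punctual subscheme is a factor-local property, one obtains
\[
\supp\!\bigl(\Phi(V^{[\a]})\bigr)\cap\prod_i\GHilb^{[\a_i]}_0(X)\subset \prod_i\CHilb^{[\a_i]}_0(X),
\]
whose $\pi_\a$-preimage is exactly $CN^\a(X)$. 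The correction terms $\Phi^\b$ with $\b<\a$ are controlled by the inductive hypothesis on the corresponding $CN^\b(X)$, and the Möbius-type cancellation built into $\Phi^\a$ then restricts the combined support to $CN^\a(X)$.

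\emph{Main obstacle.} The hardest step will be to transfer the properly-supported hypothesis from the total tautological bundle $V^{[k+1]}$ on $\GHilb^{k+1}(X)$ to each factor bundle $V^{[\a_i]}$ on the approximating piece $\GHilb^{[\a_i]}(X)$: this requires a concrete geometric model for $\Phi$ (via a section whose vanishing locus equals the support) and a careful verification that curvilinearity is detected factor-wise by the resulting sections. A secondary subtlety is the inclusion-exclusion bookkeeping, since $CN^\b(X)\supset CN^\a(X)$ whenever $\b<\a$: the inductive hypothesis alone only gives support in $\bigcup_{\b\le\a}CN^\b(X)$, and one must verify that the Möbius cancellation in $\Phi^\a$ refines this containment down to $CN^\a(X)$.
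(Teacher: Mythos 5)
Your part (1) coincides with the paper's proof, which in its entirety is the remark that a properly supported form $\omega$ representing $\Phi$ pulls back to a form representing $\pi_\Lambda^*\Phi$, and that the support of a pull-back under a proper map is the preimage of the support; the paper disposes of part (2) by the same token, applied to the projections $\pi_\alpha$ and the forms coming from the approximating sets $\GHilb^{[\alpha]}(X)$, with the properly-supported hypothesis understood factor-wise. So for (1) there is nothing to add.

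For part (2), however, your proposed induction has a genuine gap, and it is exactly the one you flag without resolving. First, Definition \ref{geocond} constrains only $c_d(V^{[k+1]})$ on $\GHilb^{k+1}(X)$; it says nothing about the classes $\Phi(V^{[\alpha_i]})$ on the factors $\GHilb^{[\alpha_i]}(X)$, and a properly supported Chern class need not be the Euler class of a section, so the device of decomposing ``the defining section'' factor-wise is not available from the hypothesis as stated. In the intended application this factor-wise statement does hold, but only because of extra structure: the section of $f^*TN^{[k+1]}$ is induced by $f$ consistently on every approximating set (this is what the paper later packages as a consistent subbundle, cf.\ Corollary \ref{cor:pushforward} and Proposition \ref{prop:sectionk}), not because of Definition \ref{geocond} alone. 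Second, your M\"obius-cancellation step is not yet an argument: the support of a sum is only contained in the union of the supports, and here that union is vacuous information, since for the finest partition $\beta$ every $\CHilb^{[\beta_i]}(X)=X$, hence $CN^\beta(X)=N^{k+1}(X)$ and $\bigcup_{\beta\le\alpha}CN^\beta(X)$ is the whole space. To land in $CN^\alpha(X)$ you would have to show that on $N^\alpha_0(X)$ but away from $CN^\alpha(X)$ the terms of $\Phi^\alpha$ actually cancel as forms, which you do not do. The paper sidesteps both issues by not inducting at all: each term is represented by a properly supported form on the relevant approximating Hilbert scheme and pulled back along the proper map $\pi_\alpha$, so the support statement follows at once from the preimage-of-support fact.
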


\begin{proof}
If $\omega$ is a properly supported form representing $\Phi$, then $\pi^*\omega$ represents $\pi_\Lambda^*\Phi$, and the statement follows from the fact that support of the pull-back form under a proper map is equal to the pre-image of the support.  
\end{proof}

The first step in proving Theorem \ref{tauintegral} is to pull back the integral over the fully nested Hilbert scheme, and apply the sieve formula:

\begin{equation}\label{motivic1}
\int_{\GHilb^{[k+1]}(X)}\Phi=\int_{N^{k+1}(X)}\pi^* \Phi=\sum_{\a \in \Pi(k)} \int_{N^{k+1}(X)} \Phi^\a
\end{equation}

\subsection{Reduction to equivariant integration for $X=\CC^n$}\label{subsec:reduction}

The sieve formula reduces integration of forms supported on $\GHilb^{k+1}(X)$ to small neighborhoods of the punctual Hilbert scheme sitting over various diagonals under the Hilbert-Chow morphism.    
Take a small neighborhood $\widehat{N}^{k+1}(\bU)$ of $\widehat{CN}^{k+1}(X)$ in $N^{k+1}(X)$, constructed in the previous section, which fibers over $\flag_k(TX)$ and hence fibers over $X$. This bundle can be pulled back along a classifying map $\tau: X \to B\GL(m)$ from the universal bundle 
\[\mathbf{E}=E\GL(m) \times_{\GL(m)} BN^{k+1}(\CC^m)\]
where with the notations of diagram \eqref{diagramsix} 
\[BN^{k+1}(\CC^m)=(\mu \circ \rho \circ \pi_\Lambda)^{-1}(0)=\pi_\Lambda^{-1}(\BHilb^{k+1}_0(\CC^m))\]
 is the preimage in $\widehat{N}^{k+1}(\CC^m)$ of the balanced Hilbert scheme supported at the origin of $\CC^m$. We get a commutative diagram 
\begin{equation*}
\xymatrix{\widehat{CN}^{k+1}(X) \ar[rd]^\pi \ar@{^{(}->}[r] & \widehat{N}^{k+1}(\bU) \ar[r] \ar[d]^{\pi} & \mathbf{E}  \ar[d] \\
 & X \ar[r]^{\tau}  &  B\GL(m) } 
\end{equation*}
which induces a diagram of cohomology maps
\begin{equation*}
\xymatrix{H^*(\widehat{N}^{k+1}(\bU))  \ar[d]^{\pi_*} &  H^*(\mathbf{E}) \ar[l] \ar[d]^{\res} \\
 H^*(X)   &  H^*(B\GL(m)) \ar[l]^{\mathrm{Sub}}} 
\end{equation*}
Here 
\begin{itemize}
\item $\res$ is the equivariant push-forward (integration) map along the fiber $BN^{k+1}(\CC^m)$, and in the next section we develop an iterated residue formula derived from equivariant localisation.
\item $\mathrm{Sub}$ is the Chern-Weil map, which is the substitution of the Chern roots of $X$ into the generators $\lambda_1,\ldots, \lambda_m$ of $H^*(B\GL(m))=H_{\GL(m)}^*(pt)=\QQ[\l_1,\ldots, \l_m]$,
\end{itemize}

Commutativity of the diagram tells us that for any form $\omega$ supported on some neighborhood $\widehat{N}^{k+1}(\bU) \subset \widehat{N}^{k+1}(X)$ we have  
\[\int_{\widehat{N}^{k+1}(X)}\omega =\int_{X} \int_{BN^{k+1}(\CC^n)} \omega |_{\{\l_1,\ldots, \l_m\} \to \text{Chern roots of } TX}\]
We apply this formula for the form $\Phi^\a$ coming from the sieve. According to Proposition \ref{thm:support} and Lemma \ref{lemma:support} for $\a=(\a_1,\ldots, \a_s) \in \Pi(k+1)$ the form $\Phi^\a$ is compactly supported in a neighborhood of 
\[\widehat{CN}^\a(X)=\pi_\a^{-1}(\CHilb^{[\a_1]}(X)\times \ldots \times \CHilb^{[\a_s]}(X)) \subset \widehat{N}^{k+1}(X)\]
and hence
\begin{equation}\label{reduceintegraltoaffinespace}
\boxed{\int_{\GHilb^{[k+1]}(X)}\Phi=\int_X \sum_{\a \in \Pi(k)} \int_{BN^{k+1}(\CC^m)} \Phi^\a |_{\l_1,\ldots, \l_m \to TX}}
\end{equation}
In the next section, using equivariant localisation we explain how to reduce the equivariant integration of a form which is supported on a small neighborhood of $CN^\a(\CC^m)$ to an integral over $CN^\a(\CC^m)$ itself. 

\subsection{Localisation over the flag} 

Let $r,m,k$ satisfy the numerical conditions of Theorem \ref{equivariantintegral} and let $V$ be a rank $r$ T-equivariant bundle over $\CC^m$ with T-equivariant Chern roots $\theta_1,\ldots, \theta_r$.  We aim to develop a formula for the equivariant integral
\[\int_{\GHilb^{[k+1]}(\CC^m)} c_d(V^{[k+1]}) \text{ for } d\ge \dim(\GHilb^{[k+1]}(\CC^m)=(k+1)m.\]
\begin{remark} In the multipoint formula we will have $V=f^*TN$, or more precisely, we look at $\Euler(f^*TN^{[k+1]}/W)$ for some subbundle $W$, consistent with the factors of $N^{k+1}(X)$
\end{remark}

By \eqref{reduceintegraltoaffinespace} we integrate over the balanced nested 
Hilbert scheme $BN^{k+1}(\CC^n)$ with baricenter at the origin. 
 The Hilbert sieve gives a decomposition $c_d=\sum_{\a\in \Pi(k+1)}c_d^\a$
where $c_d^\a$ is compactly supported in a small neighborhood of the curvilinear diagonal part 
\[\widehat{CN}^\a(\CC^m)=\pi_\a^{-1}(\CHilb^{[\a_1]}(\CC^m)\times \ldots \times \CHilb^{[\a_s]}(\CC^m)) \subset \widehat{N}^{k+1}(\CC^m)\]
Let  $BN^{\a}_\epsilon(\CC^m)$ denote a small neighborhood of $\widehat{CN}^\a(\CC^m)$ in $BN^{k+1}(\CC^m)$. 
Then 
\[\boxed{\int_{BN^{k+1}(\CC^m)}c_d(V^{[k+1]})=\sum_{\a \in \Pi(k+1)} \int_{BN^{\a}_\e(\CC^m)} c_d^\a(V^{[k+1]})}\]
First we focus on the innermost (deepest) term in the sieve, namely 
\[c_d^\Lambda(V^{[k+1]})=c_d(V^{[k+1]}) + \Sigma_{\b \in \Pi(k+1)\setminus \Lambda} (-1)^{|\b|-1}(|\b|-1)! c_d(V^{\b} ).\] 
supported in the small neighborhood 
$BN^{\Lambda}_\e(\CC^m)=BN^{k+1}_\e(\CC^m)$. Diagram \eqref{diagramsix} 
restricted to the origin in $\CC^m$ gives 
\begin{equation}\label{diagram7}
\xymatrix{
\mathbf{CN}^{k+1}(\CC^n) \ar@{^{(}->}[r] \ar[d] & \mathbf{BN}^{k+1}_\e(\CC^m) \ar[d] \\
\widehat{CN}^{k+1}(\CC^n) \ar@{^{(}->}[r] \ar[d]^{\rho} & BN^{k+1}_\e(\CC^m) \\
\Hom^{\nondeg}(\CC^k,\CC^m)/B_k=\flag_k(\CC^m)& } 
\end{equation}
Then by pulling back the sieve to the bold resolutions we get 
\[\boxed{\int_{BN^{k+1}(\CC^m)}c_d^\Lambda(V^{[k+1]})=\int_{\mathbf{BN}^{k+1}_\e(\CC^m)} c_d^\Lambda(V^{[k+1]})}\]
\begin{remark} In Diagram \eqref{diagramsix} and Diagram \eqref{diagram7} the bold top spaces are equivariant resolutions of the possibly singular spaces below them. Their introduction is completely formal: the equivariant localisation formula on a smooth space is more convenient than introducing a smooth ambient space and applying the Rossmann formula. We can and will pull-back the sieve to the bold resolutions, and after proving and applying the Residue Vanishing Theorem we will return to integrals over $\CHilb^{k+1}(\CC^m)$, and hence the explicit form of these resolutions is irrelevant. In the Residue Vanishing Theorem only their dimension is important. 
\end{remark}
Let $\l_1,\ldots, \l_m\in \mathfrak{t}^*$ denote the torus weights for the diagonal $T \subset \GL(m)$ action on $\CC^m$ with respect to the basis $e_1,\ldots, e_m \in \CC^m$ and let 
\[\ff=(\Span(e_1) \subset \Span(e_1,e_2)\subset \ldots \subset \Span(e_1,\ldots,e_k) \subset \CC^m)\]
denote the standard flag in $\CC^m$ fixed by the parabolic $P_{m,k} \subset \GL(m)$.

The Atiyah-Bott-Berline-Vergne  localisation formula of Proposition \ref{abbv} over the flag $\flag_k(\CC^m)$ for $c_d=c_d(V^{[k+1]}$ gives  
\begin{equation} \label{flagloc}
\int_{\mathbf{BN}^{\a}_\e(\CC^m)} c_d^\Lambda= \sum_{\sigma\in S_n/S_{n-k}}
\frac{c^\Lambda_{d,\sigma(\ff)}}{\prod_{1\leq j \leq
k}\prod_{i=j+1}^m(\lambda_{\sigma\cdot
    i}-\lambda_{\sigma\cdot j})},
\end{equation}
where 
\begin{itemize}
\item $\sigma$ runs over the ordered $k$-element subsets of $\{1,\ldots, m\}$ labeling the torus-fixed flags $\sigma(\ff)=(\Span(e_{\sigma(1)}) \subset \ldots \subset \Span(e_{\sigma(1)},\ldots, e_{\sigma(k)}) \subset \CC^m)$ in $\CC^m$.
\item $\prod_{1\leq j \leq k}\prod_{i=j+1}^n(\lambda_{\sigma(i)}-\lambda_{\sigma(j)})$ is the equivariant Euler class of the tangent space of $\flag_k(\CC^m)$ at $\s(\ff)$.
\item if $\mathbf{BN}_{\sigma(\ff)}=\rho^{-1}(\sigma(\ff)) \subset \mathbf{BN}^{[k+1]}_\e$ denotes the fiber then $c^\Lambda_{d,\sigma(\ff)}=(\int_{\mathbf{BN}_{\sigma(\ff)}} c_d^\omega)^{[0]}(\sigma(\ff))\in S^\bullet \mathfrak{t}^*$ is the differential-form-degree-zero part evaluated at $\sigma(\ff)$.
\end{itemize}
The Chern roots of the tautological bundle over $\flag_k(\CC^m)$ at the fixed point $\sigma(\ff)$ are represented by $\l_{\s(1)}, \ldots ,\l_{\s(k)}\in \mathfrak{t}^*$ and therefore if $\nu$ is a Chern polynomial of the tautological bundle then 
\begin{equation}\label{alphasigmaf}
\nu_{\s(\ff)}=\sigma \cdot \nu_\ff=\nu_\ff(\l_{\s(1)}, \ldots ,\l_{\s(k)})\in S^\bullet \mathfrak{t}^*,
\end{equation}
is the $\sigma$-shift of the polynomial $\nu_{\ff}=(\int_{\mathbf{BN}_\ff}\nu)^{[0]}(\ff)\in S^\bullet \mathfrak{t}^*$ corresponding to the distinguished fixed flag $\ff$. By \eqref{taurestrictedtocurvi} the restriction of $V^{[k+1]}$ to the curvilinear part $\mathbf{CN}^{[k+1]}(\CC^m)$ is the tensor product
\[V^{[k+1]}|_{\mathbf{CN}^{[k+1]}(\CC^m}=V \otimes  \calo_{\CC^m}^{[k+1]}\]
The test curve model formulated in Theorem \ref{bszmodel} (3) then tells that  
\[V^{[k+1]}=V \otimes \calo_{\CC^m}^{[k+1]}=V \otimes \cale\]
where $\cale$ is the tautological bundle over $\grass_{k+1}(S^\bullet \CC^m)$. Hence the Chern roots of $V^{[k+1]}$ on $\mathbf{CN}^{k+1}$, and hence at the torus fixed points, are  the pairwise sums formed from Chern roots of $V$ and Chern roots of $\cale$. This means that  
\begin{equation}\label{cdff}
c_{d,\ff}^\Lambda(V^{[k+1]})=c_{d,\ff}(\l_1,\ldots, \l_k)
\end{equation}
is a polynomial in the tautological bundle.

\subsection{Transforming the localisation formula into iterated residue}\label{subsec:transform}
We transform the right hand side of \eqref{flagloc} into an iterated residue motivated by B\'erczi--Szenes \cite{bsz}. This step turns out to be crucial in handling the combinatorial complexity of the fixed point data in the Atiyah-Bott localisation formula and condense the symmetry of this fixed point data in an efficient way which enables us to prove the residue vanishing theorem.

To describe this formula, we will need the notion of an {\em iterated
  residue} (cf. e.g. \cite{szenes}) at infinity.  Let
$\omega_1,\dots,\omega_N$ be affine linear forms on $\CC^k$; denoting
the coordinates by $z_1,\ldots, z_k$, this means that we can write
$\omega_i=a_i^0+a_i^1z_1+\ldots + a_i^kz_k$. We will use the shorthand
$h(\bz)$ for a function $h(z_1\ldots z_k)$, and $\dbz$ for the
holomorphic $n$-form $dz_1\wedge\dots\wedge dz_k$. Now, let $h(\bz)$
be an entire function, and define the {\em iterated residue at infinity}
as follows:
\begin{equation}
  \label{defresinf}
 \ires \frac{h(\bz)\,\dbz}{\prod_{i=1}^N\omega_i}
  \overset{\mathrm{def}}=\left(\frac1{2\pi i}\right)^k
\int_{|z_1|=R_1}\ldots
\int_{|z_k|=R_k}\frac{h(\bz)\,\dbz}{\prod_{i=1}^N\omega_i},
 \end{equation}
 where $1\ll R_1 \ll \ldots \ll R_k$. The torus $\{|z_m|=R_m;\;m=1 \ldots
 k\}$ is oriented in such a way that $\res_{z_1=\infty}\ldots
 \res_{z_k=\infty}\dbz/(z_1\cdots z_k)=(-1)^k$.
We will also use the following simplified notation: $\sires \overset{\mathrm{def}}=\ires.$

In practice, one way to compute the iterated residue \eqref{defresinf} is the following algorithm: for each $i$, use the expansion
 \begin{equation}
   \label{omegaexp}
 \frac1{\omega_i}=\sum_{j=0}^\infty(-1)^j\frac{(a^{0}_i+a^1_iz_1+\ldots
   +a_{i}^{q(i)-1}z_{q(i)-1})^j}{(a_i^{q(i)}z_{q(i)})^{j+1}},
   \end{equation}
   where $q(i)$ is the largest value of $m$ for which $a_i^m\neq0$,
   then multiply the product of these expressions with $(-1)^kh(z_1\ldots
   z_k)$, and then take the coefficient of $z_1^{-1} \ldots z_k^{-1}$
   in the resulting Laurent series.

\begin{proposition}[{\rm B\'erczi--Szenes \cite{bsz}, Proposition 5.4}]\label{ABtoresidue}  For any homogeneous polynomial $Q(\bz)$ on $\CC^k$ we have
\begin{equation}\label{flagres}
\sum_{\sigma\in S_m/S_{m-k}}
\frac{Q(\lambda_{\sigma(1)},\ldots ,\lambda_{\sigma(k)})}
{\prod_{1\leq j\leq k}\prod_{i=j+1}^m(\lambda_{\sigma\cdot
    i}-\lambda_{\sigma\cdot j})}=\sires
\frac{\prod_{1\leq i<j\leq k}(z_j-z_i)\,Q(\bz)\dbz}
{\prod_{i=1}^k\prod_{j=1}^m(\lambda_j-z_i)}.
\end{equation}
\end{proposition}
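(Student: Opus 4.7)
The plan is to evaluate the iterated residue on the right-hand side by a direct one-variable-at-a-time residue computation, and to match the resulting combinatorial sum term by term with the Atiyah--Bott fixed-point formula on the left. In each variable $z_s$ the integrand is a rational function whose only finite poles (in $z_s$ alone) are the simple ones at $z_s = \lambda_l$ for $l = 1, \ldots, m$, arising from the denominator factors $\lambda_l - z_s$. By the residue theorem on $\PP^1$, each application $\res_{z_s = \infty} = -\sum_{l_s = 1}^m \res_{z_s = \lambda_{l_s}}$ therefore produces a sum over index choices $l_s \in \{1, \ldots, m\}$ together with the substitution $z_s \mapsto \lambda_{l_s}$.

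The decisive structural observation is that the Vandermonde numerator $\prod_{i<j}(z_j - z_i)$ enforces pairwise distinctness of the resulting index tuple $(l_1, \ldots, l_k)$. After taking the residue at $z_k = \lambda_{l_k}$, each Vandermonde factor $(z_k - z_i)$ becomes $(\lambda_{l_k} - z_i)$, which introduces a zero in the numerator at $z_i = \lambda_{l_k}$ and cancels the simple pole $(\lambda_{l_k} - z_i)^{-1}$ in any subsequent residue. Propagating this inductively, only ordered $k$-tuples $(l_1, \ldots, l_k)$ with pairwise distinct entries contribute. These are in natural bijection with the cosets $\sigma \in S_m/S_{m-k}$ via $l_s = \sigma(s)$, and they index exactly the torus-fixed flags $\sigma(\ff) \in \flag_k(\CC^m)$ that appear on the left-hand side.

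For each such $\sigma$ I would then identify the surviving rational expression with the Atiyah--Bott summand at $\sigma(\ff)$. After all substitutions the Vandermonde evaluates to $\prod_{i<j}(\lambda_{\sigma(j)} - \lambda_{\sigma(i)})$, and these factors cancel precisely the denominator factors $(\lambda_l - \lambda_{\sigma(i)})$ with $l = \sigma(j)$ for $j > i$. The denominator surviving at index $\sigma(i)$ is therefore the product of $(\lambda_l - \lambda_{\sigma(i)})$ over $l \notin \{\sigma(1), \ldots, \sigma(i)\}$; extending $\sigma$ to a permutation of $\{1, \ldots, m\}$ and reindexing, this rewrites as $\prod_{j=1}^k \prod_{i=j+1}^m(\lambda_{\sigma(i)} - \lambda_{\sigma(j)})$, which is exactly the equivariant Euler class of $T_{\sigma(\ff)}\flag_k(\CC^m)$. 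Together with the factor $Q(\lambda_{\sigma(1)}, \ldots, \lambda_{\sigma(k)})$ in the numerator this reproduces the Atiyah--Bott summand on the left.

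The main obstacle will be careful sign bookkeeping. Signs enter from three independent sources: the orientation convention $\res_{z=\infty} = -\sum_{\text{finite}}\res$; the simple-pole identity $\res_{z=a}(dz/(a-z)) = -1$; and the reordering of surviving denominator factors $(\lambda_{\sigma(i)} - \lambda_{\sigma(j)})$ versus $(\lambda_{\sigma(j)} - \lambda_{\sigma(i)})$ after the Vandermonde cancellation. These are individually routine, but the combinatorics is delicate. The cleanest way to control them is by induction on $k$: peel off the outermost $\res_{z_k=\infty}$ via the residue theorem, observe that the Vandermonde cancellation effectively removes $\lambda_{l_k}$ from the pole structure of the remaining integrand in $z_1, \ldots, z_{k-1}$, apply the inductive hypothesis to identify the result with the Atiyah--Bott sum over $S_{m-1}/S_{m-k}$ in the remaining $m-1$ eigenvalues, and finally reassemble the external sum over $l_k \in \{1, \ldots, m\}$ into the full sum over $S_m/S_{m-k}$.
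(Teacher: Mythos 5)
Your plan is sound, and it is the standard argument for this localisation-to-iterated-residue dictionary (the present paper states the proposition without proof, citing \cite{bsz}, where it is established by exactly this kind of pole-by-pole computation): convert each $\res_{z_s=\infty}$ into minus the sum of residues at the simple poles $z_s=\lambda_l$, note that the Vandermonde factor kills any repeated choice of eigenvalue by cancelling the corresponding pole in the remaining variables, and identify the surviving denominator of each distinct tuple with the equivariant Euler class of the tangent space at the corresponding fixed flag; your proposed induction on $k$ (peel off one residue, drop one eigenvalue) is the cleanest way to organise it.

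The one step you should not treat as routine bookkeeping is the final ``extend $\sigma$ and reindex'', because that is exactly where the pairing and the global sign live. If you take the residues in the order dictated by the definition (the variable $z_k$, on the largest contour, first), then $z_k$ is the variable that retains the full denominator $\prod_{j\neq l_k}(\lambda_j-\lambda_{l_k})$, which is the $j=1$ factor of the Euler class; so the natural bijection is $z_s\leftrightarrow\sigma(k+1-s)$, not $z_s\leftrightarrow\sigma(s)$ as you assert. Undoing the reversal either reverses the arguments of $Q$ (which is not symmetric) or, via the identity $e(\sigma\circ w_0)=(-1)^{\binom{k}{2}}e(\sigma)$ for the order-reversal $w_0$ and the Euler class $e(\sigma)$, costs a sign $(-1)^{k(k-1)/2}$; if instead you residue out $z_1$ first, the same sign appears from the factors $(z_j-z_1)=-(\lambda_{l_1}-z_j)$. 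Carried out with the conventions of this paper ($|z_1|\ll\cdots\ll|z_k|$ and $\res_{\bz=\infty}\dbz/(z_1\cdots z_k)=(-1)^k$), your computation actually proves \eqref{flagres} with numerator $\prod_{i<j}(z_i-z_j)$ --- the convention used in Corollary \ref{propflag} and throughout the rest of the paper --- whereas with the displayed numerator $\prod_{i<j}(z_j-z_i)$ the right-hand side equals $(-1)^{k(k-1)/2}$ times the left-hand side. A quick check makes this concrete: for $k=m=2$ and $Q=z_1$ the left-hand side is $\frac{\lambda_1}{\lambda_2-\lambda_1}+\frac{\lambda_2}{\lambda_1-\lambda_2}=-1$ (the integral of the equivariant $c_1$ of the tautological subbundle over $\PP^1$), while the displayed residue evaluates to $+1$. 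So your induction will work, but you must fix the pairing $z_s\leftrightarrow\sigma(k+1-s)$ (or track the reversal signs) explicitly; done carefully it yields the proposition in the sign convention the rest of the paper actually uses.
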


\begin{remark}
  Changing the order of the variables in iterated residues, usually,
  changes the result. In this case, however, because all the poles are
  normal crossing, formula \eqref{flagres} remains true no matter in
  what order we take the iterated residues.
\end{remark}

This together with \eqref{flagloc},\eqref{alphasigmaf} and \eqref{cdff} gives
\begin{corollary}\label{propflag} Let $k\le m$. Then  
\begin{equation*}
\int_{\mathbf{BN}^{k+1}_\e(\CC^m)} c_d^\Lambda=\sires
\frac{\prod_{1\leq i<j\leq k}(z_i-z_j)\, c_{d,\ff}(z_1, \ldots ,z_k)\dbz}
{\prod_{l=1}^k\prod_{i=1}^m(\lambda_i-z_l)}
\end{equation*}
\end{corollary}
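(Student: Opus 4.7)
My strategy is to evaluate the integral in two stages along the fibration
\[
\mathbf{BN}^{k+1}_\e(\CC^m) \longrightarrow \flag_k(\CC^m)
\]
given by diagram \eqref{diagram7}: first push the class $c_d^\Lambda$ forward along the fiber, then apply Atiyah--Bott on $\flag_k(\CC^m)$, and finally repackage the resulting sum over torus-fixed flags as an iterated residue via Proposition~\ref{ABtoresidue}. Since I am working on the smooth equivariant resolution $\mathbf{BN}^{k+1}_\e$ the direct form of Atiyah--Bott applies, so there is no need to invoke Rossmann.

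For the first stage, fiber integration over $\mathbf{BN}_\ff=\rho^{-1}(\ff)$ produces the polynomial $c_{d,\ff}^\Lambda\in S^\bullet\mathfrak{t}^*$ in $\l_1,\dots,\l_m$. The structural input I would use is equation~\eqref{cdff}: the sieve isolates the deepest term, supported on the curvilinear locus $\mathbf{CN}^{k+1}$, where the test-curve identification of Theorem~\ref{bszmodel}(3) gives $V^{[k+1]} = V\otimes\cale$. Consequently the Chern roots of $V^{[k+1]}$ at a torus-fixed point are pairwise sums $\theta_i+z_j$ of the Chern roots of $V$ and of the tautological rank-$k$ bundle on the flag, and so $c_{d,\ff}^\Lambda$ is in fact a polynomial $c_{d,\ff}(z_1,\ldots,z_k)$ in the latter Chern roots, evaluated at $z_i\mapsto \l_i$ at the base flag.

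For the second stage, I would apply equivariant localisation \eqref{flagloc} on $\flag_k(\CC^m)$. The $T$-fixed flags are indexed by ordered $k$-subsets $\sigma\in S_m/S_{m-k}$, the tangent Euler class at $\sigma(\ff)$ is $\prod_{1\le j\le k}\prod_{i=j+1}^m(\l_{\sigma(i)}-\l_{\sigma(j)})$, and by equivariance together with \eqref{alphasigmaf} the fiber contribution at $\sigma(\ff)$ is the $\sigma$-shift $c_{d,\ff}(\l_{\sigma(1)},\ldots,\l_{\sigma(k)})$. Assembling these ingredients yields
\[
\int_{\mathbf{BN}^{k+1}_\e(\CC^m)} c_d^\Lambda = \sum_{\sigma\in S_m/S_{m-k}} \frac{c_{d,\ff}(\l_{\sigma(1)},\ldots,\l_{\sigma(k)})}{\prod_{1\le j\le k}\prod_{i=j+1}^m(\l_{\sigma(i)}-\l_{\sigma(j)})}.
\]

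Finally, I would apply Proposition~\ref{ABtoresidue} with $Q(\bz)=c_{d,\ff}(\bz)$, whose only hypothesis is that $Q$ be a homogeneous polynomial in $z_1,\dots,z_k$; this converts the fixed-point sum into the claimed iterated residue
\[
\sires \frac{\prod_{1\le i<j\le k}(z_i-z_j)\,c_{d,\ff}(z_1,\ldots,z_k)\,\dbz}{\prod_{l=1}^k\prod_{i=1}^m(\l_i-z_l)}.
\]
The only point I expect to require genuine care is verifying that the polynomiality statement \eqref{cdff} really holds on the deepest term $c_d^\Lambda$ of the sieve (not just on $c_d$), but this is guaranteed by Lemma~\ref{lemma:support}, which places the support of $c_d^\Lambda$ in the curvilinear component where the identification $V^{[k+1]}=V\otimes\cale$ is valid; the remainder of the argument is a mechanical chain of substitutions.
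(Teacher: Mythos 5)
Your proposal is correct and follows essentially the same route as the paper: localise over $\flag_k(\CC^m)$ as in \eqref{flagloc}, use the support statement (Lemma \ref{lemma:support}) and the test-curve identification $V^{[k+1]}=V\otimes\cale$ to get the polynomiality \eqref{cdff} together with the shift property \eqref{alphasigmaf}, and then convert the fixed-point sum into an iterated residue via Proposition \ref{ABtoresidue}. No substantive difference from the paper's argument.
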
 
In the formula above 
\begin{itemize}
\item $c_{d,\ff}=\int_{\mathbf{BN}_{\ff}} c_d^\omega$ is the integral over the 
fiber, which we will calculate using a second equivariant localisation.
\item $z_1,\ldots, z_k$ are the $T$-weights of the tautological bundle over $\grass_k(\symdot)$. Equivalently, the $\GL(m)$ action on $\Hom^\ff(\CC^k,\CC^m)$ reduces to $\GL(\CC_{[k]}) \subset \GL(m)$, and $z_1,\ldots z_k$ are the weights of the $T^k_\bz \subset \GL(\CC_{[k]})$ action. 
\end{itemize}
\subsection{Second equivariant localisation over the base flag}\label{subsec:secondlocalization} Next we proceed a second equivariant localisation on $\mathbf{BN}_\ff$ to compute $c_{d,\ff}(z_1,\ldots ,z_k)$.  
The torus fixed points sit in the curvilinear locus $\mathbf{CN}_\ff \subset \mathbf{BN}_\ff$, and this curvilinear fiber $\mathbf{CN}_\ff \subset \mathbf{BN}_\ff$ sits over 
\[\CHilb_\ff=\rho^{-1}(\ff)\simeq \overline{P_{m,k}\cdot p_{m,k}} \subset \grass_k(\sym^{\le k}\CC_{[k]})\]
where  
\[p_{m,k}=\mathrm{Span}(e_1,e_2+e_1^2, \ldots, \sum_{\tau \in \mathcal{P}(k)}e_{\tau}) \in \grass_k(\sym^{\le k}\CC_{[k]}),\] 
and $P_{m,k} \subset \GL_m$ is the parabolic subgroup which preserves $\ff$. Equivalently, 
\[\CHilb_\ff=\overline{\phi(\Hom^\ff(\CC^k,\CC^m)}\]
 where 
 \[\Hom^\ff(\CC^k,\CC^m)=\{\psi \in \Hom(\CC^k,\CC^m): \psi(e_i)\subset \CC_{[i]} \text{ for } i=1,\ldots, k\}\]
and $\CC_{[i]} \subset \CC^n$ is the subspace spanned by $e_1,\ldots, e_i$.  

We use the diagram \eqref{diagram8} of embeddings into configuration space to describe weights at fixed points.  The torus fixed points in $\CHilb^{k+1}_\ff \subset \grass_k(\sym^{\le k}\CC^k)$ are subspaces of the form
\[W_\tau=\mathrm{Span}(e_{\tau_1}, e_{\tau_2} , \ldots , e_{\tau_k})\]
parametrised by $k$-tuples $\tau=(\tau_1,\ldots, \tau_k)$ where 
\begin{equation}\label{tauconditions}
\tau_i \subset \{1,\ldots, i\} \text { such that } \Sigma \tau_i \le i. 
\end{equation}
These are called admissible $k$-tuples, and those $\tau$'s which correspond to fixed points in $\CHilb^{k+1}_\ff$ are called curvilinear admissible subsets, the set of these is $\mathcal{P}(k)$. 
 
 A point in $\mathbf{CN}_\ff$ has the form  $\xi=(\xi_A: A\subset \{0,\ldots, 
 k\})$. If this is torus fixed, then $\phi^\grass(\xi_{\{0,1\ldots, 
 k\}})=W_\tau$ for some $\tau=(\tau_1, \ldots, \tau_k)$ and 
$\phi^\grass(\xi_A)=W_{A,\tau} \subset W_\tau$
is a torus-fixed subset of dimension $|A|-1$, hence 
\[W_{A,\tau}=\mathrm{Span}(e_{\tau_j}:j\in \Lambda_A)\]
for some $\Lambda_A \subset \{1,\ldots, k\}$ with $|\Lambda_A|=|A|-1$. The $T_\bz^k$ weights on the tautological bundle $\cale$ over $\grass_k(\symdot)$ are $z_1,\ldots, z_k$, hence for a subset $A \subset \{0,\ldots, k\}$ the equivariant Chern roots (i.e. torus weights) of $W_{A,\tau}$ are $\{z_{\tau_j}: j \in \Lambda_{A}\}$
where $z_{\tau_j}=\sum_{i \in \tau_j} z_i$. A simple but crucial observation is the following
\begin{lemma}\label{crucial1}
If $\tau \neq \{[1],[2],\ldots, [k]\}$, then at least one integer $2\le i \le k$ does not appear in $\tau$. Hence the $T^k_\bz$-weight of $W_{A,\tau}$ does not depend on $z_i$ for any $A$. 
\end{lemma}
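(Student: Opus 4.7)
I would prove this by contrapositive: assuming every integer $j \in \{2,\ldots,k\}$ appears in some $\tau_l$, I would deduce that $\tau = (\{1\},\{2\},\ldots,\{k\})$. The argument is essentially a double-counting / extremality trick, followed by a short induction.

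First, by Theorem \ref{bszmodel}(1) the subspace $W_\tau$ lies in the maximal ideal $\mathfrak{m}$, so each generator $e_{\tau_l}$ has positive degree, i.e.\ $\tau_l \ne \emptyset$. Combined with $\tau_l \subset \{1,\ldots,l\}$ as multisets and $\sum_{i\in\tau_l}i \le l$, this forces $\tau_1 = \{1\}$, and gives the global upper bound
\[
\sum_{l=1}^{k}\sum_{i\in\tau_l} i \;\le\; \sum_{l=1}^{k} l \;=\; \tfrac{k(k+1)}{2}.
\]
On the other hand, under the standing assumption, the LHS is at least $1 + 2 + \cdots + k = k(k+1)/2$ (the $1$ coming from $\tau_1$, and each $j\ge 2$ appearing at least once). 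Thus equality holds throughout, which forces simultaneously that (i) $\sum_{i\in\tau_l} i = l$ for every $l$, and (ii) every value $j \in \{1,\ldots,k\}$ occurs \emph{exactly once} across all the multisets $\tau_l$ (since an extra occurrence of any $j\ge 1$ would strictly increase the total).

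Second, define the map $f\colon\{2,\ldots,k\} \to \{2,\ldots,k\}$ by letting $f(j)$ be the unique index $l$ with $j \in \tau_l$. The containment $\tau_l \subset \{1,\ldots,l\}$ gives $f(j)\ge j$. A downward induction then pins $f$ to the identity: at $l=k$ we must have $f(k)=k$ (by codomain), and then the constraint $\sum_{f(j)=k} j = k$ rules out any other element in $\tau_k$, so $\tau_k = \{k\}$; assuming $\tau_{l'}=\{l'\}$ for all $l'>l$, the index $l$ itself satisfies $f(l)\ge l$ but cannot map above $l$ (those slots are occupied), so $f(l)=l$ and $\tau_l=\{l\}$. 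This contradicts $\tau \ne (\{1\},\ldots,\{k\})$ and completes the contrapositive.

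For the final assertion, recall that the $T^k_\bz$-weights of $W_{A,\tau}$ are $\{z_{\tau_l}: l\in\Lambda_A\}$ with $z_{\tau_l}=\sum_{i\in\tau_l}z_i$. Hence every such weight depends only on the variables indexed by $\bigcup_l \tau_l$, and the missing integer $j$ produced above yields a variable $z_j$ on which no weight of $W_{A,\tau}$ depends, for any choice of $A$. The only genuine obstacle is getting the combinatorial conventions right (multisets vs.\ sets, curvilinear admissibility); once those are pinned down the proof is elementary.
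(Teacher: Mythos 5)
Your proof is correct. The paper states Lemma \ref{crucial1} as a ``simple but crucial observation'' and gives no argument at all, so there is nothing to match it against; your write-up supplies a valid elementary verification, and you handle the relevant conventions correctly (each $\tau_l$ is a nonempty multiset of positive-degree indices because $W_\tau\subset\sym^{\le k}\CC_{[k]}$ contains no constants, and the weight of $W_{A,\tau}$ only involves $z_{\tau_j}=\sum_{i\in\tau_j}z_i$ for $j\in\Lambda_A$). One remark: the double-counting step is dispensable. From $j\in\tau_l$ and $\Sigma\tau_l\le l$ one gets $j\le l$ directly, so if every $j\in\{2,\ldots,k\}$ appears, then $k$ can only appear in $\tau_k$, forcing $\tau_k=\{k\}$; then $k-1$ can only appear in $\tau_{k-1}$ (the slot $\tau_k$ being full), forcing $\tau_{k-1}=\{k-1\}$, and so on downward, with $\tau_1=\{1\}$ automatic. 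This gives the contrapositive in one induction, without first establishing that $\Sigma\tau_l=l$ for all $l$ and that each value occurs exactly once; your extremality argument proves these intermediate facts correctly, but they are not needed.
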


Let $\mathcal{F}_\ff$ denote the set of torus fixed points on $\mathbf{CN}_\ff$. The ABBV localisation formula on $\mathbf{BN}_\ff$ gives  
\begin{equation}\label{intnumberone} 
\boxed{\int_{\mathbf{BN}^{k+1}_\e(\CC^m)}c_d^\Lambda(V^{[k+1]})=\\
=\sum_{F\in \mathcal{F}_\ff} \sires \frac{
 \prod_{i<j}(z_i-z_j) c_{d,F}^\Lambda (z_1,\ldots ,z_k)}{
\Euler_F(\mathbf{BN}_\ff)  \prod_{l=1}^k\prod_{i=1}^m(\lambda_i-z_l)} \,\dbz}
  \end{equation}
where 
\[c_{d,F}^\Lambda(\bz)=c_{d,F}^\Lambda(V^{[k+1]})=c_{d,F}(V^{[k+1]}) + \Sigma_{\a \in \Pi(k+1)\setminus \Lambda} (-1)^\a  c_{d,F}(V^{\a} ).\]

Next we identify $c_{d,F}(V^\a)$.
Let $\a=(\a_1,\ldots, \a_s) \in \Pi(k+1)$ be a partition of the $k+1$ points. Recall from \eqref{taurestrictedtocurvi} that on the curvilinear locus $V^\a$ is the tensor product 
\[V^{\a}=V^{[\a_1]}\oplus \ldots \oplus V^{[\a_s]}=(V \otimes \calo_{\CC^m}^{[\a_1]}) \oplus \ldots \oplus (V \otimes \calo_{\CC^m}^{[\a_s]})\]
and by the test curve model formulated in Theorem \ref{bszmodel} (3) the 
tautological bundle is $\calo_{\CC^m}^{[k+1]}/\calo_\grass=\cale$, where 
$\cale$ is the tautological bundle over $\grass_k(\symdot)$.
Hence the equivariant Euler class of $V^{\a}=V^{[\a_1]}\oplus \ldots \oplus V^{[\a_s]}$ at the fixed point $F=(W_{A,\tau})$  is the product of all weights
\begin{equation}\label{euler}
\Euler_{F}(V^\a)=\prod_{i=1}^s \Euler(V_{\a_i,\tau}(\bz))=\prod_{\ell=1}^r\prod_{i=1}^s \theta_\ell \prod_{j\in \Lambda_{\a_i}} (\theta_\ell+z_{\tau_j})
\end{equation}
More generally, the $d$th equivariant Chern class of $V^\a$ at $F$ is the $d$th elementary symmetric polynomial  
\begin{equation}\label{chern}
c_{d,F}(V^\a)=\s_d(\{\theta_\ell,\ldots, \theta_\ell, \theta_\ell+z_{\tau_j}\}:1\le \ell \le r, 1\le i\le s, j\in \Lambda_{\a_i})
\end{equation}
where we have $s$ copies of $\theta_\ell$ for all $1\le \ell \le r$.
Note that the total degree of $c_{d,F}(V^\a)$ in $z_1,\ldots, z_k$ satisfies
\begin{equation}\label{degreeinz}
\deg_{\bz}c_{d,F}(V^\a)\le r\sum_{i=1}^s |\Lambda_{\a_i}|=r(k-s+1)
\end{equation}

\subsection{The first residue vanishing theorem}

\begin{proposition}[B\'erczi-Szenes, \cite{bsz}]
  \label{vanishprop}
Let $p(\bz)$ and $q(\bz)$ be polynomials in the variables $z_1\ldots 
z_k$, and assume that $q(\bz)$ is a product of linear factors:
$q(\bz)=\prod_{i=1}^NL_i$; set $\dbz=dz_1\dots dz_k$. Then
\[ \ires\frac{p(\bz)\dbz}{q(\bz)} = 0
\]
if for some $l\leq k$, the following holds:
\[\deg(p(\bz);l,l+1,\dots,k)+k-l+1<\deg(q(\bz);l,l+1,\dots,k)\]
\end{proposition}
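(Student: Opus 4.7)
The plan is to use the algorithmic Laurent expansion of formula \eqref{omegaexp} and to track the total degree in the block of variables $z_l,z_{l+1},\ldots,z_k$ throughout. The key idea is that each reciprocal $1/L_i$ contributes negatively to this block-degree precisely when $L_i$ actually involves one of those variables, and the contribution turns out to be exactly enough to force the target Laurent monomial $z_1^{-1}\cdots z_k^{-1}$ to lie strictly above the attainable degree ceiling.

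First I would expand each factor $1/L_i$ using \eqref{omegaexp}, which treats the highest-indexed variable $z_{q(i)}$ appearing in $L_i$ as dominant in the prescribed region $|z_1|\ll\cdots\ll|z_k|$. A direct inspection of the $j$-th term
\[
(-1)^j\frac{(a_i^0+a_i^1 z_1+\cdots+a_i^{q(i)-1} z_{q(i)-1})^j}{(a_i^{q(i)} z_{q(i)})^{j+1}}
\]
gives the following degree bound: every monomial $z_1^{b_1}\cdots z_k^{b_k}$ appearing in the Laurent series of $1/L_i$ satisfies $b_l+b_{l+1}+\cdots+b_k\le -1$ whenever $q(i)\ge l$, and equals $0$ otherwise. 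Indeed, if $q(i)\ge l$ then $b_{q(i)}=-(j+1)$, $b_m=0$ for $m>q(i)$, and $b_l+\cdots+b_{q(i)-1}\le j$ from the multinomial expansion of the numerator, so the sum telescopes to $j-(j+1)=-1$.

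Next I would multiply these expansions together and then by $p$. Since $\deg(q;l,\ldots,k)$ equals the number of linear factors $L_i$ with $q(i)\ge l$, the product expansion of $1/q=\prod_i 1/L_i$ has every monomial of block-degree at most $-\deg(q;l,\ldots,k)$. Multiplying by a polynomial $p$, whose block-degree is $\deg(p;l,\ldots,k)$, then bounds the block-degree of every monomial in the Laurent expansion of $p/q$ from above by $\deg(p;l,\ldots,k)-\deg(q;l,\ldots,k)$.

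Finally, the iterated residue $\sires$ extracts (up to the sign $(-1)^k$) the coefficient of $z_1^{-1}\cdots z_k^{-1}$, whose block-degree in $z_l,\ldots,z_k$ is $-(k-l+1)$. The hypothesis $\deg(p;l,\ldots,k)+k-l+1<\deg(q;l,\ldots,k)$ says precisely that this target exponent lies strictly above the attainable ceiling $\deg(p;l,\ldots,k)-\deg(q;l,\ldots,k)$, so the coefficient in question vanishes and $\ires p(\bz)\dbz/q(\bz)=0$. The one subtlety worth flagging is the first step: one must check that \eqref{omegaexp} really is the valid Laurent expansion in the nested region of the iterated residue (i.e.\ that $z_{q(i)}$ is uniquely dominant in each factor) and that the block-degree bound survives collecting monomials from distinct $j$; once these routine verifications are in place, the rest is pure bookkeeping.
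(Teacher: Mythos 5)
Your argument is correct and is essentially the standard proof: the paper itself only cites this proposition from \cite{bsz}, and the proof there is exactly this degree count on the Laurent expansion \eqref{omegaexp}, using that each factor $1/L_i$ with dominant variable $z_{q(i)}$, $q(i)\ge l$, contributes block-degree at most $-1$ in $z_l,\dots,z_k$, so the coefficient of $z_1^{-1}\cdots z_k^{-1}$ vanishes under the stated inequality. The convergence of \eqref{omegaexp} in the region $1\ll R_1\ll\cdots\ll R_k$ and term-by-term extraction of coefficients are the routine verifications you flag, and they go through exactly as you indicate.
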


We apply this proposition for the terms in \eqref{intnumberone}. 
\begin{theorem}{\textbf{First Residue Vanishing Theorem}}\label{vanishing1}  Let $m,r,k$ satisfy the numerical conditions of Theorem \ref{equivariantintegral}. Assume $\a=(\a_1,\ldots, \a_s)\in \Pi(k+1)$ is a partition with $s>1$. Then the corresponding residue
\begin{equation}\label{eqn:residue}
\sires \frac{\prod_{1\le i<j \le k}(z_i-z_j) c_{d,F}(V^\a)}{
\Euler_F(\mathbf{BN}_\ff)  \prod_{l=1}^k\prod_{i=1}^m(\lambda_i-z_l)} \,\dbz
\end{equation}
vanishes at every fixed point $F=(V_{A,\tau})$ on $\mathbf{BN}_\ff$.
\end{theorem}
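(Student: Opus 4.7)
The plan is to invoke the degree-vanishing criterion of Proposition~\ref{vanishprop}. I would write the integrand as $p(\bz)\,\dbz/q(\bz)$ with
\[
p(\bz) = \prod_{1\le i<j\le k}(z_i-z_j)\cdot c_{d,F}(V^\alpha),
\qquad
q(\bz) = \Euler_F(\mathbf{BN}_\ff)\cdot\prod_{l=1}^k\prod_{i=1}^m(\lambda_i-z_l),
\]
so that both $p$ and $q$ are products of linear forms in $\bz$ and the proposition applies once an index $l\le k$ is exhibited for which its degree inequality holds. My plan is to separate the argument into two cases according to the combinatorial type of the fixed point $F=(W_{A,\tau})$.

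In the first case $\tau\neq\{[1],[2],\dots,[k]\}$, Lemma~\ref{crucial1} supplies an index $i_0\in\{2,\dots,k\}$ not appearing in any $\tau_j$, so every tangent weight at $F$ and every Chern root of $V^\alpha$ at $F$ is independent of $z_{i_0}$. Thus $\deg_{z_{i_0}}p = k-1$, coming only from the Vandermonde factor, while $\deg_{z_{i_0}}q = m$ is produced entirely by the denominator factor $\prod_i(\lambda_i-z_{i_0})$. I would apply Proposition~\ref{vanishprop} with $l=i_0$, control the $\bz$-degrees in the remaining variables $\{z_{i_0+1},\dots,z_k\}$ by the same Vandermonde/denominator comparison, and conclude the residue vanishes using the hypothesis $k\le m$.

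For the distinguished fixed point $\tau=\{[1],\dots,[k]\}$ the above shortcut is unavailable and I would apply Proposition~\ref{vanishprop} with $l=1$, i.e. compare total $\bz$-degrees. The key input is~\eqref{degreeinz}: for any non-trivial partition ($s\ge 2$), $\deg_\bz c_{d,F}(V^\alpha)\le r(k-s+1)\le r(k-1)$, so
\[
\deg_\bz p \le r(k-1)+\binom{k}{2}, \qquad \deg_\bz q = km+\dim\mathbf{BN}_\ff.
\]
The required inequality $\deg_\bz p + k < \deg_\bz q$ then reduces, via the hypothesis $r\le (m-1)k/(k-1)$ (equivalently $r(k-1)\le k(m-1)$), to a purely geometric bound of the form $\binom{k+1}{2}<\dim\mathbf{BN}_\ff$, which I would check from the explicit description of $\mathbf{BN}_\ff$ as an equivariant resolution of the fiber of $\pi_\Lambda$ over the balanced curvilinear locus $\BHilb^{k+1}_0(\CC^m)\cap\rho^{-1}(\ff)$.

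The main obstacle I foresee is twofold. First, controlling $\Euler_F(\mathbf{BN}_\ff)$ cleanly despite the non-canonical nature of the equivariant resolution: only the total $\bz$-degree $\dim\mathbf{BN}_\ff$ should enter the final inequality, and this has to be verified uniformly across all fixed points, independently of the choice of resolution. Second, making precise the numerical balance in Theorem~\ref{equivariantintegral}, since the upper bound $r\le (m-1)k/(k-1)$ is engineered to give exactly the slack needed at $s=2$, so the equality case will require the sharpest form of the dimension estimate for $\mathbf{BN}_\ff$ and a careful treatment of the borderline monomials in $p$ and $q$.
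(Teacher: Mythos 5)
Your overall plan---apply the degree criterion of Proposition \ref{vanishprop} to each localisation term---is the same as the paper's, but your bookkeeping at the distinguished fixed point does not close, and that is exactly where the content lies. The paper makes no case distinction at all: it applies Proposition \ref{vanishprop} with $l=1$ (total $\bz$-degree) uniformly at every fixed point $F$, using \eqref{degreeinz} to bound $\deg_\bz c_{d,F}(V^\a)\le r(k-s+1)\le r(k-1)$ for $s>1$, together with the key numerical fact $\deg_\bz\Euler_F(\mathbf{BN}_\ff)=\dim\mathbf{BN}_\ff=\binom{k}{2}$, which \emph{exactly cancels} the Vandermonde factor $\prod_{i<j}(z_i-z_j)$ in the numerator. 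The vanishing condition then reads $r(k-1)+\binom{k}{2}+k<mk+\binom{k}{2}$, i.e.\ $r(k-1)+k<mk$, i.e.\ $r<(m-1)\tfrac{k}{k-1}$, which is (the strict form of) the hypothesis; no further geometric input is needed or available.

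Your Case 2 instead first spends the hypothesis in the form $r(k-1)\le k(m-1)$ and then asks to verify a ``purely geometric bound'' $\binom{k+1}{2}<\dim\mathbf{BN}_\ff$ (which after correct cancellation would still have to be $\binom{k}{2}<\dim\mathbf{BN}_\ff$). This bound is false: $\mathbf{BN}_\ff$ is a fiber whose dimension depends only on $k$, and it equals $\binom{k}{2}$ --- this is precisely why its Euler class cancels against the Vandermonde --- so the reduction you propose cannot be completed. All the slack sits in the inequality $r(k-1)+k<mk$ itself, none in $\dim\mathbf{BN}_\ff$. Your Case 1 is likewise unnecessary and, as written, overreaches: Lemma \ref{crucial1} controls only the weights $z_{\tau_j}$ of the subspaces $W_{A,\tau}$ (hence the Chern roots of $V^\a$), not the tangent weights of $\mathbf{BN}_\ff$ at $F$; moreover Proposition \ref{vanishprop} with $l=i_0$ requires a joint degree estimate in all of $z_{i_0},\dots,z_k$, not just in $z_{i_0}$, which you leave unargued (the Chern class can still carry large degree in the later variables appearing in $\tau$). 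You are right that there is a borderline issue when $r=(m-1)\tfrac{k}{k-1}$, since the criterion needs strict inequality while Theorem \ref{equivariantintegral} allows equality; but that defect is shared by the paper's own proof, and it cannot be repaired by an extra dimension estimate for $\mathbf{BN}_\ff$ of the kind you propose.
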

\begin{proof}
We use the shorthand notation $\deg(p(\bz))=\deg(p(\bz);1,2,\dots, k)$ for the total degree of a polynomial in $z_1,\ldots, z_k$.   
By \eqref{degreeinz} for. $s>1$ 
\[\deg(c_{d,F}(V^\a))\le r(k-1)\]
On the other hand 
\begin{eqnarray}
\deg(\Pi_{l=1}^k\Pi_{i=1}^m(\lambda_i-z_l)) = mk\\
\deg(\Pi_{1\le i<j\le k}(z_i-z_j))={k \choose 2}\\
\deg(\Euler_F(\mathbf{BN}_\ff))=\dim(\mathbf{BN}_\ff)={k \choose 2}
\end{eqnarray}
We apply Proposition \ref{vanishprop} with $l=1$, and check the total degree of the rational expression. If 
\[r(k-1)+{k \choose 2}+k<mk+{k\choose 2}\]
holds, then the iterated residue vanishes. Rearranging this inequality we get
\[r<(m-1)\frac{k}{k-1}\]
which is the numerical assumption in Theorem \ref{tauintegral} and Theorem \ref{equivariantintegral}.
\end{proof}

The residue vanishing theorem reduces the formula \eqref{intnumberone} to 
\begin{equation}\label{intnumberthree} 
\boxed{\int_{\mathbf{BN}^{k+1}_\e(\CC^m)}c_d^\Lambda(V^{[k+1]})=\sum_{F\in \mathcal{F}_\ff} \sires \frac{
 \prod_{i<j}(z_i-z_j) c_{d,F}(V^{[k+1]})}{
\Euler_F(\mathbf{BN}_\ff)  \prod_{l=1}^k\prod_{i=1}^m(\lambda_i-z_l)} \,\dbz}
  \end{equation}
However, the bundle $V^{[k+1]}$ is pulled-back to $\mathbf{BN}^{k+1}_\e(\CC^m)$ from a small neighborhood $\CHilb^{k+1}_\e(\CC^m)$ of $\CHilb^{k+1}(\CC^n)$. Hence the right hand side of \eqref{intnumbertwo} is the localisation formula for the integral over the small neighborhood $\CHilb^{k+1}_\e(\CC^m)$, which we formulate in the following corollary.
\begin{corollary} Let $m,r,k$ satisfy the numerical conditions of Theorem \ref{equivariantintegral}. Then 
\[\boxed{\int_{\mathbf{BN}^{k+1}_\e(\CC^m)}c_d^\Lambda(V^{[k+1]})=\int_{\CHilb^{k+1}_\e(\CC^m)} c_d(V^{[k+1]})}\]
and this is given by the formula 
\begin{equation}\label{intnumberfourb} 
\boxed{\int_{\CHilb^{k+1}_\e(\CC^m)}c_d(V^{[k+1]})=\sum_{F\in \mathcal{P}_\ff} \sires \frac{
 \prod_{i<j}(z_i-z_j) c_{d,F}(V^{[k+1]})}{
\Euler_F(\CHilb^{k+1}_\ff(\CC^m))  \prod_{l=1}^k\prod_{i=1}^m(\lambda_i-z_l)} \,\dbz}
  \end{equation}
where $\mathcal{P}_\ff=\pi_\Lambda(\mathcal{F}_\ff)$ is the set of $T_\bz^k$-fixed points on $\CHilb^{k+1}_\ff=\rho^{-1}(\ff)$ where $\rho: \widehat{\CHilb}^{k+1}(\CC^m) \to 
\flag_k(\CC^m)$ as in Diagram \eqref{diagramsix}.  
\end{corollary}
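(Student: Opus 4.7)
The plan is to combine the sieve decomposition with the First Residue Vanishing Theorem (Theorem \ref{vanishing1}) and the proper support hypothesis (Definition \ref{geocond}) to recognise formula \eqref{intnumberthree} as the equivariant localisation expression for $\int_{\CHilb^{k+1}_\e(\CC^m)} c_d(V^{[k+1]})$.

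First I would establish the equality
\[\int_{\mathbf{BN}^{k+1}_\e(\CC^m)}c_d^\Lambda(V^{[k+1]})=\int_{\CHilb^{k+1}_\e(\CC^m)} c_d(V^{[k+1]}).\]
Unwinding the sieve, $c_d^\Lambda$ equals $c_d(V^{[k+1]})$ plus a signed combination of the $c_d(V^\b)$ for $\b\in\Pi(k+1)\setminus\Lambda$. By Theorem \ref{vanishing1} every such $\b$-term contributes zero to its associated iterated residue in \eqref{intnumberone}, so only $c_d(V^{\Lambda})=c_d(V^{[k+1]})$ survives on the $\mathbf{BN}$-side. Meanwhile, Definition \ref{geocond} forces $c_d(V^{[k+1]})$ to be represented by a form whose support meets $\GHilb^{k+1}_0(\CC^m)$ only in the curvilinear component, on which $\pi_\Lambda:\mathbf{BN}^{k+1}_\e(\CC^m)\to\CHilb^{k+1}_\e(\CC^m)$ is proper and generically one-to-one. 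Because $V^{[k+1]}$ on $\mathbf{BN}^{k+1}_\e(\CC^m)$ is a $\pi_\Lambda$-pullback, the projection formula together with $(\pi_\Lambda)_*1=1$ on the curvilinear part yields the displayed identity.

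Next I would interpret the residue expression \eqref{intnumberthree} as Rossmann's localisation formula (Proposition \ref{rossman}) applied to $\int_{\CHilb^{k+1}_\e(\CC^m)} c_d(V^{[k+1]})$, using the embedding $\CHilb^{k+1}_\ff\hookrightarrow\grass_k(\Sym^{\le k}\CC_{[k]})$ into a smooth ambient space provided by the test-curve model (Theorem \ref{bszmodel}). Since $c_{d,F}(V^{[k+1]})$ depends only on $F'=\pi_\Lambda(F)$, grouping the sum over $\mathcal{F}_\ff$ by the fibers of $\pi_\Lambda|_{\mathcal{F}_\ff}\to\mathcal{P}_\ff$ produces a sum over $\mathcal{P}_\ff$ whose coefficient at each $F'$ is $\sum_{F\in\pi_\Lambda^{-1}(F')}1/\Euler_F(\mathbf{BN}_\ff)$. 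The point is then to recognise this inner sum as the Rossmann weight $\emu_{F'}[\CHilb^{k+1}_\ff,\grass_k(\Sym^{\le k}\CC_{[k]})]/\Euler^T(T_{F'}\grass_k(\Sym^{\le k}\CC_{[k]}))$, which is precisely what the symbol $1/\Euler_{F'}(\CHilb^{k+1}_\ff(\CC^m))$ abbreviates in the statement of the corollary.

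The main obstacle will be verifying this last identification at each fixed point $F'\in\mathcal{P}_\ff$: one must check that the partial desingularisation $\mathbf{BN}_\ff\to\CHilb^{k+1}_\ff$, restricted to the curvilinear part, has the property that summing inverse equivariant Euler classes of $\mathbf{BN}_\ff$ over $\pi_\Lambda^{-1}(F')$ reproduces the equivariant multiplicity of the singular $\CHilb^{k+1}_\ff$ at $F'$ inside the smooth ambient Grassmannian. Equivalently, ABBV applied to the constant function $1$ on the fiber $\pi_\Lambda^{-1}(F')\cap\mathbf{CN}_\ff$ must recover Rossmann's tangent-cone weight. Once this resolution-compatibility is in place, \eqref{intnumberthree} becomes the Rossmann formula for the right-hand integral and the corollary follows.
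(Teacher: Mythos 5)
Your proposal is correct and follows essentially the same route as the paper: Residue Vanishing I (Theorem \ref{vanishing1}) kills every non-leading term of the sieve, the leading term $c_d(V^{[k+1]})$ is a $\pi_\Lambda$-pullback whose proper support (Definition \ref{geocond}, Lemma \ref{lemma:support}) confines it to a neighborhood of the curvilinear locus where $\pi_\Lambda$ is proper and generically one-to-one, so the two integrals agree and the fixed-point sum regroups over $\mathcal{P}_\ff=\pi_\Lambda(\mathcal{F}_\ff)$. The ``obstacle'' you flag is not a genuine gap: the identity $\sum_{F\in\pi_\Lambda^{-1}(F')}1/\Euler_F(\mathbf{BN}_\ff)=\emu_{F'}[\CHilb^{k+1}_\ff,\flag_\ff]/\Euler_{F'}(\flag_\ff)$ is the standard additivity of equivariant multiplicities under an equivariant proper birational map (apply ABBV to pulled-back test classes and compare with Rossmann), and it is precisely what the formal symbol $\Euler_F(\CHilb^{k+1}_\ff(\CC^m))$ in the corollary abbreviates, the rigorous Rossmann form being invoked by the paper only afterwards in \eqref{intnumbertwo}.
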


Recall from \eqref{tauconditions} that the fixed points in $\mathcal{P}_\ff$ are parametrised by sequences $\tau=\tau_1-\ldots -\tau_k$ where 
\begin{equation}
\tau_i \subset \{1,\ldots, i\} \text { such that } \Sigma \tau_i \le i. 
\end{equation}
We call these admissible sequences, and those admissible sequences which correspond to fixed points in $\CHilb^{k+1}_\ff$ are called curvilinear admissible sequences.

In the next section we prove a local structure theorem for properly supported forms $\a$. This model roughly says that the support $\mathrm{supp}(\a)$ of such a form is birational to the total space of a bundle $B$ over the punctual part $\supp_0(\a)=\supp(\a)\cap \CHilb^{k+1}(\CC^n)$.

\subsection{The local model for properly supported forms}
Let $\a \in \Omega^\bullet(\GHilb^{k+1}(\CC^m))$ be a properly supported form as in Definition \ref{geocond}. This means that $\supp(\a)$ is locally irreducible at every point and 
\[\supp_0(\a):=\supp(\a) \cap \GHilb^{k+1}_0(\CC^m) \subset \CHilb^{k+1}(M)\]
Recall that $\Curv^{k+1}(\CC^m) \subset \CHilb^{k+1}(\CC^m)$ is  the nonsingular open locus which parametrises curvilinear subschemes. Let $B \to \Curv^{k+1}(\CC^m)$ denote the normal bundle of $\Curv^{k+1}(\CC^m)$ in $\GHilb^{k+1}(\CC^m)$.
\begin{proposition}{(Haiman \cite{haiman})} $B=\calo_{\CC^m}^{[k+1]}/\calo$ is a rank $k$ bundle.
\end{proposition}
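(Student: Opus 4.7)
The identification is due to Haiman, and I plan to establish it by combining a straightforward rank count with a local tangent space computation at a generic curvilinear point, then invoking $\GL(m)$-equivariance.

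First I would verify the rank assertion. By the test curve model of Theorem~\ref{bszmodel}, the punctual curvilinear component $\CHilb^{k+1}_p(\CC^m)$ has dimension $k(m-1)$, so letting the support point $p$ vary gives $\dim \Curv^{k+1}(\CC^m) = m + k(m-1) = (k+1)m - k$. Since $\dim \GHilb^{k+1}(\CC^m) = (k+1)m$, the normal bundle $B$ has rank $k$, matching $\rank(\calo_{\CC^m}^{[k+1]}/\calo) = (k+1) - 1 = k$, where the inclusion $\calo \hookrightarrow \calo^{[k+1]}$ is the unit map sending $1 \mapsto 1_\xi$.

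For the isomorphism, I would work at a curvilinear $\xi$ supported at $p$, choose local coordinates $(x_1,\dots,x_m)$ at $p$ adapted to a smooth curve germ $\calc$ containing $\xi$, so that $I_\xi = (x_1^{k+1}, x_2, \dots, x_m)$ and $\calo_\xi \cong \CC[x_1]/(x_1^{k+1})$. Since $I_\xi$ is generated by a regular sequence whose generators act as zero on $\calo_\xi$, all Koszul syzygies become trivial and $T_\xi\GHilb^{k+1}(\CC^m) = \Hom_{\calo_{\CC^m}}(I_\xi,\calo_\xi) \cong \calo_\xi^{\oplus m}$, with factors recording the images $\varphi(x_1^{k+1})$ and $\varphi(x_j)$ for $j\ge 2$. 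The tangent directions to $\Curv^{k+1}$ split into (a) deformations of the curve $\calc$, corresponding to arbitrary values of $\varphi(x_j) \in \calo_\xi$ for $j \ge 2$, contributing $(m-1)(k+1)$ parameters, and (b) deformations along $\calc$, which correspond to $\varphi(x_1^{k+1}) \in \CC \cdot 1 \subset \calo_\xi$. The quotient is therefore canonically identified with $\calo_\xi/\CC \cdot 1 = (\calo_{\CC^m}^{[k+1]}/\calo)|_\xi$. Because the construction is $\GL(m)$-equivariant and independent of the choice of adapted coordinates, it globalizes to the required bundle isomorphism.

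The main obstacle will be pinning down precisely which first-order deformations of $\xi$ lie in the tangent space to $\Curv^{k+1}$: at the level of Artinian bases, every deformation of a punctual subscheme remains scheme-theoretically punctual, so the curvilinear condition must be extracted from the global structure rather than from a pointwise test. The cleanest route is to differentiate the test-curve parametrization $\phi: J_k^{\reg}(1,m)/\diff_k(1) \to \grass_k(\sym^{\le k}\CC^m)$ of Theorem~\ref{bszmodel}(2) at a representative regular jet, verify that the image has the expected dimension $k(m-1)$ in the tangent space to $\grass_k$, and match the complement directions with the quotient $\calo_\xi/\CC$ using the short exact sequence $0\to \calo \to \calo^{[k+1]}\to \cale \to 0$ from Theorem~\ref{bszmodel}(3).
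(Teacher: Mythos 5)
The paper offers no argument for this statement at all -- it is simply quoted from Haiman -- so your attempt has to stand on its own, and as written it breaks at exactly the point you flag in your last paragraph. Your description of $T_\xi\Curv^{k+1}$ inside $T_\xi\GHilb^{k+1}=\Hom(I_\xi,\calo_\xi)\cong\calo_\xi^{\oplus m}$ is wrong in the $\varphi(x_1^{k+1})$-coordinate. Differentiating the translation along the curve, $I_t=\bigl((x_1-t)^{k+1},x_2,\dots,x_m\bigr)$, gives $\varphi(x_1^{k+1})=-(k+1)x_1^{k}$, $\varphi(x_j)=0$; so the deformations along $\calc$ span $\CC\cdot x_1^{k}$, not $\CC\cdot 1$. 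Conversely, the direction $\varphi(x_1^{k+1})=1$, $\varphi(x_j)=0$ deforms $\xi$ to $(x_1^{k+1}+\epsilon,x_2,\dots,x_m)$, i.e.\ to $k+1$ distinct collinear points: it is a smoothing direction transverse to $\Curv^{k+1}$, not tangent to it. Your dimension count $(m-1)(k+1)+1$ is correct, but with the corrected tangent space the normal fiber you actually obtain is $\calo_\xi/\CC\cdot x_1^{k}=\calo_\xi/\frakm_\xi^{k}$ (classes of $1,x_1,\dots,x_1^{k-1}$), not $\calo_\xi/\CC\cdot 1$ (classes of $x_1,\dots,x_1^{k}$).

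These two rank-$k$ quotients are genuinely different, so the discrepancy cannot be repaired by a better choice of adapted coordinates or by invoking $\GL(m)$-equivariance: equivariance only says that the normal bundle is an equivariant bundle, not that it is $\calo_{\CC^m}^{[k+1]}/\calo$. Concretely, at the torus-fixed curvilinear point $I_\xi=(x_1^{k+1},x_2,\dots,x_m)$ the normal directions $\varphi(x_1^{k+1})=x_1^{j}$, $j=0,\dots,k-1$, carry weights $(k+1)\lambda_1,k\lambda_1,\dots,2\lambda_1$, while $\calo_{\CC^m}^{[k+1]}/\calo$ carries the weights of $x_1,\dots,x_1^{k}$; already for $k+1=2$ the normal bundle of the punctual locus restricts on a fiber $\PP^{m-1}\subset\PP(T\CC^m)$ to a line bundle of degree $-2$, whereas $\calo^{[2]}/\calo$ restricts to one of degree $+1$. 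So the naive first-order computation -- including your fallback plan of differentiating the test-curve map $\phi$, which reproduces the same tangent space -- cannot by itself yield the asserted identification; whatever precise sense the proposition is to be used in (ultimately only through a topological model of a neighbourhood of the curvilinear locus and the class $\Euler(B)$ in the residue formula), it requires the external input from Haiman's work, or at least a different mechanism of identification, which your argument does not supply.
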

Note that the tautological bundle $B$ extends over the whole $\GHilb^{k+1}(\CC^m)$, and in particular, over the closure $\CHilb^{k+1}(\CC^m)=\overline{\Curv^{k+1}(\CC^m)}$. For properly supported $\a$  the support $\supp(\a)$ is locally irreducible at points in $\CHilb^{k+1}(\CC^m)$, hence 
\[\supp_\nabla(\a):=\supp(\a) \cap \GHilb_\nabla(\CC^m) \subset B|_{\CHilb^{k+1}(\CC^m)}\]
Hence $\supp(\a)$ locally modeled over $\supp(\a)_0$ as a bundle, which means that 
\begin{equation}\label{localmodelgeneral}
\xymatrix{\supp_\nabla(\a) \ar[d]  \ar[r]^\rho &  B  \ar[d]  \\
  \supp_0(\a)  \ar@{^{(}->}[r] &  \CHilb^{k+1}(M)}
 \end{equation}
there is a topological isomorphism $\rho$ from $\supp_\nabla(\a)$ to the total space of $B$. This local model combined with Thom-isomorphism gives 
\begin{equation}
\boxed{\int_{\CHilb^{k+1}_\e(\CC^m)} c_d(V^{[k+1]})=\int_{\CHilb^{k+1}(\CC^m)} \frac{c_d(V^{[k+1]})}{\Euler(B)}}
\end{equation}

\subsection{The second residue vanishing theorem}
We can follow the same argument we developed over the nested Hilbert scheme $\widehat{N}^{k+1}(\CC^m)$, but now for $\widehat{\CHilb}^{k+1}(\CC^m)$: we transform the localisation into iterated residue over $\flag_k(\CC^m)$, then we apply a second localisation over the fiber with the residual $k$-dimensional torus with weights $z_1, \ldots, z_{k}$. The fiber $\CHilb^{k+1}_\ff$ over the flag $\ff=(\Span(e_1) \subset \ldots \subset \Span(e_1,\ldots, e_k)=\CC_{[k]})$ is singular, but it sits in the smooth ambient space 
\[\flag_\ff=\flag_k(\Sym^{\le k}\CC_{[k]})\] 
and hence the Rossmann localisation formula of Proposition \ref{rossman} gives
\begin{equation}\label{intnumbertwo} 
\int_{\CHilb^{k+1}(\CC^m)}\frac{c_d(V^{k+1})}{\Euler(B)}=
\sum_{F\in \mathcal{P}_\ff} \sires \frac{\emu_F[\CHilb^{k+1}_\ff,\flag_\ff]
\prod_{i<j}(z_i-z_j) c_{d,F}(V^{k+1})}{
\Euler_F(B)\cdot  \Euler_F(\flag_\ff) \prod_{l=1}^k\prod_{i=1}^m(\lambda_i-z_l)} \,\dbz
  \end{equation}
Note that at the fixed point $F=\pi=(\pi_1,\ldots, \pi_k)$ corresponding to the admissible sequence $(\pi_1,\ldots, \pi_k)$ the Euler class of $B$ is 
\[\Euler_F(B)=z_{\pi_1} \ldots z_{\pi_k}.\]
The tangent space of $\flag_\ff$ at $\pi=(\pi_1,\ldots, \pi_k)$ is 
\[\Euler_\pi(\flag_\ff)=\prod_{l=1}^k\prod_{\Sigma \tau \leq l}^{\tau\neq\pi_1\ldots \pi_l}
(z_{\tau}-z_{\pi_l})\]
We prove in \cite{b0} the following vanishing theorem
\begin{theorem}{\textbf{Second Residue Vanishing Theorem (\cite{b0} Theorem 6.1)}} Let $m \le k+1$. Then 
\begin{enumerate}
\item All terms but the one corresponding to $\pi_\dist = ([1], [2],\ldots , [k])$ vanish in \eqref{intnumbertwo}, leaving us with 
\begin{equation}\label{eqn:residue2}
\sires \frac{\emu_{\pi_\dist}[\CHilb^{k+1}_\ff,\flag_\ff]
\prod_{i<j}(z_i-z_j) c_{d,F}(V^{k+1})}{
\Pi_{\Sigma \tau \le l \le k}(z_\tau-z_l) \cdot  z_1 \ldots z_k \prod_{l=1}^k\prod_{i=1}^m(\lambda_i-z_l)} \,\dbz=0
\end{equation}
\item If $|\tau|\ge 3$ then $\emu_{\pi_\dist}[\CHilb^{k+1}_\ff,\flag_\ff]$ is divisible by $z_\tau-z_l$ for all $l \ge \Sigma \tau$. Let 
\[Q_k(\bz)=\frac{\emu_{\pi_\dist}[\CHilb^{k+1}_\ff,\flag_\ff]}{\prod_{\substack{|\tau| \ge 3\\\Sigma \tau \le l \le k}}z_\tau-z_l}\]
denote the quotient polynomial. Then we get the simplified formula:
\begin{equation} \label{intnumberthreeb}
\sires \frac{Q_k(\bz)\,\prod_{i<j}(z_i-z_j) c_{d,F}(V^{k+1})}{
\prod_{i+j \le l \le k}
(z_i+z_j-z_l)  \prod_{l=1}^k\prod_{i=1}^m(\lambda_i-z_l)} \,\dbz
  \end{equation}
\end{enumerate} 
\end{theorem}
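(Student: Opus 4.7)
The plan is to prove both parts by reducing to an explicit analysis of the iterated residue at each torus-fixed point $\pi \in \mathcal{P}_\ff$, combined with the test-curve model of $\CHilb^{k+1}_\ff$ from Theorem \ref{bszmodel}. For Part~(1), the key combinatorial input is Lemma \ref{crucial1}: for any non-distinguished admissible $\pi = (\pi_1, \ldots, \pi_k)$, some index $i_0 \in \{2, \ldots, k\}$ is absent from every $\pi_j$. I would choose $i_0$ maximal among such missing indices and then invoke Proposition \ref{vanishprop} with $z_{i_0}$ (or the whole tail $z_{i_0}, \ldots, z_k$) as the innermost residue variable. The substituted Chern polynomial $c_{d,F}(V^{[k+1]})$ has equivariant roots of the form $\theta_\ell + z_{\pi_j}$ and is therefore independent of $z_{i_0}$; the same is true of $\Euler_F(B) = z_{\pi_1}\cdots z_{\pi_k}$. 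Hence the only numerator contributions in $z_{i_0}$ come from the Vandermonde $\prod_{i<j}(z_i - z_j)$ (degree $k-1$) and from $\emu_\pi[\CHilb^{k+1}_\ff, \flag_\ff]$, while the denominator carries $\prod_{i=1}^m(\lambda_i - z_{i_0})$ of degree $m$ together with part of $\Euler_F(\flag_\ff)$. After cancelling the common $z_{i_0}$-factors between $\emu_\pi$ and $\Euler_F(\flag_\ff)$ -- whose ratio is the Chern-class contribution of the tangent cone to $\CHilb^{k+1}_\ff$, bounded in $z_{i_0}$-degree by the codimension $\dim \flag_\ff - \dim \CHilb^{k+1}_\ff$ -- the assumption $m \le k+1$ yields the strict inequality required by Proposition \ref{vanishprop}.

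For Part~(2), I would compute the tangent cone $\tc_{\pi_\dist}\CHilb^{k+1}_\ff$ directly from the test-curve model. In local coordinates at the base point $p_{k,k}$ corresponding to the generators $e_1, e_2 + e_1^2, \ldots, \sum_{i_1+\cdots+i_r=k} e_{i_1}\cdots e_{i_r}$, the curvilinear locus $\phi(J_k^\reg(1,k))$ is parametrized by a regular jet $(\gamma', \gamma'', \ldots, \gamma^{(k)})$ and the defining relations of $\CHilb^{k+1}_\ff$ inside the smooth ambient $\flag_\ff = \flag_k(\sym^{\le k}\CC_{[k]})$ arise by expressing each basis vector $e_\tau$ with $|\tau| \ge 3$ as a polynomial in the lower-order jets $\gamma^{(i)}$. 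These relations produce explicit ideal generators whose leading terms, with respect to a suitable weight-compatible monomial order, are precisely the linear forms $z_\tau - z_l$ for every admissible multi-index $\tau$ with $|\tau| \ge 3$ and every $l \ge \Sigma \tau$. Running the monomial multidegree algorithm recalled in \S \ref{subsec:epdmult} on the resulting initial ideal then exhibits $\prod_{|\tau|\ge 3,\ \Sigma \tau \le l \le k}(z_\tau - z_l)$ as a factor of $\emu_{\pi_\dist}[\CHilb^{k+1}_\ff,\flag_\ff]$, and the complementary factor defines $Q_k(\bz)$, yielding the simplified formula \eqref{intnumberthreeb}.

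The main obstacle I anticipate is the uniform control of $\emu_\pi[\CHilb^{k+1}_\ff, \flag_\ff]$ at the non-distinguished fixed points in Part~(1), since $\CHilb^{k+1}_\ff$ is generally singular at these $\pi$ and its tangent cones admit no closed-form description. The route I would take is to degenerate the defining ideal of $\CHilb^{k+1}_\ff$ to a monomial initial ideal in a weight order compatible with the $T^k_\bz$-action, and then bound the resulting multidegree by combinatorially counting which weights $z_\tau$ of $\sym^{\le k}\CC_{[k]}$ involve the missing variable $z_{i_0}$. Because every tangent direction of $\flag_\ff$ at $\pi$ has weight of the form $z_\tau - z_{\pi_l}$ and $z_{i_0}$ appears only through $z_\tau$, the count is finite and bounded by data depending only on $k$, not on $m$; this is what supplies the slack needed to beat the $z_{i_0}^m$ factor in the denominator and close the degree argument.
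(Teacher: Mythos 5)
This theorem is not proved in the paper at all: the text states it as imported from \cite{b0} (Theorem 6.1), so your sketch has to stand on its own, and while its overall strategy (a missing-variable degree count via multidegree bounds for part (1), a tangent-cone analysis at $\pi_\dist$ for part (2)) is the right family of ideas, both central steps have genuine gaps. For part (1), Proposition \ref{vanishprop} only gives vanishing from a degree inequality in a \emph{tail block} $z_l,\dots,z_k$ of the fixed residue order; you want to use the single middle variable $z_{i_0}$, but reordering the iterated residue is not justified here (the poles $z_i+z_j-z_l$ and $z_\tau-z_{\pi_l}$ are not of the normal-crossing type covered by the remark after Proposition \ref{ABtoresidue}), and a single-variable count is genuinely insufficient: a denominator factor $z_\tau-z_{\pi_l}$ with $i_0\in\tau$ but involving variables of higher index is expanded, per \eqref{omegaexp}, in that higher-index variable and contributes arbitrarily large \emph{positive} powers of $z_{i_0}$, so the coefficient of $z_{i_0}^{-1}$ is not controlled by comparing $z_{i_0}$-degrees. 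If instead you take the whole tail $\{z_{i_0},\dots,z_k\}$, then $c_{d,F}(V^{[k+1]})$ and $\Euler_F(B)$ do depend on the tail variables occurring in $\pi$, with degree of order $r$ per variable, and the inequality of Proposition \ref{vanishprop} no longer follows from the stated numerics. Moreover, ``cancelling the common $z_{i_0}$-factors between $\emu_\pi$ and $\Euler_F(\flag_\ff)$'' is not a meaningful operation -- what is available is the bound $\deg_{z_{i_0}}\emu_\pi\le\#\{\text{weights of }T_\pi\flag_\ff\text{ containing }z_{i_0}\}$ recalled in \S\ref{subsec:epdmult} -- and your closing sentence inverts the logic: the factor $\prod_{i=1}^m(\lambda_i-z_{i_0})$ in the denominator is what you rely on, not what you must ``beat'', and the count you describe naturally requires $m\ge k+1$ rather than the stated $m\le k+1$, a discrepancy the sketch never confronts. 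Supplying this control of the nondistinguished fixed points is exactly the multi-page combinatorial core of \cite{b0} (compare the analogous vanishing argument in \cite{bsz}), and it is missing here.

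For part (2), the divisibility of $\emu_{\pi_\dist}[\CHilb^{k+1}_\ff,\flag_\ff]$ by $\prod(z_\tau-z_l)$ should come from a geometric containment: the tangent cone at $\pi_\dist$ lies in the coordinate subspace of $T_{\pi_\dist}\flag_\ff$ complementary to the directions of weight $z_\tau-z_l$ with $|\tau|\ge3$, $\Sigma\tau\le l\le k$ (indeed it is the orbit closure $\OO_k$ times a linear space, cf.\ Remark \ref{remarkq}), after which multiplicativity of the equivariant dual for varieties inside a linear subspace produces the factor $\prod(z_\tau-z_l)$ and identifies the quotient with $\epd{\OO_k,W_k}=Q_k$. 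Your route -- degenerate to a monomial initial ideal and read off ``leading terms'' equal to the linear forms $z_\tau-z_l$ -- conflates torus weights with ideal elements (the leading terms would be coordinate functions \emph{of} those weights, not the weights themselves), and in any case the multidegree of a monomial ideal is a nonnegative sum of products of coordinate weights, so divisibility by the non-coordinate forms $z_\tau-z_l$ cannot be read off termwise after such a degeneration; the containment of the cone in the subspace must be established before any Gr\"obner argument, and that step is absent from the proposal.
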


\begin{remark}\label{remarkq}
The geometric meaning of $Q_k(\bz)$ in \eqref{intnumberthreeb} is the following, see also \cite[Theorem 6.16]{bsz}. Let $T_k\subset B_k\subset \GL(k)$ be the subgroups of invertible
diagonal and upper-triangular matrices, respectively; denote the
diagonal weights of $T_k$ by $z_1, \ldots, z_k$.  Consider the $\GL(k)$-module of 3-tensors $\Hom(\CC^k,\sym^2\CC^k)$; identifying the
weight-$(z_m+z_r-z_l)$ symbols $q^{mr}_l$ and $q^{rm}_l$, we can
  write a basis for this space as follows:
\[ \Hom(\CC^k,\sym^2\CC^k)=\bigoplus \CC q^{mr}_l,\;  1\leq m,r,l \leq k.
\]
Consider the point $\epsilon=\sum_{m=1}^k\sum_{r=1}^{k-m}q_{mr}^{m+r}$
in  the $B_k$-invariant subspace
\begin{equation*}
  \label{nhmodule}
    W_k = \bigoplus_{1\leq m+r\leq l\leq k} \CC q^{mr}_l\subset
\Hom(\CC^k,\sym^2\CC^k).
\end{equation*}
Set the notation $\OO_k$ for the orbit closure
$\overline{B_k\epsilon}\subset W_k$, then $Q_k(\bz)$ is the $T_k$-equivariant
Poincar\'e dual $Q_k(\bz) = \epd{\OO_k,W_k}_{T_k}$,
which is a homogeneous polynomial of degree
$\dim(W_k)-\dim(\OO_k)$. For small $k$ these polynomials are the following (see \cite[Section 7]{bsz}):
\[Q_2=Q_3=1, Q_4=2z_1+z_2-z_4\]
\[Q_5=(2z_1+z_2-z_5)(2z_1^2 +3z_1z_2-2z_1z_5+2z_2z_3-z_2z_4-z_2z_5-z_3z_4+z_4z_5).\] 
\end{remark}

Hence we arrive to the final formula
\begin{equation}\label{intnumberfour} 
\boxed{\int_{\CHilb^{k+1}(\CC^m)}\frac{c_d(V^{k+1})}{\Euler(B)}=
 \sires \frac{
Q_k(\bz) \prod_{i<j}(z_i-z_j) \s_d(\{\theta_\ell,\theta_\ell+z_{\tau_i}\}:1\le \ell \le r, 1\le i\le k)}{
z_1\ldots z_k \prod_{i+j \le l \le k}(z_i+z_j-z_l)  \prod_{l=1}^k\prod_{i=1}^m(\lambda_i-z_l)} \,\dbz}
  \end{equation}
This completes the proof of Theorem \ref{tauintegral} and Theorem \ref{equivariantintegral}.

\section{Tautological push-forwards on Hilbert schemes}\label{push-forwards}

In this section, following the argument outlined in the previous section, we formulate an extension of the tautological integral formula in Theorem \ref{tauintegral},  where push-forward to the point is replaced by a pushforward along a smooth map between varieties.

Let $f:M \to N$ be a generic holomorphic map between the smooth complex manifolds $M$ and $N$. 
Following the notations introduced in \S \ref{subsec:reformulation}, we let $V \to M$ be a rank $r$ vector bundle, $V^{[k]} \to \Hilb^{[k]}(M)$ the tautological bundle over the ordered Hilbert scheme and $f^{[k]}=f^{\times k} \circ \HC$, which we restrict to the main component $\GHilb^k(M) \subset \Hilb^{[k]}(M)$:
{\small \[\xymatrix{ V^{[k]} \ar[d] & & \\
 \GHilb^{k}(M) \ar[r]^-{f^{[k]}} & N^k }\]}
We aim to calculate 
\begin{equation}\label{pushforward} 
f^{[k]}_* [\Phi(V^{[k]})] \in H^*(N^k)
\end{equation}
for specific Chern polynomials $\Phi$ under assumptions on the map $f$ and the support of $\Phi$. We recall some notations and terminology from \S \ref{subsec:reformulation}:
\begin{itemize}
	\item We will use the standard notation $f_*[\Psi]$ for the push-forward 
	of a cohomology class $\Psi\in H^*(M)$. Occasionally, we will indicate the 
	source space in the index for clarity, as in  $f_*[\Psi]_M$. 
	\item We will write $ f^{\times k}: M^k \to N^k$, and $ f^{[k]} =  f^{\times r}\circ \HC$ where $\HC:\Hilb^{[k]}M \to M^k$ is the ordered Hilbert-Chow. We can and will replace $k$ with a subset of $\{1,\ldots, k\}$ in these notations, which refers to the restriction to the points formed by this subset.
	\item For a partition $\a \in \Pi(k)$ the embeddings of the diagonals will be denoted by $\iota_{\Delta_\a M}:\Delta_\a M\hookrightarrow M^k$ and $i_{\Delta_\a N}: \Delta_\a N\hookrightarrow N^k$.
\end{itemize}
The sieve over the fully nested Hilbert scheme $N^{k+1}(M)$ explained in \S \ref{subsec:sieve} gives a decomposition
\[\Phi= \sum_{\a \in \Pi(k+1)} \Phi^\a,\] 
where for any partition $\{0,\ldots, k\}=\a_1 \sqcup \ldots \sqcup \a_s$ the form $\Phi^\a$ is supported on the approximating punctual subset  
\[\supp(\Phi^\a)=N^{\a}_0(X)=\HC^{-1}(\Delta_{\a})=\pi_\a^{-1}(\GHilb^\a_0(M))\]
where $\GHilb^{\a}_0(M)=\GHilb^{\a_1}_0(M) \times \ldots \times \GHilb^{\pi_s}_0(M)$.
Here $\Phi^\a$ is a linear combination of the classes $\Phi(V^{[\b]})$ for those partitions $\b \in \Pi(k+1)$ which are refinements of $\a$, that is, $\b \le \a$ holds. 
 In particular, over the small diagonal corresponding to the trivial partition $\Lambda=[0,\ldots, k]$ the sieve formula gives 
\[\Phi^{\Lambda}(V^{[k+1]})=\sum_{\b \in \Pi(k+1)}(-1)^{|\b|-1}(|\b|-1)!\Phi(V^\b).\]

Then $f^{[k+1]}$ factors over the $\a$-diagonal part as 
\begin{equation}\label{diagram:fpi1}
\xymatrixcolsep{4pc} 
\xymatrix{\GHilb^{\a}_0(M) \ar[r]^-{f^{[\a_1]} \times \ldots \times f^{[\a_s]}} & \Delta_{\a_1}N \times \ldots \times \Delta_{\a_s}N \ar[r] \ar@{^{(}->}[d]^-{\iota_{\Delta_\a N}} & \Delta N \ar@{^{(}->}[d]^{\iota_{\Delta N}} \\
  & N^{\a_1}  \times \ldots \times N^{\a_s}  \ar[r]^-{=} & N^k}
\end{equation}
Then, with the same proof as for Theorem \ref{tauintegral} we arrive to the following push-forward analogue. 

\begin{theorem}[\textbf{Push-forwards on geometric components of Hilbert schemes}]\label{thm:pushforward}
Let $f:M \to N$ be a Thom-Boardman map between the smooth complex manifolds $M$ 
and $N$. Let $V$ be a rank $r$ vector bundle over $M$. Assume that $r,k,n$ 
satisfy the inequalities
\[k\le m \le r \le (m-1)\frac{k}{k-1},\]
and $\Phi=c_{(k+1)n}(V^{[k+1]})$ is properly supported on $\GHilb^{k+1}(X)$. Then the map $f^{[k+1]}: \GHilb^{k+1}(M) \to N^{k+1}$ has the following closed form:
\[f^{[k+1]}_* [\Phi]=\sum_{(\a_1,\ldots, \a_s) \in \Pi(k+1)} (\iota_{\Delta_\a N})_* f^{\times s}_* \left [\res_{\bz^{\a_1}=\infty}\ldots \res_{\bz^{\a_s}=\infty} \mathcal{R}^\a(\theta_i, \bz) d\bz^{\a_s}\ldots d\bz^{\a_1} \right ]_{\Delta_\a M}\]
where $\mathcal{R}^\a(\a_i, \bz^{\a_1},\ldots, \bz^{\a_s})$ is the rational expression 
\[\Phi(V(\bz^{\a_1}) \oplus \ldots \oplus V(\bz^{\a_s})) 
\prod_{l=1}^s \frac{(-1)^{|\a_l|-1} \prod_{1\le i<j \le |\a_l|-1}(z_i^{\a_l}-z_j^{\a_l})Q_{|\a_l|-1}d\bz^{\a_l}}
{\prod_{i+j\le l\le 
|\l_l|-1}(z_i^{\a_l}+z_j^{\a_l}-z_l^{\a_l})(z_1^{\a_l}\ldots z_{|\a_l|-1}^{\a_l})^{n+1}}\prod_{i=1}^{|\l_l|-1} 
s_X\left(\frac{1}{z_i^{\a_l}}\right)\]
identical to the one in Theorem \ref{tauintegral}. Here 
\begin{enumerate}
\item For each partition $\a=(\a_1,\ldots, \a_s)$ we introduce a set of variables 
\[\bz^{\a_i}=\{z^{\a_i}_1,\ldots, z^{\a_i}_{|\a_i|-1}\}\]
for each element of the partition.  
\item $V(z)$ stands for the bundle $V$ tensored by the line $\mathbb{C}_z$, which is the representation of a torus $T$ with weight $z$, and hence its Chern roots are $z+\theta_1,\ldots ,z+\theta_r$. Moreover 
	\[V(\bz^{\a_l})=V \oplus V(z^{\a_l}_1) \oplus \ldots \oplus V(z^{\a_l}_{|\a_l|-1})\]
	has rank $r|\a_l|$, and 
	\[ \Phi(V(\bz^{\a_1}) \oplus \ldots \oplus V(\bz^{\a_s})) =  \Phi(\theta_i^1,z^{\a_1}_1+\theta_i,\ldots z^{\a_1}_{|\a_1|-1}+\theta_i, \ldots, \theta_i^s,z^{\a_r}_1+\theta_i,\ldots z^{\a_r}_{|\a_r|-1}+\theta_i)\]
	Note that we have $s$ copies of the roots $\theta_1,\ldots , \theta_r$ of $V$, and we think of the $i$th copy $\theta_1^i,\ldots , \theta_r^i$ as the Chern roots of $V=V^i$ sitting over the $i$th copy of $M$ in $M^s$.
	\item The iterated residue is a homogeneous symmetric polynomial of degree $ns$ in the Chern roots $\theta^i_j$, that is, the Chern roots of $V^1 \oplus \ldots \oplus V^s$ over $\Delta_\a M=M^s$, and $f^{\times s}$ sends this to $\Delta_\a N=N^s$.
\end{enumerate}
\end{theorem}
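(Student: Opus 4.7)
The plan is to adapt the integration argument of Theorem~\ref{tauintegral} essentially verbatim, tracking the cohomological data along the fibers of $f^{[k+1]}$ rather than pushing all the way to a point. Concretely, the input is the decomposition
\[
f^{[k+1]}_*[\Phi] = \sum_{\alpha \in \Pi(k+1)} f^{[k+1]}_*[\Phi^\alpha]
\]
coming from the sieve on the fully nested Hilbert scheme $N^{k+1}(M)$ (\S\ref{subsec:sieve}), after lifting to the ordered Hilbert scheme via \eqref{n!} and then pulling back along $\pi_\Lambda : N^{k+1}(M) \to \GHilb^{[k+1]}(M)$. By Proposition~\ref{thm:support}, each summand $\Phi^\alpha$ is supported on $N^\alpha_0(M) = \HC^{-1}(\Delta_\alpha)$, so using the factorization diagram \eqref{diagram:fpi1} the $\alpha$-summand factors as $(\iota_{\Delta_\alpha N})_* f^{\times s}_*(\cdots)_{\Delta_\alpha M}$, and the problem is reduced to computing the class $(\cdots)_{\Delta_\alpha M} \in H^*(\Delta_\alpha M)$ obtained by integrating $\Phi^\alpha$ along the fibers of $\pi_\alpha$ composed with the Hilbert--Chow map to $\Delta_\alpha M$.

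To evaluate this fiberwise push-forward, I would follow Steps~2--5 of \S\ref{sec:strategy} applied locally in a tubular neighborhood of each diagonal $\Delta_\alpha M \subset M^s$. First, the form $\Phi^\alpha$ is compactly supported near the $\alpha$-diagonal, so the Chern--Weil construction of \S\ref{subsec:reduction} identifies the fiber integral with an equivariant integral over a product of balanced nested Hilbert schemes $\prod_{i=1}^{s} BN^{[\alpha_i]}(T\CC^m)$ — the dependence on $\Delta_\alpha M$ entering only through substitution of Chern roots of $TM$ for the torus weights $\lambda_1,\ldots,\lambda_m$. Since $\Phi^\alpha$ splits as a product of classes indexed by the blocks $\alpha_i$ (by construction of the sieve and the factoring $V^{[\alpha]} = \bigoplus_i V^{[\alpha_i]}$), the equivariant integral factors as a product, so it suffices to treat each block $\alpha_i$ of size $|\alpha_i|$ separately, obtaining exactly the residue associated to one block in the integration formula.

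For a single block one runs the machinery of \S\ref{subsec:transform}--\S\ref{subsec:secondlocalization}: localize first over the flag $\flag_{|\alpha_i|-1}(\CC^m)$ and use Proposition~\ref{ABtoresidue} to convert the Atiyah--Bott sum into the iterated residue in the $z^{\alpha_i}$-variables with the factor $\prod_{l=1}^{|\alpha_i|-1}\prod_{j=1}^{m}(\lambda_j - z^{\alpha_i}_l)$ in the denominator; then apply the First Residue Vanishing Theorem~\ref{vanishing1} to kill all non-leading sieve terms; use the local model \eqref{localmodelgeneral} (which applies because $\Phi^\alpha$ inherits proper support from $\Phi$) to replace the tubular neighborhood by a push-forward over $\CHilb^{|\alpha_i|}(\CC^m)$ weighted by $\Euler(B) = z^{\alpha_i}_1 \cdots z^{\alpha_i}_{|\alpha_i|-1}$; and finally apply Rossmann localization and the Second Residue Vanishing Theorem to collapse to the single distinguished fixed point $\pi_{\mathrm{dist}}$, producing the universal factor $Q_{|\alpha_i|-1}$ and the denominator $\prod_{i+j\le l}(z^{\alpha_i}_i + z^{\alpha_i}_j - z^{\alpha_i}_l)$. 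Combining the blocks and converting $\prod_j(\lambda_j - z)$ into the Segre series $s_M(1/z)$ after the substitution $z \mapsto -z$ gives exactly the stated rational kernel $\mathcal{R}^\alpha$.

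The main obstacle — and the reason this is not purely a formal translation — is verifying that proper support of $\Phi$ in the sense of Definition~\ref{geocond} descends to proper support of each sieve term $\Phi^\alpha$ uniformly in families parametrized by $\Delta_\alpha M$, so that the local model \eqref{localmodelgeneral} can be invoked fiberwise and the compactly supported Thom isomorphism commutes with $f^{\times s}_*$. Once this compatibility is established (which essentially amounts to checking that the topological isomorphism $\supp_\nabla(\Phi^\alpha) \xrightarrow{\sim} B$ can be chosen equivariantly over a neighborhood of $\Delta_\alpha M$ in $M^s$, using the $\GL(m)$-linearization of \S\ref{sec:model}), all remaining steps are as in Theorem~\ref{tauintegral} and no new residue-theoretic input is required.
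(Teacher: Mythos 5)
Your proposal follows the paper's own route essentially verbatim: the paper proves Theorem~\ref{thm:pushforward} by applying the sieve on the fully nested Hilbert scheme, using the support statement of Proposition~\ref{thm:support} together with the factorization of $f^{[k+1]}$ over the $\alpha$-diagonals in diagram~\eqref{diagram:fpi1}, and then running ``the same proof as for Theorem~\ref{tauintegral}'' (Chern--Weil reduction, flag localisation, the two residue vanishing theorems, and the local model) block by block, exactly as you describe. Your closing remark about proper support descending to the sieve terms is precisely the content of Lemma~\ref{lemma:support}, so there is no gap.
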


We finally formulate a special case of Theorem \ref{thm:pushforward} which we will refer at in the proof of the main theorem. For the set-up let $\a=(\a_1,\ldots, \a_s) \in \Pi(k+1)$ be a partition and take the following extended version of diagram \eqref{diagram:fpi1}
\begin{equation}\label{diagram:fpi2} 
\xymatrixcolsep{5pc} 
\xymatrix{\GHilb^{\a}_0(M) \ar[r]^-{f^{[\a]}=f^{[\a_1]} \times \ldots \times f^{[\a_s]}} \ar@{^{(}->}[d]& \Delta_{\a_1}N \times \ldots \times \Delta_{\a_s}N  \ar@{^{(}->}[d]^-{\iota_{\Delta_\a N}}  & \\
\GHilb^{\a}(M) \ar[r]^-{f^{[\a]}=f^{[\a_1]} \times \ldots \times f^{[\a_s]}}  & N^{k+1}=N^{\a_1}  \times \ldots \times N^{\a_s}  &  \nabla_{\a_1}N \times \ldots \times \nabla_{\a_s}N \ar@{^{(}->}[l]\ar[lu]^\d }
\end{equation}
 where $\GHilb^{\a}(M)=\GHilb^{\a_1}(M) \times \ldots \times \GHilb^{\a_s}(M)$ is the $\a$-approximating scheme defined in Definition \ref{def:approximatingsets}, and $\nabla_{\a_i}N$ stand for the neighborhood of the diagonal $\Delta_{\a_i}N$ in $N^{\a_i}$.
Suppose that $V^{[k+1]}$ has a subbundle of the form $f^{[k+1]*}W\hookrightarrow V^{[k+1]}$ for some bundle $W$ over $N^{k+1}$ and this is consistent with an embedding 
\[HC_{\a}^*f^{\times (k+1)*}W\hookrightarrow V^{\a}.\]
for all partitions $\a$ where
\[HC_{\a}:\GHilb^{\a}(M) \to M^{k+1}\]
is the Hilbert-Chow for the $\a$-approximating set. In short, we say that 
$V^{[k+1]}$ has a consistent subbundle $f^{[k+1]*}W$.
\begin{corollary}\label{cor:pushforward} Suppose that $V^{[k+1]}$ has a consistent subbundle $f^{[k+1]*}W \hookrightarrow V^{[k+1]}$ for some bundle $W$ over $N^{k+1}$. Then 
\[f^{[k+1]}_* \left[\Euler \left(\frac{V^{[k]}}{f^{[k]*}W}\right)\right]_{\Hilb^{[k]}(M)}=\sum_{(\a_1,\ldots, \a_s) \in \Pi(k+1)} (\iota_{\Delta_\a N})_* f^{\times s}_* \left [\res_{\bz^{\a_1}=\infty}\ldots \res_{\bz^{\a_s}=\infty} \mathcal{R}^\a(\theta_i, \bz) d\bz^{\a_s}\ldots d\bz^{\a_1} \right ]_{\Delta_\a M}\]
where $\mathcal{R}^\a(\a_i, \bz^{\a_1},\ldots, \bz^{\a_s})$ is the rational expression 
\[\Euler \left(\frac{V(\bz^{\a_1}) \oplus \ldots \oplus V(\bz^{\a_s})}{f^{[\a]*}W}\right) 
\prod_{l=1}^s \frac{(-1)^{|\a_l|-1} \prod_{1\le i<j \le |\a_l|-1}(z_i^{\a_l}-z_j^{\a_l})Q_{|\a_l|-1}d\bz^{\a_l}}
{\prod_{i+j\le l\le 
|\l_l|-1}(z_i^{\a_l}+z_j^{\a_l}-z_l^{\a_l})(z_1^{\a_l}\ldots z_{|\a_l|-1}^{\a_l})^{n+1}}\prod_{i=1}^{|\l_l|-1} 
s_X\left(\frac{1}{z_i^{\a_l}}\right)\]
\end{corollary}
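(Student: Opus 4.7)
The plan is to derive Corollary~\ref{cor:pushforward} as a direct specialization of Theorem~\ref{thm:pushforward}, but with careful bookkeeping to account for the relative nature of the Euler class of the quotient $V^{[k+1]}/f^{[k+1]*}W$. The main point is that the consistency hypothesis on $W$ guarantees that the sieve decomposition, which is defined inductively over refinements of partitions, behaves compatibly with the quotient construction, so that the same residue machinery of \S\ref{sec:prooftauintegrals} applies verbatim to $\Euler(V^{[k+1]}/f^{[k+1]*}W)$.

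First I would verify that the Chern class $\Euler(V^{[k+1]}/f^{[k+1]*}W)$ is properly supported in the sense of Definition~\ref{geocond}, so that Theorem~\ref{thm:pushforward} may be applied. For a Thom-Boardman map $f$, the multipoint arguments of \S\ref{sec:strategy} show that the support of the transversal section of $V^{[k+1]}/f^{[k+1]*}W$ meets $\GHilb^{k+1}_0(M)$ only in the curvilinear component, and that it is locally irreducible there. Next, I would apply the sieve formula of \S\ref{subsec:sieve} to write
\[
\Euler\!\left(\tfrac{V^{[k+1]}}{f^{[k+1]*}W}\right)
\;=\;\sum_{\a\in\Pi(k+1)}\Phi^{\a},
\]
where each $\Phi^{\a}$ is supported on $N^{\a}_{0}(M)$. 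The consistency of $f^{[k+1]*}W\hookrightarrow V^{[k+1]}$ with the embeddings $HC_{\a}^{*}f^{\times(k+1)*}W\hookrightarrow V^{\a}$ is exactly what is needed for the alternating combinations defining $\Phi^{\a}$ to make sense as Euler classes of quotient bundles on each approximating piece $\GHilb^{[\a]}(M)$; in particular, on the deepest stratum one obtains the leading term $\Euler(V^{[k+1]}/f^{[k+1]*}W)^{\Lambda}$, which pulls back to an Euler class of a quotient on the fully nested Hilbert scheme.

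The third step is to push each $\Phi^{\a}$ forward along the factorisation of $f^{[k+1]}$ in diagram~\eqref{diagram:fpi2}. Since $\Phi^{\a}$ is compactly supported in a tubular neighborhood of $N^{\a}_{0}(M)$, which sits over $\Delta_{\a_{1}}N\times\cdots\times\Delta_{\a_{s}}N$, the push-forward factors through $\iota_{\Delta_{\a}N}$ and reduces to a push-forward along $f^{\times s}$ composed with the contribution from the punctual part along each $\a_{i}$-diagonal. On each factor $\GHilb^{[\a_{i}]}_{0}(M)$, the residue formula of Theorem~\ref{tauintegral} applies with $V$ replaced by $V/f^{*}\iota_{\Delta_{\a_{i}}N}^{*}W$; this is because under the torus-equivariant embedding into configuration space, the pullback of $W$ becomes a $z$-independent subbundle (the tensor factor $W$ does not see the tautological $\cale$), so the Euler class splits multiplicatively as $\Euler(V(\bz^{\a_{i}})\ominus f^{[\a_{i}]*}W)$, matching the integrand in the statement. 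Assembling these pieces over the partition $\a$ yields exactly the claimed sum.

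The main technical obstacle is to verify that the consistency condition is preserved at every step of the construction, particularly when passing to the flag fibration $\widehat{\GHilb}^{k+1}(T\CC^{m})\to\flag_{k}(T\CC^{m})$ and applying the two residue vanishing theorems. At the first vanishing (Theorem~\ref{vanishing1}), the degree count in $\bz$ on the quotient $V^{\a}/f^{[\a]*}W$ is bounded by that of $V^{\a}$, because $f^{[\a]*}W$ contributes no $\bz$-dependence; this ensures the vanishing argument carries over unchanged. At the second vanishing, the local model of~\eqref{localmodelgeneral} identifies the neighborhood of the support with the total space of $B$, and the Thom isomorphism again commutes with the quotient by $f^{[k+1]*}W$ because the latter is pulled back from the base $N^{k+1}$. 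Once these compatibility checks are in place, the formula of Theorem~\ref{thm:pushforward} specialises term-by-term to the Euler-class statement of Corollary~\ref{cor:pushforward}.
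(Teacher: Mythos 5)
Your proposal follows essentially the same route as the paper: the corollary is obtained by rerunning the sieve and residue machinery of Theorem \ref{thm:pushforward} (exactly as in the relative $k=2,3$ formulas \eqref{relative2pt} and \eqref{relative3pt}) for the Euler class of the quotient, with the consistency hypothesis making the quotient well defined on each approximating stratum $\GHilb^{[\a]}(M)$, and with $f^{[\a]*}W$ pulled back from $N^{k+1}$, hence carrying no $\bz$-dependence, so the degree counts in the residue vanishing theorems and the local model over $B$ go through unchanged. The one caveat is your opening step: properly-supportedness of $\Euler(V^{[k+1]}/f^{[k+1]*}W)$ in the sense of Definition \ref{geocond} cannot be "verified" for an arbitrary bundle $V$ and consistent $W$ — it is a hypothesis inherited from Theorem \ref{thm:pushforward}, and in the paper it is only established later, for the specific choice $V=f^*TN$, $W=T_h$, via Propositions \ref{prop:hilbfloc} and \ref{prop:sectionk} — so in the corollary you should assume it rather than derive it.
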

We will apply this formula with $V=f^*TN$ and $W=T_{\Delta N}$, the tangent bundle to the small diagonal.

\section{Proof of the main theorems}\label{sec:proofmain}
We start with the introduction and analysis of the multipoint Hilbert scheme $\GHilb^k(f)$, which plays a central role in the proof. This is followed by local analysis of the $k$-fold locus in $M$ at its punctual locus. We then give the proof of the multipoint residue formula. 
 \subsection{The multipoint Hilbert scheme}

Let $f: M\to N$ be a map between smooth compact complex manifolds of dimension $\dim(M)=m, \dim(N)=N$. This induces the $k$th Hilbert extension map
\[\mathrm{hf}^{[k]}: \Hilb^{k}(M) \to \Hilb^{k}(M \times N)\]
which sends the subscheme $\xi_I$ to $\xi_{(I,I_{\Gamma(f)})}$ where $I_{\Gamma(f)}$ is the ideal of the graph $\Gamma(f)$ of $f$. Heuristically, the map $f^{[n]}$ pushes $\xi_I$ onto the graph along the $Y$ axis.
\begin{lemma} $\mathrm{hf}^{[k]}$ is a regular embedding.
\end{lemma}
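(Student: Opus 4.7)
The plan is to factor $\mathrm{hf}^{[k]}$ through the graph and then realise the resulting closed embedding of Hilbert schemes as the zero scheme of a regular section of a vector bundle on $\Hilb^{k}(M\times N)$. Concretely, let $\varphi: M\xrightarrow{\sim}\Gamma(f)\subset M\times N$, $p\mapsto(p,f(p))$, be the graph isomorphism and $\iota:\Gamma(f)\hookrightarrow M\times N$ the inclusion. Unpacking the definition, $\mathrm{hf}^{[k]}$ is the composition
\[
\Hilb^{k}(M)\xrightarrow{\;\varphi^{[k]}\;}\Hilb^{k}(\Gamma(f))\xrightarrow{\;\iota^{[k]}\;}\Hilb^{k}(M\times N),
\]
and since $\varphi^{[k]}$ is an isomorphism, it suffices to show that $\iota^{[k]}$ is a regular embedding.

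For this, I would first exhibit $\Gamma(f)$ itself as the zero scheme of a regular section. Because $\pi_{M}:M\times N\to M$ is smooth of relative dimension $n$ and $\Gamma(f)$ is its section corresponding to $f$, the graph is locally cut out by the regular sequence $g_{i}=y_{i}-f_{i}(x)$, $1\le i\le n$, and globally by a section $\sigma\in H^{0}(M\times N,\,\pi_{N}^{*}TN)$ of a rank-$n$ bundle whose zero scheme is $\Gamma(f)$. Taking tautological pushforwards, one obtains a section $\sigma^{[k]}\in H^{0}(\Hilb^{k}(M\times N),\,(\pi_{N}^{*}TN)^{[k]})$ of a rank-$kn$ bundle. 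A length-$k$ subscheme $\xi\subset M\times N$ lies in $\Gamma(f)$ if and only if $\sigma|_{\xi}=0$, which is equivalent to $\sigma^{[k]}([\xi])=0$; thus the set-theoretic image of $\iota^{[k]}$ is the zero locus of $\sigma^{[k]}$, and the scheme-theoretic image is the corresponding ideal-theoretic zero scheme.

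It then remains to check that $\sigma^{[k]}$ is a \emph{regular} section, i.e., that its $kn$ local defining equations form a regular sequence in $\mathcal{O}_{\Hilb^{k}(M\times N)}$ at every point of the image. The dimension bookkeeping is automatic, since $\dim\Hilb^{k}(\Gamma(f))=km=k(m+n)-kn$ on the main component, with analogous counts on the other components (each component of $\Hilb^{k}(\Gamma(f))$ is a component of $\Hilb^{k}(M\times N)$ of the same excess codimension). The regularity itself is checked locally: around any $[\xi]$, choose a coordinate patch $U\subset M\times N$ containing $\mathrm{supp}(\xi)$ in which $\sigma|_{U}$ is given by the regular sequence $(g_{1},\dots,g_{n})$; the defining ideal of $\iota^{[k]}(\Hilb^{k}(\Gamma(f)))$ in $\Hilb^{k}(U)$ is then generated by the $kn$ components of the sections $g_{i}^{[k]}$ of $\mathcal{O}^{[k]}$.

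The main obstacle is the verification that these $kn$ components really form a regular sequence, because $\Hilb^{k}(M\times N)$ is in general singular and reducible. I would handle this by an induction on $k$ based on the nested Hilbert stratification: having the regular sequence property for $\Hilb^{k-1}$ and using the universal exact sequence $0\to I_{\xi'}/I_{\xi}\to\mathcal{O}^{[k]}\to\mathcal{O}^{[k-1]}\to 0$ associated with a flag $\xi'\subset\xi$, one deduces that the new sections $g_{i}^{[k]}$ form a regular sequence modulo $g_{i}^{[k-1]}$ precisely because $(g_{1},\dots,g_{n})$ is already a regular sequence on $M\times N$. Combined with the dimension match, this yields the regular-embedding statement.
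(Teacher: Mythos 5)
Your reduction is a reasonable start, and for the record the paper itself states this lemma with no proof at all, so there is nothing to compare against: factoring $\mathrm{hf}^{[k]}$ through the graph isomorphism and identifying the image of $\Hilb^k(\Gamma(f))$ with the zero scheme of the induced section $\sigma^{[k]}$ of $(\pi_N^*TN)^{[k]}$ is correct and is the natural set-up (it is also the device the paper uses later, with $f^*TN^{[k]}$ and its quotient by $T_h$). One inaccuracy: a global section $\sigma\in H^0(M\times N,\pi_N^*TN)$ with zero scheme $\Gamma(f)$ need not exist --- already for $f=\mathrm{id}:E\to E$ with $E$ an elliptic curve, $\Gamma(f)$ is the diagonal and $\pi_N^*TE\cong\mathcal{O}_{E\times E}$, so such a $\sigma$ would be a nonconstant global function. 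This is harmless, since being a regular embedding is local and near the finite support of any $\xi$ you may work on a union of charts with the equations $y_i-f_i(x)$, but it should be phrased that way.

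The genuine gap is the step you yourself flag as the main obstacle: showing that the $kn$ local components of $\sigma^{[k]}$ form a regular sequence in the local rings of $\Hilb^k(M\times N)$, which is the entire content of the lemma, and your proposed induction does not go through. The sequence $0\to I_{\xi'}/I_{\xi}\to\mathcal{O}^{[k]}\to\mathcal{O}^{[k-1]}\to 0$ lives on the nested Hilbert scheme $\Hilb^{k-1,k}(M\times N)$, not on $\Hilb^k(M\times N)$; there is no morphism $\Hilb^k\to\Hilb^{k-1}$ along which ``$g_i^{[k]}$ modulo $g_i^{[k-1]}$'' makes sense, and regular-sequence statements do not transport along the two projections from the nested scheme (they are neither flat nor finite, and the nested scheme is itself singular). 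The ``automatic'' dimension bookkeeping is also not automatic: you must show that \emph{every} irreducible component of $\Hilb^k(M\times N)$ meeting the image --- including Iarrobino-type components, which exist once $\dim(M\times N)\ge 3$ and $k$ is large --- does so in codimension exactly $kn$, and a component of $\Hilb^k(M)$ is merely contained in some component of $\Hilb^k(M\times N)$, not equal to one. Even granted the codimension count, the local rings of $\Hilb^k(M\times N)$ are not Cohen--Macaulay in general (Hilbert schemes of points satisfy Murphy's-law-type pathologies), so correct codimension does not imply that the equations form a regular sequence; the fact that $(g_1,\dots,g_n)$ is regular on the smooth space $M\times N$ gives no depth information upstairs. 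As it stands the key claim is unproved. Note that what the paper actually uses later is the weaker statement that $\GHilb^k(f)$ is a local (topological) complete intersection near the punctual locus for Thom--Boardman maps; establishing your regularity claim in a neighborhood of the geometric/curvilinear locus, where the local structure is controlled, is a far more realistic target than on all of $\Hilb^k(M\times N)$.
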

We introduce two subsets of $\Hilb^k(M)$ which play central role in our argument. 

\begin{definition} \begin{enumerate} \item The $f$-Hilbert scheme $\Hilb^k(f)$ is defined by the following pull-back diagram
\[\xymatrix{\Hilb^k(f) \ar@{^{(}->}[r] \ar@{^{(}->}[d]^-{j} & \Hilb^k(M) \times N \ar@{^{(}->}[d]^-{j} \\ 
\Hilb^k(M) \ar@{^{(}->}[r]^-{\mathrm{hf}^{[k]}} & \Hilb^k(M \times N)}\]
We use the same notation $f$ for the associated map 
\[f: \Hilb^k(f) \to N \ \ \ \xi \mapsto f(\xi).\]
\item The geometric component $\GHilb^k(f) \subset \Hilb^k(f)$ is the closure of the open part consisting of $k$ points on a level set of $f$: 
\[\GHilb^k(f)=\overline{\{\xi=\xi_1 \sqcup \ldots \sqcup \xi_k \in \Hilb^k(M): f(\xi_1)=\ldots =f(\xi_s) \in N\}}\subset \Hilb^k(f)\]
\end{enumerate}
\end{definition}
\begin{remark}Intuitively, both $\Hilb^k(f)$ and $\GHilb^k(f)$ contain subschemes of length $k$ in $M$ whose projection to the graph of $f$ is horizontal. However, not every such 'horizontal' subscheme can be approximated by $k$ different points on level sets. More precisely, a punctual subscheme $\xi \in \Hilb^k_p(f)$ whose support is $p\in M$ sits in $\GHilb^k_p(f)$ if and only if 
\[\xi =\lim_{i\to \infty} \xi_i \text{ where } \xi_i \subset f^{-1}(p_i) \text{ for some } p_i \to p\]
\end{remark}
In fact, the test curve model of Morin singularities gives the following description. 
\begin{proposition}\label{prop:hilbf}  Let $\Hilb^k_0(f)=\cup_{p\in M} \Hilb^k_p(f)$ denote the punctual part of $\Hilb^k(f)$, consisting of subschemes supported at {\it some} point on $M$, and let $\pi_M: \Hilb^k_0(f) \to M$ denote the projection. For the Morin algebra $A_{k}=k[t]/t^{k}$ of dimension $k$ let 
\[\Hilb_{A_k}(f)=\{\xi \in \Hilb^k_0(f): \calo_\xi \simeq A_k\}\]
Then 
\[\overline{\pi_M(\Hilb_{A_k}(f))}=\Sigma_{A_k}(f)\]
where we recall that $\Sigma_{A_k}(f)=\overline{\{p\in M: A_{f,p} \simeq A_k\}}$ is the Thom locus whose cohomology cycle gives the Thom polynomial $\Tp_k$. 
\end{proposition}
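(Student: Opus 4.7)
The plan is to identify, for each $p \in M$, the fibre of the projection $\pi_M: \Hilb_{A_k}(f) \to M$ over $p$ with the set of regular $k$-jet germs $\gamma: (\CC,0) \to (M,p)$ annihilated by $f-f(p)$ up to order $k$, and then invoke the test curve characterisation of the $A_k$-singularity established in the proof of Proposition \ref{prop:boardman}.

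First I would prove the easy inclusion $\Sigma_{A_k}(f) \subset \overline{\pi_M(\Hilb_{A_k}(f))}$. If $p \in M$ satisfies $A_{f,p} \simeq \CC[t]/t^k$, then the scheme-theoretic fibre of $f$ at $p$ is locally $\mathrm{Spec}(A_{f,p})$, a curvilinear subscheme of length $k$ supported at $p$ and contained in $f^{-1}(f(p))$ by construction. This subscheme produces a point of $\Hilb_{A_k}(f)$ projecting to $p$, so $\{p: A_{f,p} \simeq A_k\} \subset \pi_M(\Hilb_{A_k}(f))$, and taking closures gives the desired inclusion.

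For the reverse inclusion $\overline{\pi_M(\Hilb_{A_k}(f))} \subset \Sigma_{A_k}(f)$, let $\xi \in \Hilb_{A_k}(f)$ be supported at $p$. Since $\calo_\xi \simeq \CC[t]/t^k$, the subscheme $\xi$ is the image of a regular $k$-jet germ $\gamma \in J_k^\reg(1,m)$, unique up to $\diff_k(1)$-reparametrisation, and the condition $\xi \subset f^{-1}(f(p))$ translates (after subtracting $f(p)$) to $f \circ \gamma \equiv 0 \bmod t^{k+1}$. The test curve argument in the proof of Proposition \ref{prop:boardman} then tells us that the existence of such a $\gamma$ is equivalent to $j^kf_p$ lying in $\overline{\Sigma_{A_k}} = \overline{\Sigma^{1,\ldots,1}} \subset J_k(m,n)$. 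Since $f$ is Thom-Boardman, the $k$-jet extension $j^k f$ is transverse to every Boardman stratum; combined with Whitney condition A for the Boardman stratification this upgrades to
\[(j^kf)^{-1}\bigl(\overline{\Sigma_{A_k}}\bigr) \;=\; \overline{(j^kf)^{-1}(\Sigma_{A_k})} \;=\; \overline{\{q \in M : A_{f,q}\simeq A_k\}} \;=\; \Sigma_{A_k}(f),\]
so $p \in \Sigma_{A_k}(f)$. Since $\Sigma_{A_k}(f)$ is closed by definition, this yields $\overline{\pi_M(\Hilb_{A_k}(f))} \subset \Sigma_{A_k}(f)$.

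The main obstacle is making the correspondence between curvilinear length-$k$ subschemes of $M$ supported at $p$ and regular $k$-jets $\gamma$ modulo $\diff_k(1)$-reparametrisation both precise and compatible with the "lying in the fibre" condition; this requires matching the $\diff_k(1)$-orbit structure on $J_k^\reg(1,m)$ with the automorphism group $\mathrm{Aut}(\CC[t]/t^k)$ acting on embeddings $\mathrm{Spec}\,\CC[t]/t^k \hookrightarrow M$. A secondary technical point is the equality $(j^kf)^{-1}(\overline{\Sigma_{A_k}}) = \overline{(j^kf)^{-1}(\Sigma_{A_k})}$, which is a transversality-plus-Whitney-A argument worth isolating as a short lemma before beginning the main proof.
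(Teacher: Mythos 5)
Your proposal is correct in substance and runs on the same engine as the paper's proof --- the test-curve model identifying curvilinear length-$k$ subschemes in a fibre of $f$ with regular jets $\gamma$ annihilated by $f$ --- but it is organised differently and is in fact more complete. The paper argues only one inclusion explicitly: for a generic $p\in\Sigma_{A_k}(f)$ it takes a test curve $h$ with $j^kf\circ j^kh=0$, forms the associated curvilinear subscheme $\xi_h$, and verifies $\xi_h\in\Hilb^k(f)$ by observing that $j^k(f\circ h)=0$ makes the corresponding curve on the graph of $f$ horizontal in $M\times N$; the reverse inclusion is left implicit in the ``if and only if'' of the test-curve model. You obtain the inclusion $\Sigma_{A_k}(f)\subset\overline{\pi_M(\Hilb_{A_k}(f))}$ more directly, from the remark that at a point with $A_{f,p}\simeq A_k$ the local scheme-theoretic fibre is itself a curvilinear length-$k$ subscheme of $f^{-1}(f(p))$ --- this avoids both the test curve and the genericity of $p$ --- and you then supply an explicit argument for the opposite inclusion, which the paper omits. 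The dictionary you worry about (curvilinear $\xi$ with $\calo_\xi\simeq\CC[t]/t^k$ versus regular jets modulo $\diff_k(1)$, and the translation of $\xi\subset f^{-1}(f(p))$ into $f\circ\gamma\equiv f(p)$ modulo the appropriate power of $t$) is exactly the content of the test-curve model of Theorem \ref{bszmodel}, so that part is available off the shelf; note only an off-by-one, since with the paper's normalisation a length-$k$ curvilinear scheme corresponds to a jet of order $k-1$, so the condition is modulo $t^{k}$ rather than $t^{k+1}$.

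One step does need repair: the equality $(j^kf)^{-1}\bigl(\overline{\Sigma_{A_k}}\bigr)=\overline{(j^kf)^{-1}(\Sigma_{A_k})}$ cannot be justified by ``Whitney condition A for the Boardman stratification'', because, as the paper itself remarks, the Thom--Boardman classes do not satisfy the frontier condition and hence do not stratify the closure $\overline{\Sigma_{A_k}}$; transversality to the Boardman classes alone does not control how $j^kf$ meets the boundary of $\overline{\Sigma_{A_k}}$. Either require transversality of $j^kf$ to a Whitney-regular algebraic stratification of the closed set $\overline{\Sigma_{A_k}}$ (this is part of what ``sufficiently generic'' must mean, and then the pullback of the open dense Morin stratum is dense in the pullback of the closure), or argue as in the paper's Proposition \ref{prop:boardman}: use the explicit description of $\overline{\Sigma^{1,\ldots,1}}$ as the closure of the image of $\phi$ to deform the jet inside the solution space $\mathcal{S}_\gamma$ and produce nearby honest $A_k$-points of $f$. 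With that adjustment your argument is complete and, on the reverse inclusion, goes beyond what the paper writes down.
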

\begin{proof}
The test curve model for the map $f:M \to N$ says that for a generic point $p\in \Sigma_{A_k}(f)$ there is a test curve $h: (\CC,0) \to (M,p)$ whose $k$-jet is annihilated by $f$, that is, $j^kf \circ j^kh =0$.
The $k$-jet $j^kh$ in this equation is unique up to $\diff_k(\CC)$. The image of $j^kh$ in $M$ is a $k$-jet of a smooth curve $C_h$ at $p$, and it defines a curvilinear subscheme $\xi_h=(h_1,\ldots, h_n)\in \mathrm{Curv}^k_p(M)$ where $(h_1,\ldots, h_n)$ is the ideal of $h$ in some local coordinates. We claim that $\xi_h \in \Hilb^k(f)$, and this finishes the proof. To check this, note that $j^kf \circ j^kh =j^k(f \circ h)=0$ means that the first $k$ derivatives of the curve $f \circ h$ are zero. Equivalently, the first $k$ derivatives of the corresponding curve  
\[\mathrm{Graph}(C_h)=\{(j^kh(x),j^kf (j^kh(x)):x\in \CC\}  \subset M\times N\]
on $\mathrm{Graph}(f)$ are horizontal in $M\times N$.  
\end{proof}
The following lemma is a reformulation of the definition of the image of the $k$-fold locus.
\begin{lemma}
The $k$-point locus in $N$ is equal to  $n_k=f_*[\GHilb^k(f)]$.
\end{lemma}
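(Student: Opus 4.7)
The plan is to unwind the definitions and verify that the proper map $f : \GHilb^k(f) \to N_k$ is generically $k!$-to-one onto the image of the $k$-fold locus, which matches exactly the multiplicity factor in the definition $n_k = k! \cdot \bar n_k$ from the introduction. No serious obstacle is expected, since, as the paper notes, the lemma is a reformulation bookkeeping the combinatorial factor arising from the passage to ordered configurations.

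First I would describe the generic fibre on the target side. By hypothesis $f : M \to N$ is a sufficiently generic Thom--Boardman map, so by definition of the $k$-fold locus there is a dense open subset $N_k^\circ \subset N_k$ over which $f$ restricts to an \'etale $k$-sheeted covering $M_k^\circ := f^{-1}(N_k^\circ) \cap M_k \to N_k^\circ$: every $y \in N_k^\circ$ has exactly $k$ distinct preimages $\{p_1,\ldots,p_k\} \subset M$, each a regular point of $f$. Consequently the reduced scheme-theoretic preimage $f^{-1}(y)\cap M_k$ sits in the open stratum of reduced configurations of $\GHilb^{k}(M)$.

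Next I would identify the fibres of $f : \GHilb^k(f) \to N$. By the definition recalled in \S\ref{subsec:reformulation}, the dense open stratum of $\GHilb^k(f)$ parametrises ordered tuples $(x_1,\ldots,x_k) \in \Hilb^{[k]}(M)$ of pairwise distinct points with $f(x_1)=\cdots=f(x_k)$. Composing the inclusion with $f^{\times k} \circ \HC$, the image lies in the small diagonal $\Delta_N \subset N^k$, which we identify with $N$; this is the map $f$ in the lemma. Over $y \in N_k^\circ$ the preimage in $\GHilb^k(f)$ is precisely the set of orderings of the $k$-element unordered set $f^{-1}(y)\cap M_k$, which has cardinality $k!$.

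Finally, since $\GHilb^k(f)$ is irreducible (the closure of a single irreducible stratum) and $f$ is proper on it, the projection formula yields
\[
f_*[\GHilb^k(f)] \;=\; \deg\bigl(f : \GHilb^k(f) \to N_k\bigr)\cdot [N_k] \;=\; k!\cdot \bar n_k \;=\; n_k,
\]
which is the claim. The only point requiring a brief check is that the closure $\GHilb^k(f)$ contains no additional top-dimensional components dominating $N_k$; this is automatic, because $\GHilb^k(f)$ is by construction the closure of one irreducible locally closed stratum and the remaining components of $\Hilb^k(f)$ (supported on deeper diagonals) have strictly smaller image dimension.
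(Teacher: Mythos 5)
Your argument is correct and amounts to the same thing as the paper's treatment: the paper gives no separate proof, asserting that the lemma is just a reformulation of the definition of the image of the $k$-fold locus with the multiplicity convention $n_k = k!\cdot\bar{n}_k$ fixed in the introduction, and your generic-fibre count ($k!$ orderings over a generic point of $N_k$) plus the degree/push-forward formula is exactly that bookkeeping made explicit. Do note that the factor $k!$ presupposes the ordered convention $\GHilb^k(f)\subset\Hilb^{[k]}(M)$ from the overview in \S 2 (which you correctly invoke); with the unordered definition written in \S 10 the push-forward would only produce $\bar{n}_k$, so the identity as stated is to be read with the ordered Hilbert scheme.
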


\begin{proposition} Let $f:M \to N$ be a Thom-Boardman map and $p\in M$ a point of multiplicity $k$. If $\GHilb^k(f) \cap \GHilb^k_p(M)$ is nonempty, then 
\[\GHilb^k(f) \cap \Hilb^k_p(M)=\mathrm{Spec}(A_{f,p})\]
is the scheme given by the local algebra of $f$ at $p$, and this ideal sits in the curvilinear component $\CHilb^k_p(M)$.  
\end{proposition}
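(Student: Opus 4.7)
My plan is to split the proposition into two independent claims: (a) the set-theoretic identity $\GHilb^k(f) \cap \Hilb^k_p(M) = \{\Spec(A_{f,p})\}$, and (b) the curvilinear membership $\Spec(A_{f,p}) \in \CHilb^k_p(M)$. Since $\GHilb^k(f)\subset \GHilb^k(M)$ by construction, the intersection in (a) coincides with $\GHilb^k(f)\cap \GHilb^k_p(M)$, which is nonempty by hypothesis; nonemptiness is used only at the very end of (a) to confirm that the scheme $\Spec(A_{f,p})$ is actually attained.

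For (a), I would take an arbitrary $\xi$ in the intersection and show $\xi=\Spec(A_{f,p})$. Because $\GHilb^k(f)\subset \Hilb^k(f)$, the pullback square defining $\Hilb^k(f)$ provides a point $y\in N$ with $\mathrm{hf}^{[k]}(\xi)=\xi\times\{y\}$ in $\Hilb^k(M\times N)$. Unwinding $\mathrm{hf}^{[k]}(\xi)=\xi_{(I_\xi,I_{\Gamma(f)})}$, this equality is the ideal-theoretic statement that $f$ is constant on $\xi$ with value $y$, i.e.\ $\xi\hookrightarrow f^{-1}(y)$ as closed subschemes of $M$. Support at $p$ forces $y=f(p)$, so $\xi$ sits scheme-theoretically inside $f^{-1}(f(p))$ in a neighbourhood of $p$. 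The localisation of this fibre at $p$ is by definition $\Spec(A_{f,p})$, whose length equals $\dim_\CC A_{f,p}=k$ by the multiplicity hypothesis. Since $\xi$ itself has length $k$ and is contained in a length-$k$ scheme, the inclusion is forced to be an equality, and $\xi=\Spec(A_{f,p})$. Together with the nonemptiness hypothesis this yields the singleton identity.

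For (b), the hypotheses---$f$ Thom-Boardman and $A=A_{f,p}$ a local algebra of dimension $k$ realised at $p$---are exactly those of the corollary following Proposition \ref{prop:boardman}, which immediately gives $\Spec(A)\in \CHilb^k_p(M)$. The main technical point in the whole argument is the scheme-theoretic step in (a): one must verify that the pullback definition of $\Hilb^k(f)$ encodes the full ideal-theoretic inclusion $\xi\hookrightarrow f^{-1}(y)$ and not merely a set-theoretic one. This follows from the concrete description of $\mathrm{hf}^{[k]}$ as $\xi\mapsto \xi_{(I_\xi,I_{\Gamma(f)})}$: the equality of closed subschemes $\mathrm{hf}^{[k]}(\xi)=\xi\times\{y\}$ in $M\times N$ forces the graph ideal to cut out $\xi\times\{y\}$ inside $\xi\times N$, which is precisely the ideal-theoretic condition $f^*\mathfrak{m}_y\subset I_\xi$. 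Once this is in place, (a) reduces to a one-line length comparison and (b) is an immediate citation of the corollary.
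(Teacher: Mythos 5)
Your proposal is correct and matches the paper's route: the paper disposes of the first assertion as ``trivial'' (and your unwinding of the pullback definition of $\Hilb^k(f)$ into the ideal-theoretic inclusion $f^*\mathfrak{m}_{f(p)}\subset I_\xi$ plus the length-$k$ comparison is exactly the spelled-out version of that), and it derives the curvilinear membership from Proposition \ref{prop:boardman}, just as you do via its corollary.
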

\begin{proof} The first part is trivial. The second part immediately follows 
from Proposition \ref{prop:boardman}.
\end{proof}

\subsection{Local analysis of multipoint locus}

Recall the $k$-fold locus $M_k(f)=\overline{\pi_1(M_k^\times)}$ of the map $f:M \to N$ where 
\[M_k^\times=\{(p_1,\ldots, p_k):f(p_1)=\ldots =f(p_k), p_i \neq p_i\}\]
and $\pi_1$ is projection to the first factor. The punctual $k$-fold locus is
\[M_k^0(f)=\pi_1(\Delta \cap \overline{M_k^\times})\]
where $\Delta$ is the small diagonal of the Cartesian product $M^{\times k}$.

In this section we study local behaviour of the $k$-fold locus $M_k(f)$ near its punctual part. We start with looking at Porteous points. 

\begin{definition} The point $p\in M$ is called $k$-Porteous point of the holomorphic map $f:M \to N$ if the local algebra of $f$ at $p\in \Sigma^k$ is isomorphic to $A_{\Sigma^k}=\CC[x_1,\ldots, x_k]/(x_1,\ldots, x_k)^2$. The multiplicity of $f_p$ is $k+1$, and its Thom-Boardman type is $(k,0,0,\dots)$, that is, $p \in \Sigma^k(f)$.
\end{definition}

Let $f:(\CC^m,0) \to (\CC^n,0)$ be a map germ with local algebra $A_{\Sigma^k}$. Then $\dim(\ker(df))=k$, and according to Proposition \ref{prop:boardman} and Proposition \ref{prop:hilbf} the corresponding point $\mathrm{Spec}(A_f)\in \Hilb^{k+1}(\CC^m)$ sits in 
\[\mathrm{Spec}(A_f) \in \CHilb^{k+1}(\CC^m) \cap \GHilb^{k+1}(f).\]  
\begin{proposition}\label{prop:kporteous}
Let $f:(\CC^m,0) \to (\CC^n,0)$ be a stable map germ with local algebra $A_{\Sigma^k}$, and let $\xi_f=\mathrm{Spec}(A_{\Sigma^k}) \in \CHilb^{k+1}_0(\CC^m)$ be the corresponding point in the Hilbert scheme. Then $\GHilb^{k+1}(f) \subset \GHilb^{k+1}(\CC^m)$ is locally irreducible at $\xi_f$.
\end{proposition}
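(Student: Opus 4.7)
\emph{Proof plan.} The strategy is to exhibit $\GHilb^{k+1}(f)$ locally near $\xi_f$ as the image of a locally irreducible parameter space, from which local irreducibility follows. First I would invoke Theorem \ref{propstable} to reduce to a normal form: stability forces $f$ to be $\mathcal{A}$-equivalent to the miniversal unfolding of the Porteous genotype $g(x_1,\ldots,x_k)=(x_ix_j)_{1\le i\le j\le k}$. Working in such coordinates, after shrinking to a neighborhood $V$ of $0\in \CC^m$, one can identify a locally irreducible germ $(U,0)\subset(\CC^n,0)$ above which the scheme-theoretic preimage $f^{-1}(v)\cap V$ is a flat family of length-$(k+1)$ subschemes of $V$. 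In the equidimensional case $m=n$, miracle flatness makes $U$ a full neighborhood of $0\in \CC^n$; in general, $U$ is the locus of values $v\in \CC^n$ near $0$ above which the scheme-theoretic preimage has length exactly $k+1$, which by Mather's classification is a locally irreducible germ (the image of the source Porteous stratum $\Sigma_{A_{\Sigma^k}}(f)$ under $f$).

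By the universal property of the Hilbert scheme, this flat family yields a morphism
\[\phi\colon U\longrightarrow \GHilb^{k+1}(\CC^m),\qquad v\longmapsto f^{-1}(v)\cap V,\]
and I would show that $\phi$ identifies a neighborhood of $0\in U$ with a neighborhood of $\xi_f$ in $\GHilb^{k+1}(f)$. First, $\phi(U)\subset\GHilb^{k+1}(f)$: on the open dense subset of $U$ where $f^{-1}(v)\cap V$ is reduced, $\phi(v)$ consists of $k+1$ distinct points on a common level set, so the inclusion follows by closure. Second, $\phi$ is injective because distinct values of $v$ produce disjoint scheme-theoretic fibers. Third, $\phi$ is locally surjective onto $\GHilb^{k+1}(f)$: any $\xi\in\GHilb^{k+1}(f)$ close to $\xi_f$ has $\supp(\xi)\subset V$ and satisfies $f|_\xi\equiv v$ scheme-theoretically for some $v$ near $0$; the resulting closed embedding $\xi\hookrightarrow f^{-1}(v)\cap V$ between length-$(k+1)$ schemes must be an isomorphism, forcing $\xi=\phi(v)$ and in particular $v\in U$.

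Combining these three properties, the germ of $\GHilb^{k+1}(f)$ at $\xi_f$ is the image of the locally irreducible germ $(U,0)$ under the injective morphism $\phi$, hence is locally irreducible. The hardest step is the third: one must rule out the possibility that nearby elements of $\GHilb^{k+1}(f)$ arise from fiber degenerations over values $v\notin U$. This is controlled by the stability of $f$, via upper-semicontinuity of the local algebra: near $\xi_f$ every point of $\Sigma_{A_{\Sigma^k}}(f)$ has local algebra of dimension exactly $k+1$, so scheme-theoretic fibers close to $f^{-1}(0)\cap V$ retain total length $k+1$ only along the image of this stratum, which is $U$ by construction.
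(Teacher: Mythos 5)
Your scaffolding---identifying the germ of $\GHilb^{k+1}(f)$ at $\xi_f$ with the family of full scheme-theoretic fibers $f^{-1}(v)\cap V$ of length $k+1$ (via conservation of multiplicity, flatness of a constant-length finite family over a reduced base, and the universal property of the Hilbert scheme)---is reasonable and cleanly transports the question to the target. But the step carrying all the content is the one you assert rather than prove: local irreducibility of $U$ at $0$. Your parenthetical identification of $U$ with the image of the Porteous stratum $\Sigma_{A_{\Sigma^k}}(f)$ is false: already for the cross-cap ($k=1$, $f(x,y)=(x^2,xy,y)$) the $A_1$-stratum is the single point $0$ and its image is a point, while $U$ is the whole double-point curve $\{(t^2,0,0)\}$. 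In general $U$ is the $(k+1)$-fold locus of $f$ in the target (its generic points have $k+1$ distinct reduced preimages), and Mather's classification of stable germs says nothing directly about whether this locus is locally irreducible at the Porteous value. Since your $\phi$ is a bijection between the germ of $\GHilb^{k+1}(f)$ at $\xi_f$ and the relevant part of $U$, asserting irreducibility of $U$ is essentially restating the proposition. Moreover, without it you cannot justify the density of reduced fibers needed for $\phi(U)\subset\GHilb^{k+1}(f)$: a priori some local branch of $U$ through $0$ could consist entirely of non-reduced fibers and would have to be discarded, so what you really show is that the germ of $\GHilb^{k+1}(f)$ equals $\phi$ of the closure of the reduced-fiber locus in $U$---exactly the set whose irreducibility is at issue.

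This missing input is precisely what the paper's proof supplies by direct computation with the normal form: it writes down the stable unfolding $\tg_k$ of the genotype $(x_ix_j)_{1\le i\le j\le k}$, solves the level-set equations by diagonalizing the $(k-1)\times(k-1)$ matrix built from the unfolding parameters, so that the $(k+1)$-point fibers are explicitly parametrized (the $x_1$-coordinates are the roots of a single degree-$(k+1)$ polynomial, the remaining coordinates are given by rational formulas, and the maximal-multiplicity condition is cut out by explicit collinearity/vanishing conditions), and then treats the non-diagonalizable locus by degenerating Jordan normal forms, exhibiting every nearby point of $\GHilb^{k+1}(f)$ as a limit from one connected parameter chart. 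Some genuine argument of this kind for the multiple-point locus of the stable model---not an appeal to the classification of stable germs---is what your proof still needs; the rest of your reduction could then be kept as a cleaner framing than the paper's, but by itself it does not close the proof.
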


\begin{proof}
We follow the procedure described in the previous section to construct a stable Porteous germ in any codimension. The rank 0 genotype of an $k$-Porteous singularity is 
\[g_k: (x_1,\ldots, x_k) \mapsto (x_ix_j :1\le i\le j \le k)\]
whose codimension is $l=k(k+1)/2-k={k \choose 2}$. A stable unfolding of codimension $l$ is the map $\tg_k:(\CC^{k^2(k+1)/2},0) \to (\CC^{k^2(k+1)/2+{k \choose 2}},0)$ 
\[
\tg_k: (x_i, a_{ij}^s: 1\le i,j,s \le k) \mapsto \\
(x_ix_j+\sum_{s=1}^k a_{ij}^s x_s, a_{ij}^s: 1\le i,j,s \le k)
\]
where we add a linear form $L_{ij}=a_{ij}^1x_1+\ldots +a_{ij}^kx_k$ to each coordinate function, with the restriction that $k$ out of the $k^2(k+1)/2$ coefficients $a_{ij}^s$ are equal to zero. The choice of this $k$-tuple is not unique, resulting in different stable unfoldings, and a possible choice is...

We would like to describe the level set $\tg_k^{-1}(\Lambda_{ij}, a_{ij})$ for $|\Lambda_{ij}|,|a_{ij}|<\epsilon$. This is equivalent to solving the system 
\begin{equation}
x_ix_j+\sum_{s=1}^k a_{ij}^s x_s-\Lambda_{ij}=0 \tag{ij}
\end{equation}
The equations indexed by (12),(13),...,(1k) can be written in the following form
\begin{equation}\begin{pmatrix}x_1+a_{12}^2 & a_{12}^3 & a_{12}^4 & \cdots & a_{12}^k \\
a_{13}^2 & x_1+a_{13}^3 & a_{13}^4 & \cdots & a_{13}^k \\
a_{14}^2 & a_{14}^3 & x_1+a_{14}^4 & & a_{14}^k \\
\cdots & & & \ddots & \\
a_{1k}^2 & & & & x_1+a_{1k}^k
\end{pmatrix} 
\begin{pmatrix}
x_2 \\ x_3\\ \vdots \\ \\ x_k
\end{pmatrix}=\begin{pmatrix}
\Lambda_{12} \\ \Lambda_{13} \\ \vdots \\ \\ \Lambda_{1k}
\end{pmatrix} \tag{12)-(1k}
\end{equation}   
For generic choice of $x_1,a_{1j}^s$ the $(k-1) \times (k-1)$ matrix $A$ on the left hand side is diagonalizable, and in an eigenbasis given by $J$ the equation can written as $(J^{-1}AJ)J^{-1}\mathbf{x}=J^{-1}\Lambda$. The trace of $A$ is linear, whereas the determinant of $A$ is a degree $k-1$ polynomial in $x_1$, hence the same is true for the diagonal matrix $J^{-1}AJ$. Therefore the diagonal entries of $J^{-1}AJ$ are linear forms in $x_1$, and in the eigenbasis $J$ the equation (12)-(1k) has the form 
\[\mathrm{diag}(x_1+\a_2,x_1+\a_3,\ldots, x_1+\a_k)\tilde{\mathbf{x}}=\tilde{\Lambda}.\]
for $\tilde{\bx}=J^{-1}\bx, \tilde{\Lambda}=J^{-1}\Lambda$, and some $\a_2,\ldots, \a_k \in \CC$. 
Note that 
\[x_1+\a_i \neq 0 \text{ for } 2\le j \le k\]
must hold, otherwise the map $\tg_k$ written in the bases $J$ would have less than $k(k+1)/2$ nonzero quadratic coordinate functions, so its local algebra could not be isomorphic to $(x_1,\ldots, x_k)/(x_1,\ldots, x_k)^2$, because any presentation of this algebra has at least $k(k+1)/2$ quadratic relations. Then  
\begin{equation}\label{transformJ}
\tx_j=\frac{\tilde{\Lambda}_{1j}}{x_1+\a_j} \text{ for } 2\le j \le k
\end{equation}
and with $\tx_1=x_1$ equation (11) can be rewritten in the basis $J$ as
\begin{equation}\nonumber
\Lt_{11}=\tx_1^2+\ta_{ss}^1\tx_1+\ta_{ss}^2\tx_2+\ldots +\ta_{ss}^k\tx_k
\end{equation}
Substituting \eqref{transformJ} we obtain 
\begin{equation}
\Lt_{11}= \tx_1^2\prod_{i \neq 1}(\tx_1+\a_i)+ \ta_{11}^1\tx_1\prod_{i \neq 1}(\tx_1+\a_i)+\ta_{11}^2\Lt_{12}\prod_{i \neq 1,2}(\tx_1+\a_i)+\ldots + \ta_{ss}^k\Lt_{1k}\prod_{i \neq 1,k}(\tx_1+\a_i) \tag{11} 
\end{equation}
The first term in (11) has degree $k+1$ in $\tx_1$, hence the $x_1$ coordinates of the $k+1$ points in $\tg_k^{-1}(\Lambda_{ij}, a_{ij})$ are the $k+1$ solutions (with multiplicity) of (11) and the corresponding $x_2,\ldots, x_k$ coordinates are given by \eqref{transformJ}.
 \begin{remark}
For $s>1$ equation (ss) has slightly different form 
\begin{multline} \Lt_{ss}= \Lt_{1s}^2 \prod_{i \neq 1,s}(\tx_1+\a_i)+ \ta_{ss}^1\tx_1(\tx_1+\a_s)^2\prod_{i \neq 1,s}(\tx_1+\a_i)+\tag{ss} \\
+\ta_{ss}^2\Lt_{12}(\tx_1+\a_s)^2\prod_{i \neq 1,2,s}(\tx_1+\a_i)+\ldots + \ta_{ss}^k\Lt_{1k}(\tx_1+\a_s)^2\prod_{i \neq 1,k,s}(\tx_1+\a_i) \nonumber
\end{multline}
but these are generically also degree $k+1$ polynomials in $\tx_1$. On the other hand, the equations $(ij)$ for $1<i\neq j$ have degree less than $k+1$ in $x_1$, hence in order to get $k+1$ points in the level set $\tg_k^{-1}(\Lambda_{ij}, a_{ij})$ these equations must vanish. Hence the level set $\tg_k^{-1}(\Lambda_{ij}, a_{ij})$ has (maximal) multiplicity $k+1$ if and only if 
\begin{enumerate}[(i)]
\item The equations (11)-(kk) are collinear.
\item The equations (ij) for $1<i \neq j$ are identically $0$. 
\end{enumerate}
Equations given by (i) and (ii) cut out the $k+1$-fold locus $\CC^m_{k+1}$ from the source space $\CC^m$. 
\end{remark}
To prove local irreducibility we note that if the matrix $A$ in (12)-(1k) is 
not diagonalizable, we can still use its Jordan normal form and the 
corresponding level set is given by the limit of level sets corresponding to 
diagonalizable matrices.
\end{proof}
Proposition \ref{prop:kporteous} shows irreducibility of $\GHilb^{k+1}(f)$ at points $p$ where $f_p$ has multiplicity $k+1$. Next we study local irreducibility at higher multiplicity points. These points naturally sit in the closure of the locus where the multiplicity is $k+1$.

\begin{proposition}\label{prop:k+lporteous}
Let $f:(\CC^m,0) \to (\CC^n,0)$ be a stable map germ with local algebra $A_{\Sigma^{k+\ell}}$ for some $\ell>0$. Then $\GHilb^{k+1}(f)$ is locally irreducible at points of $\GHilb^{k+1}_0(f)$ 
\end{proposition}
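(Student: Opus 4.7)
The proof will follow the strategy of Proposition \ref{prop:kporteous}, suitably adapted to higher Porteous multiplicity. First I would invoke Mather's stability theorem (Theorem \ref{propstable}) to reduce to the standard form: up to left-right equivalence, $f$ is the miniversal unfolding $\tg_{k+\ell}$ of the Porteous genotype $g_{k+\ell}$, possibly composed with an identity on extra coordinates coming from further trivial unfolding. Since trivial unfolding preserves the Hilbert scheme structure and the stratification of the $k$-fold loci, it suffices to prove local irreducibility for $f=\tg_{k+\ell}$.

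Next I would apply the Jordan-form analysis from the proof of Proposition \ref{prop:kporteous}, but with $k$ replaced throughout by $k+\ell$. In an eigenbasis of the coefficient matrix $A$ appearing in equations $(12)$--$(1,k+\ell)$, these equations determine $\tx_j=\tilde\Lambda_{1j}/(\tx_1+\a_j)$ as rational functions of $\tx_1$ for $j=2,\dots,k+\ell$, while equation $(11)$ becomes a polynomial of degree $k+\ell+1$ in $\tx_1$. Hence near the origin the level sets of $\tg_{k+\ell}$ lie on a smooth one-parameter family of curves $C_{(\Lambda,a)}\subset M$, and each fiber consists of the $k+\ell+1$ roots of that polynomial. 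The induced embedding of the local $\GHilb^{k+1}(f)$ into the relative Hilbert scheme of $(k+1)$-point subschemes of this family presents $\GHilb^{k+1}(f)$ locally as a $\binom{k+\ell+1}{k+1}$-to-one branched cover of the image of $f$: over a generic base point one picks $k+1$ of the $k+\ell+1$ roots.

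The crucial step is to show that this branched cover is analytically irreducible at each punctual limit $\xi\in\GHilb^{k+1}_0(f)$. As $(\Lambda,a)$ approaches the Porteous stratum, the curves $C_{(\Lambda,a)}$ degenerate to a single tangent line through $\supp(\xi)$, and the $k+\ell+1$ roots of equation $(11)$ collide to the origin of this line. Locally the cover is therefore modeled on the cyclic branched cover $\tx_1\mapsto \tx_1^{k+\ell+1}$, whose monodromy is generated by a cyclic permutation of the $k+\ell+1$ roots; this cyclic action is transitive on the collection of $(k+1)$-subsets. Consequently all $\binom{k+\ell+1}{k+1}$ analytic sheets of $\GHilb^{k+1}(f)$ over a punctured neighborhood of $f(\supp(\xi))$ are glued through $\xi$ into a single connected analytic germ whose local ring is a domain, which is precisely local irreducibility.

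The main obstacle is the verification that the local monodromy is indeed cyclic--transitive on $(k+1)$-subsets in the full $\Sigma^{k+\ell}$ setting, once the $\a_j$-dependence of $\tx_j=\tilde\Lambda_{1j}/(\tx_1+\a_j)$ and the compatibility constraints imposed by the higher equations $(ii)$ and $(ij)$ with $1<i\neq j$ are taken into account. As in the proof of Proposition \ref{prop:kporteous}, the non-diagonalizable specializations of the matrix $A$ are then handled by continuity from the diagonalizable locus, completing the argument.
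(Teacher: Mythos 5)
Your reduction to the stable normal form via Mather's theorem is fine and is implicit in the paper as well, but the core of your argument has two genuine flaws. First, the local model of $\GHilb^{k+1}(f)$ as a $\binom{k+\ell+1}{k+1}$-to-one branched cover of the image is not correct: the fibers of $\tg_{k+\ell}$ contain the full $k+\ell+1$ preimage points near the origin only over the deepest stratum of the multiple-point stratification of the target (this is exactly the content of the collinearity/vanishing conditions (i),(ii) in the remark inside the proof of Proposition \ref{prop:kporteous}). A generic point of the germ of $\GHilb^{k+1}(f)$ at a punctual subscheme $\xi$ lies over a point of the $(k+1)$-fold locus of $N$ whose fiber has only $k+1$ nearby preimages, where there is a single choice of configuration, not $\binom{k+\ell+1}{k+1}$ sheets. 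So even a correct monodromy statement over the deepest stratum would not decide irreducibility of the whole germ. Second, the monodromy claim itself cannot hold as stated: a cyclic group of order $k+\ell+1$ (the monodromy of $\tx_1\mapsto\tx_1^{k+\ell+1}$) cannot act transitively on the $\binom{k+\ell+1}{k+1}$ subsets of size $k+1$ once $\ell\ge 2$, since $\binom{k+\ell+1}{k+1}>k+\ell+1$; you flag this as the step to be checked, but it is not merely unverified, it is false in the generality you need. Relatedly, the degeneration picture is off: the punctual limits of $k+1$ colliding points in fibers are not curvilinear schemes on a limiting tangent line, they are the $k$-Porteous fat points $\mathrm{Spec}\bigl(\CC[y_1,\dots,y_k]/\mathfrak m^2\bigr)$ spanning $k$-planes inside $\ker(df_0)$, so the punctual locus is positive-dimensional.

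The paper's proof exploits precisely this last fact: every limit $\xi\in\GHilb^{k+1}_0(f)$ is a $k$-Porteous subscheme determined by a $k$-dimensional subspace of the $(k+\ell)$-dimensional kernel, giving an identification $\GHilb^{k+1}_0(f)\cong\grass_k(\ker(df_0))$, and then $\GHilb^{k+1}(f)$ is shown to fiber locally over this Grassmannian with fibers handled by the case $\ell=0$, i.e.\ by Proposition \ref{prop:kporteous}; local irreducibility follows because the base is smooth and irreducible and the fibers are locally irreducible. If you want to salvage your approach, do not replace $k$ by $k+\ell$ throughout the coordinate analysis; instead fix the $k$-plane in $\ker(df_0)$ recording the directions along which the chosen $k+1$ points collide, show that this datum determines the punctual limit and varies in $\grass_k(\ker(df_0))$, and then apply the already proved $\ell=0$ analysis fiberwise rather than a monodromy argument on subsets of the maximal fiber.
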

\begin{proof} Since $\xi_f=\Spec(A_{\Sigma^{k+\ell}})$ is a $k+\ell$-Porteous point, its kernel $\ker(f_0)$ is a $k+\ell$-dimensional subspace in $\CC^m$. Let $\xi_i \in \GHilb^{k+1}(f)$ be a sequence of $k$-fold points on $\CC^m$ with limit $\xi =\lim_{i\to \infty} \xi_i \in \GHilb^{k+1}_0(f)$ Then $\xi$ is a $k$-Porteous point with a $k$-dimensional kernel $\ker(\xi) \subset \ker(f_0)$, and any this kernel uniquely determines the point $\xi$.  Hence 
\[\GHilb^{k+1}_0(f)=\grass_k(\ker(f_0))\]
and by Proposition \ref{prop:kporteous} the $k+1$-fold locus $\GHilb^{k+1}(f)$ fibers over this Grassmannian with locally irreducible fibers. 
\end{proof}
In fact, by taking limits of the subscheme determined by the points. labeled by $1,\ldots, i$ for $1\le i\le k$, we get a fibration 
\[\xymatrix{\GHilb^{k+1}_\nabla(f)\ar@{^{(}->}[r] \ar[d]& \widehat{\CHilb}^{k+1}_\nabla(\CC^m) \ar[d] \\
 \flag_k(\ker(f_0))  \ar@{^{(}->}[r] \ar[d]& \flag_k(\CC^m) \ar[d] \\
\GHilb^{k+1}_0(f) \ar@{^{(}->}[r] & \grass_k(\CC^m) 
}\]
where $\GHilb^{k+1}_\nabla(f)$ is a local neighborhood of $\GHilb^{k+1}_0(f)$ in $\GHilb^{k+1}(f)$.
\begin{proposition}\label{prop:hilbfloc}
Let $f:M \to N$ be a stable Thom-Boardman map. Then $\GHilb^{k+1}_0(f) \subset \CHilb^{k+1}(M)$, and $\GHilb^{k+1}(f)$ is locally irreducible at points of the punctual part $\GHilb^{k+1}_0(f)$.
\end{proposition}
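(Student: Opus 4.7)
The first inclusion $\GHilb^{k+1}_0(f) \subset \CHilb^{k+1}(M)$ is immediate from the Corollary concluding Section \ref{sec:background}: for a Thom-Boardman map, any length $(k+1)$ punctual subscheme arising as a limit of $(k+1)$ distinct points on a common level set of $f$ is modeled on $\mathrm{Spec}(A)$ for some quotient $A$ of $A_{f,p}$ of dimension $k+1$, and by Proposition \ref{prop:boardman} together with the test-curve model of Theorem \ref{bszmodel}, such a scheme must sit in the curvilinear component.

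My plan for local irreducibility is to reduce the general situation to Propositions \ref{prop:kporteous} and \ref{prop:k+lporteous}. Let $\xi \in \GHilb^{k+1}_0(f)$ be supported at $p$, write $A = A_{f,p}$, and set $d = \dim A \geq k+1$. By the stability hypothesis and Mather's Theorem \ref{propstable}, $f$ agrees near $p$, after a left-right change of coordinates, with the stable miniversal unfolding $\tilde{g}_A$ of a genotype of $A$. This identifies a neighborhood of $\xi$ in $\GHilb^{k+1}(f)$ with the scheme of length $(k+1)$ subschemes of level sets of $\tilde{g}_A$ near the origin, reducing local irreducibility to an explicit statement about level sets of a polynomial map.

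The structural input I would extract, generalizing equations (12)-(1k) and (11) of the Porteous proof, is that after linearizing the lowest-order part of $\tilde{g}_A$ (using generic diagonalizability of the associated linear pencil), the level-set system admits a triangular resolution: one distinguished coordinate $\tilde{x}_1$ satisfies a single polynomial equation of degree $d$, while the remaining coordinates are given by rational expressions of the form $\tilde{x}_j = \tilde{\Lambda}_{1j}/(\tilde{x}_1+\alpha_j)$. The $(k+1)$-fold punctual limits near $\xi$ then correspond to $(k+1)$-tuples of roots of the degree $d$ polynomial (counted with multiplicity) together with a $k$-dimensional subspace of $\ker(df_p)$; exactly as in Proposition \ref{prop:k+lporteous}, this presents $\GHilb^{k+1}(f)$ near $\xi$ as a fibration over $\grass_k(\ker(df_p))$ with locally irreducible fibers, yielding the claim.

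The main obstacle will be verifying the triangular resolution for arbitrary nilpotent $A$, since the proof of Proposition \ref{prop:kporteous} exploited the fact that all relations of the Porteous algebra are purely quadratic, which gave the clean block structure leading to the matrix equation (12)-(1k). For general $A$ my strategy is to induct on the $\mathfrak{m}$-adic filtration of $A$: the quadratic part reduces to the Porteous analysis of Proposition \ref{prop:kporteous}, while each subsequent layer of higher-order relations contributes only corrections that can be resolved recursively using the distinguished curvilinear direction provided by Step 1 and the test-curve model of Theorem \ref{bszmodel}. Once this factorization is in place, local irreducibility follows uniformly across all stable Thom-Boardman germs.
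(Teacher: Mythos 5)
Your first step (the inclusion $\GHilb^{k+1}_0(f)\subset\CHilb^{k+1}(M)$) is fine and coincides with the paper's argument via Proposition \ref{prop:boardman} and the corollary closing \S\ref{sec:background}. The irreducibility part, however, has a genuine gap, and it sits exactly where you locate your ``main obstacle''. The paper never attempts a normal-form analysis for an arbitrary nilpotent algebra $A$: it establishes local irreducibility only at Porteous points (Propositions \ref{prop:kporteous} and \ref{prop:k+lporteous}) and then propagates it to the whole punctual locus by a degeneration argument --- by Proposition \ref{prop:boardman} every singularity stratum $\Sigma_A(f)$ of a Thom--Boardman map has Porteous points in its closure, and local irreducibility is then transported from these more degenerate points to the stratum (the paper phrases this as local irreducibility being a closed condition). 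Your route instead requires the ``triangular resolution'' of the level-set equations for every stable germ, and this is precisely the hard content that is neither proved nor provable along the lines you sketch.

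Concretely, two steps fail. First, the proposed induction has the wrong base: the quadratic part of a minimal presentation of a general $A$ is not the Porteous ideal. For the Morin algebras $\CC[t]/t^{k+1}$ --- which govern the \emph{generic} points of $\GHilb^{k+1}_0(f)$ --- the genotype $x\mapsto x^{k+1}$ has no quadratic relations at all, so there is nothing to which the analysis of Proposition \ref{prop:kporteous} applies, and the claim that higher-order relations are ``corrections that can be resolved recursively'' is an assertion rather than an argument: the elimination producing the single degree-$(k+1)$ equation in the Porteous proof used the specific matrix form (12)--(1k) of purely quadratic relations. Second, the local structure you assert is false outside the Porteous case: you claim a fibration of $\GHilb^{k+1}(f)$ near $\xi$ over $\grass_k(\ker(df_p))$, but at a Morin point $\ker(df_p)$ is one-dimensional, so $\grass_k(\ker(df_p))$ is empty for $k\ge 2$ while $\GHilb^{k+1}_0(f)$ is not; the identification of the punctual locus with $\grass_k(\ker(f_0))$ in Proposition \ref{prop:k+lporteous} is special to Porteous algebras, where the kernel is large and all relations are quadratic. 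To repair the proposal you would either have to carry out the level-set analysis for all local algebras occurring for stable Thom--Boardman maps, or replace it, as the paper does, by the specialization-through-Porteous-points argument, for which the explicit computations of Propositions \ref{prop:kporteous} and \ref{prop:k+lporteous} suffice.
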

 \begin{proof}
 Local irreducibility follows from Proposition \ref{prop:k+lporteous} and Proposition \ref{prop:boardman}. Indeed, for a Thom-Boardman map every singularity locus $\Sigma_A(f) \subset M$ contains Porteous-points in its closure, and local irreducibility is a closed condition. 
 \end{proof}

\subsection{Proof of the multipoint residue formula} We will work with the following diagram
\[\xymatrix{& V \ar[d] & & \\
\Hilb^k(f) \ar@{^{(}->}[r]  & \GHilb^k(M) \ar[r]^-{f^{[k]}} & N^k & \nabla_N \ar@{^{(}->}[l] \ar[d]^{\pi_{\Delta N}}\\
& & & \Delta_N \ar@{^{(}->}[lu]^i}\]
where $V$ is a to-be-defined vector bundle such that $\Hilb^k(f)$ is the zero set some section of it. Such vector bundle does not exist on the whole $\Hilb^k(M)$, but we can get around this problem as follows. Let $\nabla_N \subset N^k$ be (a tubular neighborhood of) the normal bundle of the small diagonal $\Delta_N$. Note that 
\[\Hilb^k(f) \subset (f^{[k]})^{-1}(\nabla_N) \subset \Hilb^k(M)\]
and $\Hilb^k(f)$ is compact in $(f^{[k]})^{-1}(\nabla_N)$. Hence it is enough to define $V$ over $U=(f^{[k]})^{-1}(\nabla_N)$, and take any topological extension. Our calculations will take place in $\nabla_{N}$ and in compactly supported cohomology in the open set $U$. 
\begin{proposition}\label{prop:sectionk}
Let  $T_v\to\nabla_N$ be the tautological pull-back bundle on $\nabla_{N}$  and $T_h=\pi_{\Delta N}^*T(\Delta N)\to\nabla_N$ be the pull-back of the diagonal tangent from the base. Then there is a bundle embedding  $f^{[k]*}T_h\hookrightarrow f^*TN^{[k]}$  consistent with an embedding 
\[f^{[\a]*}T_h \hookrightarrow f^{[\a_1]*}TN_1\oplus \ldots \oplus f^{[\a_s]*}TN_s\]
for all partitions $\a=(\a_1,\ldots, \a_s) \in \Pi(k)$, and the quotient bundle
\[f^*TN^{[k]}/f^{[k]*}T_h\]
has a transversal section presenting $\GHilb^k(f)$ as a complete intersection in $U$.
\end{proposition}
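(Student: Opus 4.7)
The plan is to extend the constructions in Propositions \ref{prop:sectionk=2} and \ref{prop:sectionk=3} to arbitrary $k$. For the embedding $f^{[k]*}T_h \hookrightarrow f^*TN^{[k]}$, I observe that for $\xi \in U = (f^{[k]})^{-1}(\nabla_N)$ the image $f^{[k]}(\xi)$ lies close to the diagonal $\Delta_N$, so $p = \pi_{\Delta N}(f^{[k]}(\xi)) \in N$ is well defined. Choosing a local trivialization of $TN$ in a neighborhood of $p$, each vector $v \in T_pN = T_h|_{f^{[k]}(\xi)}$ extends to a constant section of $f^*TN|_\xi$, giving a fiberwise inclusion
\[T_pN \hookrightarrow H^0(\xi, f^*TN|_\xi) = f^*TN^{[k]}|_\xi.\]
This assignment globalizes to a bundle embedding over $U$. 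For a partition $\alpha = (\alpha_1, \ldots, \alpha_s) \in \Pi(k)$, the same diagonal recipe applied separately to each factor $\Hilb^{[\alpha_i]}(M)$ produces the compatible inclusion into $\bigoplus_{i=1}^s (f^*TN)^{[\alpha_i]}$; since each partial inclusion arises from the same ``constant sections at $p$'' prescription, the embeddings agree after pulling back to the master blow-up $N^k(M)$, giving consistency across partitions.

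Next I would construct the section $\sigma$ of the quotient $f^*TN^{[k]}/f^{[k]*}T_h$ whose zero set is $\GHilb^k(f)$. The rank of the quotient is $(k-1)n$, matching the rank of $T_v$. The section is defined by pulling back the tautological section of $T_v \to \nabla_N$ (whose value at any point in $\nabla_N$ records the deviation from the diagonal) under the canonical identification
\[T_v|_{f^{[k]}(\xi)} \cong \bigl(f^*TN^{[k]}/f^{[k]*}T_h\bigr)\big|_\xi\]
provided by the local trivialization used above. On a reduced configuration $\xi = x_1 \sqcup \ldots \sqcup x_k$ the section reads $\sigma(\xi) = (f(x_1) - p, \ldots, f(x_k) - p)$ modulo the diagonal translation, and so vanishes exactly when $f(x_1) = \ldots = f(x_k)$; by continuity this extends to the identification $\sigma^{-1}(0) = \GHilb^k(f) \cap U$.

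The main obstacle is verifying that $\sigma$ is a transversal section, so that $\GHilb^k(f)$ is a local complete intersection of the expected codimension $(k-1)n$ in $U$. Away from the punctual locus $\GHilb^k_0(f)$, transversality reduces to the standard statement that for generic $f$ the $k$-fold Cartesian product $f^{\times k}$ is transversal to $\Delta_N$ on $M^{\times k}$ off the big diagonals. At punctual points the situation is subtler, and this is where the Thom-Boardman assumption enters: by Proposition \ref{prop:hilbfloc} the punctual part $\GHilb^k_0(f)$ is contained in the curvilinear component $\CHilb^k(M)$ and $\GHilb^k(f)$ is locally irreducible there, while Propositions \ref{prop:kporteous} and \ref{prop:k+lporteous} provide explicit local coordinates on stable unfoldings of $\Sigma^k$ and $\Sigma^{k+\ell}$ singularities that realize the fibers of $f$ as fibrations over $\flag_k(\ker df_0)$. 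These two ingredients together force the zero scheme of $\sigma$ at a punctual point to have the expected dimension and be locally cut out by $(k-1)n$ equations, yielding transversality. The delicate step is to carry out this verification uniformly across all Thom-Boardman strata, rather than stratum by stratum.
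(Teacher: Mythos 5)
Your embedding of $T_h$ via constant sections is fine and matches the paper in spirit, but the construction of the section $\sigma$ has a genuine gap, and in fact as stated it produces the wrong zero locus. You define $\sigma$ by pulling back the tautological section of $T_v$ under a claimed canonical identification $T_v|_{f^{[k]}(\xi)}\cong \bigl(f^*TN^{[k]}/f^{[k]*}T_h\bigr)|_\xi$. No such canonical identification exists at non-reduced subschemes: the fiber of the quotient tautological bundle at a punctual $\xi$ with ideal $I$ is $T_pN\otimes \mathfrak{m}_p/I$, which records derivative (jet) data of sections, while $T_v$ is pulled back from $\nabla_N$ and sees only the displacement of the support points. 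If $\sigma$ were the pullback of the tautological section of $T_v$ under any bundle isomorphism, its zero set would be $(f^{[k]})^{-1}(\Delta_N)\cap U$, which contains every punctual subscheme in $U$ (all of $\GHilb^k_0(M)\cap U$), regardless of whether the jet of $f$ annihilates it --- e.g.\ curvilinear schemes at points where $f$ is an immersion. This strictly contains $\GHilb^k(f)\cap U$, so ``by continuity this extends to $\sigma^{-1}(0)=\GHilb^k(f)\cap U$'' cannot be repaired: the difference-of-values formula $(f(x_1)-p,\ldots,f(x_k)-p)$ simply does not determine the correct extension over the punctual locus, and the entire content of the proposition lives exactly there.

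The paper's proof supplies precisely the missing jet-theoretic input. Using the identification of the tautological bundle over the curvilinear locus with the restriction of the tautological bundle $E$ on $\grass_k(\cald_M^{k})$ (Theorem \ref{bszmodel}), the $k$-jet of $f$ gives an element $j^kf\in H^0(E\otimes f^*TN)$; equivalently (the ``straightening'' version), for a covector $\omega\in T^*_{f(p)}N$ one forms $j^k\omega\circ j^kf$ and reduces it modulo $I$, obtaining a linear map $T^*_pN\to\mathfrak{m}_p/I$, i.e.\ a point of $f^*TN^{[k]}_I/ T_pN$. This section is first defined over the punctual locus and then extended to a tubular neighborhood, and its vanishing at $\xi$ is, by construction, the condition $j^k\omega\circ j^kf\in I$ for all $\omega$, which is exactly membership in $\Hilb^k(f)$. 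Your transversality discussion (generic transversality of $f^{\times k}$ to $\Delta_N$ off the diagonals, plus the local models of Propositions \ref{prop:kporteous}, \ref{prop:k+lporteous} and \ref{prop:hilbfloc} at punctual points) is aligned with how the paper uses the Thom--Boardman hypothesis, but it presupposes that the section has the correct zeros at non-reduced subschemes, which your construction does not deliver. To close the gap you must replace the value-difference section by one built from $j^kf$ (or otherwise describe $f^*TN^{[k]}/f^{[k]*}T_h$ at punctual points via $\mathfrak{m}_p/I$) before any transversality argument can start.
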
 

\begin{proof} 
Eliminating the terms of degree $k+1$ results in an algebra homomorphism
$J_k(u,1) \twoheadrightarrow J_{k-1}(u,1)$, and the chain
$J_k(u,1) \twoheadrightarrow J_{k-1}(u,1)
\twoheadrightarrow \ldots \twoheadrightarrow J_1(u,1)$ induces
an increasing filtration on $J_k(u,1)^*$:
\begin{equation}\label{filtration} 
J_1(u,1)^* \subset J_2(u,1)^* \subset \ldots \subset J_k(u,1)^*.
\end{equation}
Choosing coordinates on $\CC^u$ and $\CC^v$ a $k$-jet $f \in J_k(u,v)$ can be identified with the set of derivatives at the
origin, that is the vector $(f'(0),f''(0),\ldots, f^{(k)}(0))$, where
$f^{(j)}(0)\in \mathrm{Hom}(\mathrm{Sym}^j\CC^u,\CC^v)$. Here $\sym^l$ stands for the symmetric tensor product. This way we
get the isomorphism 
\begin{equation}\label{identification}
J_k(u,v) \simeq J_k(u,1) \otimes \CC^v \simeq \oplus_{j=1}^k\mathrm{Hom}(\mathrm{Sym}^j\CC^u,\CC^v).
\end{equation}

Similarly, the linear dual $J_i(u,1)^*$ may be interpreted as a set of
differential operators on $\CC^u$ of degree at most
$i$, and in particular, by taking symbols, we have the following 
isomorphism of filtered $\GL(n)$-modules.
\begin{equation}\label{eq:dual}
  J_k(u,1)^* \cong \Sym^{\le k}\CC^u \overset{\mathrm{def}}=
\oplus_{l=1}^k \sym^l\CC^u,
\end{equation}
Given a regular $k$-jet $f: (\CC,0) \to (\CC^n,0)$ in $J_k^{\reg}(1,n)$ we may push forward the differential operators of order $k$ on $\CC$ (with constant coefficients) to $\CC^n$ along $f$ which gives us a map 
\begin{equation}\label{embedgrass2}
\tilde{f}:J_k(1,1)^* \to \grass(k,J_k(m,1)^*)
\end{equation}
We can set the  fibered version of this argument as follows. Let $J_k^{\reg}(\CC,M \to M$ denote the bundle of $k$-jets of germs of parametrized regular curves in $M$, that is, where the first derivative $f'\neq 0$ is nonzero. Then  
\[J_k^{\reg}(\CC,M)=\diff_M \times_{\diff_k(m)} J_k^\reg(1,m).\]
where $\diff_M$ denotes the principal $\diff_k(m)$-bundle over $M$ formed by all local polynomial coordinate systems on $M$.
The dual bundle is $\cald_M^{\le k}$, the bundle of at most $k$th order differential operators over $M$. Then $\cald_M^{\le 0}=\calo_M$, and we let $\cald_M^k=\cald_M^{\le k}/\cald_M^{\le 0}$. We have a filtration of bundles 
\begin{equation}\label{dfiltration2}
\calo_M=\cald_M^{\le 0} \subset \cald_M^{\le 1} \subset \ldots \subset \cald_M^{\le k}
\end{equation}
where the graded component $\cald_M^{\le i}/\cald_M^{\le i-1}\simeq \Sym^iT_M$ but this filtration is not split in general, so $\cald_M^{\le k} \nsimeq \Sym^{\le k}T_M$.
Then, again, $\cald_M^k$ is the associated bundle 
\[\cald_M^k=\diff_M \times_{\diff_k(m)} J_k(m,1)^*=\diff_M \times_{\diff_k(m)} \symdot.\]
Given a regular $k$-jet $f:(\CC,0) \to (M,p)$ we may push forward the differential operators of order k on $\CC$ to $M$ along $f$ and obtain a $k$-dimensional subspace of $\cald^{\le k}_{M,p}$. This gives the bundle map 
\begin{equation}\label{diffoppushforward2}
J_k^\reg(\CC,M) \to \grass_k(\cald_M^{k})
\end{equation} 
which is the fibred version of the map \eqref{embedgrass1}. Note that $\diff_k(1)=\jetreg 11$ acts fibrewise on the jet bundle $J_k^{\reg}(\CC,M)$ via the composition map \eqref{comp} and the map \eqref{diffoppushforward2} is $\diff_k(1)$-invariant resulting an embedding 
\begin{equation}\label{invdiffoppush2}
\tau:\mathrm{Curv}^k_0(M)=J_k^{\reg}(\CC,M)/\diff_k(1) \hookrightarrow \grass_k(\cald_M^{k})
\end{equation}
A $k$-jet of a map modulo reparametrisation gives a curvilinear subscheme, hence the equation above. Finally, the tautological bundle $E$ over $\grass_k(\cald_M^{k})$ restricts to the tautological bundle $\calo_M^{[k]}$ over the curvilinear locus:
\[\tau^*E=\calo_M^{[k]}\]
A section $s$ of $E$ is given by a dual element of $\cald_M^{k}$, that is, an element of 
\[J_k^{\reg}(M,\CC)=\diff_M \times_{\diff_k(m)} J_k^\reg(n,1).\] 
The $k$-jet of $f:M \to N$ defines an element of 
\[j^kf \in J_k^{\reg}(M,\CC) \otimes f^*TN=H^0(E \otimes f^*TN)\]
The restriction of this section to $\mathrm{Curv}^k_0(M) \subset \grass_k(\cald_M^{k})$ gives the desired section. 
\end{proof}

\begin{remark}
An alternative proof goes as follows. By taking the linear part of a jet of a function, we obtain a bundle $J_k(N,\C) \to T^*N$. We will refer to a section of this bundle as a {\it $k$-straightening of the manifold $N$.} 

As a first step, we construct the desired section over the punctual part $\GHilb^k_0(M)=\cup_{p\in M} \GHilb^k_p(M)$. Given a map $f: M \to N$, and a punctual ideal $I \subset \mathfrak{m}_p$ we can define a linear map 
\[T_p^*N \to \mathfrak{m}_p/I\]
which, in turn,  is a point in 
\[T_pN \otimes \mathfrak{m}_p/I=f^*TN^{[k]}_I\] 
\begin{enumerate}
\item We start with an element $\omega \in T^*_{f(p)}N$. Using a local $k$-straightening we associate a $k$-jet $j^k\omega \in J_k(T_{f(p)}N,\CC)$ of a function at $f(p)$ on $N$.
\item Then we the pull-back $f_\omega=j^k\omega \circ f \in J_k(T_pM,\CC)$ gives a $k$-jet of a function on $M$ at $p$, vanishing at $p$.
\item Finally we reduce this $k$-jet modulo $I$, i.e we take the image of $f_\omega$ under the quotient map 
\[J_k(T_pM,\CC)\simeq \mathfrak{m}_p/\mathfrak{m}_p^k \to \mathfrak{m}_p/(I,\mathfrak{m}_p^k)=\mathfrak{m}_p/I,\ \ f_\omega \mapsto \hat{f}_\omega.\]
\end{enumerate}
The desired map is $\omega \to \hat{f}_\omega$.
The second step is that we extend this section analytically to a small tubular neighborhood of the punctual locus $\Hilb^k_0(M)$.
Finally, we need to check that $\Hilb^k(f)$ is indeed the zero set of the constructed section. By definition, $\hat{f}_\omega=0$ is equivalent to $f_\omega=j^k\omega \circ j^kf \in I$,  and this says that $I \in \Hilb^k(f)$.

\end{remark} 

Proposition \ref{prop:hilbfloc} and Proposition \ref{prop:sectionk} together prove that $\Euler(f^*TN^{[k]}/f^{[k]*}T_h)$ is properly supported, which we postulate in the following  
\begin{corollary}
Let $f:M \to N$ be a stable Thom Boardman map, whose $k$-jet induces the section $s_f$ of $f^*TN^{[k]}/f^{[k]*}T_h$ presenting $\GHilb^k(f)$ as a local complete intersection in $U$. Then 
\[[\GHilb^k(f)]=\mathrm{Euler}(f^*TN^{[k]}/f^{[k]*}T_h)\]
is properly supported.
\end{corollary}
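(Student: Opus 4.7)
The proposed proof is essentially a synthesis of the two preceding propositions, so I would organize it as follows.

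First, I would invoke Proposition \ref{prop:sectionk} to pin down both the section and its zero locus. The proposition provides a transversal section $s_f$ of the quotient bundle $f^*TN^{[k]}/f^{[k]*}T_h$ on the open neighborhood $U=(f^{[k]})^{-1}(\nabla_N)$ of $\GHilb^k(f)$ in $\GHilb^k(M)$, and it asserts that $\GHilb^k(f)=s_f^{-1}(0)$ as a local complete intersection in $U$. Since $\GHilb^k(f)$ is compactly supported inside $U$, the standard Thom/Poincaré identification gives
\[[\GHilb^k(f)]=\mathrm{Euler}\bigl(f^*TN^{[k]}/f^{[k]*}T_h\bigr)\in H^*(\GHilb^k(M)).\]
In particular, the support of this Euler class coincides with $\GHilb^k(f)$.

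Next, I would verify the two conditions of Definition \ref{geocond} for the support $\GHilb^k(f)$. The intersection with the punctual part satisfies
\[\GHilb^k(f)\cap \GHilb^k_0(M)=\GHilb^k_0(f)\subset \CHilb^k(M)\]
by Proposition \ref{prop:hilbfloc}, which is the first (curvilinear containment) condition. Local irreducibility along $\GHilb^k_0(f)$ is exactly the second conclusion of Proposition \ref{prop:hilbfloc}, established via the Porteous local normal form analysis of Propositions \ref{prop:kporteous} and \ref{prop:k+lporteous} together with the Thom–Boardman closure statement of Proposition \ref{prop:boardman}.

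The only point requiring a short additional comment is local irreducibility of $\GHilb^k(f)$ at points outside the punctual locus. Away from $\GHilb^k_0(f)$, a scheme in $\GHilb^k(f)$ decomposes (étale-locally) as a disjoint union $\xi=\xi_1\sqcup\cdots\sqcup\xi_s$ of shorter subschemes, each of which lies in some $\GHilb^{k_i}(f)$ with $k_i<k$; since Proposition \ref{prop:hilbfloc} applies inductively to each factor, local irreducibility propagates from the deepest stratum $\GHilb^k_0(f)$ to all of $\GHilb^k(f)$ via the product structure. Hence both defining properties of ``properly supported'' hold, completing the proof.

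The only potentially delicate point—and the one I would expect to be the main obstacle in a fully detailed write-up—is the passage between local and global statements: ensuring that the local normal-form picture at each Porteous/higher-corank point actually glues to give genuine local irreducibility of $\GHilb^k(f)$ as an analytic subvariety of $\GHilb^k(M)$, rather than merely of the source manifold $M$. This is handled by observing that the section $s_f$ is transverse on $U$, so each local branch in $M$ lifts uniquely to a local branch in $\GHilb^k(M)$ under the complete-intersection presentation, and no spurious new components are introduced.
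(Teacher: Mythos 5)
Your proposal is correct and follows essentially the same route as the paper, which deduces the corollary directly by combining Proposition \ref{prop:sectionk} (the transversal section presenting $\GHilb^k(f)$ as a local complete intersection in $U$, hence $[\GHilb^k(f)]$ as the Euler class) with Proposition \ref{prop:hilbfloc} (curvilinear containment and local irreducibility along the punctual locus). Your extra discussion of points away from the punctual part and of the local-to-global gluing is harmless elaboration beyond what the paper's definition of properly supported actually requires.
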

		
We are ready to deduce the multipoint formula. Recall that we defined the $k$-point locus in $N$ as 
$f_*[\Hilb^k(f)]$ and our goal is to evaluate this cohomology class in $N$.
As $f(\Hilb^k(f))$ is a subset of the diagonal $\Delta N\subset N^{\times k}$, our calculations will take place in a neighborhood of $\Delta N$, which we will identify with the normal bundle $\d: \nabla_{N} \to \Delta_N$ of $\Delta N$ in $N^{\times k}$, and in compactly supported cohomology in the open set $U=\left(f^{[k]}\right)^{-1}\nabla_{ N}$. Since $\Euler(T_v)=c_{n(k-1)}(T_v)$ is exactly the Thom class of the bundle $\nabla_N$, the Thom isomorphism tells us   
\[  \hcpt(\nabla_N)\simeq H^*(\Delta_N)\cdot c_{n(k-1)}( T_v).
 \] 
We also, clearly have $T\nabla_N\simeq TN_1\oplus \ldots \oplus TN_k \simeq T_h\oplus T_v$. We apply Corollary \ref{cor:pushforward} with $V=f^*TN$ and $W=T_h$.
{\small \begin{multline}\label{finalformulak}
	[f(\Hilb^{k}(f))]=
	\pi_{\Delta N*}f^{[k]}_*\left[\Euler\left(\frac{f^*TN^{[k]}}{f^{[k]*}T_h}\right)\right]_{U\subset\GHilb^k(M)}=\\ 
 \sum_{(\a_1,\ldots, \a_s) \in \Pi(k)} \frac1{c_{n(k-1)}(T_v)} (\iota_{\Delta_\a N})_* f^{\a}_* \left [\res_{\bz^{\a_1}, \ldots ,\bz^{\a_s}=\infty} c_{n(k-1)} \left(\frac{f^*TN(\bz^{\a_1}) \oplus \ldots \oplus f^*TN(\bz^{\a_s})}{f^{[\a]*}T_h})\right) \mathcal{R}^\a(\theta_i, \bz) d\bz^{\a_s}\ldots d\bz^{\a_1} \right ]_{\Delta_\a M}
\end{multline}}
Recall that here 
\begin{itemize}
\item $\Delta_\a N=\Delta_{\a_1}N \times \ldots \times \Delta_{\a_s}N$, $\Delta_\a M=\Delta_{\a_1}M \times \ldots \times \Delta_{\a_s}M$, $f^\a=f^{\times s} \circ \HC: \Delta_\a M \to \Delta_\a N$.
\item $\iota_{\Delta_\a N}: \Delta_\a N \to N^{\times k}$ is the embedding.
\item Push forward $\pi_{\Delta N*}$ along the normal bundle of the small diagonal in $N$ is division by the Euler class by the Thom isomorphism.
\item For $1\le t \le s$ we set $f^*TN(\bz^{\a_t})=f^*TN \oplus f^*TN(z^{\a_t}_1) \oplus \ldots \oplus f^*TN(z^{\a_t}_{|\a_t|-1})$.
\end{itemize}
The key final observation is that the push-forward map $f^\a_*$ is a $H_T^\bullet(N)$-module homomorphism, and hence
\[\frac1{c_{n(k-1)}(T_v)} (\iota_{\Delta_\a N})_* f^{\a}_* c_{(k-1)n} \left(\frac{f^*TN(\bz^{\a_1}) \oplus \ldots \oplus f^*TN(\bz^{\a_s})}{f^{[\a]*}T_h}\right)=f^\a_* c_{(k-s)n}(f^*TN(\bz^{\a_1})^{0} \oplus \ldots \oplus f^*TN(\bz^{\a_s})^{0})\]
where 
\[f^*TN(\bz^{\a_t})^0=f^*TN(z^{\a_t}_1) \oplus \ldots \oplus f^*TN(z^{\a_t}_{|\a_t|-1})\ \ 1\le t \le s\]
stands for the reduced bundles. In short, pull-back and push-forward along the diagonal $\Delta^\a N$ kills the horizontal direction from the tautological bundle $V^\a(\bz)$. Hence the generic term in \eqref{finalformulak} can be written as 
{\small \begin{align*}	
& f^\a_* \left [\res_{\bz^{\a_1}, \ldots ,\bz^{\a_s}=\infty} c_{(k-s)n}(f^*TN(\bz^{\a_1})^{0} \oplus \ldots \oplus f^*TN(\bz^{\a_s})^{0})\mathcal{R}^\a(\theta_i, \bz) d\bz^{\a_s}\ldots d\bz^{\a_1} \right ]=\\
& =f^\a_* \res_{\bz^{\a_1}, \ldots ,\bz^{\a_s}=\infty} \prod_{t=1}^s  \frac{\prod_{1\le i<j \le |\a_t|-1}(z_i^{\a_t}-z_j^{\a_t})Q_{|\a_t|-1} c_{(k-s)n}(f^*TN(\bz^{\a_1})^{0} \oplus \ldots \oplus f^*TN(\bz^{\a_s})^{0})}
{(-1)^{|\a_t|-1}  \prod_{i+j\le l\le 
|\a_t|-1}(z_i^{\a_t}+z_j^{\a_t}-z_l^{\a_t})(z_1^{\a_t}\ldots z_{|\a_t|-1}^{\a_t})^{n+1}}\prod_{i=1}^{|\a_t|-1} 
s_M\left(\frac{1}{z_i^{\a_m}}\right) =\\
&=\prod_{t=1}^s f^{\a_t}_*  \res_{\bz^{\a_t}=\infty}   \frac{\prod_{1\le i<j \le |\a_t|-1}(z_i^{\a_t}-z_j^{\a_t})Q_{|\a_t|-1} c_{(|\a_t|-1)n}(f^*TN(\bz^{\a_t})^{0} }
{(-1)^{|\a_t|-1}  \prod_{i+j\le l\le 
|\a_t|-1}(z_i^{\a_t}+z_j^{\a_t}-z_l^{\a_t})(z_1^{\a_t}\ldots z_{|\a_t|-1}^{\a_t})^{n+1}}\prod_{i=1}^{|\a_t|-1} 
s_M\left(\frac{1}{z_i^{\a_t}}\right) 
\end{align*}}
The last equality hols because the Euler class of a sum is the product of Euler classes 
\[c_{(k-s)n}(f^*TN(\bz^{\a_1})^{0} \oplus \ldots \oplus f^*TN(\bz^{\a_s})^{0})=\prod_{t=1}^s c_{(|\a_t|-1)n}(f^*TN(\bz^{\a_t})^{0})\]
and the residue variables for $\a_i$ and $\a_j$ are disjoint for $1 \le i\neq j \le s$. But
\[c_{(|\a_t|-1)n}(f^*TN(\bz^{\a_t})^{0})=\prod_{i=1}^{|\a_t|-1}\prod_{j=1}^n(z_i^{\a_t}+\theta_j)\]
and hence  
\[c_{(|\a_t|-1)n}(f^*TN(\bz^{\a_t})^{0}) \cdot \prod_{i=1}^{|\l_t|-1} s_M\left(\frac{1}{z_i^{\a_t}}\right)=\frac{\prod_{i=1}^{|\a_t|-1}\prod_{j=1}^n(z_i^{\a_t}+\theta_j)}{\prod_{i=1}^{|\a_t|-1}\prod_{j=1}^m(z_i^{\a_t}+\l_j)}=\frac{1}{(z_1^{\a_t} \ldots z_{|\a_t|-1}^{\a_t})^{m-n}}\prod_{i=1}^{|\a_t|-1}c_f\left(\frac{1}{z_i^{\a_t}}\right)\]
When we substitute this, and compare with the residue formula of \cite{bsz} for Thom polynomials (see also in the Introduction), we arrive at 
\[\res_{\bz^{\a_t}=\infty}   \frac{\prod_{1\le i<j \le |\a_t|-1}(z_i^{\a_t}-z_j^{\a_t})Q_{|\a_t|-1} c_{(|\a_t|-1)n}(f^*TN(\bz^{\a_t})^{0} }
{(-1)^{|\a_t|-1}  \prod_{i+j\le l\le 
|\a_t|-1}(z_i^{\a_t}+z_j^{\a_t}-z_l^{\a_t})(z_1^{\a_t}\ldots z_{|\a_t|-1}^{\a_t})^{m-n+1}}\prod_{i=1}^{|\a_t|-1} 
c_f\left(\frac{1}{z_i^{\a_t}}\right)=\Tp^{m \to n-1}_{A_{|\a_t|-1}}.\]
Hence the term corresponding to $\a=(\a_1,\ldots, \a_s)$ is 
\begin{equation}
\prod_{i=1}^s f_*^{\a_t} \Tp^{m \to n-1}_{A_{|\a_t|-1}}
\end{equation}
which completes the proof of Theorem \ref{mainthm1} and Theorem \ref{mainthm2}.

\bibliographystyle{abbrv}
\bibliography{thom.bib}
\end{document}